\newtheorem{thm}{Theorem}[section]
\newtheorem{lem}[thm]{Lemma}
\newtheorem{cor}[thm]{Corollary}
\newtheorem{pro}[thm]{Proposition}
\newtheorem{ex}[thm]{Example}
\newtheorem{rmk}[thm]{Remark}
\newtheorem{defi}[thm]{Definition}
\newcommand {\emptycomment}[1]{}
\newcommand{\dr}{\delta^{\rm reg}}
\newcommand{\lon }{\,\rightarrow\,}
\newcommand{\be }{\begin{equation}}
\newcommand{\ee }{\end{equation}}
\newcommand{\g}{\mathfrak g}
\newcommand{\h}{\mathfrak h}
\newcommand{\huaL}{\mathcal{L}}
\newcommand{\huaR}{\mathcal{R}}
\newcommand{\huaG}{\mathcal{G}}
\newcommand{\huaC}{{\mathcal{C}}}
\newcommand{\huaO}{{\mathcal{O}}}
\newcommand{\frkd}{\mathfrak d}
\newcommand{\frks}{\mathfrak s}
\newcommand{\frkL}{\mathfrak L}
\newcommand{\frkR}{\mathfrak R}
\newcommand{\half}{\frac{1}{2}}
\newcommand{\Id}{\rm{Id}}
\newcommand{\br}[1]{   [ \cdot,    \cdot  ]   }
\newcommand{\dt}{\delta^{\mathrm{T}}}
\newcommand{\Hom}{\mathrm{Hom}}
\newcommand{\Sym}{\mathrm{Sym}}
\newcommand{\perm}{\mathbb S}
\newcommand{\gl}{\mathfrak {gl}}
\newcommand{\kup}{$\huaO$-operator }
\newcommand{\kups}{$\huaO$-operators }
\newcommand{\ad}{\mathrm{ad}}
\newcommand{\sgn}{\mathrm{sgn}}
\newcommand{\K}{\mathbb{K}}
\newcommand{\GRB}{\huaO}
\newcommand{\MN}{\mathrm{MN}}
\begin{document}

\title[Twisting on pre-Lie algebras and quasi-pre-Lie bialgebras]{Twisting on pre-Lie algebras and quasi-pre-Lie bialgebras}

\author{Jiefeng Liu}
\address{School of Mathematics and Statistics, Northeast Normal University, Changchun 130024, China}
\email{liujf12@126.com}
\vspace{-5mm}


\begin{abstract}
We study (quasi-)twilled pre-Lie algebras and the associated $L_\infty$-algebras and differential graded Lie algebras. Then we show that certain twisting transformations on (quasi-)twilled pre-Lie algbras can be characterized by the solutions of Maurer-Cartan equations of the associated differential graded Lie algebras ($L_\infty$-algebras). Furthermore, we show that $\huaO$-operators and twisted $\huaO$-operators are solutions of the Maurer-Cartan equations. As applications, we study (quasi-)pre-Lie bialgebras using the associated differential graded Lie algebras ($L_\infty$-algebras) and the twisting theory of (quasi-)twilled pre-Lie algebras. In particular, we give a construction of quasi-pre-Lie bialgebras using symplectic Lie algebras, which is parallel to that a Cartan $3$-form on a semi-simple Lie algebra gives a quasi-Lie bialgebra.

\end{abstract}


\keywords{(quasi-)twilled pre-Lie algebra, twisting, Maurer-Cartan equation, (quasi-)pre-Lie bialgebra, twisted $\frks$-matrix}
\footnotetext{{\it{MSC}}: 16T10, 17A30, 17B30, 17B60}
\maketitle

\tableofcontents

\allowdisplaybreaks


\section{Introduction}\label{sec:intr}
Quasi-Lie bialgebras were introduced by Drinfeld in \cite{D90} as the classical limits of quasi-Hopf algebras.  Then Kosmann-Schwarzbach in \cite{Kosmann92} found an elegant way to express the structure of quasi-Lie bialgebras by the ``big bracket", which was first defined  by Kostant and Sternberg \cite{KS87} and introduced into the theory of Lie bialgebras by Lecomte and Roger \cite{LR90}. In 1999, Roytenberg generalized the above idea to Lie bialgebroids and quasi-Lie bialgebroids using the language of supermanifold \cite{Roy99}. The notion of twisting operation as a useful tool to construct new quasi-Hopf algebras was introduced by Drinfeld and its classical limit leads to the notion of twisting operation on quasi-Lie bialgebras, which provides a way to analyze double theory of (quasi-)Lie algebras and construct new (quasi-)Lie bialgebras \cite{BaKos,Kosmann92}. From the viewpoint of Poisson geometry, twisting operations were studied in detail by Kosmann-Schwarzbach \cite{KS05} and Roytenberg \cite{Roy02A}. See \cite{Uchino} for more details of twisting operations on associative algebras.

The aim of this paper is to develop the theory of (quasi-)pre-Lie bialgebras along the philosophy of Kosmann-Schwarzbach \cite{Kosmann92} and Roytenberg \cite{Roy02A} and to use twisting operations on pre-Lie algebras to study the double theory of (quasi-)pre-Lie algebras.

Pre-Lie algebras (or left-symmetric algebras) are a class of nonassociative algebras coming
from the study of convex homogeneous cones, affine manifolds and affine structures on
Lie groups, deformation of associative algebras and then  appeared in many fields in
mathematics and mathematical physics, such as complex and symplectic structures on Lie
groups and Lie algebras, integrable systems, Poisson brackets and infinite dimensional
Lie algebras, vertex algebras, quantum field theory, operads and so on. See \cite{Bakalov,ChaLiv,Lichnerowicz,MT}, and the survey article \cite{Pre-lie algebra in geometry} and the references therein for more details. In recent papers \cite{Ban,DSV}, the new integration theory for pre-Lie algebra was introduced as a tool to develop the deformation theory controlled by pre-Lie algebras, which can be used to prove that Kapranov¡¯s $L_\infty$ algebra structure on the Dolbeault complex of a K\"{a}hler manifold is homotopy abelian and independent of
the choice of K\"{a}hler metric up to an $L_\infty$ isomorphism.

The notion of pre-Lie bialgebra (or left-symmetric bialgebra) was introduced by Bai in \cite{Left-symmetric bialgebras}. This structure is equivalent to that of para-K\"{a}hler Lie algebra, which is a symplectic Lie algebra with a decomposition into a direct sum of the underlying vector spaces of two Lagrangian subalgebras. See \cite{BeM,Kan,Ku94,Ku99a} for more details about para-K\"{a}hler Lie algebras and applications in
mathematical physics. The notion of coboundary pre-Lie bialgebra leads to an analogue of the classical Yang-Baxter equation, called an ``S-equation''.  A  symmetric  solution of  this  equation, called an ``$\frks$-matrix'',  gives a pre-Lie bialgebra and a para-K\"{a}hler Lie algebra naturally. Recently, pre-Lie bialgebroid (or left-symmetric bialgebroid) was introduced as a geometric generalization of a pre-Lie bialgebra and the authors developed the Manin triple theory for pre-Lie algebroids \cite{LBS2}. As was pointed out in \cite{Left-symmetric bialgebras} by Bai, pre-Lie bialgebras have many properties similar to those of Lie bialgebras given by Drinfeld \cite{Dr83}. It is well known that a symplectic structure on a Lie algebra gives a non-degenerate skew-symmetric $r$-matrix. Similarly, a Hessian structure on a pre-Lie algebra, which corresponds to an affine Lie group $G$ with a $G$-invariant Hessian metric \cite{Shima}, also gives a non-degenerate symmetric $\frks$-matrix. Furthermore, since Lie bialgebra and the classical Yang-Baxter equation can be regarded as classical limits of the Hopf algebra and quantum Yang-Baxter equation \cite{Bel} respectively, the analogy mentioned above, the pre-Lie bialgebra and S-equation suggest that there might exist pre-Hopf algebra and quantum S-equation as an analogue of the Hopf algebra and quantum Yang-Baxter equation, respectively. We expect that our future study will be related to the theory of quantum groups.

We shall review the definition of (quasi-)Lie bialgebras in Kosmann-Schwarzbach's big bracket theory and twisting operations on (quasi-)Lie bialgebras. A quasi-Lie bialgebra structure on a vector space $V$ is defined by a triple $(\mu,\gamma,\varphi)$ of elements in $\wedge^\bullet(V\oplus V^*)$, with $\mu:\wedge^2 V\rightarrow V$, $\gamma:\wedge^2 V^*\rightarrow V^*$ and $\phi\in \wedge^3 V^*$, such that $\{\mu+\gamma+\phi,\mu+\gamma+\phi\}=0$, where $\{-,-\}$ is the graded Poisson bracket defined by $\{V,V\}=\{V^*,V^*\}=0$ and $\{V,V^*\}=\langle V,V^*\rangle$. \emptycomment{By the bidegree reason of $\wedge^\bullet(V\oplus V^*)$, the condition $\{\mu+\gamma+\phi,\mu+\gamma+\phi\}=0$ is equivalent to the following equations:
\begin{eqnarray*}
\half\{\mu,\mu\}+\{\gamma,\phi\}=0,\quad \{\gamma,\gamma\}=0,\quad \{\mu,\gamma\}=0,\quad\{\mu,\phi\}=0.
\end{eqnarray*}
Here the equation $\{\gamma,\gamma\}=0$ means that $\gamma$ is a Lie algebra structure on $V^*$. The equation $\half\{\mu,\mu\}+\{\gamma,\phi\}=0$ means that  $\mu$ is not a true Lie algebra structure on $V$, which only satisfies the Jacobi identity up to a defect measured by the element $\phi$. The equation $\{\mu,\gamma\}=0$ is the compatibility condition between $\mu$ and $\gamma$. Finally, $\{\mu,\phi\}=0$ is a coherent condition between $\mu$ and $\phi$.} Note that a Lie bialgebra is a quasi-Lie bialgebra with $\phi=0$. Let $\Theta=\mu+\gamma+\phi$ denote the quasi-Lie bialgebra structure on $V$. By definition, the twisting of the structure $\Theta$ by $r\in \wedge^2 V$ is a canonical transformation:
 $$\Theta^r=e^{X_{r}}(\Theta),$$
  where $X_r=\{-,r\}$ and $\Theta^r$ is the result of twisting. We recall some basic conclusions for this twisting operations. For the case $\Theta=\mu$, this structure corresponds to the Lie bialgebra such that the Lie algebra structure on $\g^*$ is trivial. If $r$ is a solution of a Maurer-Cartan equation: $$\{r,r\}_\mu=0,$$
  where $\{-,-\}_\mu:=\{\{\mu,-\},-\}$, then the result of twisting $\Theta^r=\mu+\{\mu,r\}$ is a Lie bialgebra structure on $V$. Since the bracket $\{-,-\}_\mu$ restricted to $V$ is just the Schouten bracket on the Lie algebra $(V,\mu)$, $r$ is a solution of the classical Yang-Baxter equation. Thus an $r$-matrix can be obtained through the twisting operation on a Lie bialgebra. Similarly, for the case $\Theta=\mu+\phi$, this structure corresponds to the quasi-Lie bialgebra such that the Lie algebra structure on $\g^*$ is trivial. The twisted $r$-matrix can also be obtained through the twisting operation on this quasi-Lie bialgebra \cite{Roy02A}. The twisted $r$-matrices, more generally twisted Poisson structures, appeared in string theory and topological field theory \cite{KlS,Pa}.

In Section \ref{sec:L}, using only a canonical bigraded system of the graded Poisson algebra on $\wedge^\bullet(V\oplus V^*)$, a canonical bigraded Lie algebra system on $C^*(\huaG,\huaG)=\oplus_{n\ge 1}\Hom(\wedge^{n-1}\huaG\otimes\huaG,\huaG)$ is introduced, where $\huaG=\g_1\oplus\g_2$ is a vector space decomposed into two subspaces $\g_1$ and $\g_2$ and the graded Lie bracket on $C^*(\huaG,\huaG)$ is given by Matsushima-Nijenhuis bracket. Then we introduce the notions of (quasi-)twilled pre-Lie algebras. See \cite{KosmannD} and \cite{Uchino} for more details on twilled Lie algebras and twilled associative algebras respectively. By the derived bracket construction in \cite{KosmannD}, we show that there are $L_\infty$-algebras  and differential graded Lie algebras associated to  quasi-twilled pre-Lie algebras and twilled pre-Lie algebras respectively. This is the foundation of the whole paper.

In Section \ref{sec:T}, we first study the twisting operation on a pre-Lie algebra $\huaG=\g_1\oplus\g_2$ with a decomposition  into two subspaces $\g_1$ and $\g_2$. In particular, solutions of the Maurer-Cartan equations of the aforementioned $L_\infty$-algebras and differential graded Lie algebras give rise to special twisting of quasi-twilled pre-Lie algebras and twilled pre-Lie algebras respectively. Then, we consider the semidirect product pre-Lie algebra associated to a bimodule over a pre-Lie algebra. It is a twilled pre-Lie algebra naturally and we show that the $\huaO$-operators given in \cite{Bai-Liu-Ni}, which play an important role in study of the bialgebra theory of pre-Lie algebras and $L$-dendriform algebras, are just solutions of the Maurer-Cartan equation of the associated gLa. See \cite{Bai-1,Bor,Ku99b,Uchino} for more details about $\huaO$-operators on Lie algebras and associative algebras and applications in mathematical physics. Finally, we consider the quasi-twilled pre-Lie algebra constructed by a $2$-cocycle associated to a bimodule over a pre-Lie algebra. We obtain the notion of twisted $\huaO$-operator through the solution of a Maurer-Cartan equation of the associated $L_\infty$-algebra, which is similar to the notion of twisted $r$-matrix on a Lie algebra. This part plays essential role in our later study of the theory of (quasi-)pre-Lie bialgebras.

In Section \ref{sec:P}, we first recall the notions of pre-Lie bialgebra and Manin triple for pre-Lie algebras. Note that a Manin triple for pre-Lie algebras can be seen as a twilled pre-Lie algebra with a nondegenerate invariant skew-symmetric bilinear form. For the twilled pre-Lie algebra $(\g\ltimes_{\ad^*,-R^*}\g^*,\g,\g^*)$, using the twisting theory of a twilled pre-Lie algebra given in Section \ref{sec:T} and the twisting transformation preserves the nondegenerate skew-symmetric bilinear form, we recover the notions of $S$-equation and $\frks$-matrix given in \cite{Left-symmetric bialgebras} naturally. In particular, as an $r$-matrix on a Lie algebra $\g$ can be described by the solution of a Maurer-Cartan equation of a gLa defined by the Schouten bracket on $\wedge^\bullet \g$, we find that an $\frks$-matrix on a pre-Lie algebra can also be described by the solution of a Maurer-Cartan equation of a gLa on the tensor space $\oplus_{k\ge1}(\wedge^{k-1}\g\otimes\g \otimes\g)$, which is transferred from the gLa associated to the twilled pre-Lie algebra $(\g\ltimes_{\ad^*,-R^*}\g^*,\g,\g^*)$. This is beyond the observation in \cite{Left-symmetric bialgebras}.

In Section \ref{sec:Q}, we first give the notions of quasi-pre-Lie bialgebra and quasi-Manin triple for pre-Lie algebras. Note that a quasi-Manin triple for pre-Lie algebras can be seen as a quasi-twilled pre-Lie algebra with a nondegenerate invariant skew-symmetric bilinear form. Then we show that the equivalence between quasi-pre-Lie bialgebras and quasi-Manin triples. For the quasi-twilled pre-Lie algebra $(\g\ltimes_{\ad^*,-R^*,\phi}\g^*,\g,\g^*)$ associated to a $2$-cocycle $\phi:\g\times\g \rightarrow \g^*$ with the coefficient in the bimodule $(\g^*;\ad^*,-R^*)$, using the twisting theory of a quasi-twilled pre-Lie algebra given in Section \ref{sec:T} and the twisting transformation preserves the nondegenerate skew-symmetric bilinear form, we obtain the notion of twisted $\frks$-matrix, which gives rise to a new quasi-pre-Lie bialgebra. At last, we give a construction of quasi-pre-Lie bialgebras using symplectic Lie algebras. Recall that a symplectic (Frobenius) Lie algebra  is a Lie algebra $\g$ equipped with a nondegenerate 2-cocycle $\omega\in\wedge^2\g^*$, i.e.
  $$\omega([x,y]_\g,z)+\omega([y,z]_\g,x)+\omega([z,x]_\g,y)=0,\quad\forall~x,y,z\in\g.$$ It was shown by \cite{symplectic Lie algebras} that there exists a compatible pre-Lie algebra structure $\cdot_\g$ on $\g$ given by
  \begin{equation*}
    \omega(x\cdot_\g y,z)=-\omega(y,[x,z]_\g),\quad\forall~x,y,z\in\g.
  \end{equation*}
Define $\Phi\in\wedge^2\g^*\otimes\g^*$ by
  \begin{equation*}
    \Phi(x,y,z)=\omega([x,y]_\g,z),\quad\forall~x,y,z\in\g.
  \end{equation*}
  Then $(\g,\g^*,\Phi)$  is a quasi-pre-Lie bialgebra. This result is parallel to that a Cantan $3$-form $\phi\in \wedge^3\g^*$ on a semisimple Lie algebra $\g$ defined by
  $$\phi(x,y,z)=([x,y]_\g,z),\quad\forall~x,y,z\in\g$$
  gives a quasi-Lie bialgebra, where $(-,-)$ is the Killing form on $\g$. Then we give an example of a quasi-pre-Lie bialgebra from a 4-dimensional symplectic Lie algebra and construct some twisted $\frks$-matrices on this quasi-pre-Lie bialgebra.

In this paper, all the vector spaces are over algebraically closed field $\mathbb K$ of characteristic $0$ and finite dimensional.

\vspace{2mm}

\section{Preliminaries}\label{sec:Pre}
\subsection{Cohomology of pre-Lie algebras and Matsushima-Nijenhuis brackets}
\begin{defi}  A {\bf pre-Lie algebra} is a pair $(\g,\cdot_\g)$, where $\g$ is a vector space and  $\cdot_\g:\g\otimes \g\longrightarrow \g$ is a bilinear multiplication
satisfying that for all $x,y,z\in \g$, the associator
$(x,y,z)=(x\cdot_\g y)\cdot_\g z-x\cdot_\g(y\cdot_\g z)$ is symmetric in $x,y$,
i.e.
$$(x,y,z)=(y,x,z),\;\;{\rm or}\;\;{\rm
equivalently,}\;\;(x\cdot_\g y)\cdot_\g z-x\cdot_\g(y\cdot_\g z)=(y\cdot_\g x)\cdot_\g
z-y\cdot_\g(x\cdot_\g z).$$
\end{defi}

Let $(\g,\cdot_\g)$ be a pre-Lie algebra. The commutator $
[x,y]_\g=x\cdot_\g y-y\cdot_\g x$ defines a Lie algebra structure
on $\g$, which is called the {\bf sub-adjacent Lie algebra} of
$(\g,\cdot_\g)$ and denoted by $\g^c$. Furthermore,
$L:\g\longrightarrow \gl(\g)$ with $x\mapsto L_x$, where
$L_xy=x\cdot_\g y$, for all $x,y\in \g$, gives a module of
the Lie algebra $\g^c$ of $\g$. See \cite{Pre-lie algebra in
geometry} for more details.

\begin{defi}
  Let $(\g,\cdot_\g)$ be a pre-Lie algebra and $V$   a vector space. Let $\huaL,\huaR:\g\longrightarrow\gl(V)$ be two linear maps with $x\mapsto \huaL_x$ and $x\mapsto \huaR_x$ respectively. The triple $(V;\huaL,\huaR)$ is called a {\bf bimodule} over $\g$ if
\begin{eqnarray}
\label{representation condition 1} \huaL_x\huaL_yu-\huaL_{x\cdot_\g y}u&=&\huaL_y\huaL_xu-\huaL_{y\cdot_\g x}u,\\
 \label{representation condition 2}\huaL_x\huaR_yu-\huaR_y\huaL_xu&=&\huaR_{x\cdot_\g y}u-\huaR_y\huaR_xu, \quad \forall~x,y\in \g,~ u\in V.
\end{eqnarray}
 \end{defi}

In the sequel, we will simply call $V$ a $\g$-bimodule if there is no confusions possible. It is
obvious that $( \K  ;\rho=0,\mu=0)$ is a $\g$-bimodule, which we
call the {\bf trivial bimodule}.

In fact, $(V;\huaL,\huaR)$ is a bimodule of a pre-Lie algebra
$\g$ if and only if the direct sum $\g\oplus V$ of vector spaces is
 a pre-Lie algebra (the semi-direct product) by
defining the multiplication on $\g\oplus V$ by
$$
  (x_1+v_1)\cdot_{(\huaL,\huaR)}(x_2+v_2)=x_1\cdot_\g x_2+\huaL_{x_1}v_2+\huaR_{x_2}v_1,\quad \forall~ x_1,x_2\in \g,v_1,v_2\in V.
$$
  We denote it by $\g\ltimes_{\huaL,\huaR} V$ or simply by $A\ltimes V$.

By a straightforward calculation, we have
\begin{pro}\label{pro:dual-module equiv}
 $(V;\huaL,\huaR)$ is a bimoudle over the pre-Lie algebra $(\g,\cdot_\g)$ if and only if $(V^*;\huaL^*-\huaR^*,-\huaR^*)$ is a bimoudle over the pre-Lie algebra $\g$,  where $\huaL^*:\g\longrightarrow \gl(V^*)$ and $\huaR^*:\g\longrightarrow \gl(V^*)$ are given by
$$
 \langle \huaL^*_x\xi,u\rangle=-\langle \xi,\huaL_x u\rangle,\quad\langle \huaR^*_x\xi,u\rangle=-\langle \xi,\huaR_xu\rangle,\quad \forall~ x\in \g,\xi\in V^*,u\in V.
$$
\end{pro}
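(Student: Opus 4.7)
The plan is to verify the two bimodule axioms for $(V^*;\huaL^*-\huaR^*,-\huaR^*)$ directly, by pairing each axiom with an arbitrary $u\in V$ and translating the resulting identity into an equation of operators on $V$ via the definitions of the dual actions. Since $V$ is finite-dimensional, the construction is an involution on bimodule structures: applying it twice recovers $(V;\huaL,\huaR)$ under $V^{**}\cong V$ (because $(\huaL^*)^*=\huaL$, $(\huaR^*)^*=\huaR$, and the signs conspire to give $\huaL$ and $\huaR$ back). Hence I only need to establish the forward implication; the converse follows by applying the forward statement to the dual bimodule.

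The two identities I would record first are
\[
\langle (\huaL^*_x-\huaR^*_x)\xi,u\rangle=\langle \xi,(\huaR_x-\huaL_x)u\rangle,\qquad \langle -\huaR^*_y\xi,u\rangle=\langle \xi,\huaR_y u\rangle.
\]
Using them, axiom \eqref{representation condition 2} for the proposed dual bimodule, paired with $u\in V$, reduces after a short rearrangement to exactly \eqref{representation condition 2} for $(V;\huaL,\huaR)$, so that axiom is essentially free and in fact equivalent in the two directions.

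The real work is axiom \eqref{representation condition 1}. Pairing it with $u$ produces the requirement that the operator
\[
\huaL_y\huaL_x-\huaL_y\huaR_x-\huaR_y\huaL_x+\huaR_y\huaR_x-\huaL_{x\cdot_\g y}+\huaR_{x\cdot_\g y}
\]
on $V$ be symmetric in $x,y$. Subtracting the $(y,x)$-version, the pure-$\huaL$ part produces $[\huaL_y,\huaL_x]+\huaL_{[x,y]_\g}$, which vanishes by \eqref{representation condition 1}; the mixed $\huaL\huaR$ part, rewritten using \eqref{representation condition 2} in both orders $(x,y)$ and $(y,x)$, equals $\huaR_{[x,y]_\g}+[\huaR_x,\huaR_y]$; and the pure-$\huaR$ part together with $\huaR_{x\cdot_\g y}-\huaR_{y\cdot_\g x}$ gives $[\huaR_y,\huaR_x]-\huaR_{[x,y]_\g}$, exactly cancelling the previous contribution. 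The main obstacle is this bookkeeping: the verification of \eqref{representation condition 1} for the dual uses \emph{both} original axioms simultaneously, together with the Lie bracket identity $x\cdot_\g y-y\cdot_\g x=[x,y]_\g$, and the cancellations are not immediately visible until one organizes the antisymmetrized terms by their $\huaL$/$\huaR$ type. Once this is done, both axioms hold and, by the double-duality remark, the converse is automatic.
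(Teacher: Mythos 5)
Your proposal is sound, and in fact the paper gives no proof at all here: Proposition \ref{pro:dual-module equiv} is introduced only with the phrase ``by a straightforward calculation'', so your write-up supplies precisely the verification the author omits. Two remarks. First, the double-duality observation is a genuine economy: writing $M_x=\huaL_x-\huaR_x$, the dual construction sends $(\huaL,\huaR)$ to $(-M^{t},\huaR^{t})$ (transposes on $V^*$), and applying it twice returns $(\huaL,\huaR)$ under $V^{**}\cong V$ (legitimate, since the paper assumes all spaces finite dimensional), so proving one implication really does yield the equivalence. Second, there is a sign slip in your displayed operator: pairing axiom \eqref{representation condition 1} for $(V^*;\huaL^*-\huaR^*,-\huaR^*)$ with $u$ requires that $M_yM_x+M_{x\cdot_\g y}$, not $M_yM_x-M_{x\cdot_\g y}$, be symmetric in $x,y$; that is, the last two terms of your display should read $+\huaL_{x\cdot_\g y}-\huaR_{x\cdot_\g y}$. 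With the sign as displayed the pure-$\huaL$ part would antisymmetrize to $[\huaL_y,\huaL_x]-\huaL_{[x,y]_\g}=-2\huaL_{[x,y]_\g}\neq 0$, so the argument would not close; but your stated intermediate outputs --- $[\huaL_y,\huaL_x]+\huaL_{[x,y]_\g}=0$ from \eqref{representation condition 1}, $\huaR_{[x,y]_\g}+[\huaR_x,\huaR_y]$ from the mixed terms via \eqref{representation condition 2}, and $[\huaR_y,\huaR_x]-\huaR_{[x,y]_\g}$ from the remaining terms --- are exactly the ones produced by the corrected sign and do cancel, so the slip is typographical rather than structural. Your treatment of axiom \eqref{representation condition 2}, which dualizes to a condition equivalent to itself, is correct as stated.
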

In the following, $(V^*;\huaL^*-\huaR^*,-\huaR^*)$ is called the {\bf dual bimodule} of the bimodule $(V;\huaL,\huaR)$.

Let $R:\g\rightarrow
\gl(\g)$ be a linear map with $x\mapsto R_x$, where the
linear map $R_x:\g\longrightarrow\g$  is defined by
$R_x(y)=y\cdot_\g x,$ for all $x, y\in \g$. Then
$(\g;\huaL=L,\huaR=R)$ is a bimodule, which we call the
{\bf regular bimodule}. The dual bimodule of the regular bimodule $(\g;L,R)$ is just the bimodule $(\g^*;{\rm ad}^*=L^*-R^*, -R^*)$ over $\g$.

The cohomology complex for a pre-Lie algebra $(\g,\cdot_\g)$ with a bimodule $(V;\huaL,\huaR)$ is given as follows.
The set of $n$-cochains is given by
$\Hom(\wedge^{n-1}\g\otimes \g,V),\
n\geq 1.$  For all $\phi\in \Hom(\wedge^{n-1}\g\otimes \g,V)$, the coboundary operator $\delta:\Hom(\wedge^{n-1}\g\otimes \g,V)\longrightarrow \Hom(\wedge^{n}\g\otimes \g,V)$ is given by
 \begin{eqnarray}\label{eq:pre-Lie cohomology}
 \nonumber\delta\phi(x_1, \cdots,x_{n+1})&=&\sum_{i=1}^{n}(-1)^{i+1}\huaL_{x_i}\phi(x_1, \cdots,\hat{x_i},\cdots,x_{n+1})\\
\label{eq:cobold} &&+\sum_{i=1}^{n}(-1)^{i+1}\huaR_{x_{n+1}}\phi(x_1, \cdots,\hat{x_i},\cdots,x_n,x_i)\\
 \nonumber&&-\sum_{i=1}^{n}(-1)^{i+1}\phi(x_1, \cdots,\hat{x_i},\cdots,x_n,x_i\cdot_\g x_{n+1})\\
 \nonumber&&+\sum_{1\leq i<j\leq n}(-1)^{i+j}\phi([x_i,x_j]_\g,x_1,\cdots,\hat{x_i},\cdots,\hat{x_j},\cdots,x_{n+1}),
\end{eqnarray}
for all $x_i\in \g,~i=1,\cdots,n+1$. In particular, we use $\dt$ ($\dr$) to refer the coboundary operator  associated with
the trivial bimodule (the regular bimodule).

A permutation $\sigma\in\perm_n$ is called an $(i,n-i)$-unshuffle if $\sigma(1)<\cdots<\sigma(i)$ and $\sigma(i+1)<\cdots<\sigma(n)$. If $i=0$ and $i=n$, we assume $\sigma=\Id$. The set of all $(i,n-i)$-unshuffles will be denoted by $\perm_{(i,n-i)}$. The notion of an $(i_1,\cdots,i_k)$-unshuffle and the set $\perm_{(i_1,\cdots,i_k)}$ are defined analogously.

Let $\g$ be a vector space. We consider the graded vector space $C^*(\g,\g)=\oplus_{n\ge 1}C^n(\g,\g)=\oplus_{n\ge 1}\Hom(\wedge^{n-1}\g\otimes\g,\g)$. It was shown in \cite{ChaLiv,Nij,WBLS} that $C^*(\g,\g)$ equipped with the Matsushima-Nijenhuis bracket
\begin{eqnarray}\label{eq:Nij-preLie}
{[P,Q]^{\MN}}&=&P\diamond Q-(-1)^{pq}Q\diamond P,\quad\forall P\in C^{p+1}(\g,\g),Q\in C^{q+1}(\g,\g)
\end{eqnarray}
is a gLa, where $P\diamond Q\in C^{p+q+1}(\g,\g)$ is defined by
\begin{eqnarray*}
&&P\diamond Q(x_1,\ldots,x_{p+q+1})\\
&=&\sum_{\sigma\in\perm_{(q,1,p-1)}}\sgn(\sigma)P(Q(x_{\sigma(1)},\ldots,x_{\sigma(q)},x_{\sigma(q+1)}),x_{\sigma(q+2)},\ldots,x_{\sigma(p+q)},x_{p+q+1})\\
&&+(-1)^{pq}\sum_{\sigma\in\perm_{(p,q)}}\sgn(\sigma)P(x_{\sigma(1)},\ldots,x_{\sigma(p)},Q(x_{\sigma(p+1)},\ldots,x_{\sigma(p+q)},x_{p+q+1})).
\end{eqnarray*}
In particular, $\pi\in\Hom(\otimes^2\g,\g)$ defines a pre-Lie algebra if and only if $[\pi,\pi]^{\MN}=0.$ If $\pi$ is a pre-Lie algebra structure, then $d_{\pi}(f):=[\pi,f]^{\MN}$ is a graded derivation of the gLa $(C^*(\g,\g),[-,-]^{\MN})$ satisfying $d_{\pi}\circ d_{\pi}=0$, so that $(C^*(\g,\g),[-,-]^{\MN},d_{\pi})$ becomes a dgLa.

\subsection{$L_\infty$-algebras and Maurer-Cartan equations}
The notion of an $L_\infty$-algebras was introduced by Stasheff in \cite{stasheff:shla}. 

Let $V$ be a graded vector space and $TV$ be its tensor algebra over $V$. We define the graded skew-symmetric algebra over $V$ by
$$\wedge V:=TV/\langle u\otimes v-(-1)^{|u||v|} v\otimes u\rangle,$$
where $|u|$ and $|v|$ denote the degree of homogeneous elements $u,v\in V$. For any homogeneous
elements $v_1,v_2,\cdots,v_k$ and a permutation $\sigma\in S_k$, the Koszul signs $\chi(\sigma)$ is defined by
$$x_{\sigma(1)}\wedge x_{\sigma(2)}\wedge \cdots\wedge x_{\sigma(k)}=\chi(\sigma)x_1\wedge x_2\wedge \cdots\wedge x_k. $$

\begin{defi}
An {\bf $L_\infty$-algebra} is a graded vector space $\g$ equipped with a collection  of linear maps $l_k:\wedge^k\g\lon\g$ of degree $2-k$ for $k\ge 1$, such that for all $n\ge 1$
\begin{eqnarray}\label{sh-Lie}
\sum_{i+j=n+1}(-1)^{i}\sum_{\sigma\in \mathbb S_{(i,n-i)} }\chi(\sigma)l_j(l_i(x_{\sigma(1)},\cdots,x_{\sigma(i)}),x_{\sigma(i+1)},\cdots,x_{\sigma(n)})=0,
\end{eqnarray}
where $v_1,v_2,\cdots,v_n$ are homogeneous elements in $\g$.
\end{defi}
See \cite{LS,LM,Ma} for more details on $L_\infty$-algebras.

\emptycomment{
 Below are the generalized Jacobi identities in an $L_\infty$-algebra for $n=1,2,3:$
\begin{eqnarray}
 \label{weak-2}&&l_1\circ l_1=0,\\
\label{weak-3}&&l_1(l_2(x_1,x_2))-l_2(l_1(x_1),x_2)-(-1)^{|x_1|}l_2(x_1,l_1(x_2))=0,\\
\label{weak-4}&&l_2(l_2(x_1,x_2),x_3)+(-1)^{|x_1|\cdot(|x_2|+|x_3|)}l_2(l_2(x_2,x_3),x_1)+(-1)^{|x_3|\cdot(|x_1|+|x_2|)}l_2(l_2(x_3,x_1),x_2)\\
\nonumber&&-l_3(l_1(x_1),x_2,x_3)-(-1)^{|x_1|}l_3(x_1,l_1(x_2),x_3)-(-1)^{|x_1|+|x_2|}l_3(x_1,x_2,l_1(x_3))-l_1(l_3(x_1,x_2,x_3))\\
\nonumber&&=0.
\end{eqnarray}
}

\begin{defi}
Let $(\g,\{l_k\}_{k=1}^\infty)$ be an $L_\infty$-algebra and $\alpha\in \g^1$. The equation
\begin{eqnarray}\label{MC-equation}
\sum_{k=1}^{+\infty}\frac{1}{k!}l_k(\alpha,\cdots,\alpha)=0,
\end{eqnarray}
is called the {\bf Maurer-Cartan equation}.
\end{defi}

Let $(\g,[-,-]_\g,d)$ be a dgLa. We define a new degree by $|x|_d=|x|+1$ and a new bracket by
\begin{eqnarray}
[x,y]_{d}:=(-1)^{|x|_d}[dx,y]_\g,\quad \forall x,y\in\g.
\end{eqnarray}
The new bracket is called the  {\bf derived bracket} \cite{KosmannD}. 

\begin{pro}{\rm (\cite{KosmannD})}\label{old-derived}
Let $(\g,[-,-]_\g,d)$ be a dgLa, and let $\h\subset \g$ be an abelian subalgebra, i.e. $[\h,\h]_\g=0$. If the derived bracket is closed on $\h$, then $(\h,[-,-]_d)$ is a gLa.
\end{pro}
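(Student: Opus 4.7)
The plan is to show two things on $\h$ with respect to the shifted grading $|\cdot|_d=|\cdot|+1$: graded skew-symmetry of $[-,-]_d$, and the graded Jacobi identity. Closure of the derived bracket on $\h$ is granted by hypothesis, so once these two conditions are verified, $(\h,[-,-]_d)$ is a gLa by definition.

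For skew-symmetry, I would exploit the abelian hypothesis $[\h,\h]_\g=0$. Since $d$ is a graded derivation of $[-,-]_\g$, for any $x,y\in\h$ we have
\[
0=d[x,y]_\g=[dx,y]_\g+(-1)^{|x|}[x,dy]_\g,
\]
so $[dx,y]_\g=-(-1)^{|x|}[x,dy]_\g$. Combined with the graded skew-symmetry $[dy,x]_\g=-(-1)^{(|y|+1)|x|}[x,dy]_\g$ of the ambient bracket, a direct sign count gives
\[
[x,y]_d+(-1)^{|x|_d|y|_d}[y,x]_d=0,
\]
which is exactly graded skew-symmetry with respect to the shifted degree. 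This step is essentially sign bookkeeping, and it is where the abelian assumption is used.

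For the Jacobi identity I would rely on the universal feature of derived brackets: on any dgLa, $[-,-]_d$ satisfies the graded (Loday/Leibniz type) Jacobi identity
\[
[x,[y,z]_d]_d=[[x,y]_d,z]_d+(-1)^{|x|_d|y|_d}[y,[x,z]_d]_d
\]
for all $x,y,z\in\g$, without any abelian hypothesis. The verification unfolds both sides using $[x,y]_d=(-1)^{|x|_d}[dx,y]_\g$, and reduces to an identity that is a consequence of the graded Jacobi identity for $[-,-]_\g$, the derivation property of $d$, and $d\circ d=0$. Restricting this relation to $x,y,z\in\h$ and combining it with the graded skew-symmetry just established, one obtains the usual graded Jacobi identity in the symmetric (Chevalley) form, showing that $(\h,[-,-]_d)$ is a gLa with respect to the shifted grading.

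The main obstacle is not conceptual but notational: keeping track of the degree shift $|\cdot|_d=|\cdot|+1$ in every sign, and making sure the Loday identity on the whole $\g$ collapses correctly to a symmetric Jacobi relation on $\h$. The abelian condition is needed precisely to kill the asymmetric defect in $[-,-]_d$; without it, one only gets a Loday algebra. Once the signs are handled carefully, both the skew-symmetry and the Jacobi identity fall out mechanically from the three dgLa axioms ($d^2=0$, derivation property, and ambient graded Jacobi).
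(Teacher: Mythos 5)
Your proposal is correct and follows exactly the standard derived-bracket argument of Kosmann-Schwarzbach that the paper cites (the paper itself gives no proof, only the reference): the derived bracket is a graded Loday bracket on all of $\g$ by the ambient Jacobi identity, the derivation property and $d\circ d=0$, and the abelian hypothesis forces graded skew-symmetry on $\h$ for the shifted degree, so the Loday identity collapses to the graded Jacobi identity. The sign computations you outline do check out with the convention $[x,y]_d=(-1)^{|x|_d}[dx,y]_\g$.
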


Let $V$ be a graded vector space, we define the {\bf suspension operator} $s:V\lon sV$ by assigning  $V$ to the graded vector space $sV$ with $(sV)^i:=V^{i-1}$. Similarly, the {\bf desuspension operator} $s:V\lon s^{-1}V$ is defined by assigning  $V$ to the graded vector space $s^{-1}V$ with $(s^{-1}V)^i:=V^{i+1}$.

Let $(\g,[\cdot,\cdot]_\g,d)$ be a dgLa. We set $d_t=\sum_{i=0}^{+\infty}d_it^i$ a deformation of  $d$ with $d_0=d$.
Here $d_t$ is a differential on $\g[[t]]$, which is a  graded Lie algebra of formal series with coefficients in $\g$. The condition $d_t\circ d_t=0$ is equivalent to
$$
\sum_{i+j=n\atop i,j\ge0}d_i\circ d_j=0,\quad n\in\mathbb Z_{\ge0}.
$$
 Define an $i$ ($i\ge1$)-ary derived bracket $l_i$ on $s\g$ by
\begin{eqnarray}\label{derived-bracket-1}
l_i=(-1)^{\frac{(i-1)(i-2)}{2}}s\circ N_i\circ(\overbrace{s^{-1}\otimes\cdots\otimes s^{-1}}^{i})\circ
(sd_{i-1}s^{-1}\otimes\overbrace{{\Id}\otimes\cdots\otimes{\Id}}^{i-1}),
\end{eqnarray}
where $N_i$ is defined by
\begin{eqnarray}
N_i(x_1,\cdots,x_i)=[\cdots[[x_1,x_2]_\g,x_3]_\g,\cdots,x_i]_\g,\quad \forall x_1, \cdots, x_i\in\g.
\end{eqnarray}

\begin{pro}{\rm (\cite{Uchino-1})}\label{higher-derived-bracket}
Let $(\g,[\cdot,\cdot]_\g,d)$ be a dgLa and  $\h\subset \g$  an abelian subalgebra. If $l_i$ is closed on $s\h$, then $(s\h,\{l_i\}_{i=1}^\infty)$ is an $L_\infty$-algebra.
\end{pro}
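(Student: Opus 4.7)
My strategy is to reduce this result to the classical higher derived bracket construction (due to Voronov, in the form used by Uchino) by encoding the family of derivations $\{d_i\}$ as a single inner Maurer-Cartan element in an enlarged graded Lie algebra. First I observe that since $d_t$ is a differential on the dgLa $\g[[t]]$, each coefficient $d_i$ is a degree-$1$ derivation of $[-,-]_\g$, and the identities $\sum_{i+j=n}d_i\circ d_j=0$ hold for every $n\ge 0$.

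Concretely, I would enlarge $\g$ to $\tilde{\g}=\g\oplus\bigoplus_{i\ge 0}\K\theta_i$ by adjoining degree-$1$ symbols $\theta_i$, extending the graded Lie bracket by $[\theta_i,x]_{\tilde\g}=d_i(x)$ for $x\in\g$ and $[\theta_i,\theta_j]_{\tilde\g}=0$; the graded Jacobi identity in $\tilde{\g}$ then follows from the derivation property of each $d_i$. The total degree-$1$ element $\Theta=\sum_{i\ge 0}\theta_i\,t^i\in\tilde{\g}[[t]]$ satisfies
\[ [\Theta,\Theta]_{\tilde\g}=2\sum_{n\ge 0}\Big(\sum_{i+j=n}d_i\circ d_j\Big)t^n=0, \]
so $\Theta$ is a Maurer-Cartan element of $\tilde{\g}[[t]]$. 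The subspace $\h\subset\g\subset\tilde{\g}$ remains abelian, and the iterated brackets $[\cdots[[\Theta,x_1]_{\tilde\g},x_2]_{\tilde\g},\ldots,x_i]_{\tilde\g}$ for $x_j\in\h$, after extracting the coefficient of $t^{i-1}$ and using $[\theta_{i-1},x_1]_{\tilde\g}=d_{i-1}(x_1)$, reduce precisely to $N_i(d_{i-1}(x_1),x_2,\ldots,x_i)$.

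I would then invoke the higher derived bracket theorem: given a graded Lie algebra, an abelian subalgebra, and a degree-$1$ Maurer-Cartan element, the iterated derived brackets (once closure on the abelian subalgebra is assumed so that no projection is needed) form an $L_\infty$-algebra on the corresponding suspension. Our hypothesis that each $l_i$ is closed on $s\h$ provides exactly the required closure, and matching formulas with \eqref{derived-bracket-1}, where the prefactor $(-1)^{(i-1)(i-2)/2}$ is inserted to absorb the Koszul signs of the $i$-fold desuspension, identifies the induced $L_\infty$-brackets with the $l_i$ of the statement; the $L_\infty$-relations \eqref{sh-Lie} on $s\h$ follow.

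The main obstacle will be sign bookkeeping: the Koszul signs from suspension/desuspension in \eqref{derived-bracket-1}, the prefactor $(-1)^{(i-1)(i-2)/2}$, and the signs in the $L_\infty$-axioms \eqref{sh-Lie} must all be reconciled. A purely direct alternative is to expand
\[ \sum_{i+j=n+1}(-1)^i\sum_{\sigma\in\perm_{(i,n-i)}}\chi(\sigma)\,l_j\bigl(l_i(x_{\sigma(1)},\ldots,x_{\sigma(i)}),x_{\sigma(i+1)},\ldots,x_{\sigma(n)}\bigr) \]
and reduce it, via the graded Jacobi identity for $[-,-]_\g$ and the derivation property of each $d_k$, to a sum that vanishes term by term either from $\sum_{i+j=n}d_i\circ d_j=0$ or from $[\h,\h]_\g=0$; this direct route is conceptually transparent but combinatorially intricate, whereas the enlargement above renders the signs and combinatorics automatic by appealing to Voronov's theorem.
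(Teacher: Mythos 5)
The paper itself offers no proof of this proposition: it is quoted from Uchino \cite{Uchino-1}, so there is no in-paper argument to measure yours against. Judged on its own terms, your reduction has a genuine gap at its central device. The enlarged space $\tilde{\g}=\g\oplus\bigoplus_{i\ge0}\K\theta_i$ with $[\theta_i,x]_{\tilde\g}=d_i(x)$ and $[\theta_i,\theta_j]_{\tilde\g}=0$ is in general \emph{not} a graded Lie algebra. The derivation property of each $d_i$ only yields the Jacobi identity for triples containing at most one $\theta$; for a triple $(\theta_i,\theta_j,x)$ the graded Jacobi identity forces
\[
[[\theta_i,\theta_j]_{\tilde\g},x]_{\tilde\g}=d_i(d_j(x))+d_j(d_i(x)),
\]
so declaring $[\theta_i,\theta_j]_{\tilde\g}=0$ requires $d_i\circ d_j+d_j\circ d_i=0$ for every individual pair $(i,j)$. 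The hypothesis $d_t\circ d_t=0$ gives only the weaker, aggregated condition $\sum_{i+j=n}d_i\circ d_j=0$, from which the pairwise anticommutation does not follow: already $d_0\circ d_2+d_2\circ d_0=-d_1\circ d_1$ need not vanish. Your displayed computation of $[\Theta,\Theta]_{\tilde\g}$ conceals exactly this point: with your definitions that bracket is zero by fiat, while the middle expression $2\sum_n(\sum_{i+j=n}d_i\circ d_j)t^n$ is an operator on $\g[[t]]$, not an element of $\tilde\g[[t]]$; identifying the two is precisely the unverified Jacobi constraint.

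The standard repair is to adjoin a \emph{single} degree-one element $\delta$ to $\g[[t]]$ with $[\delta,a]=d_t(a)$ and $[\delta,\delta]=0$, which is consistent precisely because $d_t\circ d_t=0$. But this does not finish the argument either: Voronov's theorem then produces, up to suspension signs, the $t$-dependent brackets $\lambda_i(x_1,\dots,x_i)=\sum_{k\ge0}t^k\,N_i(d_k(x_1),x_2,\dots,x_i)$, whereas \eqref{derived-bracket-1} retains only the coefficient of $t^{i-1}$ in the $i$-ary bracket. The arity-$n$ $L_\infty$ relation for the $\lambda_i$ mixes, in each power of $t$, all pairs of coefficients with $k+l$ fixed, not only $k=i-1$, $l=j-1$; extracting the relations for the $l_i$ alone requires an additional weight/homogeneity argument (and the closure hypothesis of the proposition concerns only those particular coefficients). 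This matching is the real content of the cited theorem, so as written you must either supply that grading argument or carry out the direct verification you defer to in your final paragraph.
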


\section{Twilled and quasi-twilled pre-Lie algebras}\label{sec:L}

\subsection{Lift and bidegree}\hspace{2mm}

Let $\g_1$ and $\g_2$ be two vector spaces. The elements in $\g_1$ are denoted by $x,y, x_i$ and the elements in $\g_2$ are denoted by $u,v,v_i$. Let $c:\wedge^{n-1}\g_2\otimes\g_2\lon \g_1$ be a linear map. We define a linear map $\hat{c}\in C^n(\g_1\oplus\g_2,\g_1\oplus\g_2)$ by
\begin{eqnarray*}
\hat{c}\big((x_1,v_1),\cdots,(x_n,v_n)\big):=(c(v_1,\cdots,v_n),0).
\end{eqnarray*}
In general, for a given linear map $f:\wedge^{k-1}\g_1\otimes\wedge^{l}\g_2\otimes \g_1\lon\g_1$, we define a linear map $\hat{f}\in C^{k+l}(\g_1\oplus\g_2,\g_1\oplus\g_2)$ by
\begin{equation*}
 \hat{f}((x_1,v_1),(x_2,v_2),\cdots,(x_k,v_k))=(\sum_{\sigma\in\perm_{(k-1,l)}}\sgn(\tau)f(x_{\tau(1)},\cdots,x_{\tau(k-1)},v_{\tau(k)},\cdots,v_{\tau(k+l-1)},x_{k+l}),0).
\end{equation*}
Similarly, for $f:\wedge^{k-1}\g_1\otimes\wedge^{l}\g_2\otimes \g_1\lon\g_2$, we define a linear map $\hat{f}\in C^{k+l}(\g_1\oplus\g_2,\g_1\oplus\g_2)$ by
\begin{equation*}
 \hat{f}((x_1,v_1),(x_2,v_2),\cdots,(x_k,v_k))=(0,\sum_{\sigma\in\perm_{(k-1,l)}}\sgn(\tau)f(x_{\tau(1)},\cdots,x_{\tau(k-1)},v_{\tau(k)},\cdots,v_{\tau(k+l-1)},x_{k+l})).
\end{equation*}
For $f:\wedge^{k}\g_1\otimes\wedge^{l-1}\g_2\otimes \g_2\lon\g_1$, we define a linear map $\hat{f}\in C^{k+l}(\g_1\oplus\g_2,\g_1\oplus\g_2)$ by
\begin{equation*}
\hat{f}((x_1,v_1),(x_2,v_2),\cdots,(x_k,v_k))=(\sum_{\sigma\in\perm_{(k,l-1)}}\sgn(\tau)f(x_{\tau(1)},\cdots,x_{\tau(k)},v_{\tau(k+1)},\cdots,v_{\tau(k+l-1)},v_{k+l}),0).
\end{equation*}
Similarly, for $f:\wedge^{k}\g_1\otimes\wedge^{l-1}\g_2\otimes \g_2\lon\g_2$, we define a linear map $\hat{f}\in C^{k+l}(\g_1\oplus\g_2,\g_1\oplus\g_2)$ by
\begin{equation*}
\hat{f}((x_1,v_1),(x_2,v_2),\cdots,(x_k,v_k))=(0,\sum_{\sigma\in\perm_{(k,l-1)}}\sgn(\tau)f(x_{\tau(1)},\cdots,x_{\tau(k)},v_{\tau(k+1)},\cdots,v_{\tau(k+l-1)},v_{k+l})).
\end{equation*}
The linear map $\hat{f}$ is called a {\bf lift} of $f$. For example, the lifts of linear maps $\alpha:\g_1\otimes\g_1\lon\g_1,~\beta:\g_1\otimes\g_2\lon\g_2$ and $\gamma:\g_2\otimes\g_1\lon\g_2$ are given by
\begin{eqnarray}
\label{semi-direct-1}\hat{\alpha}\big((x_1,v_1)\otimes(x_2,v_2)\big)&=&(\alpha(x_1,x_2),0),\\
\label{semi-direct-2}\hat{\beta}\big((x_1,v_1)\otimes(x_2,v_2)\big)&=&(0,\beta(x_1,v_2)),\\
\label{semi-direct-3}\hat{\gamma}\big((x_1,v_1)\otimes(x_2,v_2)\big)&=&(0,\gamma(v_1,x_2)),
\end{eqnarray}
respectively. Let $H:\g_2\lon\g_1$ be a linear map whose lift is given by
$
\hat{H}(x,v)=(H(v),0).
$
Obviously we have $\hat{H}\circ\hat{H}=0.$\vspace{2mm}

We define $\g^{l,k}=\wedge^{l-1}\g_1\otimes\wedge^{k}\g_2\otimes \g_1+\wedge^{l}\g_1\otimes\wedge^{k-1}\g_2\otimes \g_2$. For instance,
$$
\g^{2,1}=(\wedge^2\g_1\otimes\g_2)\oplus(\g_1\otimes\g_2\otimes\g_1).
$$
 The vector space $\wedge^{n-1}{(\g_1\oplus\g_2)}\otimes (\g_1\oplus\g_2)$ is isomorphic to the direct sum of
$\g^{l,k}$, $l + k = n$. For example,
$$
\wedge^{2}(\g_1\oplus\g_2)\otimes(\g_1\oplus\g_2)=\g^{3,0}\oplus\g^{2,1}\oplus\g^{1,2}\oplus\g^{0,3}.
$$
It is obvious that
\begin{eqnarray}\label{decomposition}
C^n(\g_1\oplus\g_2,\g_1\oplus\g_2)\cong\sum_{l+k=n}C^n(\g^{l,k},\g_1)\oplus\sum_{l+k=n}C^n(\g^{l,k},\g_2).
\end{eqnarray}

\begin{defi}
A linear map $f\in \Hom\big(\wedge^{n-1}(\g_1\oplus\g_2)\otimes(\g_1\oplus\g_2),(\g_1\oplus\g_2)\big)$ has a {\bf bidegree} $l|k$, if the following four conditions hold:
\begin{itemize}
\item[\rm(i)] $l+k+1=n;$
\item[\rm(ii)] If $X$ is an element in $\g^{l+1,k}$, then $f(X)\in\g_1;$
\item[\rm(iii)] If $X$ is an element in $\g^{l,k+1}$, then $f(X)\in\g_2;$
\item[\rm(iv)] All the other case, $f(X)=0.$
\end{itemize}
We denote a linear map $f$ with bidegree $l|k$ by $||f||=l|k$.
\end{defi}
We call a linear map $f$ {\bf homogeneous} if $f$ has a bidegree.
We have $l+k\ge0,~k,l\ge-1$ because $n\ge1$ and $l+1,~k+1\ge0$. For example, the lift $\hat{H}\in C^1(\g_1\oplus\g_2,\g_1\oplus\g_2)$ of $H:\g_2\lon\g_1$ has the bidegree $-1|1$. The linear maps $\hat{\alpha},~\hat{\beta},~\hat{\gamma}\in C^2(\g_1\oplus\g_2,\g_1\oplus\g_2)$ given by \eqref{semi-direct-1}, \eqref{semi-direct-2} and \eqref{semi-direct-3} have the bidegree  $||\hat{\alpha}||=||\hat{\beta}||=||\hat{\gamma}||=1|0$. Naturally we obtain a homogeneous linear map of the bidegree $1|0$,
\begin{eqnarray}
\label{semi-direct}\hat{\mu}:=\hat{\alpha}+\hat{\beta}+\hat{\gamma}.
\end{eqnarray}
Observe that $\hat{\mu}$ is a multiplication of the semi-direct product type,
$$
\hat{\mu}\big((x_1,v_1),(x_2,v_2)\big)=(\alpha(x_1,x_2),\beta(x_1,v_2)+\gamma(v_1,x_2)).
$$
Even though $\hat{\mu}$ is not a lift (there is no $\mu$), we still use the symbol for our convenience below. \vspace{2mm}

By the definition of bidegree, we obtain the following lemmas.
\emptycomment{
\begin{lem}
  The bidegree of $f\in C^n(\g_1\oplus\g_2,\g_1\oplus\g_2)$ is $l|k$ if and only if the following four conditions hold:
\begin{itemize}
\item[\rm(i)] $l+k+1=n;$
\item[\rm(ii)] If $X$ is an element in $\g^{l+1,k}$, then $f(X)\in\g_1;$
\item[\rm(iii)] If $X$ is an element in $\g^{l,k+1}$, then $f(X)\in\g_2;$
\item[\rm(iv)] All the other case, $f(X)=0.$
\end{itemize}
\end{lem}
}

\begin{lem}\label{Zero-condition-1}
Let $f_1,\cdots,f_k\in C^n(\g_1\oplus\g_2,\g_1\oplus\g_2)$ be homogeneous linear maps and the bidegrees of $f_i$ be
different. Then $f_1+\cdots+f_k=0$ if and only if $f_1=\cdots=f_k=0.$
\end{lem}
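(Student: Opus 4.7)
The \emph{if} direction is immediate. For the \emph{only if} direction, my plan is to realize the bidegree decomposition as a direct sum decomposition of $C^n(\g_1\oplus\g_2,\g_1\oplus\g_2)$, so the lemma reduces to uniqueness of decomposition.

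First, I would unpack the definition of bidegree. A homogeneous map $f$ of bidegree $l|k$ (with $l+k+1=n$) is by definition supported only on $\g^{l+1,k}$ (where its image lies in $\g_1$) and on $\g^{l,k+1}$ (where its image lies in $\g_2$), and vanishes on all other summands of $\wedge^{n-1}(\g_1\oplus\g_2)\otimes(\g_1\oplus\g_2)$. Combined with \eqref{decomposition}, this means that the subspace of homogeneous maps of a fixed bidegree $l|k$ is exactly $C^n(\g^{l+1,k},\g_1)\oplus C^n(\g^{l,k+1},\g_2)$, and distinct bidegrees pick out distinct summands in \eqref{decomposition}.

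The main (and essentially only) step is to check that the summands corresponding to distinct bidegrees intersect trivially inside \eqref{decomposition}. Suppose $f_1+\cdots+f_k=0$ with $\|f_i\|=l_i|k_i$ pairwise distinct. Fix an index $i$ and an element $X\in\g^{l_i+1,k_i}$. Then $f_i(X)\in\g_1$. For $j\neq i$, the contribution $f_j(X)$ can be nonzero only if $(l_j+1,k_j)=(l_i+1,k_i)$, which is excluded by distinctness, or if $(l_j,k_j+1)=(l_i+1,k_i)$, in which case $f_j(X)\in\g_2$. Projecting $f_1(X)+\cdots+f_k(X)=0$ to $\g_1$ therefore gives $f_i(X)=0$. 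Repeating the argument with $X\in\g^{l_i,k_i+1}$ and projecting to $\g_2$ shows $f_i$ also vanishes on that summand, so $f_i=0$ by the defining property of bidegree.

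The only mild subtlety is recognizing that two maps of different bidegrees can both act nontrivially on the same summand $\g^{l+1,k}$ (namely when $(l_j,k_j)=(l_i+1,k_i-1)$), but in that case their images land in complementary components $\g_1$ and $\g_2$, so the projection argument still separates them cleanly. No further computation is needed.
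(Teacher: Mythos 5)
Your proof is correct and is essentially the argument the paper has in mind: the paper states this lemma without proof, as an immediate consequence of the definition of bidegree together with the decomposition \eqref{decomposition}, and your projection argument (separating the two maps that can act on a common summand $\g^{l+1,k}$ by the fact that their images lie in the complementary components $\g_1$ and $\g_2$) is precisely the verification being taken for granted. The subtlety you flag — that distinct bidegrees $l_i|k_i$ and $(l_i+1)|(k_i-1)$ share a summand of the domain — is the only point where care is needed, and you handle it correctly.
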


\begin{lem}\label{Zero-condition-2}
If $||f||=-1|l$ (resp. $l|-1$) and $||g||=-1|k$ (resp. $k|-1$), then $[f,g]^{\MN}=0.$
\end{lem}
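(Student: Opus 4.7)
The plan is to unpack the bidegree condition and observe that the composition underlying $P\diamond Q$ forces an argument in the ``wrong'' summand, which the outer map then kills.

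First I would spell out what the two assumptions mean concretely. Setting $l_{\mathrm{deg}}=-1,k_{\mathrm{deg}}=l$ in the definition of bidegree, one gets $n=l$ and
\[
\g^{l_{\mathrm{deg}}+1,\,k_{\mathrm{deg}}}=\g^{0,l}=\wedge^{-1}\g_1\otimes\wedge^{l}\g_2\otimes\g_1+\wedge^{0}\g_1\otimes\wedge^{l-1}\g_2\otimes\g_2=\wedge^{l-1}\g_2\otimes\g_2,
\]
while $\g^{l_{\mathrm{deg}},\,k_{\mathrm{deg}}+1}=\g^{-1,l+1}=0$. Hence a map $f$ of bidegree $-1|l$ is nothing but a map $\wedge^{l-1}\g_2\otimes\g_2\to\g_1$ extended by zero: it vanishes on every input having at least one argument in $\g_1$, and its output lies in $\g_1$. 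A symmetric computation shows that a map of bidegree $l|-1$ is a map $\wedge^{l-1}\g_1\otimes\g_1\to\g_2$ extended by zero.

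Next I would inspect the formula \eqref{eq:Nij-preLie} for $[f,g]^{\MN}$. Every summand of $f\diamond g$ has the shape $f\bigl(\ldots,g(\ldots),\ldots\bigr)$: either $g(\ldots)$ appears as the first entry of $f$ (the $S_{(q,1,p-1)}$ piece) or as the last entry (the $S_{(p,q)}$ piece). Assume first $\|f\|=-1|l$ and $\|g\|=-1|k$. Since $g$ outputs in $\g_1$, the entry $g(\ldots)$ is a $\g_1$-argument of $f$; by the characterization above, $f$ is zero on any input containing a $\g_1$-argument, so each summand of $f\diamond g$ vanishes. The same reasoning applied to $g\diamond f$ gives $g\diamond f=0$, and therefore $[f,g]^{\MN}=0$. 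In the other case $\|f\|=l|-1$, $\|g\|=k|-1$, the roles of $\g_1$ and $\g_2$ are exchanged: $g(\ldots)\in\g_2$ is fed into $f$, which requires all arguments in $\g_1$, so again both $f\diamond g$ and $g\diamond f$ vanish term by term.

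There is no real obstacle; the only care needed is the bookkeeping to confirm that \emph{both} summands in the $\diamond$-formula fail for the same reason, i.e.\ that the $g$-output always lands as an \emph{argument} of $f$ (never is it combined only with outputs of $f$), which is visible directly from \eqref{eq:Nij-preLie}. Once the bidegree hypothesis is translated into the support statement above, the conclusion is immediate.
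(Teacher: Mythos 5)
Your proof is correct and follows exactly the route the paper intends: the paper omits the argument entirely (stating only that the lemma follows "by the definition of bidegree"), and your unpacking of bidegree $-1|l$ as a map supported on $\wedge^{l-1}\g_2\otimes\g_2$ with values in $\g_1$, combined with the observation that in every summand of $f\diamond g$ the output of $g$ is fed as an argument to $f$, is precisely the intended verification.
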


\begin{lem}\label{important-lemma-2}
If $||f||=l_f|k_f$ and $||g||=l_g|k_g$, then $[f,g]^{\MN}$ has the bidegree $l_f+l_g|k_f+k_g.$
\end{lem}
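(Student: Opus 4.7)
The plan is to verify the claim by tracking how inputs from $\g_1$ and $\g_2$ flow through the composition appearing in the Matsushima-Nijenhuis bracket. First I would check the arity. Since $f\in C^{l_f+k_f+1}$ and $g\in C^{l_g+k_g+1}$, the bracket $[f,g]^{\MN}$ lies in $C^{(l_f+l_g)+(k_f+k_g)+1}$, which matches the arity condition required for a map of bidegree $l_f+l_g\,|\,k_f+k_g$.

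Next I would reformulate the bidegree condition in terms of ``type counts.'' Saying $\|h\|=l\,|\,k$ amounts to saying that $h$ vanishes on any homogeneous input unless the total count of $\g_1$- and $\g_2$-entries equals $(l+1,k)$ or $(l,k+1)$, in which case the output lies in $\g_1$ or $\g_2$ respectively. Informally, $h$ strips exactly $l$ extra $\g_1$-factors and $k$ extra $\g_2$-factors. Equivalently, $h$ sends a bi-homogeneous input of type $(a,b)$ to an output of type $(a-l,b-k)$, which must lie in $\{(1,0),(0,1)\}$ for non-vanishing.

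The heart of the proof is then a walk through each summand of $f\diamond g$ and of $g\diamond f$. In a typical term $f(\ldots,g(\ldots),\ldots)$ the inner $g$ consumes a sub-collection of inputs of type $(l_g+1,k_g)$ or $(l_g,k_g+1)$ and returns a single output in $\g_1$ or $\g_2$; assembling this output with the remaining entries gives an $f$-input of total type $(a-l_g,b-k_g)$, where $(a,b)$ is the type of the original input. Applying $\|f\|=l_f\,|\,k_f$, non-vanishing forces $(a,b)\in\{(l_f+l_g+1,k_f+k_g),\,(l_f+l_g,k_f+k_g+1)\}$, with the final output landing in $\g_1$ or $\g_2$ in exact accordance with bidegree $l_f+l_g\,|\,k_f+k_g$. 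The same counting argument applies to $g\diamond f$, so by Lemma \ref{Zero-condition-1} the bracket $[f,g]^{\MN}=f\diamond g-(-1)^{pq}g\diamond f$ is homogeneous of the claimed bidegree.

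The main bookkeeping obstacle is the distinguished last slot. In the first sum of the definition of $P\diamond Q$ the last slot of the composite is inherited from $P$, whereas in the second sum it is inherited from $Q$. I would need to verify in both situations that the $\g_1$- or $\g_2$-type of this distinguished slot agrees with the clause of the bidegree prescription selected by the remaining inputs — so that outputs in $\g_1$ really come from $\g^{l_f+l_g+1,\,k_f+k_g}$ and outputs in $\g_2$ from $\g^{l_f+l_g,\,k_f+k_g+1}$. This is a finite case analysis over the $(q,1,p-1)$- and $(p,q)$-unshuffles in the definition of $\diamond$, and is the only part of the argument that is not immediate from the type-count bookkeeping above.
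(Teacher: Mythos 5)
Your type-counting argument is correct and is exactly the direct verification the paper leaves implicit (the lemma is stated with no proof beyond ``by the definition of bidegree''); the arity check, the reformulation of bidegree as ``input of total type $(a,b)$ maps to type $(a-l,b-k)\in\{(1,0),(0,1)\}$'', and the walk through the summands of $f\diamond g$ and $g\diamond f$ are all sound. The final ``bookkeeping obstacle'' you flag is in fact vacuous: since $\g^{l,k}$ is defined as the sum $\wedge^{l-1}\g_1\otimes\wedge^{k}\g_2\otimes\g_1+\wedge^{l}\g_1\otimes\wedge^{k-1}\g_2\otimes\g_2$, membership in $\g^{l_f+l_g+1,\,k_f+k_g}$ or $\g^{l_f+l_g,\,k_f+k_g+1}$ depends only on the total counts of $\g_1$- and $\g_2$-entries and not on which subspace the distinguished last slot lies in, so your type count already settles the case analysis.
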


\subsection{Twilled and quasi-twilled pre-Lie algebras}
Let $(\huaG,\ast)$ be a pre-Lie algebra with a decomposition  into two subspaces $\huaG=\g_1\oplus\g_2$. Then we have
\begin{lem}\label{lem:dec}
Any $2$-cochain $\pi\in C^2(\huaG,\huaG)$ can be uniquely decomposed into four homogeneous linear maps of bidegrees $2|-1,~1|0,~0|1$ and $-1|2,$
$$
\pi=\hat{\phi}_1+\hat{\mu}_1+\hat{\mu}_2+\hat{\phi}_2.
$$
\end{lem}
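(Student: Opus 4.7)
\textbf{Proof plan for Lemma \ref{lem:dec}.} The plan is to reduce the statement to a routine direct-sum bookkeeping exercise, matching components of $\pi$ against the four admissible bidegrees with $l+k=1$ and $l,k\ge-1$. First I would note that for $n=2$ the antisymmetrization in $C^2(\huaG,\huaG)=\Hom(\wedge^1\huaG\otimes\huaG,\huaG)$ is vacuous, so $C^2(\huaG,\huaG)=\Hom(\huaG\otimes\huaG,\huaG)$. Using the decompositions $\huaG\otimes\huaG=\bigoplus_{i,j\in\{1,2\}}\g_i\otimes\g_j$ and $\huaG=\g_1\oplus\g_2$, any $\pi$ admits a unique expression $\pi=\sum_{i,j,k}\pi^{k}_{ij}$, where $\pi^k_{ij}:\g_i\otimes\g_j\to\g_k$.

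Next I would specialize the general decomposition \eqref{decomposition} to $n=2$, which produces $\huaG\otimes\huaG=\g^{2,0}\oplus\g^{1,1}\oplus\g^{0,2}$ with $\g^{2,0}=\g_1\otimes\g_1$, $\g^{1,1}=(\g_2\otimes\g_1)\oplus(\g_1\otimes\g_2)$, and $\g^{0,2}=\g_2\otimes\g_2$. The constraint $l+k+1=n=2$ with $l,k\ge-1$ forces $(l,k)\in\{(2,-1),(1,0),(0,1),(-1,2)\}$, which are exactly the four target bidegrees in the statement. I would then group the eight components $\pi^k_{ij}$ according to conditions (ii)--(iv) of the bidegree definition: set
\[
\hat\phi_1:=\pi^2_{11},\qquad \hat\mu_1:=\pi^1_{11}+\pi^2_{12}+\pi^2_{21},\qquad \hat\mu_2:=\pi^1_{12}+\pi^1_{21}+\pi^2_{22},\qquad \hat\phi_2:=\pi^1_{22},
\]
and verify by inspection that these have bidegrees $2|-1$, $1|0$, $0|1$, $-1|2$ respectively (the ``vanishing'' directions $\g^{3,-1}\to\g_1$ and $\g^{-1,3}\to\g_2$ are automatically satisfied since those spaces are zero).

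Finally, the existence part of the decomposition is immediate because the eight components $\pi^k_{ij}$ partition into the four groups above and their sum recovers $\pi$. For uniqueness I would invoke Lemma \ref{Zero-condition-1}: if two decompositions agreed, their difference would be a sum of homogeneous pieces of the four distinct bidegrees summing to zero, hence each piece would vanish. There is no substantive obstacle here; the only thing to be careful about is confirming that the convention ``$\wedge^{-1}$ is zero'' is applied consistently so that no extra bidegrees arise beyond the four listed, and that every one of the eight components $\pi^{k}_{ij}$ finds a home in exactly one of the four homogeneous pieces.
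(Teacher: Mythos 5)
Your proposal is correct and follows essentially the same route as the paper: the paper's proof simply specializes the direct-sum decomposition \eqref{decomposition} to $C^2(\huaG,\huaG)=(2|-1)+(1|0)+(0|1)+(-1|2)$ and reads off existence and uniqueness, while you additionally spell out the eight components $\pi^k_{ij}$ and their grouping (which matches the paper's formulas \eqref{bracket-1}--\eqref{bracket-4}). No gaps.
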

\begin{proof}
By \eqref{decomposition},  the space $C^2(\huaG,\huaG)$ is decomposed into four subspaces
$$
C^2(\huaG,\huaG)=(2|-1)+(1|0)+(0|1)+(-1|2),
$$
where $(i|j)$ is the space of linear maps  of the bidegree $i|j$. Thus $\pi$ is uniquely decomposed into homogeneous linear maps of bidegrees $2|-1,~1|0,~0|1$ and $-1|2$.
\end{proof}

The pre-Lie algebra multiplication $\ast$ of $\huaG$ can be uniquely decomposed by the canonical projections $\huaG\lon\g_1$ and $\huaG\lon\g_2$ into eight multiplications:
\begin{eqnarray*}
\label{1}x\ast y&=&(x\ast_1y,x\ast_2y),\quad
\label{2}x\ast v=(x\ast_2 v,x\ast_1 v),\\
\label{3}u\ast y&=&(u\ast_2 y,u\ast_1 y),\quad
\label{4}u\ast v=(u\ast_1 v,u\ast_2 v).
\end{eqnarray*}
For convenience, we also use $\pi$ to denote the multiplication $\ast$, i.e. $$\pi((x,u),(y,v)):=(x,u)\ast(y,v).$$ Denote $\pi=\hat{\phi}_1+\hat{\mu}_1+\hat{\mu}_2+\hat{\phi}_2$ as in Lemma \ref{lem:dec}.  Then we have
\begin{eqnarray}
\label{bracket-1}\hat{\phi}_1((x,u),(y,v))&=&(0,x\ast_2y),\\
\label{bracket-2}\hat{\mu}_1((x,u),(y,v))&=&(x\ast_1 y,x\ast_1 v+u\ast_1 y),\\
\label{bracket-3}\hat{\mu}_2((x,u),(y,v))&=&(x\ast_2 v+u\ast_2 y,u\ast_2 v),\\
\label{bracket-4}\hat{\phi}_2((x,u),(y,v))&=&(u\ast_1 v,0).
\end{eqnarray}
Note that $\hat{\phi}_1$ and $\hat{\phi}_2$ are lifted linear maps of $\phi_1(x,y):=x\ast_2 y$ and $\phi_2(u,v):=u\ast_1 v$, respectively.

\begin{lem}\label{proto-twilled}
The Maurer-Cartan equation $[\pi,\pi]^{\MN}=0$ is equivalent to the following  conditions:
\begin{eqnarray}\label{eq:OT}
\left\{\begin{array}{rcl}
{}[\hat{\mu}_1,\hat{\phi}_1]^{\MN}&=&0,\\
{}\frac{1}{2}[\hat{\mu}_1,\hat{\mu}_1]^{\MN}+[\hat{\mu}_2,\hat{\phi}_1]^{\MN}&=&0,\\
{}[\hat{\mu}_1,\hat{\mu}_2]^{\MN}+[\hat{\phi}_1,\hat{\phi}_2]^{\MN}&=&0,\\
{}\frac{1}{2}[\hat{\mu}_2,\hat{\mu}_2]^{\MN}+[\hat{\mu}_1,\hat{\phi}_2]^{\MN}&=&0,\\
{}[\hat{\mu}_2,\hat{\phi}_2]^{\MN}&=&0.
\end{array}\right.
\end{eqnarray}
\end{lem}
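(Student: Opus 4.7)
The plan is to expand $[\pi,\pi]^{\MN}$ by bilinearity of the Matsushima-Nijenhuis bracket, classify each resulting term by its bidegree, then separate the vanishing equation into independent pieces using Lemma \ref{Zero-condition-1}.

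First I would record the bidegrees: $\|\hat{\phi}_1\|=2|-1$, $\|\hat{\mu}_1\|=1|0$, $\|\hat{\mu}_2\|=0|1$, $\|\hat{\phi}_2\|=-1|2$, so by Lemma \ref{important-lemma-2} the ten homogeneous pieces of $[\pi,\pi]^{\MN}$ carry the bidegrees
\begin{eqnarray*}
&&[\hat{\phi}_1,\hat{\phi}_1]^{\MN}\colon 4|-2,\quad [\hat{\phi}_1,\hat{\mu}_1]^{\MN}\colon 3|-1,\quad [\hat{\phi}_1,\hat{\mu}_2]^{\MN},\;[\hat{\mu}_1,\hat{\mu}_1]^{\MN}\colon 2|0,\\
&&[\hat{\phi}_1,\hat{\phi}_2]^{\MN},\;[\hat{\mu}_1,\hat{\mu}_2]^{\MN}\colon 1|1,\quad [\hat{\mu}_1,\hat{\phi}_2]^{\MN},\;[\hat{\mu}_2,\hat{\mu}_2]^{\MN}\colon 0|2,\\
&&[\hat{\mu}_2,\hat{\phi}_2]^{\MN}\colon -1|3,\quad [\hat{\phi}_2,\hat{\phi}_2]^{\MN}\colon -2|4.
\end{eqnarray*}
Next, Lemma \ref{Zero-condition-2} kills the two extremal terms: $[\hat{\phi}_1,\hat{\phi}_1]^{\MN}=0$ because both arguments have second index $-1$, and $[\hat{\phi}_2,\hat{\phi}_2]^{\MN}=0$ because both arguments have first index $-1$. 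So only seven terms survive and they split cleanly into the five bidegree classes $3|-1$, $2|0$, $1|1$, $0|2$, $-1|3$.

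Now I would invoke Lemma \ref{Zero-condition-1}: since these five bidegrees are pairwise distinct, the equation $[\pi,\pi]^{\MN}=0$ is equivalent to the vanishing of the sum in each bidegree separately, namely
\begin{eqnarray*}
2[\hat{\mu}_1,\hat{\phi}_1]^{\MN}=0,\quad [\hat{\mu}_1,\hat{\mu}_1]^{\MN}+2[\hat{\mu}_2,\hat{\phi}_1]^{\MN}=0,\quad 2[\hat{\mu}_1,\hat{\mu}_2]^{\MN}+2[\hat{\phi}_1,\hat{\phi}_2]^{\MN}=0,\\
[\hat{\mu}_2,\hat{\mu}_2]^{\MN}+2[\hat{\mu}_1,\hat{\phi}_2]^{\MN}=0,\quad 2[\hat{\mu}_2,\hat{\phi}_2]^{\MN}=0.
\end{eqnarray*}
After dividing each by $2$ where appropriate, these are exactly the five identities in \eqref{eq:OT}.

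There is no real obstacle here: the only care needed is bookkeeping, making sure the factors of $2$ arising from graded symmetry of $[-,-]^{\MN}$ on an even-degree element are accounted for, and confirming that no two of the surviving homogeneous brackets share a bidegree (so Lemma \ref{Zero-condition-1} really applies to each class individually). Both checks are mechanical once the bidegree table above is written down.
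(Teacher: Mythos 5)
Your proposal is correct and follows essentially the same route as the paper: expand $[\pi,\pi]^{\MN}$ bilinearly, discard $[\hat{\phi}_1,\hat{\phi}_1]^{\MN}$ and $[\hat{\phi}_2,\hat{\phi}_2]^{\MN}$ via Lemma \ref{Zero-condition-2}, group the remaining terms by bidegree using Lemma \ref{important-lemma-2}, and separate with Lemma \ref{Zero-condition-1}. The factors of $2$ from the symmetry of the bracket on degree-$2$ cochains are handled exactly as in the paper's proof.
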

\begin{proof}
By Lemma \ref{Zero-condition-2}, we have
\begin{eqnarray*}
[\pi,\pi]^{\MN}&=&[\hat{\phi}_1+\hat{\mu}_1+\hat{\mu}_2+\hat{\phi}_2,\hat{\phi}_1+\hat{\mu}_1+\hat{\mu}_2+\hat{\phi}_2]^{\MN}\\
               &=&[\hat{\phi}_1,\hat{\mu}_1]^{\MN}+[\hat{\phi}_1,\hat{\mu}_2]^{\MN}+[\hat{\phi}_1,\hat{\phi}_2]^{\MN}+[\hat{\mu}_1,\hat{\phi}_1]^{\MN}+[\hat{\mu}_1,\hat{\mu}_1]^{\MN}\\
               &&+[\hat{\mu}_1,\hat{\mu}_2]^{\MN}+[\hat{\mu}_1,\hat{\phi}_2]^{\MN}+[\hat{\mu}_2,\hat{\phi}_1]^{\MN}+[\hat{\mu}_2,\hat{\mu}_1]^{\MN}+[\hat{\mu}_2,\hat{\mu}_2]^{\MN}\\
               &&+[\hat{\mu}_2,\hat{\phi}_2]^{\MN}+[\hat{\phi}_2,\hat{\phi}_1]^{\MN}+[\hat{\phi}_2,\hat{\mu}_1]^{\MN}+[\hat{\phi}_2,\hat{\mu}_2]^{\MN}\\
               &=&(2[\hat{\mu}_1,\hat{\phi}_1]^{\MN})+([\hat{\mu}_1,\hat{\mu}_1]^{\MN}+2[\hat{\mu}_2,\hat{\phi}_1]^{\MN})+(2[\hat{\mu}_1,\hat{\mu}_2]^{\MN}+2[\hat{\phi}_1,\hat{\phi}_2]^{\MN})\\
               &&+([\hat{\mu}_2,\hat{\mu}_2]^{\MN}+2[\hat{\mu}_1,\hat{\phi}_2]^{\MN})+(2[\hat{\mu}_2,\hat{\phi}_2]^{\MN}).
\end{eqnarray*}
By Lemma \ref{important-lemma-2} and Lemma \ref{Zero-condition-1},  $[\pi,\pi]^{\MN}=0$ if and only if  \eqref{eq:OT} holds.
\end{proof}

\begin{defi}
Let $(\huaG,\pi)$ be a pre-Lie algebra with a decomposition  into two subspaces $\huaG=\g_1\oplus\g_2$ and the multiplication structure
$\pi=\hat{\phi}_1+\hat{\mu}_1+\hat{\mu}_2+\hat{\phi}_2$.
The triple $(\huaG,\g_1,\g_2)$ is called a {\bf quasi-twilled pre-Lie algebra} if $\phi_2=0$, or equivalently, $\g_2$ is a subalgebra.
 \end{defi}

By Lemma \ref{proto-twilled}, we have
\begin{lem}\label{lem:quasi-t}
The triple $(\huaG,\g_1,\g_2)$ is a quasi-twilled pre-Lie algebra  if and only if the following four conditions hold:
\begin{eqnarray}
\label{quasi-1}[\hat{\mu}_1,\hat{\phi}_1]^{\MN}&=&0,\\
\label{quasi-2}\frac{1}{2}[\hat{\mu}_1,\hat{\mu}_1]^{\MN}+[\hat{\mu}_2,\hat{\phi}_1]^{\MN}&=&0,\\
\label{quasi-3}[\hat{\mu}_1,\hat{\mu}_2]^{\MN}&=&0,\\
\label{quasi-4}\frac{1}{2}[\hat{\mu}_2,\hat{\mu}_2]^{\MN}&=&0.
\end{eqnarray}
\end{lem}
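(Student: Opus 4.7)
The plan is to reduce this directly to Lemma \ref{proto-twilled} by specializing to the case $\hat{\phi}_2 = 0$. Since the multiplication $\pi$ on $\huaG$ decomposes uniquely as $\pi = \hat{\phi}_1 + \hat{\mu}_1 + \hat{\mu}_2 + \hat{\phi}_2$, and the condition that $\g_2$ is a subalgebra is precisely the vanishing of the component $u\ast_1 v$ of the multiplication (the piece landing in $\g_1$ when both arguments lie in $\g_2$), the definition of a quasi-twilled pre-Lie algebra translates exactly to $\hat{\phi}_2 = 0$ by the formula \eqref{bracket-4}. I would begin by making this identification explicit, noting that $(\huaG,\pi)$ being a pre-Lie algebra is equivalent to the Maurer-Cartan equation $[\pi,\pi]^{\MN} = 0$.

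Next I would substitute $\hat{\phi}_2 = 0$ into the five-equation system \eqref{eq:OT} of Lemma \ref{proto-twilled}. Equations one, two, three and four of \eqref{eq:OT} immediately become \eqref{quasi-1}, \eqref{quasi-2}, \eqref{quasi-3} and \eqref{quasi-4} respectively, because $[\hat{\phi}_1,\hat{\phi}_2]^{\MN} = 0$ and $[\hat{\mu}_1,\hat{\phi}_2]^{\MN} = 0$ when $\hat{\phi}_2 = 0$. The fifth equation $[\hat{\mu}_2,\hat{\phi}_2]^{\MN} = 0$ is also automatically satisfied. This yields the ``only if'' direction.

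For the ``if'' direction, one simply runs the same substitution in reverse: if the four equations \eqref{quasi-1}--\eqref{quasi-4} hold and $\hat{\phi}_2 = 0$, then all five equations of \eqref{eq:OT} hold, so $[\pi,\pi]^{\MN} = 0$, confirming that $\pi$ defines a pre-Lie algebra structure compatible with the given decomposition and with $\g_2$ as a subalgebra.

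There is essentially no obstacle here; the lemma is a direct corollary of Lemma \ref{proto-twilled}. The only care needed is to verify that the cross-terms $[\hat{\phi}_1,\hat{\phi}_2]^{\MN}$, $[\hat{\mu}_1,\hat{\phi}_2]^{\MN}$, and $[\hat{\mu}_2,\hat{\phi}_2]^{\MN}$ all vanish as soon as $\hat{\phi}_2 = 0$, which is immediate from the bilinearity of the Matsushima-Nijenhuis bracket. Thus the proof is a two-line citation of Lemma \ref{proto-twilled} with the specialization $\phi_2 = 0$.
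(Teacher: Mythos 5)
Your proof is correct and follows exactly the paper's own route: the paper derives this lemma as an immediate consequence of Lemma \ref{proto-twilled} by setting $\hat{\phi}_2=0$ in the five-equation system \eqref{eq:OT}, which is precisely your specialization argument. Your write-up simply makes explicit the vanishing of the cross-terms involving $\hat{\phi}_2$, which the paper leaves unsaid.
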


The following theorem shows that a quasi-twilled pre-Lie algebra gives an $L_\infty$-algebra.

\begin{thm}\label{quasi-as-shLie}
Let $(\huaG,\g_1,\g_2)$ be a quasi-twilled pre-Lie algebra. We define $d_{\hat{\mu}_2}:C^m(\g_2,\g_1)\lon C^{m+1}(\g_2,\g_1)$, $[-,-]_{{\hat{\mu}_1}}:C^m(\g_2,\g_1)\times C^n(\g_2,\g_1)\lon C^{m+n}(\g_2,\g_1)$ and $[-,-,-]_{\hat{\phi}_1}:C^m(\g_2,\g_1)\times C^n(\g_2,\g_1)\times C^k(\g_2,\g_1)\lon C^{m+n+k-1}(\g_2,\g_1)$ by
\begin{eqnarray}
\label{eq:shLie1}d_{\hat{\mu}_2}(f_1)&=&[\hat{\mu}_2,\hat{f}_1]^{\MN},\\
\label{eq:shLie2}{[f_1,f_2]_{\hat{\mu}_1}}&=&(-1)^{m-1}[[\hat{\mu}_1,\hat{f}_1]^{\MN},\hat{f}_2]^{\MN},\\
\label{eq:shLie3}{[f_1,f_2,f_3]}_{\hat{\phi}_1}&=&(-1)^{n-1}[[[\hat{\phi}_1,\hat{f}_1]^{\MN},\hat{f}_2]^{\MN},\hat{f}_3]^{\MN},
\end{eqnarray}
for all $f_1\in C^m(\g_2,\g_1),~f_2\in C^n(\g_2,\g_1),~f_3\in C^k(\g_2,\g_1).$ Then $(C^*(\g_2,\g_1),d_{\hat{\mu}_2},[-,-]_{{\hat{\mu}_1}},[-,-,-]_{\hat{\phi}_1})$ is an $L_\infty$-algebra.
\end{thm}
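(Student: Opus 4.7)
\medskip

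\noindent\textbf{Proof plan.} The plan is to realize the stated brackets as higher derived brackets in the sense of Proposition \ref{higher-derived-bracket}, applied to the graded Lie algebra $(C^*(\huaG,\huaG),[-,-]^{\MN})$. First I would take as differential $d_{\hat{\mu}_2}:=[\hat{\mu}_2,-]^{\MN}$; it is a degree $+1$ derivation and squares to zero because the relation \eqref{quasi-4} gives $[\hat{\mu}_2,\hat{\mu}_2]^{\MN}=0$, and then graded Jacobi yields $d_{\hat{\mu}_2}^2=0$. I then introduce the formal deformation
\begin{equation*}
d_t:=d_{\hat{\mu}_2}+t\,d_{\hat{\mu}_1}+t^2\,d_{\hat{\phi}_1},\qquad d_i=0\ \text{for}\ i\ge 3,
\end{equation*}
and check $d_t\circ d_t=0$ power by power in $t$. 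After writing each iterated bracket $[\hat{\alpha},[\hat{\beta},f]^{\MN}]^{\MN}+[\hat{\beta},[\hat{\alpha},f]^{\MN}]^{\MN}$ as $[[\hat{\alpha},\hat{\beta}]^{\MN},f]^{\MN}$ via graded Jacobi, the coefficients of $t^0,t^1,t^2,t^3$ collapse to the four quasi-twilled identities \eqref{quasi-4}, \eqref{quasi-3}, \eqref{quasi-2} and \eqref{quasi-1} respectively; the coefficients of $t^i$ for $i\ge 4$ vanish trivially.

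Next I would identify $C^*(\g_2,\g_1)$ with its image in $C^*(\huaG,\huaG)$ under the lift $f\mapsto\hat{f}$ and verify that it is an abelian subalgebra. For $f\in C^m(\g_2,\g_1)$ the lift $\hat{f}$ is supported on $\g^{0,m}$ with image in $\g_1$, so $\|\hat{f}\|=-1|m$; Lemma \ref{Zero-condition-2} then gives $[\hat{f}_1,\hat{f}_2]^{\MN}=0$ for any two such lifts. With the deformation $d_t$ and the abelian subalgebra in hand, Proposition \ref{higher-derived-bracket} produces $L_\infty$-brackets $l_1,l_2,l_3$ on $sC^*(\g_2,\g_1)$ from the formula \eqref{derived-bracket-1}, and $l_i=0$ for $i\ge 4$ since $d_i=0$ there. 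Unwinding the Voronov construction gives $l_1(f)=[\hat{\mu}_2,\hat{f}]^{\MN}$ and, up to the Koszul signs produced by the suspension maps, $l_2(f_1,f_2)=(-1)^{m-1}[[\hat{\mu}_1,\hat{f}_1]^{\MN},\hat{f}_2]^{\MN}$ and $l_3(f_1,f_2,f_3)=(-1)^{n-1}[[[\hat{\phi}_1,\hat{f}_1]^{\MN},\hat{f}_2]^{\MN},\hat{f}_3]^{\MN}$, which are exactly \eqref{eq:shLie1}--\eqref{eq:shLie3}.

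Finally I would verify that each $l_i$ is actually closed on $C^*(\g_2,\g_1)$, which is the hypothesis needed to invoke Proposition \ref{higher-derived-bracket}. This is pure bidegree bookkeeping via Lemma \ref{important-lemma-2}: starting from $\|\hat{\mu}_2\|=0|1$, $\|\hat{\mu}_1\|=1|0$, $\|\hat{\phi}_1\|=2|-1$ and $\|\hat{f}_i\|=-1|\bullet$, one checks that all three iterated brackets land in bidegree $-1|\bullet$, hence in the image of the lift from $C^*(\g_2,\g_1)$. The main obstacle I anticipate is the bookkeeping in verifying $d_t^2=0$ and, in parallel, tracking the suspension signs so that the output of the Voronov formula lands on precisely the signs $(-1)^{m-1}$ and $(-1)^{n-1}$ appearing in \eqref{eq:shLie2} and \eqref{eq:shLie3}; everything else is a direct application of the structural results already at hand.
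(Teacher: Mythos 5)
Your proposal is correct and follows essentially the same route as the paper: deform $d_0=[\hat{\mu}_2,-]^{\MN}$ by $d_1=[\hat{\mu}_1,-]^{\MN}$ and $d_2=[\hat{\phi}_1,-]^{\MN}$, check $d_t^2=0$ against the quasi-twilled identities, note that $C^*(\g_2,\g_1)$ is abelian and that the derived brackets close on it by bidegree counting, and invoke Proposition \ref{higher-derived-bracket}. The only small slip is that the coefficient of $t^4$ is $d_2\circ d_2=\half[[\hat{\phi}_1,\hat{\phi}_1]^{\MN},-]^{\MN}$, which does not vanish ``trivially'' from $d_i=0$ for $i\ge 3$ but rather because $\|\hat{\phi}_1\|=2|-1$ and Lemma \ref{Zero-condition-2} gives $[\hat{\phi}_1,\hat{\phi}_1]^{\MN}=0$.
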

\begin{proof}
We set $d_0:=[\hat{\mu}_2,-]^{\MN}$. By \eqref{quasi-4} and the fact that $(C^*(\huaG,\huaG),[-,-]^{\MN})$ is a gLa, we deduce that $(C^*(\huaG,\huaG),[-,-]^{\MN},d_0)$ is a dgLa. Moreover, we define
$$
d_1:=[\hat{\mu}_1,-]^{\MN},\quad d_2:=[\hat{\phi}_1,-]^{\MN},\quad d_i=0,\quad\forall i\ge3.
$$
It is straightforward to check that
$
\sum_{i+j=n\atop i,j\ge0}d_i\circ d_j=0,\,\,n\in\mathbb Z_{\ge0}.
$
Therefore, we have higher derived brackets on $sC^*(\huaG,\huaG)$ given by
\begin{eqnarray*}
d_{\hat{\mu}_2}(sf_1)&=&s[\hat{\mu}_2,f_1]^{\MN},\\
{[sf_1,sf_2]_{\hat{\mu}_1}}&=&(-1)^{|f_1|}s[[\hat{\mu}_1,f_1]^{\MN},f_2]^{\MN},\\
{[sf_1,sf_2,sf_3]}_{\hat{\phi}_1}&=&(-1)^{|f_2|}s[[[\hat{\phi}_1,f_1]^{\MN},f_2]^{\MN},f_3]^{\MN},\\
l_i&=&0,\,\,\,\,i\ge4,
\end{eqnarray*}
where $f_1,~f_2,~f_3\in C^*(\huaG,\huaG)$. By Lemma \ref{important-lemma-2}, $d_{\hat{\mu}_2},[-,-]_{{\hat{\mu}_1}},[-,-,-]_{\hat{\phi}_1}$ are closed on $C^*(\g_2,\g_1)$. Moreover, $C^*(\g_2,\g_1)$ is an abelian subalgebra of the gLa $(C^*(\huaG,\huaG),[-,-]^{\MN})$. By Proposition \ref{higher-derived-bracket}, our claim follows.
\end{proof}

\begin{defi}
Let $(\huaG,\pi)$ be a pre-Lie algebra with a decomposition  into two subspaces $\huaG=\g_1\oplus\g_2$ and the multiplication structure
$\pi=\hat{\phi}_1+\hat{\mu}_1+\hat{\mu}_2+\hat{\phi}_2$. The triple $(\huaG,\g_1,\g_2)$ is called a {\bf twilled pre-Lie algebra} if $\phi_1=\phi_2=0$, or equivalently, $\g_1$ and $\g_2$ are subalgebras of $\huaG$.
\end{defi}

By  Lemma \ref{proto-twilled}, we have
\begin{lem}\label{lem:twillL}
The triple $(\huaG,\g_1,\g_2)$ is a twilled pre-Lie algebra if and only if the following three conditions hold:
\begin{eqnarray}
\label{twilled-1}\frac{1}{2}[\hat{\mu}_1,\hat{\mu}_1]^{\MN}&=&0,\\
\label{twilled-2}[\hat{\mu}_1,\hat{\mu}_2]^{\MN}&=&0,\\
\label{twilled-3}\frac{1}{2}[\hat{\mu}_2,\hat{\mu}_2]^{\MN}&=&0.
\end{eqnarray}
\end{lem}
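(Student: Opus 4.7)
The plan is to deduce this lemma as a direct specialization of Lemma \ref{proto-twilled}. By definition, $(\huaG,\g_1,\g_2)$ is a twilled pre-Lie algebra precisely when the multiplication $\pi=\hat{\phi}_1+\hat{\mu}_1+\hat{\mu}_2+\hat{\phi}_2$ is a pre-Lie structure on $\huaG$ (so $[\pi,\pi]^{\MN}=0$) and the bidegree components $\hat{\phi}_1$ and $\hat{\phi}_2$ vanish. The equivalence between $\phi_1=\phi_2=0$ and ``$\g_1$ and $\g_2$ are subalgebras'' comes immediately from the explicit formulas \eqref{bracket-1}--\eqref{bracket-4}, since $\phi_1(x,y)=x\ast_2 y$ records exactly the $\g_2$-component of $x\ast y$ for $x,y\in\g_1$, and similarly $\phi_2(u,v)=u\ast_1 v$ records the $\g_1$-component of $u\ast v$ for $u,v\in\g_2$.

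Next I would apply Lemma \ref{proto-twilled}, which asserts that $[\pi,\pi]^{\MN}=0$ is equivalent to the full system \eqref{eq:OT} of five Maurer--Cartan type equations. Substituting $\hat{\phi}_1=\hat{\phi}_2=0$ into this system, the first equation $[\hat{\mu}_1,\hat{\phi}_1]^{\MN}=0$ and the fifth equation $[\hat{\mu}_2,\hat{\phi}_2]^{\MN}=0$ become trivial, the mixed term $[\hat{\mu}_2,\hat{\phi}_1]^{\MN}$ in the second equation and $[\hat{\mu}_1,\hat{\phi}_2]^{\MN}$ in the fourth equation drop out, and the cross term $[\hat{\phi}_1,\hat{\phi}_2]^{\MN}$ in the third equation vanishes. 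What remains is precisely the three equations \eqref{twilled-1}, \eqref{twilled-2}, \eqref{twilled-3}.

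There is no real obstacle here; the lemma is essentially a corollary obtained by zeroing out the ``defect'' cochains $\hat{\phi}_1$ and $\hat{\phi}_2$ in Lemma \ref{proto-twilled}. The only minor point requiring care is verifying that the terms I claim drop out genuinely vanish once $\hat{\phi}_1=\hat{\phi}_2=0$, which is immediate since $[-,-]^{\MN}$ is bilinear. Thus the proof collapses to a short two-line argument invoking Lemma \ref{proto-twilled} and the characterization of $\phi_i=0$ as the subalgebra conditions.
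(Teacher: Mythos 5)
Your proposal is correct and matches the paper's approach: the paper derives this lemma directly from Lemma \ref{proto-twilled} by setting $\hat{\phi}_1=\hat{\phi}_2=0$, exactly as you do. The identification of $\phi_1=\phi_2=0$ with the subalgebra conditions via \eqref{bracket-1}--\eqref{bracket-4} and the bilinearity of $[-,-]^{\MN}$ are precisely the points that make the five equations of \eqref{eq:OT} collapse to \eqref{twilled-1}--\eqref{twilled-3}.
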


\begin{rmk}\label{two-representation}
By \eqref{twilled-1}, we obtain that $\hat{\mu}_1$ is a  pre-Lie algebra multiplication on $\huaG=\g_1\oplus\g_2$. Then, by \eqref{bracket-2}, $\hat{\mu}_1|_{\g_1\otimes\g_1}$ is a  pre-Lie algebra multiplication on $\g_1$, which we denote by $(\g_1,\ast_1)$ and $\huaL_xv:=\hat{\mu}_1(x,v),~\huaR_xv:=\hat{\mu}_1(v,x)$ is a bimodule of $(\g_1,\ast_1)$ on the vector space $\g_2$. Similarly, $\hat{\mu}_2|_{\g_2\otimes\g_2}$ is a  pre-Lie algebra multiplication on $\g_2$, which we denote by $(\g_2,\ast_2)$ and $\frkL_vx:=\hat{\mu}_2(v,x),~\frkR_vx:=\hat{\mu}_2(x,v)$ is a bimodule of $(\g_2,\ast_2)$ on the vector space $\g_1$.

\end{rmk}

\begin{cor}\label{twilled-DGLA}
Let  $(\huaG,\g_1,\g_2)$ be a twilled pre-Lie algebra. Then $(C^*(\g_2,\g_1),d_{\hat{\mu}_2},[-,-]_{\hat{\mu}_1})$ is  a dgLa, where $d_{\hat{\mu}_2}$ and $[-,-]_{\hat{\mu}_1}$ are given by \eqref{eq:shLie1} and \eqref{eq:shLie2} respectively.
\end{cor}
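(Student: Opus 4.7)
The plan is to deduce this corollary directly from Theorem \ref{quasi-as-shLie}. A twilled pre-Lie algebra is by definition a quasi-twilled pre-Lie algebra in which additionally $\phi_1=0$ (indeed, Lemma \ref{lem:twillL} is Lemma \ref{lem:quasi-t} with equation \eqref{quasi-1} trivially satisfied and equation \eqref{quasi-2} reducing to $[\hat{\mu}_1,\hat{\mu}_1]^{\MN}=0$). Therefore Theorem \ref{quasi-as-shLie} applies, and I would apply it with $\hat{\phi}_1=0$.

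With $\hat{\phi}_1=0$, formula \eqref{eq:shLie3} immediately yields $[f_1,f_2,f_3]_{\hat{\phi}_1}=0$ for all $f_1,f_2,f_3\in C^*(\g_2,\g_1)$. Thus the $L_\infty$-algebra $(C^*(\g_2,\g_1),d_{\hat{\mu}_2},[-,-]_{\hat{\mu}_1},[-,-,-]_{\hat{\phi}_1})$ degenerates to one whose only nontrivial operations are $l_1=d_{\hat{\mu}_2}$ and $l_2=[-,-]_{\hat{\mu}_1}$. The generalized Jacobi identities \eqref{sh-Lie} for $n=1,2,3$ then become exactly the axioms of a dgLa: $d_{\hat{\mu}_2}^2=0$, the graded Leibniz rule for $d_{\hat{\mu}_2}$ with respect to $[-,-]_{\hat{\mu}_1}$, and the graded Jacobi identity for $[-,-]_{\hat{\mu}_1}$. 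This would be the complete proof.

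If instead one wanted a self-contained verification without invoking Theorem \ref{quasi-as-shLie}, I would revisit the derived-bracket construction directly. Setting $d_0:=[\hat{\mu}_2,-]^{\MN}$ and $d_1:=[\hat{\mu}_1,-]^{\MN}$ (with $d_i=0$ for $i\ge 2$), the twilled conditions \eqref{twilled-1}--\eqref{twilled-3} are precisely $d_0\circ d_0=0$, $d_0\circ d_1+d_1\circ d_0=0$, $d_1\circ d_1=0$, which is the cocycle condition $\sum_{i+j=n}d_i\circ d_j=0$ on the deformed differential $d_t=d_0+td_1$. Then the first two higher derived brackets in \eqref{derived-bracket-1} give $d_{\hat{\mu}_2}$ and $[-,-]_{\hat{\mu}_1}$, while all higher ones vanish since $d_i=0$ for $i\ge 2$. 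Proposition \ref{higher-derived-bracket} together with the observation that $C^*(\g_2,\g_1)$ is an abelian subalgebra of $(C^*(\huaG,\huaG),[-,-]^{\MN})$ (elements of bidegree $l|-1$ with $l\ge 0$ bracket-commute by Lemma \ref{Zero-condition-2}) and that $d_{\hat{\mu}_2},[-,-]_{\hat{\mu}_1}$ are closed on $C^*(\g_2,\g_1)$ by the bidegree count in Lemma \ref{important-lemma-2}, then yields the dgLa structure.

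There is no real obstacle: the corollary is essentially a corollary in the logical sense, the only substantive verification being the closure and abelian-subalgebra properties of $C^*(\g_2,\g_1)$ inside $C^*(\huaG,\huaG)$, which follow purely from bidegree bookkeeping via Lemmas \ref{Zero-condition-2} and \ref{important-lemma-2}. The cleanest write-up is therefore the first route: observe that the twilled hypothesis is exactly the quasi-twilled hypothesis with $\hat{\phi}_1=0$, invoke Theorem \ref{quasi-as-shLie}, and note that the ternary bracket is identically zero, so the $L_\infty$-algebra is a dgLa.
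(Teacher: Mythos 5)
Your proposal is correct and matches the paper's intended argument: the corollary is stated immediately after Theorem \ref{quasi-as-shLie} precisely because a twilled pre-Lie algebra is a quasi-twilled one with $\hat{\phi}_1=0$, so the ternary bracket \eqref{eq:shLie3} vanishes and the $L_\infty$-algebra collapses to a dgLa. Your alternative self-contained route via the derived brackets is also sound, but the first route is exactly what the paper does implicitly.
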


Let $(V;\huaL,\huaR)$ be a bimodule of a pre-Lie algebra $\g$. It is obvious that the semi-direct pre-Lie algebra $\g\ltimes_{\huaL,\huaR} V$ is a twilled pre-Lie algebra. Let $\mu$ denote the semi-direct product structure on $\g\oplus V$. Thus
\begin{cor}\label{semi-direct-GLA}
  Let $\g\ltimes_{\huaL,\huaR} V$ be a semi-direct pre-Lie algebra associated to the bimodule $(V;\huaL,\huaR)$ over the pre-Lie algebra $\g$. Then
  $(C^*(V,\g),[-,-]_{\hat{\mu}})$ is a gLa, where $[-,-]_{\hat{\mu}}$ is given by \eqref{eq:shLie2}.
\end{cor}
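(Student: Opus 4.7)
The plan is to realize this corollary as an immediate specialization of Corollary \ref{twilled-DGLA}. First I would check that the semi-direct product $\g\ltimes_{\huaL,\huaR}V$, together with the decomposition $\huaG=\g\oplus V$, is a twilled pre-Lie algebra in the sense of Lemma \ref{lem:twillL}: both $\g$ (by definition) and $V$ (since the bimodule structure gives no intrinsic product on $V$) are subalgebras, so by Lemma \ref{proto-twilled} we have $\phi_1=\phi_2=0$.

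Next I would decompose the semi-direct multiplication $\mu$ according to Lemma \ref{lem:dec}. The key observation is the comparison with \eqref{semi-direct-1}--\eqref{semi-direct-3}: the three pieces $\hat{\alpha}$, $\hat{\beta}$, $\hat{\gamma}$ (corresponding respectively to $x\cdot_\g y$, $\huaL_x v$, $\huaR_y u$) all have bidegree $1|0$, so $\hat{\mu}$ in this setting is itself homogeneous of bidegree $1|0$. Since the semi-direct product has no product landing in $\g$ from mixed arguments and no intrinsic product on $V$, the bidegree $0|1$ component vanishes, i.e.\ $\hat{\mu}_2=0$, and therefore $\hat{\mu}=\hat{\mu}_1$ in the twilled decomposition.

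Finally, Corollary \ref{twilled-DGLA} applied to this twilled pre-Lie algebra produces a dgLa $(C^*(V,\g),d_{\hat{\mu}_2},[-,-]_{\hat{\mu}_1})$. Because $\hat{\mu}_2=0$, the differential $d_{\hat{\mu}_2}=[\hat{\mu}_2,-]^{\MN}$ is identically zero, leaving the graded bracket $[-,-]_{\hat{\mu}_1}=[-,-]_{\hat{\mu}}$ as the sole structure. This bracket, being the derived bracket of a gLa restricted to an abelian subalgebra (cf.\ Proposition \ref{old-derived}), automatically satisfies the graded Jacobi identity, yielding the desired gLa structure on $C^*(V,\g)$. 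The only nontrivial step is the bidegree bookkeeping that forces $\hat{\mu}_2=0$; everything else is immediate from the earlier corollary.
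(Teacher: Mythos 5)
Your proposal is correct and follows exactly the route the paper intends: the paper simply observes that $\g\ltimes_{\huaL,\huaR}V$ is a twilled pre-Lie algebra with $\hat{\mu}_2=0$ and invokes Corollary \ref{twilled-DGLA}, so the differential vanishes and only the graded bracket survives. Your bidegree bookkeeping just makes explicit what the paper leaves as "obvious" (the only minor slip is that $\phi_1=\phi_2=0$ comes from the definition of the semi-direct product rather than from Lemma \ref{proto-twilled}, but this does not affect the argument).
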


The following lemma gives the precise formulas of the dgLa $(C^*(\g_2,\g_1),d_{\hat{\mu}_2},[-,-]_{\hat{\mu}_1})$ given by Corollary \ref{twilled-DGLA}, which is useful in  characterization of \kups as the solutions of Maurer-Cartan equations
and in the definition of $S$-equations and $\frks$-matrices on a pre-Lie algebra.
\begin{lem}\label{twilled-DGLA-concrete}
 Let  $(\huaG,\g_1,\g_2)$ be a twilled pre-Lie algebra with the structure $\pi=\hat{\mu}_1+\hat{\mu}_2$. Then $(C^*(\g_2,\g_1),d_{\hat{\mu}_2},[-,-]_{\hat{\mu}_1})$ is a dgLa, where $d_{\hat{\mu}_2}$ and $[-,-]_{\hat{\mu}_1}$ are given by
\begin{eqnarray}
\label{eq:twilled-DGLA-concrete1} d_{\hat{\mu}_2}(f_1) ( v_1,\cdots, v_{m+1} )&=&\sum_{i=1}^{m}(-1)^{m+i}\frkL_{v_i}f_1(v_1, \cdots,\hat{v_i},\cdots,v_{m+1})\\
\nonumber&&+\sum_{i=1}^{m}(-1)^{m+i}\huaR_{v_{m+1}}f_1(v_1, \cdots,\hat{v_i},\cdots,v_m,v_i)\\
\nonumber&&-\sum_{i=1}^{m}(-1)^{m+i}f_1(v_1, \cdots,\hat{v_i},\cdots,v_n,v_i\cdot_\g v_{m+1})\\
\nonumber &&+\sum_{1\leq i<j\leq m}(-1)^{m+i+j+1}f_1([v_i,v_j]_\g,v_1,\cdots,\hat{v_i},\cdots,\hat{v_j},\cdots,v_{m+1})
\end{eqnarray}
and
\begin{eqnarray}
\label{eq:twilled-DGLA-concrete2}&&[f_1,f_2]_{\hat{\mu}_1} (v_1,\cdots, v_{m+n} )\\
\nonumber&=&\sum_{\sigma\in\mathbb S_{(n-1,1,m-1)}}\sgn(\sigma)\Big(f_2(v_{\sigma(1)},\cdots,v_{\sigma(n)})\ast_1f_2(v_{\sigma(n+1)},\cdots,v_{\sigma(m+n-1)},v_{m+n})\\
\nonumber&&+\sum_{i=2}^m(-1)^{i-1}f_1(\huaL_{f_2(v_{\sigma(1)},\cdots,v_{\sigma(n)})}v_{\sigma(n+i-1)},v_{\sigma(n+i)},\cdots,\hat{v}_{\sigma(n+j-1)},\cdots,v_{\sigma(m+n-1)},v_{m+n})\\
\nonumber&&-f_1(v_{\sigma(n+1)},\cdots,v_{\sigma(m+n+1)},\huaL_{f_2(v_{\sigma(1)},\cdots,v_{\sigma(n)})}v_{m+n})\Big)\\
\nonumber&&+\sum_{\sigma\in\mathbb S_{(m,n-1)}}\sum_{i=1}^m\sgn(\sigma)(-1)^{m(n-1)-i-1}\Big(f_1(v_{\sigma(1)},\cdots,\hat{v}_{\sigma(i)},\cdots,v_{\sigma(m)},\huaR_{f_2(v_{\sigma(m+1)},\cdots,v_{\sigma(m+n-1)},v_{m+n})}v_{\sigma(i)})\\
\nonumber&&+f_1(v_{\sigma(1)},\cdots,\hat{v}_{\sigma(i)},\cdots,v_{\sigma(m)},v_{\sigma(i)})\ast_1f_2(v_{\sigma(m+1)},\cdots,v_{\sigma(m+n-1)},v_{m+n})\Big)\\
\nonumber&&+\sum_{\sigma\in\mathbb S_{(m,1,n-2)}}\sum_{i=1}^m\sgn(\sigma)(-1)^{m(n-1)-i}\Big(f_2(\huaR_{f_1(v_{\sigma(1)},\cdots,\hat{v}_{\sigma(i)},\cdots,v_{\sigma(m)},v_{\sigma(i)})}v_{\sigma(i)},v_{\sigma(m+1)},\cdots,v_{\sigma(m+n-1)},v_{m+n})\\
\nonumber&&+f_2(\huaL_{f_1(v_{\sigma(1)},\cdots,\hat{v}_{\sigma(i)},\cdots,v_{\sigma(m)},v_{\sigma(i)})}v_{\sigma(m+1)},v_{\sigma(m+2)},\cdots,v_{\sigma(m+n-1)},v_{m+n})\Big)\\
\nonumber&&+\sum_{\sigma\in\mathbb S_{(m,n)}}\sum_{i=1}^m\sgn(\sigma)(-1)^{i}\Big(f_2(v_{\sigma(1)},\cdots,v_{\sigma(n-1)},\huaL_{f_1(v_{\sigma(n)},\cdots,\hat{v}_{\sigma(n+i-1)},\cdots,v_{\sigma(m+n-1)},v_{\sigma(n+i-1)})}v_{m+n})\\
\nonumber&&+f_2(v_{\sigma(1)},\cdots,v_{\sigma(n-1)},\huaR_{f_1(v_{\sigma(n)},\cdots,\hat{v}_{\sigma(n+i-1)},\cdots,v_{\sigma(m+n-1)},v_{m+n})}v_{\sigma(n+i-1)})\Big)
\end{eqnarray}
for all  $f_1\in C^m(\g_2,\g_1),~f_2\in C^n(\g_2,\g_1)$.
\end{lem}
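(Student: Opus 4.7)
The plan is to unpack the definitions \eqref{eq:shLie1} and \eqref{eq:shLie2} via the Matsushima-Nijenhuis formula \eqref{eq:Nij-preLie}, restrict the evaluation to arguments $v_1,\ldots\in\g_2$, and identify the surviving terms using the bimodule data $(\ast_1,\huaL,\huaR)$ and $(\ast_2,\frkL,\frkR)$ from Remark \ref{two-representation}. The crucial bookkeeping tool is the bidegree calculus: since $\|\hat\mu_i\|=1|0$ or $0|1$, $\|\hat f_j\|$ has a definite value whenever $f_j\in C^*(\g_2,\g_1)$, so Lemma \ref{important-lemma-2} and Lemma \ref{Zero-condition-2} immediately discard almost all $\diamond$-summands, leaving only those with inputs in $\g_2$ and output in $\g_1$.

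First I would establish \eqref{eq:twilled-DGLA-concrete1}. Expanding $d_{\hat\mu_2}(f_1)=[\hat\mu_2,\hat f_1]^{\MN}=\hat\mu_2\diamond\hat f_1-(-1)^{m-1}\hat f_1\diamond\hat\mu_2$ and evaluating on $v_1,\ldots,v_{m+1}\in\g_2$, only the component of $\hat\mu_2$ equal to the subalgebra multiplication $\ast_2$ on $\g_2$ (and the bimodule pieces $\frkL,\frkR$ when $\hat\mu_2$ is the outer map) can feed into, or receive the output from, $\hat f_1$. Matching term by term gives exactly the standard pre-Lie coboundary formula \eqref{eq:pre-Lie cohomology} for $(\g_2,\ast_2)$ with coefficients in the bimodule $(\g_1;\frkL,\frkR)$, up to the global sign $(-1)^{m+1}$ that is absorbed into \eqref{eq:twilled-DGLA-concrete1}.

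Second, for \eqref{eq:twilled-DGLA-concrete2} I would iterate the expansion. Writing $g:=[\hat\mu_1,\hat f_1]^{\MN}$, the same bidegree argument shows that $g$ is a sum of explicit tensors built from $\ast_1$ (on $\g_1$-outputs of $f_1$) and $\huaL,\huaR$ (on $\g_2$-inputs, using the $\g_1$-bimodule structure on $\g_2$ from Remark \ref{two-representation}). Then forming $[g,\hat f_2]^{\MN}=g\diamond\hat f_2-(-1)^{(m-1)n}\hat f_2\diamond g$ and evaluating on $v_1,\ldots,v_{m+n}\in\g_2$, each of the four kinds of surviving terms comes from a distinct placement of $f_2$'s output: either it is multiplied via $\ast_1$ with $f_1$'s output, or it is fed through $\huaL$ (acting on the last slot) or $\huaR$ (acting on a swapped $v_i$) into $f_1$, and symmetrically with the roles of $f_1$ and $f_2$ swapped. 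The combinatorics of nested $\diamond$-products produces precisely the unshuffle sets $\mathbb S_{(n-1,1,m-1)}$, $\mathbb S_{(m,n-1)}$, $\mathbb S_{(m,1,n-2)}$ and $\mathbb S_{(m,n)}$ appearing in \eqref{eq:twilled-DGLA-concrete2}.

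The main obstacle is purely the sign and index bookkeeping. The prefactor $(-1)^{m-1}$ in \eqref{eq:shLie2}, the two Koszul signs from the nested $\diamond$, and the Matsushima-Nijenhuis commutator sign $(-1)^{(m-1)n}$ must be combined and the resulting signs read off from the unshuffles; the bidegree restrictions from Lemma \ref{important-lemma-2} keep the number of surviving configurations small, so no conceptual difficulty arises, but the verification must be carried out patiently term by term to confirm the signs match those displayed in \eqref{eq:twilled-DGLA-concrete2}. That the result is a dgLa is already guaranteed by Corollary \ref{twilled-DGLA}, so nothing further is needed beyond this bookkeeping.
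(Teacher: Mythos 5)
Your plan is correct and coincides with what the paper does: the paper itself states only that ``the proof follows from a complicated computation'' and omits it, the dgLa property being already guaranteed by Corollary \ref{twilled-DGLA}, so the only content is the term-by-term expansion of $[\hat{\mu}_2,\hat{f}_1]^{\MN}$ and $(-1)^{m-1}[[\hat{\mu}_1,\hat{f}_1]^{\MN},\hat{f}_2]^{\MN}$ via the $\diamond$-product with the bidegree lemmas pruning the surviving summands, exactly as you describe.
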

The proof follows from a complicated computation and we omit the details.

\section{Twisting on pre-Lie algebras}\label{sec:T}
Let $(\huaG,\pi)$ be a pre-Lie algebra with a decomposition  into two subspaces $\huaG=\g_1\oplus\g_2$ and the multiplication structure
$\pi=\hat{\phi}_1+\hat{\mu}_1+\hat{\mu}_2+\hat{\phi}_2$. Let $\hat{H}$ be the lift of a linear map $H:\g_2\lon\g_1$. We set
$$e^{X_{\hat{H}}}(-):={\Id}+X_{\hat{H}}+\frac{1}{2!}X^2_{\hat{H}}+\frac{1}{3!}X^3_{\hat{H}}+\cdots,$$
where $X_{\hat{H}}:=[-,\hat{H}]^{\MN}$, $X^2_{\hat{H}}:=[[-,\hat{H}]^{\MN},\hat{H}]^{\MN}$ and $X_{\hat{H}}^n$ is defined similarly. Since $\hat{H}\circ \hat{H}=0$, the operator $e^{X_{\hat{H}}}$ is well-defined.

\begin{defi}
The transformation $\pi^{H}:=e^{[-,\hat{H}]^{\MN}}\pi$ is called a {\bf twisting} of $\pi$ by $H$.
\end{defi}
It is obvious that the result of twisting of $H$ is also a $2$-cochain. The following results are followed from standard arguments in deformation theory.
\begin{lem}
$\pi^{H}=e^{-\hat{H}}\circ \pi\circ (e^{\hat{H}}\otimes e^{\hat{H}})$.
\end{lem}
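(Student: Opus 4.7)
The plan is to rewrite the ``adjoint'' operator $X_{\hat H}=[-,\hat H]^{\MN}$ as the difference of two \emph{commuting} linear operators on $C^*(\huaG,\huaG)$, and then to exploit the nilpotency $\hat H\circ \hat H=0$ to identify the two resulting exponentials with post-composition by $e^{-\hat H}$ and pre-composition by $e^{\hat H}$ respectively.

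First I would compute $[f,\hat H]^{\MN}$ for an arbitrary $f\in C^n(\huaG,\huaG)$. Since $\hat H\in C^1$ has Koszul degree $0$, the graded sign disappears and $[f,\hat H]^{\MN}=f\diamond\hat H-\hat H\diamond f$. Unpacking the pre-Lie diamond $\diamond$ from \eqref{eq:Nij-preLie}, the $(0,1,n-2)$-unshuffle signs in $f\diamond\hat H$ are absorbed by the skew-symmetry of $f$ in its first $n-1$ inputs, yielding
\begin{eqnarray*}
(f\diamond\hat H)(x_1,\ldots,x_n)=\sum_{i=1}^{n}f(x_1,\ldots,\hat H(x_i),\ldots,x_n),
\end{eqnarray*}
while the only surviving contribution to $\hat H\diamond f$ (the third block $(p-1)$ of the first summand of $\diamond$ has negative size when $p=0$, so that sum is empty) is $\hat H\circ f$. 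Writing $R_{\hat H}f:=f\diamond\hat H$ and $L_{\hat H}f:=\hat H\circ f$, I obtain the clean decomposition $X_{\hat H}=R_{\hat H}-L_{\hat H}$.

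Next, $L_{\hat H}$ touches only the output and $R_{\hat H}$ only the inputs, so the two operators commute: both compositions applied to $f$ equal $\sum_i \hat H(f(x_1,\ldots,\hat H(x_i),\ldots,x_n))$. Hence $e^{X_{\hat H}}=e^{-L_{\hat H}}\circ e^{R_{\hat H}}$. From $\hat H\circ\hat H=0$ I get $L_{\hat H}^{\,2}=0$, so $e^{-L_{\hat H}}f=(1-\hat H)\circ f=e^{-\hat H}\circ f$. For $e^{R_{\hat H}}$, iteration gives $R_{\hat H}^{\,k}f(x_1,\ldots,x_n)=\sum_{(i_1,\ldots,i_k)}f(\ldots,\hat H(x_{i_j})\text{ at slot }i_j,\ldots)$; terms with any repeated index vanish by $\hat H^2=0$, and the $k!$ orderings of each $k$-element subset contribute equally, so $\tfrac{1}{k!}R_{\hat H}^{\,k}f$ is exactly the sum over size-$k$ subsets $I$ of the input positions of $f(y_1^I,\ldots,y_n^I)$ with $y_i^I=\hat H(x_i)$ for $i\in I$ and $y_i^I=x_i$ otherwise. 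Summing over $k$ produces $e^{R_{\hat H}}f(x_1,\ldots,x_n)=f(e^{\hat H}(x_1),\ldots,e^{\hat H}(x_n))$, and specialising to $f=\pi$ yields $\pi^H=e^{-\hat H}\circ \pi\circ(e^{\hat H}\otimes e^{\hat H})$.

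The main obstacle is the sign bookkeeping in the first step, since the pre-Lie diamond singles out the last input slot: one has to check carefully that the $(q,1,p-1)$-unshuffle signs combine with the skew-symmetry of $f$ in its first $n-1$ arguments to produce the uniform ``insert $\hat H$ at each input slot'' formula for $R_{\hat H}$, and that the degenerate $p=0$ case of $\hat H\diamond f$ is handled by treating the unshuffle with a negative-size block as empty. After this, the argument is a purely formal manipulation based on commutation of $L_{\hat H}$ and $R_{\hat H}$, nilpotency $L_{\hat H}^{\,2}=0$, and the combinatorial identification of $e^{R_{\hat H}}$ with substitution of $e^{\hat H}$ in each input.
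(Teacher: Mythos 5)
Your proof is correct, but it takes a genuinely different route from the paper's. The paper argues by brute force for the specific $2$-cochain $\pi$: it computes $[\pi,\hat H]^{\MN}$, $[[\pi,\hat H]^{\MN},\hat H]^{\MN}$ and the triple bracket explicitly, observes that the fourth and higher iterates vanish, and then matches the resulting finite sum term by term against the expansion of $({\Id}-\hat H)\circ\pi\circ(({\Id}+\hat H)\otimes({\Id}+\hat H))$. You instead prove the structural identity $X_{\hat H}=R_{\hat H}-L_{\hat H}$ with $L_{\hat H}f=\hat H\circ f$ and $R_{\hat H}f=\sum_i f(\ldots,\hat H(x_i),\ldots)$, check that these commute, and exponentiate each factor separately using $\hat H\circ\hat H=0$. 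I verified the delicate points: the $(0,1,n-2)$-unshuffle signs do cancel against the skew-symmetry of $f$ in its first $n-1$ slots (each contributes $(-1)^{i-1}$), the degenerate first summand of $\hat H\diamond f$ is consistent with the paper's own computation of $[\pi,\hat H]^{\MN}$ (which contains exactly one term $-\hat H\circ\pi$), and your formulas reproduce the paper's coefficients (e.g.\ $X_{\hat H}^2\pi=R_{\hat H}^2\pi-2L_{\hat H}R_{\hat H}\pi$ gives the factors of $2$, and $X_{\hat H}^3\pi=-3L_{\hat H}R_{\hat H}^2\pi$ gives the $-6$). What your approach buys is generality and reusability: it establishes $e^{X_{\hat H}}f=e^{-\hat H}\circ f\circ(e^{\hat H})^{\otimes n}$ for an arbitrary $n$-cochain $f$, not just for $\pi\in C^2(\huaG,\huaG)$, and it explains conceptually why the exponential factors as a conjugation; the cost is the careful sign bookkeeping in identifying $R_{\hat H}$, which the paper's low-degree computation avoids entirely.
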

\begin{proof}
For any $(x_1,v_1),~(x_2,v_2)\in\huaG$, we have
\begin{eqnarray*}
&&[\pi,\hat{H}]^{\MN}\big((x_1,v_1),(x_2,v_2)\big)=(\pi\diamond\hat{H}-\hat{H}\diamond\pi)\big((x_1,v_1),(x_2,v_2)\big)\\
                                                 &&\qquad\qquad=\pi((H(v_1),0),(x_2,v_2))+\pi((x_1,v_1),(H(v_2),0))-\hat{H}(\pi((x_1,v_1),(x_2,v_2))).
\end{eqnarray*}
By $\hat{H}\circ\hat{H}=0$, we have
\begin{eqnarray*}
[[\pi,\hat{H}]^{\MN},\hat{H}]^{\MN}\big((x_1,v_1),(x_2,v_2)\big)&=&[\pi,\hat{H}]^{\MN}((H(v_1),0),(x_2,v_2))+[\pi,\hat{H}]^{\MN}((x_1,v_1),(H(v_2),0))\\
&&-\hat{H}\big([\pi,\hat{H}]^{\MN}((x_1,v_1),(x_2,v_2))\big)\\
&=&2\pi((H(v_1),0),(H(v_2),0))-2\hat{H}\pi((H(v_1),0),(x_2,v_2))\\
&&-2\hat{H}\pi((x_1,v_1),(H(v_2),0)).
\end{eqnarray*}
Furthermore, we have
\begin{eqnarray*}
[[[\pi,\hat{H}]^{\MN},\hat{H}]^{\MN},\hat{H}]^{\MN}\big((x_1,v_1),(x_2,v_2)\big)&=&-6\hat{H}\pi((H(v_1),0),(H(v_2),0)),\\
(([\cdot,\hat{H}]^{\MN})^i\pi)\big((x_1,v_1),(x_2,v_2)\big)&=&0,\,\,\,\,\forall i\ge4,
\end{eqnarray*}
and
\begin{eqnarray}\label{twisting-operator}
e^{[\cdot,\hat{H}]^{\MN}}\pi=\pi+[\pi,\hat{H}]^{\MN}+\half[[\pi,\hat{H}]^{\MN},\hat{H}]^{\MN}+\frac{1}{6}[[[\pi,\hat{H}]^{\MN},\hat{H}]^{\MN},\hat{H}]^{\MN}.
\end{eqnarray}
Thus, we have
\begin{eqnarray*}
\pi^{H}&=&\pi-\hat{H}\circ\pi+\pi\circ(\hat{H}\otimes{\Id})+\pi\circ({\Id}\otimes\hat{H})-\hat{H}\circ\pi\circ({\Id}\otimes\hat{H})-\hat{H}\circ\pi\circ(\hat{H}\otimes{\Id})\\
          \nonumber&&+\pi\circ(\hat{H}\otimes\hat{H})-\hat{H}\circ\pi\circ(\hat{H}\otimes\hat{H}).
\end{eqnarray*}
By $\hat{H}\circ\hat{H}=0$, we have
\begin{eqnarray*}
e^{-\hat{H}}\circ \pi\circ (e^{\hat{H}}\otimes e^{\hat{H}})&=&({\Id}-\hat{H})\circ\pi\circ (({\Id}+\hat{H})\otimes({\Id}+\hat{H}))\\
&=&\pi+\pi\circ({\Id}\otimes\hat{H})+\pi\circ(\hat{H}\otimes{\Id})+\pi\circ(\hat{H}\otimes\hat{H})\\
&&-\hat{H}\circ\pi-\hat{H}\circ\pi\circ({\Id}\otimes\hat{H})-\hat{H}\circ\pi\circ(\hat{H}\otimes{\Id})-\hat{H}\circ\pi\circ(\hat{H}\otimes\hat{H}).
\end{eqnarray*}
Therefore, we obtain that $\pi^{H}=e^{-\hat{H}}\circ \pi\circ (e^{\hat{H}}\otimes e^{\hat{H}})$.
\end{proof}

\begin{pro}
The twisting $\pi^{H}$ is a  pre-Lie algebra structure on $\huaG$, i.e. $[\pi^{H},\pi^{H}]^{\MN}=0.$
\end{pro}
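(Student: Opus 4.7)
The plan is to exploit the fact that $e^{X_{\hat{H}}}$ is an automorphism of the graded Lie algebra $(C^*(\huaG,\huaG),[-,-]^{\MN})$, so that the Maurer--Cartan equation $[\pi,\pi]^{\MN}=0$ (which holds since $(\huaG,\pi)$ is assumed to be a pre-Lie algebra) is transported directly to $[\pi^H,\pi^H]^{\MN}=0$.

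First I would record the two standard facts that make this work. The graded Jacobi identity for $[-,-]^{\MN}$ says that $X_{\hat{H}}=[-,\hat{H}]^{\MN}=-(-1)^{|\hat{H}|}\mathrm{ad}_{\hat{H}}$ is a graded derivation of $[-,-]^{\MN}$, i.e.\
\begin{equation*}
X_{\hat{H}}\bigl([a,b]^{\MN}\bigr)=[X_{\hat{H}}(a),b]^{\MN}+(-1)^{|a||\hat{H}|}[a,X_{\hat{H}}(b)]^{\MN}
\end{equation*}
for all homogeneous $a,b$. The preceding lemma's calculation shows that $X_{\hat{H}}$ is locally nilpotent on any fixed cochain (the series truncates at order three when applied to $\pi$, and more generally at a finite order determined by arity), because $\hat{H}\circ\hat{H}=0$ forces all higher iterated brackets to vanish. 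Consequently the exponential $e^{X_{\hat{H}}}$ is a well-defined linear operator on $C^*(\huaG,\huaG)$.

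Next I would deduce that $e^{X_{\hat{H}}}$ is a morphism of graded Lie algebras. This is the standard exponentiation argument: since $X_{\hat{H}}$ is a derivation, an inductive check on the iterates $X_{\hat{H}}^n[a,b]^{\MN}$ using the binomial-type Leibniz identity yields
\begin{equation*}
e^{X_{\hat{H}}}\bigl([a,b]^{\MN}\bigr)=\bigl[e^{X_{\hat{H}}}a,\,e^{X_{\hat{H}}}b\bigr]^{\MN},
\end{equation*}
where local nilpotency legitimates the rearrangement of the double sum. One has to be careful with Koszul signs, but since $\hat{H}$ has even total parity in the MN-grading used here (it is of bidegree $-1|1$, total degree $0$ in the shifted convention), the signs are all trivial and the computation is the usual one.

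Applying this identity to $a=b=\pi$ and using $[\pi,\pi]^{\MN}=0$ (since $\pi$ is a pre-Lie structure on $\huaG$) gives
\begin{equation*}
[\pi^H,\pi^H]^{\MN}=\bigl[e^{X_{\hat{H}}}\pi,\,e^{X_{\hat{H}}}\pi\bigr]^{\MN}=e^{X_{\hat{H}}}\bigl([\pi,\pi]^{\MN}\bigr)=0,
\end{equation*}
which is precisely the claim. The only real obstacle is the bookkeeping in step two: verifying the derivation-to-automorphism passage in the graded setting with the MN bracket and making sure that, given the bidegree $-1|1$ of $\hat{H}$, the iterated brackets really do terminate when applied to any cochain so that the formal identity $e^{X_{\hat{H}}}[a,b]=[e^{X_{\hat{H}}}a,e^{X_{\hat{H}}}b]$ is literally a finite one. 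Once this is in hand, the proof is a one-line consequence.
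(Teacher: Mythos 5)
Your proof is correct, but it takes a genuinely different route from the paper's. The paper first establishes (in the preceding lemma) the explicit conjugation formula $\pi^{H}=e^{-\hat{H}}\circ \pi\circ (e^{\hat{H}}\otimes e^{\hat{H}})$ and then computes directly that $\pi^{H}\diamond\pi^{H}=e^{-\hat{H}}\circ(\pi\diamond\pi)\circ(e^{\hat{H}}\otimes e^{\hat{H}}\otimes e^{\hat{H}})$, so that $[\pi^H,\pi^H]^{\MN}$ is the conjugate of $[\pi,\pi]^{\MN}=0$; this concrete route has the side benefit of exhibiting $e^{\hat{H}}:(\huaG,\pi^{H})\to(\huaG,\pi)$ as an explicit pre-Lie algebra isomorphism, which the paper records as a corollary and reuses later. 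You instead argue abstractly that $X_{\hat{H}}=[-,\hat{H}]^{\MN}$ is a degree-zero derivation of the graded Lie algebra $(C^*(\huaG,\huaG),[-,-]^{\MN})$ (correct: $\hat{H}\in C^1$ has degree $0$, so no Koszul signs intervene), that it is locally nilpotent (correct: each application lowers the first component of the bidegree by one, and bidegrees are bounded below by $-1$, which is the structural reason behind the truncation the paper attributes to $\hat{H}\circ\hat{H}=0$), and that therefore $e^{X_{\hat{H}}}$ is a gLa automorphism carrying Maurer--Cartan elements to Maurer--Cartan elements. This is the standard gauge-transformation argument from deformation theory; it is more general and avoids the preceding lemma entirely, at the cost of not producing the explicit algebra isomorphism $e^{\hat{H}}$ that the paper needs elsewhere (e.g.\ for transporting the invariant bilinear form in Section 5). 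Both arguments are complete; yours correctly isolates the two facts that need checking (derivation property and finiteness of the iterated brackets) and both do hold here.
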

\begin{proof}
By $\pi^{H}=e^{-\hat{H}}\circ \pi\circ (e^{\hat{H}}\otimes e^{\hat{H}})$, we have
\begin{eqnarray*}
[\pi^{H},\pi^{H}]^{\MN}=2\pi^{H}\diamond\pi^{H}&=&2e^{-\hat{H}}\circ(\pi\diamond\pi)\circ(e^{\hat{H}}\otimes e^{\hat{H}}\otimes e^{\hat{H}})\\
&=&e^{-\hat{H}}\circ[\pi,\pi]^{\MN}\circ(e^{\hat{H}}\otimes e^{\hat{H}}\otimes e^{\hat{H}})=0.
\end{eqnarray*}
\end{proof}

Then we obtain the following useful lemma.
\begin{cor}\label{twisting-isomorphism}
$
e^{\hat{H}}:(\huaG,\pi^{H})\lon(\huaG,\pi)
$
is an isomorphism between  pre-Lie algebras.
\end{cor}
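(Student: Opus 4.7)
The plan is to unpack the meaning of ``isomorphism of pre-Lie algebras'' and verify the two needed properties: bijectivity and multiplicativity. Both are essentially free given what has already been established in the lemma $\pi^{H}=e^{-\hat{H}}\circ \pi\circ (e^{\hat{H}}\otimes e^{\hat{H}})$ and the nilpotency $\hat{H}\circ\hat{H}=0$.

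First I would handle bijectivity. Since $\hat{H}\circ\hat{H}=0$, the exponential truncates to $e^{\hat{H}}=\Id+\hat{H}$, and likewise $e^{-\hat{H}}=\Id-\hat{H}$. A direct check shows $(\Id+\hat{H})(\Id-\hat{H})=\Id-\hat{H}^{2}=\Id$, so $e^{\hat{H}}$ is a linear bijection on $\huaG$ with inverse $e^{-\hat{H}}$.

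Next I would verify that $e^{\hat{H}}$ intertwines $\pi^{H}$ and $\pi$. Starting from the identity $\pi^{H}=e^{-\hat{H}}\circ \pi\circ (e^{\hat{H}}\otimes e^{\hat{H}})$ proved in the preceding lemma, compose on the left by $e^{\hat{H}}$ (which cancels $e^{-\hat{H}}$ using the previous step) to obtain
\begin{equation*}
e^{\hat{H}}\circ \pi^{H} \;=\; \pi\circ (e^{\hat{H}}\otimes e^{\hat{H}}).
\end{equation*}
Evaluating on a pair of elements $a,b\in\huaG$ yields $e^{\hat{H}}(\pi^{H}(a,b))=\pi(e^{\hat{H}}(a),e^{\hat{H}}(b))$, which is exactly the statement that $e^{\hat{H}}$ is a homomorphism from $(\huaG,\pi^{H})$ to $(\huaG,\pi)$.

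There is no real obstacle here; the content has been done in the lemma that rewrites $\pi^{H}$ as conjugation by $e^{\hat{H}}$. The only subtle point worth flagging explicitly is that both the inverse formula for $e^{\hat{H}}$ and the cancellation $e^{\hat{H}}\circ e^{-\hat{H}}=\Id$ rely on $\hat{H}\circ\hat{H}=0$, which was noted right after the definition of the lift $\hat{H}$. Combining the two bullet points above gives the claim.
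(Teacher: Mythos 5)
Your proposal is correct and follows exactly the route the paper intends: the paper leaves this corollary without an explicit proof precisely because it is an immediate consequence of the preceding lemma $\pi^{H}=e^{-\hat{H}}\circ\pi\circ(e^{\hat{H}}\otimes e^{\hat{H}})$ together with the invertibility of $e^{\hat{H}}=\Id+\hat{H}$ coming from $\hat{H}\circ\hat{H}=0$. Nothing is missing.
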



Apparently,  $\pi^{H}$ can also be decomposed into the unique four substructures. The twisting operations are completely determined by the following theorem.

\begin{thm}\label{thm:twist}
Let $\pi:=\hat{\phi}_1+\hat{\mu}_1+\hat{\mu}_2+\hat{\phi}_2$ and $\pi^{H}:=\hat{\phi}_1^{H}+\hat{\mu}_1^{H}+\hat{\mu}_2^{H}+\hat{\phi}_2^{H}$. Then we have
\begin{eqnarray}
\label{twisting-1}\hat{\phi}_1^{H}&=&\hat{\phi}_1,\\
\label{twisting-2}\hat{\mu}_1^{H}&=&\hat{\mu}_1+[\hat{\phi}_1,\hat{H}]^{\MN},\\
\label{twisting-3}\hat{\mu}_2^{H}&=&\hat{\mu}_2+d_{\hat{\mu}_1}\hat{H}+\half[[\hat{\phi}_1,\hat{H}]^{\MN},\hat{H}]^{\MN},\\
\label{twisting-4}\hat{\phi}_2^{H}&=&\hat{\phi}_2+d_{\hat{\mu}_2}\hat{H}+\half[\hat{H},\hat{H}]_{\hat{\mu}_1}+\frac{1}{6}[[[\hat{\phi}_1,\hat{H}]^{\MN},\hat{H}]^{\MN},\hat{H}]^{\MN},
\end{eqnarray}
where $d_{\hat{\mu}_i}:=[\hat{\mu}_i,-]^{\MN}~(i=1,2)$ and $[\hat{H},\hat{H}]_{\hat{\mu}_1}:=[[\hat{\mu}_1,\hat{H}]^{\MN},\hat{H}]^{\MN}$.
\end{thm}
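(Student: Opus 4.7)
The plan is to prove this by a bidegree bookkeeping argument, using the already-established truncated expansion \eqref{twisting-operator}
$$
\pi^{H}=\pi+[\pi,\hat{H}]^{\MN}+\tfrac{1}{2}[[\pi,\hat{H}]^{\MN},\hat{H}]^{\MN}+\tfrac{1}{6}[[[\pi,\hat{H}]^{\MN},\hat{H}]^{\MN},\hat{H}]^{\MN},
$$
substituting $\pi=\hat{\phi}_1+\hat{\mu}_1+\hat{\mu}_2+\hat{\phi}_2$, expanding by bilinearity of the Matsushima--Nijenhuis bracket, and then sorting every resulting term according to its bidegree. Since the total bidegrees $\|\hat{\phi}_1\|=2|{-1}$, $\|\hat{\mu}_1\|=1|0$, $\|\hat{\mu}_2\|=0|1$, $\|\hat{\phi}_2\|={-1}|2$ are pairwise distinct, and $\|\hat{H}\|={-1}|1$, Lemma~\ref{important-lemma-2} gives each surviving summand a definite bidegree, and Lemma~\ref{Zero-condition-1} guarantees that the decomposition $\pi^{H}=\hat{\phi}_1^{H}+\hat{\mu}_1^{H}+\hat{\mu}_2^{H}+\hat{\phi}_2^{H}$ is obtained simply by collecting terms of equal bidegree.

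The next step is to rule out the terms that vanish automatically. Using Lemma~\ref{Zero-condition-2} (the bracket of two homogeneous maps both with $l={-1}$ vanishes), I get
$$
[\hat{\phi}_2,\hat{H}]^{\MN}=0,\qquad [[\hat{\mu}_2,\hat{H}]^{\MN},\hat{H}]^{\MN}=0,\qquad [[[\hat{\mu}_1,\hat{H}]^{\MN},\hat{H}]^{\MN},\hat{H}]^{\MN}=0,
$$
because in each case the left-hand factor has bidegree with $l={-1}$ (namely ${-1}|2$, ${-1}|2$ and ${-1}|2$ respectively), so that bracketing once more against $\hat{H}$ produces zero. What remains of $[\pi,\hat{H}]^{\MN}$ is $[\hat{\phi}_1,\hat{H}]^{\MN}+[\hat{\mu}_1,\hat{H}]^{\MN}+[\hat{\mu}_2,\hat{H}]^{\MN}$ of bidegrees $1|0$, $0|1$, ${-1}|2$; of $\frac{1}{2}[[\pi,\hat{H}]^{\MN},\hat{H}]^{\MN}$ only $\frac{1}{2}[[\hat{\phi}_1,\hat{H}]^{\MN},\hat{H}]^{\MN}$ (bidegree $0|1$) and $\frac{1}{2}[[\hat{\mu}_1,\hat{H}]^{\MN},\hat{H}]^{\MN}$ (bidegree ${-1}|2$) survive; and of the triple bracket, only $\frac{1}{6}[[[\hat{\phi}_1,\hat{H}]^{\MN},\hat{H}]^{\MN},\hat{H}]^{\MN}$ (bidegree ${-1}|2$) survives.

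Collecting everything by bidegree yields
\begin{align*}
\text{bidegree }2|{-1}:&\quad\hat{\phi}_1,\\
\text{bidegree }1|0:&\quad\hat{\mu}_1+[\hat{\phi}_1,\hat{H}]^{\MN},\\
\text{bidegree }0|1:&\quad\hat{\mu}_2+[\hat{\mu}_1,\hat{H}]^{\MN}+\tfrac{1}{2}[[\hat{\phi}_1,\hat{H}]^{\MN},\hat{H}]^{\MN},\\
\text{bidegree }{-1}|2:&\quad\hat{\phi}_2+[\hat{\mu}_2,\hat{H}]^{\MN}+\tfrac{1}{2}[[\hat{\mu}_1,\hat{H}]^{\MN},\hat{H}]^{\MN}+\tfrac{1}{6}[[[\hat{\phi}_1,\hat{H}]^{\MN},\hat{H}]^{\MN},\hat{H}]^{\MN},
\end{align*}
which, after rewriting $[\hat{\mu}_i,\hat{H}]^{\MN}=d_{\hat{\mu}_i}\hat{H}$ and $[[\hat{\mu}_1,\hat{H}]^{\MN},\hat{H}]^{\MN}=[\hat{H},\hat{H}]_{\hat{\mu}_1}$ in accordance with the conventions of \eqref{eq:shLie1}--\eqref{eq:shLie2}, are exactly \eqref{twisting-1}--\eqref{twisting-4}.

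The main obstacle is not substantive mathematics but clerical: one must verify the bidegree of each of the $4+3+2+1=10$ potentially nonzero summands without a sign error and confirm that truncation at the cubic term is correct. The $\hat{H}\circ\hat{H}=0$ argument already used to derive \eqref{twisting-operator} does half of that work, and Lemma~\ref{Zero-condition-2} does the other half by killing precisely the terms that would otherwise push $l$ below $-1$. Because the four target bidegrees are distinct and exhaust the possible bidegrees of a $2$-cochain, the unique decomposition given by Lemma~\ref{Zero-condition-1} then delivers the four equalities of the theorem with no further computation.
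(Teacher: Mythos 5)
Your proposal is correct and is exactly the argument the paper intends: its proof of Theorem \ref{thm:twist} is the one-line remark that the result follows from the bidegree decomposition together with Lemmas \ref{Zero-condition-1}--\ref{important-lemma-2}, and you have simply carried out that bookkeeping explicitly (the bidegree assignments, the vanishing of the three brackets with left factor of bidegree $-1|2$, and the sorting of the surviving ten terms all check out). No gaps.
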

\begin{proof}
  By the decomposition of bidegree and Lemmas \ref{Zero-condition-1}-\ref{important-lemma-2}, the theorem follows.
\end{proof}

In the following, we discuss various cases of twisting operations.
\subsection{Twisting on twilled pre-Lie algebras}
Let $\huaG=\g_1\bowtie\g_2 $ be a twilled algebra with the multiplication structure $\pi=\mu_1+\mu_2$. The twisted structures by $H:\g_2\rightarrow \g_1$ have the forms:
\begin{eqnarray}
\label{twilled-twisting-1}\hat{\mu}_1^{H}&=&\hat{\mu}_1,\\
\label{twilled-twisting-2}\hat{\mu}_2^{H}&=&\hat{\mu}_2+d_{\hat{\mu}_1}\hat{H},\\
\label{twilled-twisting-3}\hat{\phi}_2^{H}&=&d_{\hat{\mu}_2}\hat{H}+\half[\hat{H},\hat{H}]_{\hat{\mu}_1}.
\end{eqnarray}

In Corollary \ref{twilled-DGLA}, we show that there is a dgLa structure $(C^*(\g_2,\g_1),d_{\hat{\mu}_2},[-,-]_{\hat{\mu}_1})$ associated to the twilled pre-Lie algebra $(\huaG,\g_1,\g_2)$. In the following, we show that a solution of the Maurer-Cartan equation in this dgLa can give a new twilled pre-Lie algebra through the twisting operation.
\begin{pro}\label{twisting-twilled}
Let $(\huaG,\g_1,\g_2)$ be a  twilled   pre-Lie algebra   and $H:\g_2\longrightarrow\g_1$ a linear map. The   twisting $((\huaG, \pi^{H}),\g_1,\g_2)$ is a twilled pre-Lie algebra if and only if $H$ is a solution of the Maurer-Cartan equation in the dgLa $(C^*(\g_2,\g_1),d_{\hat{\mu}_2},[-,-]_{\hat{\mu}_1})$  given in Corollary \ref{twilled-DGLA}, i.e.
\begin{equation}\label{eq:Twist-Twilled-MC}
  d_{\hat{\mu}_2}\hat{H}+\half[\hat{H},\hat{H}]_{\hat{\mu}_1}=0.
\end{equation}
The condition \eqref{eq:Twist-Twilled-MC} is equivalent with
 \begin{eqnarray}\label{eq:MC-expression}
  H(u)\ast_1 H(v)+H(u)\ast_2 v+u\ast_2 H(v)=H\big(H(u)\ast_1 v+u\ast_1 H(v)\big)+H(u\ast_2 v).
  \end{eqnarray}
\end{pro}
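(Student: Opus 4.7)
The plan is to apply Theorem~\ref{thm:twist} to the twilled situation, where by hypothesis $\hat{\phi}_1=\hat{\phi}_2=0$, and then read off when the twisted structure is again twilled.

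First, I would specialize the four formulas \eqref{twisting-1}--\eqref{twisting-4} with $\hat{\phi}_1=\hat{\phi}_2=0$. Three of them collapse to \eqref{twilled-twisting-1}--\eqref{twilled-twisting-3}, namely $\hat{\mu}_1^H=\hat{\mu}_1$, $\hat{\mu}_2^H=\hat{\mu}_2+d_{\hat{\mu}_1}\hat{H}$, and
\[
\hat{\phi}_2^H \;=\; d_{\hat{\mu}_2}\hat{H}+\tfrac{1}{2}[\hat{H},\hat{H}]_{\hat{\mu}_1},
\]
while $\hat{\phi}_1^H=\hat{\phi}_1=0$ automatically. By the characterization in Lemma~\ref{lem:twillL} together with the uniqueness of the bidegree decomposition in Lemma~\ref{lem:dec}, the triple $((\huaG,\pi^H),\g_1,\g_2)$ is twilled if and only if $\hat{\phi}_1^H=0$ and $\hat{\phi}_2^H=0$. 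The first is free, so everything reduces to $\hat{\phi}_2^H=0$, which is exactly the Maurer–Cartan equation \eqref{eq:Twist-Twilled-MC} in the dgLa $(C^*(\g_2,\g_1),d_{\hat{\mu}_2},[-,-]_{\hat{\mu}_1})$ supplied by Corollary~\ref{twilled-DGLA}.

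Next I would translate \eqref{eq:Twist-Twilled-MC} into the explicit identity \eqref{eq:MC-expression}. Since $H\in C^1(\g_2,\g_1)$ (i.e.\ $m=1$), the formulas \eqref{eq:twilled-DGLA-concrete1}--\eqref{eq:twilled-DGLA-concrete2} become very short. Evaluating on $u,v\in\g_2$, the term $d_{\hat{\mu}_2}\hat{H}(u,v)$ unfolds to $u\ast_2 H(v)+H(u)\ast_2 v-H(u\ast_2 v)$ (reading off the $\frkL$, $\frkR$ actions of $\g_2$ on $\g_1$ from Remark~\ref{two-representation}), while $\tfrac{1}{2}[\hat{H},\hat{H}]_{\hat{\mu}_1}(u,v)$ unfolds to $H(u)\ast_1 H(v)-H\bigl(H(u)\ast_1 v+u\ast_1 H(v)\bigr)$. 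Summing the two and setting the result equal to zero yields precisely \eqref{eq:MC-expression}.

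The only non-routine step is the bookkeeping in the second paragraph: making sure the signs and the identification of the $\g_1$-bimodule structure on $\g_2$ (via $\huaL,\huaR$) and the $\g_2$-bimodule structure on $\g_1$ (via $\frkL,\frkR$) from Remark~\ref{two-representation} are consistent with the conventions in Lemma~\ref{twilled-DGLA-concrete}. Once this dictionary is set up, both $d_{\hat{\mu}_2}\hat{H}$ and $[\hat{H},\hat{H}]_{\hat{\mu}_1}$ on degree-$1$ inputs have only a handful of terms, so the equivalence with \eqref{eq:MC-expression} drops out directly. The first half of the proof is essentially formal, being a one-line consequence of Theorem~\ref{thm:twist} and Lemma~\ref{lem:twillL}; the second half is a short but careful expansion.
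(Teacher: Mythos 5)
Your proposal is correct and follows essentially the same route as the paper: the first half is exactly the specialization of Theorem~\ref{thm:twist} to $\hat{\phi}_1=\hat{\phi}_2=0$ (which the paper records as \eqref{twilled-twisting-1}--\eqref{twilled-twisting-3} just before the proposition), and the second half is the same direct expansion of $d_{\hat{\mu}_2}\hat{H}$ and $\tfrac{1}{2}[\hat{H},\hat{H}]_{\hat{\mu}_1}$ on a pair $u,v\in\g_2$. Your two intermediate expressions differ from the paper's by an overall sign (a harmless convention discrepancy in evaluating $[\hat{\mu}_2,\hat{H}]^{\MN}$), and both versions yield the identity \eqref{eq:MC-expression} upon setting the sum to zero.
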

\begin{proof}
 By a direct calculation, we have
 \begin{eqnarray*}
 d_{\hat{\mu}_2}(\hat{H})(u,v)&=&-H(u)\ast_2 v-u\ast_2 H(v)+H(u\ast_2 v);\\
 {[\hat{H},\hat{H}]_{\hat{\mu}_1}}(u,v)&=&2H\big(H(u)\ast_1 v+u\ast_1 H(v)\big)-2H(u)\ast_1 H(v).
 \end{eqnarray*}
 Thus
 \begin{eqnarray*}
  d_{\hat{\mu}_2}\hat{H}(u,v)+\half[\hat{H},\hat{H}]_{\hat{\mu}_1}(u,v)&=&-H(u)\ast_2 v-u\ast_2 H(v)+H(u\ast_2 v)\\
  &&+H\big(H(u)\ast_1 v+u\ast_1 H(v)\big)-H(u)\ast_1 H(v)=0.
 \end{eqnarray*}
\end{proof}

\begin{cor}\label{cor:pre-Lie and O-operator}
Let $(\huaG,\g_1,\g_2)$ be a  twilled   pre-Lie algebra and $H$ a solution of the Maurer-Cartan equation in the associated dgLa. Then
\begin{eqnarray}\label{eq:mul2}
u\cdot^{H}v:=u\ast_2 v+H(u)\ast_1 v+u\ast_1 H(v),\quad\forall~u,v\in\g_2
\end{eqnarray}
defines a  pre-Lie algebra structure on $\g_2$.
\end{cor}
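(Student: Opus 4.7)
The plan is to reduce the claim to the observation that, in a twilled pre-Lie algebra, the second factor is automatically a subalgebra, and then to compute the subalgebra structure coming from the twisted product explicitly.

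First I would invoke Proposition \ref{twisting-twilled}: since $H$ satisfies the Maurer--Cartan equation \eqref{eq:Twist-Twilled-MC}, the triple $((\huaG,\pi^H),\g_1,\g_2)$ is again a twilled pre-Lie algebra. By definition of ``twilled'' (and as recorded in Remark \ref{two-representation}), $\g_2$ is then a pre-Lie subalgebra of $(\huaG,\pi^H)$; in particular the multiplication on $\g_2$ induced from $\pi^H$ is automatically pre-Lie. So it only remains to identify this induced multiplication with the formula \eqref{eq:mul2}.

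For that, I would apply Theorem \ref{thm:twist} to our twilled case (where $\hat{\phi}_1=\hat{\phi}_2=0$): formulas \eqref{twilled-twisting-1}--\eqref{twilled-twisting-3} give
\[
\pi^H \;=\; \hat{\mu}_1^H + \hat{\mu}_2^H + \hat{\phi}_2^H \;=\; \hat{\mu}_1 + (\hat{\mu}_2 + d_{\hat{\mu}_1}\hat{H}) + 0,
\]
where $\hat{\phi}_2^H=0$ by \eqref{twilled-twisting-3} and the Maurer--Cartan condition. Since $\|\hat{\mu}_1\|=1|0$, its restriction to $\g^{0,2}=\g_2\otimes\g_2$ vanishes by the bidegree axiom, and by \eqref{bracket-3} we have $\hat{\mu}_2(u,v)=(0,u\ast_2 v)$. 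A direct unwinding of the Matsushima--Nijenhuis bracket gives
\[
(d_{\hat{\mu}_1}\hat{H})(u,v) \;=\; [\hat{\mu}_1,\hat{H}]^{\MN}(u,v) \;=\; \hat{\mu}_1(\hat{H}u,v)+\hat{\mu}_1(u,\hat{H}v)-\hat{H}\,\hat{\mu}_1(u,v),
\]
and since $\hat{H}u=(H(u),0)$ and $\hat{\mu}_1(u,v)=0$, formula \eqref{bracket-2} yields $(d_{\hat{\mu}_1}\hat{H})(u,v)=(0,\,H(u)\ast_1 v + u\ast_1 H(v))$. Summing these contributions shows
\[
\pi^H((0,u),(0,v)) \;=\; \bigl(0,\ u\ast_2 v + H(u)\ast_1 v + u\ast_1 H(v)\bigr) \;=\; (0,\,u\cdot^H v),
\]
which is exactly \eqref{eq:mul2}. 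This both confirms that $\pi^H$ preserves $\g_2$ and identifies the induced multiplication.

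There is no real obstacle: the only mild subtlety is the bookkeeping in the derived-bracket computation of $d_{\hat{\mu}_1}\hat{H}$ on $\g_2\otimes\g_2$, which collapses cleanly because $\hat{H}$ vanishes on $\g_1$ and $\hat{\mu}_1$ vanishes on $\g_2\otimes\g_2$ by bidegree. Alternatively, one could give a coordinate-free argument using Corollary \ref{twisting-isomorphism}: the map $e^{\hat{H}}$ is an isomorphism of pre-Lie algebras, its restriction identifies $\g_2$ with the graph $\{(H(u),u):u\in\g_2\}\subset (\huaG,\pi)$, and the Maurer--Cartan condition \eqref{eq:MC-expression} is precisely what forces this graph to be closed under $\pi$, the resulting multiplication on $\g_2$ then being $\cdot^H$.
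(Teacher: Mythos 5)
Your proposal is correct and follows essentially the same route as the paper: the paper likewise deduces from the Maurer--Cartan condition (via Proposition \ref{twisting-twilled} and Lemma \ref{lem:twillL}) that $\hat{\mu}_2^{H}$ is a pre-Lie multiplication whose restriction to $\g_2$ is \eqref{eq:mul2}. You merely spell out the bidegree bookkeeping and the computation of $d_{\hat{\mu}_1}\hat{H}$ on $\g_2\otimes\g_2$, which the paper asserts without detail.
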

\begin{proof}
By Lemma \ref{lem:twillL}, we deduce that $\hat{\mu}_2^{H}$ is a  pre-Lie algebra multiplication on $\huaG$. Furthermore, the multiplication restricted to $\g_2$ is given by \eqref{eq:mul2}.
 \end{proof}

 \emptycomment{\begin{defi}
  Let $(\g,\cdot_\g)$ be a pre-Lie algebra. A {\bf $\g$-pre-Lie algebra} is a quadruple $(\h,\cdot_\h,\huaL,\huaR)$ consisting of a pre-Lie algebra $(\h,\cdot_\h)$ and a bimodule $(\huaL,\huaR)$ of $\g$ on $\h$  such that the following equality holds:
  \begin{eqnarray}\label{eq:g-pre-Lie operation}
(\huaL_xu)\cdot_\h v+u\cdot_\g\huaL_x v-\huaL_x(u\cdot_\h v)=(\huaR_x u)\cdot_\h v,\quad \forall x\in \g,u,v\in \h.
  \end{eqnarray}
\end{defi}
\begin{pro}
  Let $(\g,\cdot_\g)$ be a pre-Lie algebra and  $(\h,\cdot_\h,\huaL,\huaR)$ a $\g$-pre-Lie algebra. Then there exists a pre-Lie algebra structure on $\huaG=\g\oplus \h$ given by
  \begin{equation}
    (x+u)\ast_\lambda (y+v)=x\cdot_\g y+\huaL_x v+\huaR_yu+\lambda u\cdot_\h v,\quad\forall~x,y\in\g,u,v\in\h,\lambda\in \K.
  \end{equation}
\end{pro}
By Proposition \ref{twisting-twilled}, we have
\begin{pro}
A linear map $T:\h\longrightarrow \g$ is a solution of the Maurer-Cartan equation in the associated  dgLa if and only if
  \begin{equation}\label{eq:o-operator of weight}
  T(u)\cdot_\g T(v)=T\big(\huaL_{T(u)}v+\huaR_{T(v)}u\big)+\lambda T(u \cdot_\h v).
  \end{equation}
\end{pro}

\begin{defi}
  Let $(\g,\cdot_\g)$ be a pre-Lie algebra and  $(\h,\cdot_\h,\huaL,\huaR)$ a $\g$-pre-Lie algebra. A linear map $T:\h\longrightarrow \g$ is called an {\bf $\huaO$-operator of weight $\lambda\in\K$} if it satisfies \eqref{eq:o-operator of weight}.
\end{defi}
It is obvious that an $\huaO$-operator of weight $\lambda$ on a pre-Lie algebra can be seen as a solution of the Maurer-Cartan equation in a dgLa.

\begin{ex}
  Let $(\g,\cdot_\g)$ be a pre-Lie algebra. Then $(\g,\cdot_\g,\huaL=L,\huaR=R)$ is a $\g$-pre-Lie algebra, where $(\g;L,R)$ is the regular bimodule of $\g$. Now \eqref{eq:o-operator of weight} takes the following form:
  \begin{equation}\label{eq:Rota-Baxter of weight}
  T(x)\cdot_\g T(y)=T\big(T(x)\cdot_\g y+x\cdot_\g T(y)\big)+\lambda T(x \cdot_\g y),\quad\forall~x,y\in\g.
  \end{equation}
  A linear endomorphism $T:\g\longrightarrow \g$ satisfying \eqref{eq:Rota-Baxter of weight} is called a {\bf Rota-Baxter operators of weight $\lambda$} on a pre-Lie algebra. See \cite{LHB} for more details about Rota-Baxter operators on pre-Lie algebras.
\end{ex}

By Corollary \ref{cor:pre-Lie and O-operator}, we have
\begin{pro}
Let $T:\h\longrightarrow \g$ be an $\huaO$-operator of weight $\lambda$ over the $\g$-pre-Lie algebra $\h$. Then $(\h,\cdot^T)$ is a pre-Lie algebra, where the pre-Lie algebra multiplication $\cdot^T$ is given by
\begin{equation}\label{eq:O-pre-Lie operation weight}
  u\cdot^T v= \huaL_{T(u)}v+\huaR_{T(v)}u+\lambda u \cdot_\h v,\quad\forall~u,v\in \h
\end{equation}
and $T$ is a homomorphism from the pre-Lie algebra $(\h,\cdot^T)$ to the pre-Lie algebra $(\g,\cdot_\g)$.
\end{pro}
\begin{pro}\label{pro:construction twilled pre-Lie1}
  Let  $(\huaG,\g_1,\g_2)$ be a twilled pre-Lie algebra equipped with the structure $\pi=\hat{\mu}_1+\hat{\mu}_2$. Then $(\huaG,\pi^H=\hat{\mu}^H_1+\hat{\mu}^H_2)$ is also a twilled pre-Lie algebra if and only if $H$ satisfies
  \begin{equation}\label{eq:generalized s-matrix pre}
    d_{\hat{\mu}_1}([\hat{H},\hat{H}]_{\hat{\mu}_1})=0,
  \end{equation}
  where $\hat{\mu}^H_1$ and $\hat{\mu}^H_2$ are given by \eqref{twilled-twisting-1} and \eqref{twilled-twisting-2} respectively.
\end{pro}
\begin{proof}
By Lemma \ref{lem:twillL},  $(\huaG,\pi^H=\hat{\mu}^H_1+\hat{\mu}^H_2)$ is a twilled pre-Lie algebra if and only if $[\pi^H,\pi^H]^{\MN}=0$, which is equivalent to
\begin{eqnarray}
  \label{eq:twilled to twilled1}{[\hat{\mu}_1,\hat{\mu}_1]^{\MN}}&=&0,\\
\label{eq:twilled to twilled2}  {[\hat{\mu}_1,\hat{\mu}_2+[\hat{\mu}_1,\hat{H}]^{\MN}]^{\MN}}&=&0,\\
 \label{eq:twilled to twilled3} {[\hat{\mu}_2+[\hat{\mu}_1,\hat{H}]^{\MN},\hat{\mu}_2+[\hat{\mu}_1,\hat{H}]^{\MN}]^{\MN}}&=&0.
\end{eqnarray}
By \eqref{twilled-1} and \eqref{twilled-2}, \eqref{eq:twilled to twilled1} and \eqref{eq:twilled to twilled2} follow immediately. By \eqref{twilled-1}-\eqref{twilled-3}, we have
 \begin{eqnarray*}
   &&{[\hat{\mu}_2+[\hat{\mu}_1,\hat{H}]^{\MN},\hat{\mu}_2+[\hat{\mu}_1,\hat{H}]^{\MN}]^{\MN}}\\
   &=&2[\hat{\mu}_2,[\hat{\mu}_1,\hat{H}]^{\MN}]^{\MN}+[[\hat{\mu}_1,\hat{H}]^{\MN},[\hat{\mu}_1,\hat{H}]^{\MN}]^{\MN}\\
   &=&2{[[\hat{\mu}_1,\hat{\mu}_2]^{\MN},\hat{H}]^{\MN}}-2[\hat{\mu}_1,[\hat{\mu}_2,\hat{H}]^{\MN}]^{\MN}+{[[\hat{\mu}_1,\hat{H}]^{\MN},\hat{\mu}_1]^{\MN},\hat{H}]^{\MN}}\\
   &&-{[\hat{\mu}_1,[[\hat{\mu}_1,\hat{H}]^{\MN},\hat{H}]^{\MN}]^{\MN}}\\
   &=&-{[\hat{\mu}_1,[[\hat{\mu}_1,\hat{H}]^{\MN},\hat{H}]^{\MN}]^{\MN}}\\
   &=&-d_{\hat{\mu}_1}([\hat{H},\hat{H}]_{\hat{\mu}_1})=0,
 \end{eqnarray*}
which implies that \eqref{eq:twilled to twilled3} and \eqref{eq:generalized s-matrix pre} are equivalent.
\end{proof}}

At the end of this subsection, we consider the case $\hat{\mu}_2=0$. In this case, the twilled pre-Lie algebra $(\huaG,\g_1,\g_2)$ is a semi-direct pre-Lie algebra. In the following, we recall the notion of an $\huaO$-operator on a bimodule over a pre-Lie algebra given in \cite{Bai-Liu-Ni}.

\begin{defi}
    A linear map $T:V\longrightarrow \g$ is called an {\bf $\GRB$-operator} on a bimodule
    $(V;\huaL,\huaR)$ over a pre-Lie algebra $(\g,\cdot_\g)$  if it satisfies
  \begin{equation}\label{O-operator}
    T(u)\cdot_\g T(v)=T(\huaL_{T(u)}v+\huaR_{T(v)}u),\quad\forall~ u,v\in V.
  \end{equation}
\end{defi}

 Let $(V;\huaL,\huaR)$ be a bimodule over a pre-Lie algebra $(\g,\cdot_\g)$. Let $\mu$ denote the semi-direct product structure on $\g\oplus V$. Then
\begin{pro}
A linear map $T:V\longrightarrow \g$ is an $\huaO$-operator on the bimodule $(V;\huaL,\huaR)$ if and only if $\hat{T}$ is a solution of the Maurer-Cartan equation in the gLa $(C^*(V,\g),[-,-]_{\hat{\mu}})$ given in Corollary \ref{semi-direct-GLA}, i.e. $[\hat{T},\hat{T}]_{\hat{\mu}}=0$.
\end{pro}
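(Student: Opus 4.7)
The plan is to recognize this proposition as a direct specialization of Proposition \ref{twisting-twilled} to the semi-direct product setting. First I would observe that $\g\ltimes_{\huaL,\huaR}V$ is a twilled pre-Lie algebra with $\g_1=\g$ and $\g_2=V$. Because there is no pre-Lie multiplication on $V$ and no feedback of $V\otimes V$, $V\otimes\g$, or $\g\otimes V$ back into $\g$, the components $x\ast_2 v$, $u\ast_2 y$, and $u\ast_2 v$ of the product all vanish. By the formulas \eqref{bracket-2}--\eqref{bracket-3}, this forces $\hat{\mu}_2=0$, and the whole structure is carried by $\hat{\mu}_1$, which encodes $x\ast_1 y = x\cdot_\g y$, $x\ast_1 v = \huaL_x v$, and $u\ast_1 y = \huaR_y u$. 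Under this identification, the map $\hat{\mu}$ from Corollary \ref{semi-direct-GLA} coincides with $\hat{\mu}_1$, and the bracket $[-,-]_{\hat{\mu}}$ on $C^*(V,\g)$ of Corollary \ref{semi-direct-GLA} coincides with $[-,-]_{\hat{\mu}_1}$ of Proposition \ref{twisting-twilled}.

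Next I would invoke Proposition \ref{twisting-twilled} with $H=T$. Because $\hat{\mu}_2=0$, the differential $d_{\hat{\mu}_2}$ is the zero map, so the Maurer-Cartan equation
\[
d_{\hat{\mu}_2}\hat{T}+\tfrac12[\hat{T},\hat{T}]_{\hat{\mu}_1}=0
\]
collapses to $[\hat{T},\hat{T}]_{\hat{\mu}}=0$, which is exactly the MC equation in the gLa of Corollary \ref{semi-direct-GLA}.

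To translate this into the $\huaO$-operator identity, I would specialize the explicit form \eqref{eq:MC-expression} of the MC equation. With every $\ast_2$-contribution killed, the equation reduces to
\[
T(u)\ast_1 T(v)=T\bigl(T(u)\ast_1 v+u\ast_1 T(v)\bigr),
\]
and unwinding the bigraded notation gives
\[
T(u)\cdot_\g T(v)=T\bigl(\huaL_{T(u)}v+\huaR_{T(v)}u\bigr),
\]
which is precisely the defining equation \eqref{O-operator} of an $\huaO$-operator on $(V;\huaL,\huaR)$. Both implications are contained in this chain of equivalences, completing the proof.

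I do not expect any serious obstacle: the only bookkeeping point is to confirm that the semi-direct product's multiplication indeed lies entirely in the bidegree $1|0$ piece (i.e., $\hat{\mu}_2=\hat{\phi}_1=\hat{\phi}_2=0$), after which the result is a tautological specialization of Proposition \ref{twisting-twilled}.
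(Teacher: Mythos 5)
Your proof is correct and amounts to essentially the same computation as the paper's: the paper simply records the identity $\half[\hat{T},\hat{T}]_{\hat{\mu}}(u,v)=T(u)\cdot_\g T(v)-T(\huaL_{T(u)}v+\huaR_{T(v)}u)$ directly, whereas you obtain the same identity by specializing \eqref{eq:MC-expression} from Proposition \ref{twisting-twilled} to the semi-direct product case $\hat{\mu}_2=0$. In both cases the equivalence with the defining equation \eqref{O-operator} is then immediate.
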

\begin{proof}
It follows from the following relation:
$$\half[\hat{T},\hat{T}]_{\hat{\mu}}(u,v)= T(u)\cdot_\g T(v)-T(\huaL_{T(u)}v+\huaR_{T(v)}u),\quad \forall~u,v\in V.$$
\end{proof}

Furthermore, by Corollary \ref{cor:pre-Lie and O-operator}, we have
\begin{cor}{\rm(\cite{Bai-Liu-Ni})}\label{o-multiplication}
  Let $T$ be an \kup on a bimodule $(V;\huaL,\huaR)$ over a pre-Lie algebra $(\g,\cdot_\g)$. Then $(V,\cdot^T)$ is a pre-Lie algebra, where $\cdot^T$ is given by
\begin{equation}\label{eq:pre-Lie operation1}
  u\cdot^T v= \huaL_{T(u)}v+\huaR_{T(v)}u,\quad\forall~u,v\in V
\end{equation}
and $T$ is a homomorphism from the pre-Lie algebra $(V,\cdot^T)$ to the pre-Lie algebra $(\g,\cdot_\g)$.
\end{cor}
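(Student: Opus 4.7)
The plan is to deduce the corollary directly from the twisting machinery already established, specialized to the semi-direct product case. First I would identify the relevant twilled pre-Lie algebra: take $\huaG=\g\oplus V$ with $\g_1=\g$, $\g_2=V$, and let $\mu$ denote the semi-direct product multiplication. By construction, the associated decomposition $\pi=\hat{\mu}_1+\hat{\mu}_2$ has $\hat{\mu}_2=0$, since the multiplication on $V$ inside the semi-direct product is trivial. Thus $\hat{\mu}_1$ encodes both the pre-Lie multiplication $\cdot_\g$ on $\g$ and the bimodule maps $\huaL,\huaR$, in accordance with \eqref{bracket-2}.

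Next I would invoke the preceding proposition identifying $\huaO$-operators with Maurer-Cartan elements: the hypothesis that $T$ satisfies \eqref{O-operator} is equivalent to $[\hat{T},\hat{T}]_{\hat{\mu}}=0$, which is precisely the Maurer-Cartan equation \eqref{eq:Twist-Twilled-MC} in the dgLa of Corollary \ref{twilled-DGLA} (note that $d_{\hat{\mu}_2}=0$ because $\hat{\mu}_2=0$). Therefore Proposition \ref{twisting-twilled} applies, guaranteeing that the twisting $\pi^T$ is again a twilled pre-Lie algebra structure on $\huaG$.

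Now I would appeal to Corollary \ref{cor:pre-Lie and O-operator}, which gives a pre-Lie algebra structure on $\g_2=V$ by the formula
\[
u\cdot^{T}v=u\ast_2 v+T(u)\ast_1 v+u\ast_1 T(v).
\]
Substituting $\ast_2=0$ and reading off $T(u)\ast_1 v=\huaL_{T(u)}v$ and $u\ast_1 T(v)=\huaR_{T(v)}u$ from the semi-direct product structure recovers exactly $u\cdot^T v=\huaL_{T(u)}v+\huaR_{T(v)}u$, establishing the pre-Lie axiom on $(V,\cdot^T)$.

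Finally, the homomorphism property is immediate from the $\huaO$-operator equation itself: applying $T$ to $u\cdot^T v$ gives $T(\huaL_{T(u)}v+\huaR_{T(v)}u)=T(u)\cdot_\g T(v)$ by \eqref{O-operator}. No step here should be an obstacle, since all verifications are routine once we recognize the semi-direct product as the specialization $\hat{\mu}_2=0$ of the twilled framework; the only thing to double-check is that the bidegree decomposition of the semi-direct product really has the claimed form, so that \eqref{eq:mul2} reduces as stated.
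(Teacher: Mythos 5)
Your proposal is correct and follows essentially the same route as the paper: the paper likewise realizes the semi-direct product as the twilled pre-Lie algebra with $\hat{\mu}_2=0$, identifies the $\huaO$-operator condition with the Maurer--Cartan equation $[\hat{T},\hat{T}]_{\hat{\mu}}=0$, and then cites Corollary \ref{cor:pre-Lie and O-operator} to get the pre-Lie structure \eqref{eq:pre-Lie operation1} on $V$. Your explicit check of the reduction of \eqref{eq:mul2} under $\ast_2|_{V\otimes V}=0$ and of the homomorphism property via \eqref{O-operator} fills in exactly the routine details the paper leaves implicit.
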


\emptycomment{By Proposition \ref{pro:construction twilled pre-Lie1}, we have
\begin{pro}\label{pro:construction twilled pre-Lie2}
  Let $\g\ltimes_{\huaL,\huaR} V$ be a semi-direct pre-Lie algebra equipped with the structure $\pi=\hat{\mu}$. Then $(\g\oplus V,\pi^H=\hat{\mu}+[\hat{\mu},\hat{H}]^{\MN})$ is a twilled pre-Lie algebra if and only if $H$ satisfies
  \begin{equation}
    d_{\hat{\mu}}([\hat{H},\hat{H}]_{\hat{\mu}})=0.
  \end{equation}
\end{pro}}

By the fact $[\hat{T},\hat{T}]_{\hat{\mu}}=0$ and Proposition \ref{twisting-twilled}, we have
\begin{cor}\label{cor:O-twilled pre-Lie}
  Let $T$ be an \kup on a bimodule $(V;\huaL,\huaR)$ over a pre-Lie algebra $(\g,\cdot_\g)$. Then $(\g\oplus V,\pi^T=\hat{\mu}+[\hat{\mu},\hat{T}]^{\MN})$ is a twilled pre-Lie algebra.
\end{cor}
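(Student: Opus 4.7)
The plan is to deduce the corollary directly from Theorem~\ref{thm:twist} applied to the semi-direct product, using the $\huaO$-operator identity to kill the two non-twilled pieces of the twisted structure.

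First I would recognize that the semi-direct product pre-Lie algebra $\g\ltimes_{\huaL,\huaR}V$ is a \emph{special} twilled pre-Lie algebra: in the bidegree decomposition of its multiplication $\pi=\hat{\mu}$, one has $\hat{\mu}_1=\hat{\mu}$ while $\hat{\mu}_2=0$, $\hat{\phi}_1=0$ and $\hat{\phi}_2=0$ (only the structure bringing $\g_1\otimes\g_1\to\g_1$, $\g_1\otimes\g_2\to\g_2$, $\g_2\otimes\g_1\to\g_2$ is nonzero). Next, since $\pi$ is a pre-Lie multiplication, the general fact that twisting by $\hat{T}$ preserves the Maurer-Cartan equation (proved just before Corollary~\ref{twisting-isomorphism}) guarantees $\pi^{T}$ is again a pre-Lie multiplication. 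So the only remaining question is whether the decomposition $\pi^{T}=\hat{\phi}_1^{T}+\hat{\mu}_1^{T}+\hat{\mu}_2^{T}+\hat{\phi}_2^{T}$ has both $\hat{\phi}_1^{T}=0$ and $\hat{\phi}_2^{T}=0$, because by Lemma~\ref{lem:twillL} (or the very definition) this is exactly what it means for $(\g\oplus V,\pi^{T})$ to be twilled.

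Then I would feed the vanishing of $\hat{\phi}_1$, $\hat{\phi}_2$ and $\hat{\mu}_2$ into the formulas \eqref{twisting-1}--\eqref{twisting-4} of Theorem~\ref{thm:twist}. The $\hat{\phi}_1$-terms collapse, yielding
\begin{eqnarray*}
\hat{\phi}_1^{T}&=&0,\\
\hat{\mu}_1^{T}&=&\hat{\mu},\\
\hat{\mu}_2^{T}&=&[\hat{\mu},\hat{T}]^{\MN},\\
\hat{\phi}_2^{T}&=&\tfrac{1}{2}[\hat{T},\hat{T}]_{\hat{\mu}}.
\end{eqnarray*}
The first equation disposes of $\hat{\phi}_1^{T}$ for free. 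For the last equation I would invoke the preceding proposition, which rephrases the $\huaO$-operator condition \eqref{O-operator} as $[\hat{T},\hat{T}]_{\hat{\mu}}=0$; this gives $\hat{\phi}_2^{T}=0$, completing the required vanishing.

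I do not expect a genuine obstacle, since every ingredient is already in place; the argument is essentially a bookkeeping exercise in the bidegree calculus. The one point that deserves care is verifying that the bidegrees of the various brackets in \eqref{twisting-1}--\eqref{twisting-4} really do land in the $\hat{\phi}_1$, $\hat{\mu}_1$, $\hat{\mu}_2$, $\hat{\phi}_2$ components as claimed, so that one is allowed to read off $\hat{\phi}_2^{T}=\tfrac{1}{2}[\hat{T},\hat{T}]_{\hat{\mu}}$ as a genuine bidegree-$(-1|2)$ piece — but this is precisely what Lemma~\ref{important-lemma-2} together with $\|\hat{T}\|=-1|1$ and $\|\hat{\mu}\|=1|0$ guarantees.
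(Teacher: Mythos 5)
Your proposal is correct and is essentially the paper's own argument: the paper deduces the corollary from Proposition~\ref{twisting-twilled} (twisting of a twilled structure is twilled iff the Maurer--Cartan equation $d_{\hat{\mu}_2}\hat{H}+\tfrac{1}{2}[\hat{H},\hat{H}]_{\hat{\mu}_1}=0$ holds) together with the identity $[\hat{T},\hat{T}]_{\hat{\mu}}=0$ characterizing $\huaO$-operators, which in the semi-direct product case ($\hat{\mu}_2=0$) is exactly that Maurer--Cartan equation. You merely inline Proposition~\ref{twisting-twilled} by re-deriving it from the formulas of Theorem~\ref{thm:twist} specialized to $\hat{\phi}_1=\hat{\phi}_2=\hat{\mu}_2=0$, which is the same bidegree bookkeeping.
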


Given an \kup $T:V\lon\g$, we have a twilled pre-Lie algebra $\g\bowtie V_T$ by twisting $\g\ltimes V$ by $T$, where $(V_T,\cdot^T)$ is the pre-Lie algebra given by \eqref{eq:pre-Lie operation1}.

Define $\frkL^T, \huaR^T:V\rightarrow \gl(\g)$ by
 \begin{eqnarray}
  \label{eq:LT}\frkL^T_{u}y:&=&[\hat{\mu},\hat{T}]^{\MN}((0,u),(y,0))=T(u)\cdot_{\g}y-T(\huaR_y u),\\
   \label{eq:RT}\frkR^T_{v}x:&=&[\hat{\mu},\hat{T}]^{\MN}((x,0),(0,v))=x\cdot_{\g}T(v)-T(\huaL_x v),\quad\forall ~x,y\in \g,u,v\in V.
 \end{eqnarray}
 Then we have
 \begin{pro}\label{pro:bimodule and product-O}
 $(\g; \frkL^T, \frkR^T)$ is a bimodule over the pre-Lie algebra $(V,\cdot^T)$  and the pre-Lie algebra structure on $\g\bowtie V_T$ is explicitly given by
   \begin{equation}
     (x,u)\ast(y,v)=(x\cdot_{\g}y+\frkL^T_{u}y+\frkR^T_{v}x,\huaL_x v+\huaR_yu+u\cdot^T v),\quad\forall~x,y\in\g,u,v\in V.
   \end{equation}
 \end{pro}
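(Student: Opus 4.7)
\medskip

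\noindent\textbf{Proof proposal.} The plan is to derive both assertions directly from the twilled-algebra structure $\pi^T = \hat{\mu} + [\hat{\mu},\hat{T}]^{\MN}$ produced by Corollary~\ref{cor:O-twilled pre-Lie}, invoking Remark~\ref{two-representation} to read off the bimodule and the multiplication.

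First I would identify the bidegree decomposition of $\pi^T$. The lift $\hat{\mu}$ of the semi-direct product has bidegree $1|0$, and $\hat{T}$ has bidegree $-1|1$; by Lemma~\ref{important-lemma-2}, $[\hat{\mu},\hat{T}]^{\MN}$ has bidegree $0|1$. Thus in the unique decomposition $\pi^T=\hat{\phi}_1^T+\hat{\mu}_1^T+\hat{\mu}_2^T+\hat{\phi}_2^T$ one reads
\begin{equation*}
\hat{\phi}_1^T=0,\qquad \hat{\mu}_1^T=\hat{\mu},\qquad \hat{\mu}_2^T=[\hat{\mu},\hat{T}]^{\MN},\qquad \hat{\phi}_2^T=0,
\end{equation*}
the last equality being precisely the $\huaO$-operator equation $[\hat{T},\hat{T}]_{\hat{\mu}}=0$. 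Now Remark~\ref{two-representation} applied to the twilled pre-Lie algebra $(\g\oplus V,\pi^T,\g,V)$ immediately gives that the restriction $\hat{\mu}_2^T|_{V\otimes V}$ is a pre-Lie multiplication on $V$ and that the operators $v\mapsto \hat{\mu}_2^T(v,-)$ and $v\mapsto \hat{\mu}_2^T(-,v)$ define a bimodule of this pre-Lie algebra on $\g$. This is the whole bimodule claim, modulo checking that these restrictions coincide with $\cdot^T$, $\frkL^T$, $\frkR^T$.

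Second, I would carry out the four short evaluations of the Matsushima--Nijenhuis bracket on pure tensors. Using
\begin{equation*}
[\hat{\mu},\hat{T}]^{\MN}(X_1,X_2)=\hat{\mu}(\hat{T}X_1,X_2)+\hat{\mu}(X_1,\hat{T}X_2)-\hat{T}(\hat{\mu}(X_1,X_2))
\end{equation*}
(the standard expansion for the bracket of an arity-$1$ with an arity-$2$ map), and remembering that $\hat{T}(x,0)=0$ and $\hat{T}(0,v)=(T(v),0)$, one gets
$[\hat{\mu},\hat{T}]^{\MN}((x,0),(y,0))=0$,
$[\hat{\mu},\hat{T}]^{\MN}((0,u),(y,0))=(T(u)\cdot_\g y-T(\huaR_yu),0)$,
$[\hat{\mu},\hat{T}]^{\MN}((x,0),(0,v))=(x\cdot_\g T(v)-T(\huaL_xv),0)$, and
$[\hat{\mu},\hat{T}]^{\MN}((0,u),(0,v))=(0,\huaL_{T(u)}v+\huaR_{T(v)}u)=(0,u\cdot^T v)$. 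These four lines simultaneously verify \eqref{eq:LT}, \eqref{eq:RT}, and that the new pre-Lie product on $V$ is the one of Corollary~\ref{o-multiplication}, so the bimodule identification is complete.

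Third, I would assemble the full product by linearity. Adding $\hat{\mu}((x,u),(y,v))=(x\cdot_\g y,\huaL_xv+\huaR_yu)$ to the four expressions above yields
\begin{equation*}
(x,u)\ast(y,v)=\bigl(x\cdot_\g y+\frkL^T_uy+\frkR^T_vx,\;\huaL_xv+\huaR_yu+u\cdot^T v\bigr),
\end{equation*}
which is the stated formula. The only potentially delicate step is a careful bookkeeping of the $\perm_{(q,1,p-1)}$ and $\perm_{(p,q)}$ unshuffles in the Matsushima--Nijenhuis formula for small $p,q$; but since $\hat{T}$ has arity one all these index sets degenerate and no genuine obstruction arises. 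In particular the bimodule axioms \eqref{representation condition 1}--\eqref{representation condition 2} for $(\g;\frkL^T,\frkR^T)$ over $(V,\cdot^T)$ need not be re-verified by hand, as they are encoded in the identity $\tfrac12[\hat{\mu}_2^T,\hat{\mu}_2^T]^{\MN}=0$ supplied by Lemma~\ref{lem:twillL} for the twilled pre-Lie algebra $(\g\oplus V,\pi^T)$.
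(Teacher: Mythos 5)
Your proposal is correct and follows exactly the route the paper intends (the paper omits the proof, but its definitions of $\frkL^T$ and $\frkR^T$ as evaluations of $[\hat{\mu},\hat{T}]^{\MN}$, together with Corollary \ref{cor:O-twilled pre-Lie} and Remark \ref{two-representation}, encode precisely your argument): the twisted structure $\pi^T=\hat{\mu}+[\hat{\mu},\hat{T}]^{\MN}$ is a twilled pre-Lie algebra whose bidegree-$0|1$ component $\hat{\mu}_2^T$ squares to zero, which is equivalent to the bimodule claim, and the four pointwise evaluations you record give the explicit product. No gaps.
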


 \emptycomment{Let $T$ be an \kup on a bimodule $(V;\huaL,\huaR)$ over a pre-Lie algebra $(\g,\cdot_\g)$. Since $T$ is a solution of the Maurer-Cartan equation of the gLa $(\huaC^*(V,\g),[\cdot,\cdot]_{\hat{\mu}})$, it follows from the graded Jacobi identity that the map
\begin{equation}
d_{\hat{T}}:\Hom(\otimes^{n-1}V\wedge V,\g)\longrightarrow \Hom(\otimes^{n}V\wedge V,\g), \quad d_{\hat{T}}f:=[\hat{T},f]_{\hat{\mu}},
\end{equation}
 is a graded derivation of the gLa $(\huaC^*(V,\g),[\cdot,\cdot]_{\hat{\mu}})$ satisfying $d^2_{\hat{T}}=0$.
  Thus, we have
  \begin{lem}\label{lem:dgla}
 Let $T:V\longrightarrow\g$ be an \kup~ on a bimodule $(V;\huaL,\huaR)$ over a pre-Lie algebra $\g$. Then $(\huaC^*(V,\g),[\cdot,\cdot]_{\hat{\mu}},d_{\hat{T}})$ is a dgLa.
  \end{lem}

\begin{pro}\label{pro:O-SMC}
   Let $T$ be an $\huaO$-operator on a bimodule $(V;\huaL,\huaR)$ over $\g$. Then for a linear map $H:V\lon\g$,  $T+H$ is an \kup if and only if $H$ is a solution of the Maurer-Cartan equation on the dgLa $(C^*(V,\g),[-,-]_{\hat{\mu}},d_{\hat{T}})$.
\end{pro}

\begin{proof}
Note that $T+H$ is an \kup if and only if
$$[\hat{T}+\hat{H},\hat{T}+\hat{H}]_{\hat{\mu}}=0.$$
Since $[\hat{T},\hat{T}]_{\hat{\mu}}=0$, the above condition is equivalent to
$$[\hat{T},\hat{H}]_{\hat{\mu}}+\half [\hat{H},\hat{H}]_{\hat{\mu}}=0.$$
By the relation $d_T(\hat{H})=[\hat{T},\hat{H}]_{\hat{\mu}}$, the above condition is equivalent to
$$ d_T(\hat{H})+\half [\hat{H},\hat{H}]_{\hat{\mu}}=0.$$
 \end{proof}}

\subsection{Twisting on quasi-twilled pre-Lie algebras}
 Let $\huaG=\g_1\bowtie\g_2 $ be a quasi-twilled pre-Lie algebra with the structure $\pi=\mu_1+\mu_2+\phi_1$. The twisted structures by $H:\g_2\rightarrow \g_1$ have the forms:
\begin{eqnarray*}
\hat{\phi}_1^{H}&=&\hat{\phi}_1,\\
\hat{\mu}_1^{H}&=&\hat{\mu}_1+[\hat{\phi}_1,\hat{H}]^{\MN},\\
\hat{\mu}_2^{H}&=&\hat{\mu}_2+d_{\hat{\mu}_1}\hat{H}+\half[[\hat{\phi}_1,\hat{H}]^{\MN},\hat{H}]^{\MN},\\
\hat{\phi}_2^{H}&=&d_{\hat{\mu}_2}\hat{H}+\half[\hat{H},\hat{H}]_{\hat{\mu}_1}+\frac{1}{6}[[[\hat{\phi}_1,\hat{H}]^{\MN},\hat{H}]^{\MN},\hat{H}]^{\MN}.
\end{eqnarray*}

Since $\hat{\mu}_1$ is not a pre-Lie algebra structure, the derived bracket $[-,-]_{\hat{\mu}_1}$ does not satisfy the graded Jacobi rule in general. However, the space $C^*(\g_2,\g_1)$ still has an $L_\infty$-algebra structure $(d_{\hat{\mu}_2},[-,-]_{{\hat{\mu}_1}},[-,-,-]_{\hat{\phi}_1})$ given in Theorem \ref{quasi-as-shLie}. Then we have

\begin{pro}\label{pro:quasi-twilled to quasi-twilled}
The result of twisting $((\huaG,\pi^{H}),\g_1,\g_2)$ is also a quasi-twilled pre-Lie algebra   if and only if $H$ is a solution of the Maurer-Cartan equation in the above $L_\infty$-algebra, i.e.
\begin{equation}\label{eq:Twist-quasi-Twilled-MC}
d_{\hat{\mu}_2}(\hat{H})+\half[\hat{H},\hat{H}]_{{\hat{\mu}_1}}+\frac{1}{6}[\hat{H},\hat{H},\hat{H}]_{\hat{\phi}_1}=0.
\end{equation}
The condition \eqref{eq:Twist-quasi-Twilled-MC} is equivalent with
 \begin{eqnarray}\label{eq:MC-expression2}
  \nonumber&&H(u)\ast_1 H(v)+H(u)\ast_2 v+u\ast_2 H(v)\\
 &&=H\big(H(u)\ast_1 v+u\ast_1 H(v)\big)+H(u\ast_2 v)+H(\phi_1(H(u),H(v))).
  \end{eqnarray}
\end{pro}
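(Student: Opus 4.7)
The plan is to establish \eqref{eq:Twist-quasi-Twilled-MC} by extracting the vanishing of the bidegree $-1|2$ component of $\pi^{H}$, then expand both sides of this identity on arguments $u,v\in\g_2$ to get \eqref{eq:MC-expression2}.

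First, observe that by Theorem \ref{thm:twist}, the four pieces $\hat{\phi}_1^H,\hat{\mu}_1^H,\hat{\mu}_2^H,\hat{\phi}_2^H$ of $\pi^H$ carry bidegrees $2|{-1},\,1|0,\,0|1,\,-1|2$ respectively (this follows from Lemma \ref{important-lemma-2}, since $\|\hat{H}\|=-1|1$). By definition, $((\huaG,\pi^{H}),\g_1,\g_2)$ is quasi-twilled precisely when $\phi_2^H=0$, i.e.\ $\hat{\phi}_2^H=0$. Since the original triple is quasi-twilled, $\hat{\phi}_2=0$, so \eqref{twisting-4} collapses to
\begin{equation*}
\hat{\phi}_2^H = d_{\hat{\mu}_2}\hat{H}+\tfrac{1}{2}[\hat{H},\hat{H}]_{\hat{\mu}_1}+\tfrac{1}{6}[[[\hat{\phi}_1,\hat{H}]^{\MN},\hat{H}]^{\MN},\hat{H}]^{\MN}.
\end{equation*}
Applying the definition \eqref{eq:shLie3} of the ternary bracket with $f_1=f_2=f_3=H$ (here $n=1$, so the sign $(-1)^{n-1}=1$), the last summand is exactly $\tfrac{1}{6}[\hat{H},\hat{H},\hat{H}]_{\hat{\phi}_1}$, which yields \eqref{eq:Twist-quasi-Twilled-MC}. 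This gives the first equivalence.

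Next I would compute each of the three terms on arguments $u,v\in\g_2$. The first two terms were essentially worked out in the proof of Proposition \ref{twisting-twilled}: using \eqref{bracket-3} and \eqref{bracket-2}, one obtains
\begin{align*}
d_{\hat{\mu}_2}\hat{H}(u,v)&=-H(u)\ast_2 v-u\ast_2 H(v)+H(u\ast_2 v),\\
\tfrac{1}{2}[\hat{H},\hat{H}]_{\hat{\mu}_1}(u,v)&=H\bigl(H(u)\ast_1 v+u\ast_1 H(v)\bigr)-H(u)\ast_1 H(v).
\end{align*}
For the ternary term, I would expand $[\hat{\phi}_1,\hat{H}]^{\MN}$ first (this has bidegree $1|0$ by Lemma \ref{important-lemma-2} and sends $(x,y)\in\g_1^{\otimes 2}$ essentially to $H\phi_1(x,y)$ up to lower strata), bracket it once more with $\hat{H}$ to get a cochain in bidegree $0|1$, and finally bracket with $\hat{H}$ once more to land in bidegree $-1|2$. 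After tracking the combinatorial factors produced by the three unshuffle sums in the definition \eqref{eq:Nij-preLie} of $\diamond$, the only surviving contribution on $(u,v)\in\g_2\times\g_2$ should be $6H\bigl(\phi_1(H(u),H(v))\bigr)$, whence $\tfrac{1}{6}[\hat{H},\hat{H},\hat{H}]_{\hat{\phi}_1}(u,v)=H\bigl(\phi_1(H(u),H(v))\bigr)$. Summing the three contributions and setting the result to zero gives precisely \eqref{eq:MC-expression2}.

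The main obstacle is the bookkeeping in the triple bracket: one has to verify that all intermediate terms of the wrong bidegree cancel and that the combinatorial factor $3!=6$ arises from iterating $\diamond$, so that the $\tfrac{1}{6}$ in the MC equation matches $H\circ\phi_1\circ(H\otimes H)$ exactly. Once this is in hand, the rest is bookkeeping already carried out (in a simpler form) in Proposition \ref{twisting-twilled}.
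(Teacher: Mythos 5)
Your argument is correct and is exactly the proof the paper leaves implicit: the first equivalence follows from Theorem \ref{thm:twist} with $\hat{\phi}_2=0$ together with the definition of a quasi-twilled pre-Lie algebra (only the bidegree $-1|2$ component $\hat{\phi}_2^{H}$ can obstruct), and the second is the direct evaluation on $\g_2\otimes\g_2$ extending the computation already done in Proposition \ref{twisting-twilled}. The triple-bracket bookkeeping you flag does work out: each of the three terms of $[[[\hat{\phi}_1,\hat{H}]^{\MN},\hat{H}]^{\MN},\hat{H}]^{\MN}\big((0,u),(0,v)\big)$ contributes $\mp 2\,H\big(\phi_1(H(u),H(v))\big)$, producing the factor $6$ you predicted, up to an overall sign that is shared by all three summands of the Maurer--Cartan expression (the paper's own formulas in the proof of Proposition \ref{twisting-twilled} already carry this overall sign flip relative to a direct expansion of $[-,\hat{H}]^{\MN}$) and is therefore irrelevant once the sum is set to zero.
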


\begin{cor}\label{cor:twist-O-operator}
Let $(\huaG,\g_1,\g_2)$ be a  quasi-twilled pre-Lie algebra and $H:\g_2\longrightarrow\g_1$ a solution of the Maurer-Cartan equation in the associated  $L_\infty$-algebra. Then
\begin{eqnarray*}
u\cdot^{{H,\phi_1}}v:=u\ast_2 v+H(u)\ast_1 v+u\ast_1 H(v)+\phi_1(H(u),H(v)),\quad\forall u,v\in\g_2
\end{eqnarray*}
defines a  pre-Lie algebra structure on $\g_2$.
\end{cor}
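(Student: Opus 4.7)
The strategy is to invoke Proposition \ref{pro:quasi-twilled to quasi-twilled} and then read off the structure on $\g_2$ directly. Since $H$ is assumed to solve the Maurer-Cartan equation of the $L_\infty$-algebra constructed in Theorem \ref{quasi-as-shLie}, that proposition guarantees that $((\huaG,\pi^{H}),\g_1,\g_2)$ is again a quasi-twilled pre-Lie algebra, i.e.\ its twisted structure contains no $\hat\phi_2^H$-term and reads $\pi^H=\hat\phi_1^H+\hat\mu_1^H+\hat\mu_2^H$. Lemma \ref{lem:quasi-t} then applies to $\pi^H$; in particular $[\hat\mu_2^H,\hat\mu_2^H]^{\MN}=0$. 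Since $\hat\mu_2^H$ has bidegree $0|1$ by Theorem \ref{thm:twist}, its restriction to $\g_2\otimes\g_2$ lands in $\g_2$ and, exactly as noted in Remark \ref{two-representation}, defines a pre-Lie algebra multiplication on $\g_2$.

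It then suffices to identify this restricted multiplication with $\cdot^{H,\phi_1}$. I would start from the explicit expression
$$\hat\mu_2^{H}=\hat\mu_2+d_{\hat\mu_1}\hat H+\tfrac12[[\hat\phi_1,\hat H]^{\MN},\hat H]^{\MN}$$
from Theorem \ref{thm:twist} and evaluate each summand on a pair $((0,u),(0,v))$ with $u,v\in\g_2$. The first summand contributes $(0,u\ast_2 v)$ by \eqref{bracket-3}. Expanding $[\hat\mu_1,\hat H]^{\MN}=\hat\mu_1\diamond\hat H-\hat H\diamond\hat\mu_1$ via \eqref{eq:Nij-preLie} and using $\hat H(0,u)=(H(u),0)$ together with \eqref{bracket-2}, the middle summand reduces to $(0,H(u)\ast_1 v+u\ast_1 H(v))$ (the term $\hat H(\hat\mu_1((0,u),(0,v)))$ vanishes, since $\hat\mu_1$ has no output in $\g_1$ on $\g_2\otimes\g_2$). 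For the last summand, write $K=[\hat\phi_1,\hat H]^{\MN}$ and unfold $[K,\hat H]^{\MN}((0,u),(0,v))$.

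Most contributions drop out because $\hat H\circ\hat H=0$ and $\hat\phi_1$ vanishes as soon as one of its arguments lies in $\g_2$; the two surviving terms both compute to $(0,\phi_1(H(u),H(v)))$, and hence their sum multiplied by $\frac12$ gives precisely $(0,\phi_1(H(u),H(v)))$. Summing the three contributions reproduces the stated formula for $u\cdot^{H,\phi_1}v$. The main (and only) obstacle is this bookkeeping in the cubic term, where one must see clearly which of the eight terms in $[K,\hat H]^{\MN}$ survive; everything else is immediate from the bidegree calculus of Section \ref{sec:L} and the hypothesis that $H$ solves \eqref{eq:Twist-quasi-Twilled-MC}.
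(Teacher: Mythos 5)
Your proposal is correct and follows essentially the same route as the paper: deduce from Proposition \ref{pro:quasi-twilled to quasi-twilled} and Lemma \ref{lem:quasi-t} that $\hat{\mu}_2^{H}$ is a pre-Lie multiplication, restrict to $\g_2\otimes\g_2$ using its bidegree, and identify the restriction with the stated formula (the paper merely states this last computation, which you carry out explicitly and correctly). One negligible slip: the term $\hat H(\hat\mu_1((0,u),(0,v)))$ vanishes because $\hat\mu_1$, having bidegree $1|0$, is identically zero on $\g^{0,2}\supset\g_2\otimes\g_2$ (in particular its $\g_2$-output vanishes, which is what $\hat H$ sees), not because it has ``no output in $\g_1$''.
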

\begin{proof}
By  Lemma \ref{lem:quasi-t}, we deduce that $\hat{\mu}_2^{H}$ is a  pre-Lie algebra multiplication on $\huaG$. By the definition of $\hat{\mu}_2^{H}$, we obtain that $\hat{\mu}_2^{H}|_{\g_2\otimes\g_2}$ is a  pre-Lie multiplication on $\g_2$. Furthermore, the multiplication on $\g_2$ is given by:
$$
\hat{\mu}_2^{H}(u,v)=u\ast_2 v+H(u)\ast_1 v+u\ast_1 H(v)+\phi_1(H(u),H(v)).
$$
\end{proof}

In the following, we consider the case $\hat{\mu}_2=0$. By a direct calculation, we have
\begin{pro}\label{pro:condition of quasi-twilled pre-Lie}
    Let $(\g,\cdot_\g)$ be a pre-Lie algebra and $V$ a vector space. Let $\huaL,\huaR:\g\longrightarrow\gl(V)$ be two linear maps with $x\mapsto \huaL_x$ and $x\mapsto \huaR_x$ respectively and $\phi:\g\times \g\rightarrow V$ be a linear map. Then $\g\oplus V$  with the multiplication $\ast$ given by
 \begin{equation}\label{eq:extension mult}
  (x,u)\ast(y,v)=(x\cdot_\g y,\huaL_xv+\huaR_y u+\phi(x,y))
\end{equation}
is a quasi-twilled pre-Lie algebra if and only if $(V;\huaL,\huaR)$ is a bimodule over the pre-Lie algebra $\g$ and $\phi$  is a $2$-cocycle associated to the bimodule $(V;\huaL,\huaR)$. We denote this quasi-twilled pre-Lie algebra by $\g\ltimes_{\huaL,\huaR,\phi}V$.
\end{pro}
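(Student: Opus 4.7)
The plan is to read the multiplication $\ast$ directly as a $2$-cochain in $C^2(\g \oplus V, \g \oplus V)$, decompose it by bidegree via Lemma \ref{lem:dec}, and then appeal to the characterization of quasi-twilled pre-Lie algebras given in Lemma \ref{lem:quasi-t}. Since $u\ast v = (0,0)$ for $u,v \in V$ and the mixed products $x\ast v$ and $u\ast y$ take values entirely in $V$, a comparison with \eqref{bracket-1}--\eqref{bracket-4} immediately yields $\hat{\phi}_2 = 0$ and $\hat{\mu}_2 = 0$, together with
\[
\hat{\mu}_1((x,u),(y,v)) = (x\cdot_\g y,\ \huaL_x v + \huaR_y u),\qquad \hat{\phi}_1((x,u),(y,v)) = (0,\ \phi(x,y)).
\]
Because $\hat{\phi}_2 = 0$ holds automatically, $(\g\oplus V,\g,V)$ will be a quasi-twilled pre-Lie algebra precisely when $\pi := \hat{\mu}_1 + \hat{\phi}_1$ is itself a pre-Lie multiplication.

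Specializing Lemma \ref{lem:quasi-t} to the case $\hat{\mu}_2 = 0$, the four Maurer-Cartan conditions collapse to
\[
\tfrac{1}{2}[\hat{\mu}_1,\hat{\mu}_1]^{\MN} = 0 \qquad \text{and} \qquad [\hat{\mu}_1,\hat{\phi}_1]^{\MN} = 0.
\]
The first equation says that $\hat{\mu}_1$---visibly the semi-direct-product multiplication associated with $(\huaL,\huaR)$---is a pre-Lie multiplication. Splitting this equation according to whether zero, one, or two of the three inputs lie in $V$ recovers exactly the associator identity for $(\g,\cdot_\g)$ together with the two bimodule axioms \eqref{representation condition 1}--\eqref{representation condition 2}. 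This is the standard equivalence between bimodules and semi-direct products already recalled in the paper.

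For the second equation I will compute $[\hat{\mu}_1,\hat{\phi}_1]^{\MN} = \hat{\mu}_1\diamond\hat{\phi}_1 + \hat{\phi}_1\diamond\hat{\mu}_1$ on a triple $((x_1,0),(x_2,0),(x_3,0))$; by the bidegree of $\hat{\phi}_1$, any evaluation with an argument in $V$ vanishes identically, so this is the only case to consider. The unshuffle sums produce seven terms which must be matched against the seven terms of
\[
\delta\phi(x_1,x_2,x_3) = \huaL_{x_1}\phi(x_2,x_3) - \huaL_{x_2}\phi(x_1,x_3) + \huaR_{x_3}\phi(x_2,x_1) - \huaR_{x_3}\phi(x_1,x_2)
\]
\[
\qquad - \phi(x_2,x_1\cdot_\g x_3) + \phi(x_1,x_2\cdot_\g x_3) - \phi([x_1,x_2]_\g,x_3),
\]
obtained by specializing \eqref{eq:pre-Lie cohomology} to $n=2$ with the bimodule $(V;\huaL,\huaR)$. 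Thus $[\hat{\mu}_1,\hat{\phi}_1]^{\MN} = 0$ is equivalent to $\delta\phi = 0$, i.e.\ to $\phi$ being a $2$-cocycle.

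The main obstacle is precisely this last term-by-term matching: the four $\huaL/\huaR$-terms of $\delta\phi$ must be extracted from $\hat{\mu}_1\diamond\hat{\phi}_1$, while the three $\phi$-terms (the two corrections of the form $\phi(x_i,x_j\cdot_\g x_k)$ and the commutator term) must be extracted from $\hat{\phi}_1\diamond\hat{\mu}_1$, with the signs delivered by the unshuffle signatures and by the graded Koszul sign $-(-1)^{1\cdot 1} = +1$ in the Matsushima-Nijenhuis bracket. The computation is forced but sign-delicate; it is the pre-Lie analog of the standard identification of the Hochschild (or Chevalley-Eilenberg) coboundary with a graded commutator against the structure map, and once both surviving Maurer-Cartan equations have been interpreted the claimed equivalence follows in both directions.
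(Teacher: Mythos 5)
Your proof is correct and is essentially the argument the paper intends (the paper itself offers only ``by a direct calculation''): reading $\ast$ as a $2$-cochain, observing $\hat{\mu}_2=\hat{\phi}_2=0$, and reducing via Lemma \ref{lem:quasi-t} to $\frac{1}{2}[\hat{\mu}_1,\hat{\mu}_1]^{\MN}=0$ and $[\hat{\mu}_1,\hat{\phi}_1]^{\MN}=0$, which unwind to the bimodule axioms and to $\delta\phi=0$ respectively. The term-by-term matching you defer does close up: one finds $[\hat{\mu}_1,\hat{\phi}_1]^{\MN}=-\delta\phi$ on $\wedge^{2}\g\otimes\g$ (and it vanishes on all other components by bidegree), so the equivalence holds exactly as you claim.
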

  \begin{defi}
    Let $(\g,\cdot_\g)$ be a pre-Lie algebra and $(V;\huaL,\huaR)$ a bimodule over $\g$. Let $\phi:\g\times \g\rightarrow V$ be a $2$-cocycle associated to the bimodule $(V;\huaL,\huaR)$. A linear map $T:V\rightarrow \g$ is called a {\bf $\phi$-twisted $\huaO$-operator} if it satisfies
    \begin{equation}\label{eq:twist-Operator}
    T(u)\cdot_\g T(v)=T\big(\huaL_{T(u)}v+\huaR_{T(v)}u\big)+T(\phi(T(u),T(v))),\quad\forall~u,v\in V.
  \end{equation}
  \end{defi}

Let $(V;\huaL,\huaR)$ be a bimodule over a pre-Lie algebra $(\g,\cdot_\g)$ and $\phi$ a $2$-cocycle associated to the bimodule $(V;\huaL,\huaR)$. Let $\mu+\phi$ denote the structure of the quasi-twilled pre-Lie algebra $\g\ltimes_{\huaL,\huaR,\phi}V$.
\begin{pro}
A linear map $T:V\longrightarrow \g$ is a $\phi$-twisted $\huaO$-operator on a bimodule $(V;\huaL,\huaR)$ over a pre-Lie algebra $\g$ if and only if $\hat{T}$ is a solution of the Maurer-Cartan equation on the $L_\infty$-algebra $(C^*(V,\g),d_{\hat{\mu}_2},[-,-]_{{\hat{\mu}_1}},[-,-,-]_{\hat{\phi}_1})$  given in Theorem \ref{quasi-as-shLie}, i.e. $$\half[\hat{T},\hat{T}]_{\hat{\mu}}+\frac{1}{6}[\hat{T},\hat{T},\hat{T}]_{\hat{\phi}}=0,$$
where $\mu_1=\mu$, $\mu_2=0$ and $\phi_1=\phi$.
\end{pro}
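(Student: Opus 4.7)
The plan is to combine Proposition \ref{pro:quasi-twilled to quasi-twilled} with a direct computation of the twisted structure $\pi^T$ on the underlying vector space $\g\oplus V$. The relevant quasi-twilled pre-Lie algebra is $\g\ltimes_{\huaL,\huaR,\phi}V$, for which by construction $\mu_2=0$ and $\phi_2=0$, while $\mu_1$ encodes both $\cdot_\g$ and the bimodule actions $(\huaL,\huaR)$, and $\phi_1=\phi$. Since $\hat{\mu}_2=0$, the derived differential $d_{\hat{\mu}_2}$ vanishes, so the Maurer-Cartan equation for $\hat T\in C^1(V,\g)$ collapses to exactly
\[
\tfrac{1}{2}[\hat T,\hat T]_{\hat{\mu}}+\tfrac{1}{6}[\hat T,\hat T,\hat T]_{\hat{\phi}}=0.
\]

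By Proposition \ref{pro:quasi-twilled to quasi-twilled}, this MC equation is equivalent to $((\huaG,\pi^T),\g,V)$ being a quasi-twilled pre-Lie algebra, which (since the original $\phi_2$ already vanishes and twisting preserves the decomposition) amounts to the single condition $\hat{\phi}_2^T=0$. Indeed, inspecting the twisting formula \eqref{twisting-4} with $\phi_2=0$ and $\mu_2=0$ gives
\[
\hat{\phi}_2^T=\tfrac{1}{2}[\hat T,\hat T]_{\hat{\mu}_1}+\tfrac{1}{6}[[[\hat{\phi}_1,\hat T]^{\MN},\hat T]^{\MN},\hat T]^{\MN}=\tfrac{1}{2}[\hat T,\hat T]_{\hat{\mu}}+\tfrac{1}{6}[\hat T,\hat T,\hat T]_{\hat{\phi}},
\]
where the second equality uses \eqref{eq:shLie3} with $n=1$.

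The heart of the proof is then to compute $\hat{\phi}_2^T(u,v)$ for $u,v\in V$ directly via the identity $\pi^T=e^{-\hat T}\circ\pi\circ(e^{\hat T}\otimes e^{\hat T})$. Since $e^{\hat T}(0,u)=(T(u),u)$, one evaluates
\[
\pi\bigl((T(u),u),(T(v),v)\bigr)=\bigl(T(u)\cdot_\g T(v),\ \huaL_{T(u)}v+\huaR_{T(v)}u+\phi(T(u),T(v))\bigr),
\]
and then applies $e^{-\hat T}={\Id}-\hat T$ (using $\hat T\circ\hat T=0$) to obtain
\[
\pi^T((0,u),(0,v))=\bigl(T(u)\cdot_\g T(v)-T(\huaL_{T(u)}v+\huaR_{T(v)}u+\phi(T(u),T(v))),\ \ast\bigr).
\]
By the bidegree decomposition, only $\hat{\phi}_2^T$ (of bidegree $-1|2$) contributes to the $\g$-component of $\pi^T$ on $V\otimes V$, so the first slot is exactly $\hat{\phi}_2^T(u,v)$. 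Setting this to zero yields precisely \eqref{eq:twist-Operator}.

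The only subtlety is bookkeeping for the bidegree decomposition: one must verify that among $\hat{\phi}_1^T,\hat{\mu}_1^T,\hat{\mu}_2^T,\hat{\phi}_2^T$, no other component can land in $\g$ when evaluated on $V\otimes V$, so that the identification of the $\g$-component of $\pi^T((0,u),(0,v))$ with $\hat{\phi}_2^T(u,v)$ is unambiguous. This is automatic since $\hat{\phi}_1^T$ and $\hat{\mu}_2^T$ produce values in $V$, while $\hat{\mu}_1^T$ vanishes on $V\otimes V$ by its bidegree. No genuinely difficult step arises; the derived-bracket formalism of Theorem \ref{quasi-as-shLie} is precisely engineered to make this identification hold.
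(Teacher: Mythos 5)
Your proof is correct. The paper itself gives no argument here beyond ``it follows by a direct calculation,'' which presumably means expanding $\tfrac12[\hat T,\hat T]_{\hat\mu}(u,v)$ and $\tfrac16[\hat T,\hat T,\hat T]_{\hat\phi}(u,v)$ on elements exactly as is done for Proposition \ref{twisting-twilled}, and checking that the sum equals $T(u)\cdot_\g T(v)-T(\huaL_{T(u)}v+\huaR_{T(v)}u)-T(\phi(T(u),T(v)))$. You take a different and arguably more conceptual route: you first observe that with $\mu_2=\phi_2=0$ the Maurer--Cartan expression is, by the twisting formula \eqref{twisting-4} together with \eqref{eq:shLie2}--\eqref{eq:shLie3} at $m=n=1$, precisely the component $\hat{\phi}_2^{T}$ of the twisted structure, and then you compute that component not by unwinding the iterated Matsushima--Nijenhuis brackets but via the conjugation identity $\pi^{T}=e^{-\hat T}\circ\pi\circ(e^{\hat T}\otimes e^{\hat T})$, reading off the $\g$-component of $\pi\big((T(u),u),(T(v),v)\big)$ after applying $\Id-\hat T$. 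The bidegree bookkeeping you do to justify that only $\hat{\phi}_2^{T}$ contributes to the $\g$-valued part on $V\otimes V$ is exactly right ($\hat\mu_1^T$ vanishes on $\g^{0,2}$ and $\hat\mu_2^T$, $\hat\phi_1^T$ land in $V$ there). What your approach buys is that it explains \emph{why} the Maurer--Cartan element is the obstruction to $G_T$-type integrability (it is the ``curvature'' $\hat\phi_2^T$ of the twist), and it reuses Lemma 4.3 and Theorem \ref{thm:twist} instead of redoing a bracket computation; the paper's intended direct calculation is shorter to state but less illuminating. No gap.
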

\begin{proof}
  It follows by a direct calculation, we omit the details.
\end{proof}

\emptycomment{Obviously, a $\phi$-twisted $\huaO$-operator is a solution of the Maurer-Cartan equation on a $L_\infty$-algebra.
\emptycomment{An extension of pre-Lie algebras is a short exact sequence of pre-Lie algebra $\g$ by $\h$:
$$ 0\longrightarrow \h\stackrel{\imath}{\longrightarrow} \hat{g}\stackrel{p}\longrightarrow g\longrightarrow0.$$
Let $\sigma:\g\rightarrow \hat{\g}$ be a split. Define $\phi:\g\times \g\rightarrow \h$,
 $\huaL:\g\longrightarrow\gl(\h)$ and $\huaR:\g\longrightarrow\gl(\h)$ respectively by
\begin{eqnarray}
  \label{eq:str1}\phi(x,y)&=&\sigma(x)\cdot_{\hat{\g}}\sigma(y)-\sigma(x\cdot_{\g}y),\\
 \label{eq:str2} \huaL_xu&=&\sigma(x)\cdot_{\hat{\g}} u,\\
 \label{eq:str31}\huaR_xu&=&u\cdot_{\hat{\g}}\sigma(x),
\end{eqnarray}
where $x,y\in\g$ and $u\in\h$.

 Given a split $\sigma$, we have $\hat{g}\cong \g\oplus \h$, and the pre-Lie algebra structure on  $\hat{\g}$ can be transferred to  $\g\oplus \h$:}

Let $T:V\rightarrow \g$ be a linear map. We denote the graph of $T$ by $G_T$,
$$G_T:=\{(T(u),u)|u\in V\}.$$
Then we have
\begin{pro}
  The graph $G_T$ is a subalgebra of $\g\ltimes_{\huaL,\huaR,\phi}V$ if and only if $T$ is a $\phi$-twisted $\huaO$-operator on the bimodule $(V;\huaL,\huaR)$ over a pre-Lie algebra $(\g,\cdot_\g)$. Moreover, the subalgebra $G_T$ of $\g\ltimes_{\huaL,\huaR,\phi}V$ is isomorphic to the pre-Lie algebra $(V,\cdot^{{H,\phi}})$ given by Corollary \ref{cor:twist-O-operator}.
\end{pro}
\begin{proof}
  For any $(T(u),u),(T(v),v)\in G_T$, we have
  $$ (T(u),u)\ast(T(v),v)=(T(u)\cdot_\g T(v),\huaL_{T(u)}v+\huaR_{T(v)} u+\phi(T(u),T(v))).$$
 Thus, $T$ is a $\phi$-twisted $\huaO$-operator if and only if $(T(u)\cdot_\g T(v),\huaL_{T(u)}v+\huaR_{T(v)} u+\phi(T(u),T(v)))$ is in $G_T$.

  The second claim follows by a direct calculation. We omit the details.
\end{proof}}
By Corollary \ref{cor:twist-O-operator}, we have
\begin{cor}\label{cor:twist-pre-Lie operation}
  Let $T$ be a $\phi$-twisted $\huaO$-operator on a bimodule $(V;\huaL,\huaR)$ over a pre-Lie algebra $(\g,\cdot_\g)$. Then $(V,\cdot^{T,\phi})$ is a pre-Lie algebra, where $\cdot^{T,\phi}$ is given by
\begin{equation}\label{eq:pre-Lie operation2}
  u\cdot^{T,\phi} v= \huaL_{T(u)}v+\huaR_{T(v)}u+\phi(T(u),T(v)),\quad\forall~u,v\in V,
\end{equation}
and $T$ is a homomorphism from the pre-Lie algebra $(V,\cdot^{T,\phi})$ to the pre-Lie algebra $(\g,\cdot_\g)$.
\end{cor}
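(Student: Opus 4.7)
The plan is to deduce this corollary by specializing the preceding general result Corollary \ref{cor:twist-O-operator} to the quasi-twilled pre-Lie algebra $\g\ltimes_{\huaL,\huaR,\phi}V$ produced by Proposition \ref{pro:condition of quasi-twilled pre-Lie}, and then to read off the homomorphism property directly from the defining equation \eqref{eq:twist-Operator} of a $\phi$-twisted $\huaO$-operator.

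More precisely, I would first identify $\g_1=\g$ and $\g_2=V$ inside the quasi-twilled pre-Lie algebra $\huaG=\g\ltimes_{\huaL,\huaR,\phi}V$, and match the decomposition $\pi=\hat\phi_1+\hat\mu_1+\hat\mu_2+\hat\phi_2$ with the multiplication \eqref{eq:extension mult}. Reading off \eqref{bracket-1}--\eqref{bracket-4}, one sees that $\mu_1$ encodes both $\cdot_\g$ and the bimodule action (so that $x\ast_1 v=\huaL_xv$ and $u\ast_1 y=\huaR_yu$), $\phi_1=\phi$ encodes the cocycle, while $\mu_2=0$ and $\phi_2=0$. By the proposition immediately preceding the corollary, the hypothesis that $T$ is a $\phi$-twisted $\huaO$-operator is equivalent to $\hat{T}$ being a Maurer--Cartan element of the associated $L_\infty$-algebra $(C^*(V,\g),d_{\hat{\mu}_2},[-,-]_{\hat\mu_1},[-,-,-]_{\hat\phi_1})$ from Theorem \ref{quasi-as-shLie}.

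Next I would apply Corollary \ref{cor:twist-O-operator} with $H=T$. Since $\mu_2=0$, the general formula $u\cdot^{H,\phi_1}v=u\ast_2 v+H(u)\ast_1 v+u\ast_1 H(v)+\phi_1(H(u),H(v))$ collapses, in this special case, to
\[
u\cdot^{T,\phi}v=\huaL_{T(u)}v+\huaR_{T(v)}u+\phi(T(u),T(v)),
\]
which is exactly \eqref{eq:pre-Lie operation2}. Hence $(V,\cdot^{T,\phi})$ is a pre-Lie algebra by virtue of Corollary \ref{cor:twist-O-operator}.

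Finally, for the homomorphism assertion, I would just apply $T$ to both sides of \eqref{eq:pre-Lie operation2} and invoke the defining identity \eqref{eq:twist-Operator}:
\[
T(u\cdot^{T,\phi}v)=T\bigl(\huaL_{T(u)}v+\huaR_{T(v)}u\bigr)+T\bigl(\phi(T(u),T(v))\bigr)=T(u)\cdot_\g T(v).
\]
There is essentially no obstacle here: once the bookkeeping of identifying $\g_1,\g_2,\mu_1,\mu_2,\phi_1,\phi_2$ in the quasi-twilled setting is correct, the corollary is a direct specialization of Corollary \ref{cor:twist-O-operator} together with a one-line verification. The only subtlety worth double-checking is the sign/placement convention in equations \eqref{bracket-1}--\eqref{bracket-4}, to ensure that $\huaL$ and $\huaR$ are matched to $\ast_1$ on the correct sides; this is purely notational rather than conceptual.
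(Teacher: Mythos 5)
Your proposal is correct and follows exactly the paper's (implicit) argument: the paper derives this corollary by specializing Corollary \ref{cor:twist-O-operator} to the quasi-twilled pre-Lie algebra $\g\ltimes_{\huaL,\huaR,\phi}V$ with $\mu_2=0$, $\phi_2=0$, $\phi_1=\phi$, and the homomorphism property is immediate from \eqref{eq:twist-Operator}. Your identification of $\ast_1$ with $\cdot_\g$, $\huaL$, $\huaR$ and of $\ast_2$ with $\phi$ on $\g\otimes\g$ is the right bookkeeping, so nothing is missing.
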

By Proposition \ref{pro:quasi-twilled to quasi-twilled}, we have
\begin{cor}\label{cor:twist-o-quasi-pre-Lie}
  Let $T$ be a $\phi$-twisted $\huaO$-operator on a bimodule $(V;\huaL,\huaR)$ over a pre-Lie algebra $(\g,\cdot_\g)$. Let $\pi^\phi=\mu+\phi$ be the structure of the quasi-pre-Lie algebra $\g\ltimes_{\huaL,\huaR,\phi}V$. Then $(\g\oplus V,\pi^{T,\phi}=\hat{\pi^\phi}+[\hat{\mu},\hat{T}]^{\MN}+[\hat{\phi},\hat{T}]^{\MN})$ is a quasi-twilled pre-Lie algebra. Moreover, the quasi-twilled pre-Lie algebra structure on $\g\oplus V$ is explicitly given by
   \begin{eqnarray*}
     (x,u)\ast(y,v)&=&(x\cdot_{\g}y+\frkL^T_{u}y+\frkR^T_{v}x-T(\phi(x,y))-T(\phi(T(\xi),y))\\
     &&-T(\phi(x,T(\eta))),\huaL_x v+\huaR_yu+\phi(T(\xi),y)+\phi(x,T(\eta))+u\cdot^{T,\phi} v+\phi(x,y)),
   \end{eqnarray*}
   where $\frkL^T$ and $\frkR^T$ are given by \eqref{eq:LT} and \eqref{eq:RT} respectively and $x,y\in\g,\xi,\eta\in\g^*$.
\end{cor}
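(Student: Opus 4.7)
The plan is to combine Proposition~\ref{pro:quasi-twilled to quasi-twilled} with a direct computation of the twisted multiplication. By the immediately preceding proposition, the hypothesis that $T$ is a $\phi$-twisted $\huaO$-operator is precisely the Maurer--Cartan equation for $\hat T$ in the $L_\infty$-algebra associated with $\g\ltimes_{\huaL,\huaR,\phi}V$ (specialised to $\mu_1=\mu$, $\mu_2=0$, $\phi_1=\phi$, $\phi_2=0$). Proposition~\ref{pro:quasi-twilled to quasi-twilled} therefore immediately produces the claimed quasi-twilled pre-Lie algebra structure on $\g\oplus V$, which is the $\pi^{T,\phi}$ in the statement.

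For the explicit multiplication it is cleanest to avoid expanding the Matsushima--Nijenhuis brackets directly and use instead the identity $\pi^T=e^{-\hat T}\circ\pi\circ(e^{\hat T}\otimes e^{\hat T})$ together with $\hat T\circ\hat T=0$, so that $e^{\pm\hat T}=\Id\pm\hat T$ and $e^{\hat T}(x,u)=(x+T(u),u)$. Substituting this into the defining formula \eqref{eq:extension mult} yields
\begin{eqnarray*}
&&\pi^{\phi}\big((x+T(u),u),(y+T(v),v)\big)\\
&=&\big((x+T(u))\cdot_\g(y+T(v)),\;\huaL_{x+T(u)}v+\huaR_{y+T(v)}u+\phi(x+T(u),y+T(v))\big),
\end{eqnarray*}
whose bilinear expansion produces four multiplications in the $\g$-component and eight bimodule--cocycle contributions in the $V$-component. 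Applying $e^{-\hat T}=\Id-\hat T$ then transfers $-T$ of the $V$-component into the $\g$-component and leaves the $V$-component unchanged.

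The key step is the cancellation supplied by the $\phi$-twisted $\huaO$-operator equation
\begin{eqnarray*}
T(u)\cdot_\g T(v)=T\big(\huaL_{T(u)}v+\huaR_{T(v)}u+\phi(T(u),T(v))\big),
\end{eqnarray*}
which annihilates $T(u)\cdot_\g T(v)$ against three of the $-T(\cdots)$ contributions coming from $e^{-\hat T}$. After this cancellation the $\g$-component reduces to
\begin{eqnarray*}
x\cdot_\g y+\big(x\cdot_\g T(v)-T(\huaL_x v)\big)+\big(T(u)\cdot_\g y-T(\huaR_y u)\big)-T(\phi(x,y))-T(\phi(x,T(v)))-T(\phi(T(u),y)),
\end{eqnarray*}
and the two middle summands are by \eqref{eq:LT} and \eqref{eq:RT} precisely $\frkR^T_v x$ and $\frkL^T_u y$. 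The $V$-component likewise regroups into $\huaL_x v+\huaR_y u+\phi(x,y)$ together with $(\huaL_{T(u)}v+\huaR_{T(v)}u+\phi(T(u),T(v)))=u\cdot^{T,\phi}v$ and the two remaining cocycle terms $\phi(T(u),y)$ and $\phi(x,T(v))$, which matches the displayed formula (with the paper's $\xi,\eta$ read as $u,v$). The only real obstacle is careful bookkeeping of the twelve expanded terms, which is routine once the $\huaO$-operator relation is invoked at the correct moment.
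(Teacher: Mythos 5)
Your proposal is correct and is essentially the ``direct calculation'' that the paper omits: the first assertion is exactly the combination of Proposition~\ref{pro:quasi-twilled to quasi-twilled} with the preceding characterization of $\phi$-twisted $\huaO$-operators as Maurer--Cartan elements, and your use of the conjugation identity $\pi^{H}=e^{-\hat{H}}\circ\pi\circ(e^{\hat{H}}\otimes e^{\hat{H}})$ with $e^{\pm\hat T}=\Id\pm\hat T$ is a clean way to organize the twelve-term expansion, with the twisted $\huaO$-operator identity invoked at the right place to cancel $T(u)\cdot_\g T(v)$ against $T(\huaL_{T(u)}v+\huaR_{T(v)}u+\phi(T(u),T(v)))$. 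Note that your computation reproduces the paper's explicit display (reading the paper's $\xi,\eta$ as $u,v$), including the $\phi(T(u),T(v))$ term hidden inside $u\cdot^{T,\phi}v$; this corresponds to the summand $\half[[\hat{\phi},\hat{T}]^{\MN},\hat{T}]^{\MN}$ of $\hat\mu_2^{T}$ from Theorem~\ref{thm:twist}, which the compact expression $\pi^{T,\phi}=\hat{\pi^\phi}+[\hat{\mu},\hat{T}]^{\MN}+[\hat{\phi},\hat{T}]^{\MN}$ in the statement appears to drop --- a defect of the statement, not of your argument.
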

\begin{proof}
  It follows by a direct calculation. We omit the details.
\end{proof}

\begin{pro}
 Let $(\g,\cdot_\g)$ be a pre-Lie algebra and $(V;\huaL,\huaR)$ a bimodule over $\g$. Let $\varphi:\g\rightarrow V$ be an invertible $1$-cochain. Then the inverse $\varphi^{-1}$ is a $\phi$-twisted $\huaO$-operator, in this case, $\phi=-\delta \varphi$.
 \end{pro}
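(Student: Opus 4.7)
The plan is to exploit the bijectivity of $\varphi$ to transport everything back to $\g$ via $\varphi$, and then verify the $\phi$-twisted $\huaO$-operator equation \eqref{eq:twist-Operator} by a direct substitution.

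First I would check that $\phi := -\delta\varphi$ is a legitimate $2$-cocycle associated to the bimodule $(V;\huaL,\huaR)$, so that the quasi-twilled pre-Lie algebra $\g\ltimes_{\huaL,\huaR,\phi}V$ from Proposition \ref{pro:condition of quasi-twilled pre-Lie} is well-defined. This is immediate: $\delta\phi = -\delta^2\varphi = 0$ since the pre-Lie coboundary operator $\delta$ defined in \eqref{eq:pre-Lie cohomology} squares to zero.

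Next, specializing \eqref{eq:pre-Lie cohomology} to $n=1$ for the $1$-cochain $\varphi\in\Hom(\g,V)$ gives the explicit formula
\begin{equation*}
\delta\varphi(x,y) \;=\; \huaL_x\varphi(y) + \huaR_y\varphi(x) - \varphi(x\cdot_\g y),\qquad \forall\,x,y\in\g,
\end{equation*}
hence $\phi(x,y) = \varphi(x\cdot_\g y) - \huaL_x\varphi(y) - \huaR_y\varphi(x)$. Setting $T=\varphi^{-1}$ and writing any $u,v\in V$ as $u=\varphi(x)$, $v=\varphi(y)$, so that $T(u)=x$ and $T(v)=y$, the right-hand side of \eqref{eq:twist-Operator} becomes
\begin{equation*}
T\bigl(\huaL_{T(u)}v+\huaR_{T(v)}u\bigr)+T\bigl(\phi(T(u),T(v))\bigr) \;=\; T\bigl(\huaL_x\varphi(y)+\huaR_y\varphi(x)+\phi(x,y)\bigr).
\end{equation*}
Substituting the expression for $\phi(x,y)$, the two pairs of terms cancel and we are left with $T(\varphi(x\cdot_\g y)) = x\cdot_\g y = T(u)\cdot_\g T(v)$, which is the left-hand side of \eqref{eq:twist-Operator}. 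This establishes the claim.

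There is no genuine obstacle here; the only care needed is in reading off the $n=1$ coboundary formula with the correct signs and in verifying linearity of $T$ at the right moment. The proof is essentially a one-line cancellation once $\delta\varphi$ is written out explicitly.
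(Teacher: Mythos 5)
Your proof is correct and follows essentially the same route as the paper's: write out $\delta\varphi$ from \eqref{eq:pre-Lie cohomology} at $n=1$, substitute $u=\varphi(x)$, $v=\varphi(y)$, and observe that condition \eqref{eq:twist-Operator} for $T=\varphi^{-1}$ reduces to $\phi=-\delta\varphi$. Two small points in your favour: you also verify that $\phi=-\delta\varphi$ is a $2$-cocycle (which the paper leaves implicit but is needed for the twisted $\huaO$-operator to be defined), and your sign $+\huaR_y\varphi(x)$ in $\delta\varphi(x,y)$ is the one consistent with \eqref{eq:pre-Lie cohomology} and with the final cancellation, whereas the paper's displayed formula carries an apparent sign typo on that term.
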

 \begin{proof}
   We put $T=\varphi^{-1}$. The condition \eqref{eq:twist-Operator} is equivalent to
   $$\varphi(T(u)\cdot_\g T(v))=\huaL_{T(u)}v+\huaR_{T(v)} u+\phi(T(u),T(v)),$$
   which can be written by
   $$\varphi(T(u)\cdot_\g T(v))-\huaL_{T(u)}v-\huaR_{T(v)} u=\phi(T(u),T(v)).$$
   On the other hand, we have
   $$\delta\varphi(x,y)=-\varphi(x\cdot_\g y)+\huaL_{x}\varphi(y)-\huaR_{y} \varphi(x),\quad\forall~x,y\in\g.$$
   We obtain that $$\delta\varphi(T(u),T(v))=-\phi(T(u),T(v)).$$
 \end{proof}

\emptycomment{\subsection{The cases of $\phi_1=0$ and $\phi_2\neq0$}
In this case, $\phi_1=\phi_1^H=0$ and thus $\mu_1$ and $\hat{\mu}_1^{H}$ are pre-Lie algebra structures. The twisting of $H:\g_2\longrightarrow\g_1$ has the form:
\begin{eqnarray*}
\hat{\mu}_1^{H}&=&\hat{\mu}_1,\\
\hat{\mu}_2^{H}&=&\hat{\mu}_2+d_{\hat{\mu}_1}\hat{H},\\
\hat{\phi}_2^{H}&=&\hat{\phi}_2+d_{\hat{\mu}_2}\hat{H}+\half[\hat{H},\hat{H}]_{\hat{\mu}_1}.
\end{eqnarray*}
\begin{pro}
The result of twisting $((\huaG,\pi^{H}),\g_1,\g_2)$ is also a twilled pre-Lie algebra  if and only if $H$ is a solution of the quasi-Maurer-Cartan equation as following:
\begin{equation}\label{eq:Twist-Twilled-MC3}
d_{\hat{\mu}_2}(\hat{H})+\half[\hat{H},\hat{H}]_{{\hat{\mu}_1}}=-\hat{\phi}_2.
\end{equation}
The condition \eqref{eq:Twist-Twilled-MC3} is also equivalent to
 \begin{eqnarray}\label{eq:MC-expression3}
\nonumber&&H(u)\ast_2 v+u\ast_2 H(v)+H(u)\ast_1 H(v)+\phi_2(u,v)\\
&&=H\big(H(u)\ast_1 v+u\ast_1 H(v)\big)+H(u\ast_2 v).
  \end{eqnarray}
\end{pro}
\begin{cor}\label{cor:pre-Lie and O-operator3}
Let $(\huaG,\g_1,\g_2)$ be a twilled quasi-pre-Lie algebra and $H$ a solution of the quasi-Maurer-Cartan equation. Then
\begin{eqnarray}\label{eq:mul3}
u\cdot^{H,\phi_2}v:=u\ast_2 v+H(u)\ast_1 v+u\ast_1 H(v),\quad\forall u,v\in\g_2
\end{eqnarray}
defines a  pre-Lie algebra structure on $\g_2$.
\end{cor}
 If $\hat{\mu}_2=0$, then \eqref{eq:MC-expression3} reduces to an equality
  \begin{equation}
    H(u)\ast_1 H(v)+\phi_2(u,v)=H\big(H(u)\ast_1 v+u\ast_1 H(v)\big).
  \end{equation}
 \begin{defi}
    Let $(\g,\cdot_\g)$ be a pre-Lie algebra and $(V;\huaL,\huaR)$ a bimodule over $\g$. Let $\psi:V\times V\rightarrow \g$ be a linear map. We call $T:V\rightarrow \g$ a {\bf quasi-$\huaO$-operator} if it satisfies
    \begin{equation}\label{eq:quasi-Operator}
    T\big(\huaL_{T(u)}v+\huaR_{T(v)}u\big)-T(u)\cdot_\g T(v)=\psi(u,v),\quad\forall~u,v\in V.
  \end{equation}
  \end{defi}
\begin{pro}\label{pro:condition of twilled quasi-pre-Lie}
 Let $(\g,\cdot_\g)$ be a pre-Lie algebra and $(V;\huaL,\huaR)$ a bimodule over $\g$. Let $\psi:V\times V\rightarrow \g$ be a linear map. Then $\g\oplus V$  with the multiplication $\ast$ given by
 \begin{equation*}\label{eq:extension mult2}
  (x,u)\ast(y,v)=(x\cdot_\g y+\psi(u,v),\huaL_xv+\huaR_y u)
\end{equation*}
is a twilled quasi-pre-Lie algebra if and only if $\psi$ satisfies
\begin{eqnarray}\label{eq:twilled quasi-pre-Lie}
\left\{\begin{array}{rcl}
{}\psi(u,v)\cdot_\g x-\psi(v,u)\cdot_\g x+\psi(\huaR_x v,w)+\psi( v,\huaL_x w)&=&0,\\
{}x\cdot_\g \psi(u,v)-\psi(\huaL_x u,v)+\psi(\huaR_x u,v)-\psi( u,\huaL_x v)&=&0,\\
{}\huaL_{\psi(u,v)} w-\huaL_{\psi(v,u)} w-\huaR_{\psi(v,w)} u+\huaR_{\psi(u,w)} v&=&0,\\
\end{array}\right.
\end{eqnarray}
where $x,y\in\g$ and $u,v,w\in V$. We denote this twilled quasi-pre-Lie algebra by $\g\oplus_{\psi}V$.
\end{pro}

\begin{pro}
  The graph $G_T$ is a subalgebra of $\g\oplus_{\psi}V$ if and only if $T$ is a quasi-$\huaO$-operator on a bimodule $(V;\huaL,\huaR)$ over a pre-Lie algebra $(\g,\cdot_\g)$.
  \end{pro}
\begin{cor}
  Let $T$ be a quasi-$\huaO$-operator on a bimodule $(V;\huaL,\huaR)$ over a pre-Lie algebra $(\g,\cdot_\g)$ and $\psi:V\times V\rightarrow \g$ be a linear map satisfying \eqref{eq:twilled quasi-pre-Lie}. Then $(V,\cdot^{T,\psi})$ is a pre-Lie algebra, where $\cdot^{T,\psi}$ is given by
\begin{equation}\label{eq:pre-Lie operation3}
  u\cdot^{T,\psi} v= \huaL_{T(u)}v+\huaR_{T(v)}u,\quad\forall~u,v\in V.
\end{equation}
\end{cor}

\begin{cor}
 Let $T$ be a quasi-$\huaO$-operator on a bimodule $(V;\huaL,\huaR)$ over a pre-Lie algebra $(\g,\cdot_\g)$ and $\psi:V\times V\rightarrow \g$ be a linear map satisfying \eqref{eq:twilled quasi-pre-Lie}. Then there is a twilled pre-Lie algebra structure on $\g\oplus V$ given by
   \begin{equation}
     (x,u)\ast(y,v)=(x\cdot_{\g}y+\frkL^T_{u}y+\frkR^T_{v}x,\huaL_x v+\huaR_yu+u\cdot^{T,\psi} v),
   \end{equation}
   where $\frkL^T$ and $\frkR^T$ is given by \eqref{eq:LT} and \eqref{eq:RT} respectively and $x,y\in\g,u,v\in V$.
 \end{cor}
\begin{ex}
  Let $(\g,\cdot_\g)$ be a pre-Lie algebra. Then for a fixed $q\in\K$, $\g\oplus\g$ with the multiplication given by
  $$(x,u)\ast_q (y,v)=(x\cdot_\g y+q(u\cdot_\g v),x\cdot_\g v+u\cdot_\g y),$$
  is a twilled quasi-pre-Lie algebra, where $\psi(u,v)=q(u\cdot_\g v)$. We denote this twilled quasi-pre-Lie algebra by $\g\oplus_q\g$.

  Let $R$ be a Rota-Baxter operator of weight $0$ on the pre-Lie algebra $\g$. Define $B:\g\rightarrow\g$ by
  $$B(x)=R(x)+\frac{q}{2}x,\quad\forall~x\in\g.$$
  Then the graph $G_B$ of $B$ is a subalgebra of the twilled quasi-pre-Lie algebra $\g\oplus_{q^2/4}\g$. This implies that $B$ is a solution of
  $$B\big({B(x)}\cdot_\g y+x\cdot_\g B(y)\big)-B(x)\cdot_\g B(y)=\frac{q^2}{4}(x\cdot_\g y).$$
  Thus $B$ is a quasi-$\huaO$-operator with $\psi(x,y)=\frac{q^2}{4}(x\cdot_\g y)$.
\end{ex}}
\section{Pre-Lie bialgebras and $\frks$-matrices}\label{sec:P}
In this section, we use the approach of twisting operations on pre-Lie algebras to study pre-Lie bialgebras and Manin triples for pre-Lie algebras.
\begin{defi}{\rm(\cite{Left-symmetric bialgebras})}\label{defi:pre-Lie bialgebra}
A {\bf pre-Lie bialgebra} on $(\g,\g^*)$ consists of the following data:
\begin{itemize}
\item[$\bullet$]A pre-Lie algebra $(\g,\cdot_\g)$ and a linear map $\beta:\g^*\rightarrow \g^*\otimes \g^*$ such that
$\langle\beta(\xi),x\otimes y\rangle=\langle\xi, x\cdot_\g y\rangle$;
\item[$\bullet$]A pre-Lie algebra $(\g^*,\cdot_{\g^*})$ and a linear map $\alpha:\g\rightarrow \g\otimes \g$ such that
$\langle\alpha(x),\xi\otimes \eta\rangle=\langle x, \xi\cdot_{\g^*} \eta\rangle$
\end{itemize}
satisfying the following properties:
\begin{itemize}
\item[$\rm(i)$]$\alpha$ is a $1$-cocycle associated to the module $(\g\otimes\g;L\otimes1+1\otimes
\ad)$ over the sub-adjacent Lie algebra $\g^c$;
\item[$\rm(ii)$]$\beta$ is a $1$-cocycle associated to the module $(\g^*\otimes\g^*;L\otimes1+1\otimes
\ad)$ over the sub-adjacent Lie algebra ${\g^*}^c$.
\end{itemize}
\end{defi}

A nondegenerate  skew-symmetric bilinear form
$(-,-)_-$ on a pre-Lie algebra $(\frkd,\cdot_\frkd)$
is called {\bf invariant} if
\begin{equation}
(a\cdot_\frkd b,c)_-+(b,[a,c]_\frkd)_-=0,\quad\forall~a,b,c\in\frkd.
\end{equation}

\begin{defi}
  A {\bf quadratic pre-Lie algebra} is a triple $(\frkd,\cdot_\frkd,(-,-)_-)$, where $(\frkd,\cdot_\frkd)$ is a pre-Lie algebra and $(-,-)_-$ is a nondegenerate invariant skew-symmetric bilinear form on $\frkd$.
\end{defi}

\begin{ex}{\rm
 Let $(\g,\cdot_\g)$ be a pre-Lie algebra. Then $(\frkd=\g\ltimes_{\ad^*,-R^*} \g^*,(-,-)_-)$ is a quadratic pre-Lie algebra, where the nondegenerate invariant skew-symmetric bilinear form $(-,-)_-$ is given by
 \begin{eqnarray}\label{symplectic bracket}
((x,\xi),(y,\eta))_-=\langle \xi,y\rangle-\langle x,\eta\rangle, \quad \forall ~x,y\in \g,~~\xi,\eta\in\g^*.
\end{eqnarray}}
\end{ex}

\begin{defi}
A   {\bf Manin triple for pre-Lie algebras}
 is a triple $((\frkd,\cdot_\frkd, (-,-)_-),\g_1,\g_2)$, where $(\frkd,\cdot_\frkd,(-,-)_-)$ is an even dimensional quadratic pre-Lie algebra, $\g_1$ and $\g_2$ are pre-Lie subalgebras, both isotropic with respect to $(-,-)_-$, and $\frkd=\g_1\oplus\g_2$ as vector spaces.
\end{defi}

\begin{rmk}
Let $((\frkd,\cdot_\frkd, (-,-)_-),\g_1,\g_2)$ be a Manin triple for pre-Lie algebras. Then $\frkd=\g_1\oplus\g_2$ is a twilled pre-Lie algebra. Thus a  Manin triple for pre-Lie algebras can be seen as a twilled pre-Lie algebra with a nondegenerate invariant skew-symmetric bilinear form.
\end{rmk}

\begin{thm}\label{thm:equivalence1}{\rm(\cite{LBS2,Left-symmetric bialgebras})}
  There is a one-to-one correspondence between  Manin triples for pre-Lie algebras and pre-Lie bialgebras.
\end{thm}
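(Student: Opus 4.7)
The plan is to establish the bijection in both directions explicitly, using the twilled pre-Lie algebra formalism of Section 3 together with the observation that the nondegenerate skew-symmetric form $(-,-)_-$ identifies $\g_2$ with $\g_1^*$ (and vice versa) as vector spaces. The underlying principle is that a Manin triple is nothing but a twilled pre-Lie algebra $(\frkd,\g_1,\g_2)$ whose ``mixed'' structure constants are forced by the invariance of $(-,-)_-$ to be given by the canonical dual bimodule of Proposition 2.3; conversely the cocycle conditions in Definition 5.1 are exactly the compatibility condition $[\hat{\mu}_1,\hat{\mu}_2]^{\MN}=0$ from Lemma 3.7.

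For the forward direction, start with a Manin triple $((\frkd,\cdot_\frkd,(-,-)_-),\g_1,\g_2)$ and identify $\g_2\cong\g_1^*$ via $v\mapsto (v,-)_-|_{\g_1}$. By Remark 3.10 the decomposition makes $\g_2$ into a bimodule $(\huaL,\huaR)$ over the pre-Lie algebra $(\g_1,\ast_1)$, with $\huaL_xv=x\cdot_\frkd v$ and $\huaR_xv=v\cdot_\frkd x$, and analogously for $\g_1$ as a bimodule over $(\g_2,\ast_2)$. Writing out the invariance identity $(a\cdot_\frkd b,c)_-+(b,[a,c]_\frkd)_-=0$ on the three combinations with $a,c\in\g_1$, $b\in\g_2$ (and mirror cases) forces $\huaL=\ad^*$ and $\huaR=-R^*$, i.e.\ the bimodule on $\g_2\cong\g_1^*$ is exactly the dual bimodule of Proposition 2.3. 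Dualizing $\ast_2$ produces $\alpha:\g\to\g\otimes\g$ by $\langle\alpha(x),\xi\otimes\eta\rangle=\langle x,\xi\cdot_{\g^*}\eta\rangle$, and dualizing $\ast_1$ produces $\beta:\g^*\to\g^*\otimes\g^*$. The pre-Lie conditions $[\hat{\mu}_i,\hat{\mu}_i]^{\MN}=0$ recover that $\cdot_\g$ and $\cdot_{\g^*}$ are pre-Lie multiplications, while the mixed compatibility $[\hat{\mu}_1,\hat{\mu}_2]^{\MN}=0$ of Lemma 3.7, after the standard dualization, is exactly the statement that $\alpha$ and $\beta$ are $1$-cocycles with respect to the modules $(\g\otimes\g;L\otimes 1+1\otimes\ad)$ and $(\g^*\otimes\g^*;L\otimes 1+1\otimes\ad)$ over the sub-adjacent Lie algebras. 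This gives a pre-Lie bialgebra.

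For the reverse direction, given a pre-Lie bialgebra $(\g,\g^*)$, define on $\frkd=\g\oplus\g^*$ the bilinear form $((x,\xi),(y,\eta))_-=\langle\xi,y\rangle-\langle x,\eta\rangle$ and the multiplication
\begin{equation*}
(x,\xi)\cdot_\frkd(y,\eta)=\bigl(x\cdot_\g y+\ad^*_\xi y-R^*_\eta x,\ \xi\cdot_{\g^*}\eta+\ad^*_x\eta-R^*_y\xi\bigr),
\end{equation*}
where the $\ad^*,R^*$ in each component are taken with respect to the pre-Lie structure of the other factor. Using that $(\g^*;\ad^*,-R^*)$ is a bimodule over $\g$ and symmetrically, one verifies by Lemma 3.7 that this defines a twilled pre-Lie algebra: the conditions $[\hat{\mu}_i,\hat{\mu}_i]^{\MN}=0$ come from $\cdot_\g,\cdot_{\g^*}$ being pre-Lie, and $[\hat{\mu}_1,\hat{\mu}_2]^{\MN}=0$ is exactly the pair of $1$-cocycle conditions on $\alpha,\beta$ from Definition 5.1 after dualizing. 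Invariance of $(-,-)_-$ is a direct check on all cross terms using the defining identities $\langle\ad^*_x\xi,y\rangle=-\langle\xi,[x,y]_\g\rangle$ and $\langle R^*_x\xi,y\rangle=-\langle\xi,y\cdot_\g x\rangle$. Finally, the two constructions are mutually inverse essentially by inspection: starting from a Manin triple, the identification $\g_2\cong\g_1^*$ is canonical and the reconstructed multiplication coincides with $\cdot_\frkd$; starting from a bialgebra, restriction to $\g$ and $\g^*$ returns the original pre-Lie structures, and the induced $\alpha,\beta$ return to themselves by non-degeneracy of the pairing.

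The main obstacle is the computation that the mixed compatibility $[\hat{\mu}_1,\hat{\mu}_2]^{\MN}=0$ translates term-by-term into the two $1$-cocycle conditions of Definition 5.1. This boils down to expanding the Matsushima-Nijenhuis bracket of two bidegree-$(1|0)$ and $(0|1)$ cochains on the four independent trilinear slots (one $\g_1$ and two $\g_2$ entries, and vice versa), and matching each with the coboundary $\delta\alpha$ for the module $(\g\otimes\g; L\otimes 1+1\otimes\ad)$ after passing through the canonical duality $\wedge^2\g_2\otimes\g_2\hookrightarrow\Hom(\g\otimes\g,\g)^*$. This is lengthy but mechanical; everything else in the argument reduces to the bimodule identities in Proposition 2.3 and the defining equations of Lemma 3.7.
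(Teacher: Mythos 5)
The paper itself offers no proof of this theorem: it is quoted from \cite{LBS2,Left-symmetric bialgebras}, so the only in-paper point of comparison is the proof of the quasi analogue, Theorem \ref{thm:equivalence2}. Your outline is correct and is essentially the standard argument, but you route the compatibility check through the machinery of Section \ref{sec:L}: you identify a Manin triple with a twilled pre-Lie algebra carrying an invariant form, use invariance to pin the mixed multiplications down to the dual bimodule $(\g^*;\ad^*,-R^*)$ of Proposition \ref{pro:dual-module equiv}, and then read the two $1$-cocycle conditions of Definition \ref{defi:pre-Lie bialgebra} off the single bidegree-$(1|1)$ equation $[\hat{\mu}_1,\hat{\mu}_2]^{\MN}=0$ of Lemma \ref{lem:twillL}, split according to whether the output lands in $\g_1$ or $\g_2$. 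The paper's proof of the quasi version instead verifies the left-symmetry of the associator directly on each combination of arguments from $\g$ and $\g^*$ and matches each combination with one axiom of the bialgebra. The two routes are equivalent; yours is more uniform and exploits the bidegree decomposition (Lemmas \ref{Zero-condition-1}--\ref{important-lemma-2}) that the paper develops but does not use at this point, at the cost of the ``lengthy but mechanical'' expansion of the Matsushima--Nijenhuis bracket that you correctly flag as the real computational content. One small point worth making explicit if you write this up: after the identification $\g_2\cong\g_1^*$, the invariance of $(-,-)_-$ makes the two components of $[\hat{\mu}_1,\hat{\mu}_2]^{\MN}$ adjoint to one another only up to the passage between a multiplication and its dualized comultiplication, so both cocycle conditions (i) and (ii) do need to be extracted, one from each component, rather than deduced from a single one by duality.
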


Let $T:\g^*\lon\g$ be an \kup~ on the bimodule $(\g^*;\ad^*,-R^*)$ over a pre-Lie algebra $(\g,\cdot_{\g})$. Let $\pi$ be the structure multiplication of the twilled pre-Lie algebra $\g\ltimes_{\ad^*,-R^*}\g^*$. By Corollary \ref{cor:O-twilled pre-Lie},   $(( \g\oplus\g^*,\pi^T),\g,\g^*)$  is a twilled pre-Lie algebra. Moreover, by Corollary \ref{twisting-isomorphism},
$e^{\hat{T}}:( \g\oplus\g^*,\pi^T)\lon( \g\oplus\g^*,\pi)$ is a pre-Lie algebra isomorphism.

By Corollary \ref{o-multiplication} and Proposition \ref{pro:bimodule and product-O}, we have

\begin{pro}\label{pro:algrep}
Let $T:\g^*\lon\g$ be an $\huaO$-operator on the bimodule $(\g^*;\ad^*,-R^*)$ over $\g$. Then $\g^*_T:=(\g^*,\cdot^T)$ is a pre-Lie algebra, where $\cdot^T$ is given by
$$
\xi\cdot^T \eta=\ad^*_{T(\xi)}\eta-R^*_{T(\eta)}\xi,\quad\forall~\xi,\eta\in\g^*.
$$
Furthermore, ${\ad^*}^T$ and $-{R^*}^T$ given by
\begin{eqnarray}\label{eq:dual bimodule in bialgebra}
{\ad_\xi^*}^T x= T(\xi)\cdot_\g x+T(R^*_x \xi) \quad -{R_\xi^*}^T x=x\cdot_\g T(\xi)-T(\ad^*_x \xi),\quad\forall x\in\g
\end{eqnarray}
is a bimodule of the pre-Lie algebra  $\g^*_T$ on the vector space $\g$. Moreover, the twilled pre-Lie algebra multiplication on $\g\oplus\g^*$ is explicitly given by
  \begin{equation}
     (x,\xi)\ast(y,\eta)=(x\cdot_{\g}y+{\ad_\xi^*}^T y-{R_\eta^*}^T x,\ad^*_x \eta-R^*_y\xi+\xi\cdot^{T} \eta),
   \end{equation}
   where $x,y\in\g,\xi,\eta\in \g^*$.
\end{pro}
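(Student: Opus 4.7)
The plan is to prove this proposition as a direct specialization of the two previously established results, Corollary \ref{o-multiplication} and Proposition \ref{pro:bimodule and product-O}, to the particular bimodule $(\g^*;\ad^*,-R^*)$ and its $\huaO$-operator $T:\g^*\to\g$. No new identities need to be verified by hand; everything is obtained by substitution.

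First I would establish that $(\g^*,\cdot^T)$ is a pre-Lie algebra. Since $T$ is an $\huaO$-operator on the bimodule $(\g^*;\ad^*,-R^*)$, Corollary \ref{o-multiplication} immediately yields a pre-Lie algebra structure on $\g^*$ defined by $\xi\cdot^T\eta=\huaL_{T(\xi)}\eta+\huaR_{T(\eta)}\xi$. Substituting $\huaL=\ad^*$ and $\huaR=-R^*$ recovers exactly the stated formula $\xi\cdot^T\eta=\ad^*_{T(\xi)}\eta-R^*_{T(\eta)}\xi$. Thus $\g^*_T$ is a pre-Lie algebra and $T:\g^*_T\to\g$ is a pre-Lie algebra homomorphism.

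Next I would obtain the bimodule structure on $\g$. By Proposition \ref{pro:bimodule and product-O}, the maps $\frkL^T,\frkR^T:\g^*\to\gl(\g)$ defined in \eqref{eq:LT} and \eqref{eq:RT} form a bimodule of $(\g^*,\cdot^T)$ on $\g$. Plugging $\huaL=\ad^*$, $\huaR=-R^*$ into those formulas gives
\begin{eqnarray*}
\frkL^T_\xi y&=&T(\xi)\cdot_\g y-T(\huaR_y\xi)\;=\;T(\xi)\cdot_\g y+T(R^*_y\xi),\\
\frkR^T_\eta x&=&x\cdot_\g T(\eta)-T(\huaL_x\eta)\;=\;x\cdot_\g T(\eta)-T(\ad^*_x\eta),
\end{eqnarray*}
which are precisely the maps ${\ad^*}^T$ and $-{R^*}^T$ stated in the proposition. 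This identifies the bimodule on $\g$.

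Finally, the explicit twilled pre-Lie algebra multiplication on $\g\oplus\g^*$ follows by writing down the formula from Proposition \ref{pro:bimodule and product-O} with $V=\g^*$ and the above identifications of $\frkL^T,\frkR^T,\cdot^T$. The only thing to notice is that the first-component bimodule is the original $(\ad^*,-R^*)$-bimodule on $\g^*$, producing the term $\ad^*_x\eta-R^*_y\xi$ in the $\g^*$-component. Since each step is a direct substitution, there is no substantial obstacle; the main care required is simply to keep the sign conventions consistent when replacing $\huaR=-R^*$ inside the formulas for $\frkL^T,\frkR^T$ so that the signs in the final twilled multiplication match the claimed expression.
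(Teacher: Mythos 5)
Your proposal is correct and matches the paper exactly: the paper itself derives this proposition by simply citing Corollary \ref{o-multiplication} and Proposition \ref{pro:bimodule and product-O} and specializing to $\huaL=\ad^*$, $\huaR=-R^*$, which is precisely your argument, and your sign bookkeeping (e.g.\ $-T(\huaR_y\xi)=+T(R^*_y\xi)$ and $\frkR^T_\eta=-{R^*_\eta}^T$) is accurate.
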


\begin{lem}\label{lem:O-symmetric}
 Let $T:\g^*\lon\g$ be an $\huaO$-operator on the bimodule $(\g^*;\ad^*,-R^*)$ over $\g$. Then $e^{\hat{T}}$ preserves the bilinear form $(-,-)_-$ in \eqref{symplectic bracket} if and only if $T=T^*$. Here $T^*$ is the dual map of $T$, i.e.
$\langle T(\xi),\eta\rangle=\langle \xi,T^*(\eta)\rangle,$ for all $\xi,\eta\in\g^*.$
\end{lem}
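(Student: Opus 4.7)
The plan is a direct calculation, relying on the nilpotency of $\hat{T}$ to give $e^{\hat{T}}$ in closed form.

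First I would compute $e^{\hat{T}}$ explicitly. Since $\hat{T}(x,\xi) = (T(\xi),0)$, we have $\hat{T}\circ\hat{T} = 0$, so
\[
e^{\hat{T}}(x,\xi) \;=\; (x,\xi) + \hat{T}(x,\xi) \;=\; (x+T(\xi),\,\xi).
\]
Notice that at this stage the assumption that $T$ is an $\huaO$-operator has played no role: the formula for $e^{\hat{T}}$ holds for any linear map $T:\g^*\to\g$, and correspondingly the symmetry criterion we are about to derive does not use the Maurer-Cartan equation $[\hat{T},\hat{T}]_{\hat{\mu}}=0$. This is worth flagging in the write-up.

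Next I would substitute into the pairing $(-,-)_-$ defined in \eqref{symplectic bracket}:
\begin{eqnarray*}
\big(e^{\hat{T}}(x,\xi),\,e^{\hat{T}}(y,\eta)\big)_-
&=& \big((x+T(\xi),\xi),(y+T(\eta),\eta)\big)_- \\
&=& \langle \xi,\,y+T(\eta)\rangle - \langle x+T(\xi),\,\eta\rangle \\
&=& \big((x,\xi),(y,\eta)\big)_- + \langle \xi,T(\eta)\rangle - \langle T(\xi),\eta\rangle.
\end{eqnarray*}
Therefore $e^{\hat{T}}$ preserves $(-,-)_-$ if and only if $\langle \xi,T(\eta)\rangle = \langle T(\xi),\eta\rangle$ for all $\xi,\eta\in\g^*$.

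Finally I would interpret this identity in terms of $T^*$. By definition of the dual map, $\langle T(\xi),\eta\rangle = \langle \xi,T^*(\eta)\rangle$, so the displayed equality becomes $\langle \xi,T(\eta)\rangle = \langle \xi,T^*(\eta)\rangle$ for all $\xi\in\g^*$, which (using the canonical identification of $\g$ with $\g^{**}$ in finite dimensions, as assumed throughout the paper) is equivalent to $T=T^*$. Since every step is an equivalence, this establishes the lemma. There is no real obstacle here; the only subtlety worth spelling out is the identification $\g\simeq\g^{**}$ that makes the condition $T=T^*$ meaningful for a map $T:\g^*\to\g$.
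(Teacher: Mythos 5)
Your proof is correct and follows essentially the same route as the paper: use $\hat{T}\circ\hat{T}=0$ to get $e^{\hat{T}}=\Id+\hat{T}$, expand the pairing, and observe that the defect is $\langle \xi,T(\eta)\rangle-\langle T(\xi),\eta\rangle=\langle (T^*-T)\xi,\eta\rangle$. Your side remark that the $\huaO$-operator hypothesis is never used is accurate (the paper's proof does not use it either) and is a reasonable thing to flag.
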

\begin{proof}
  By $\hat{T}\circ\hat{T}=0$, we have $e^{\hat{T}}={\Id}+\hat{T}$. For all $x,y\in\g,~\xi,\eta\in\g^*$, we have
\begin{eqnarray*}
(e^{\hat{T}}(x+\xi),e^{\hat{T}}(y+\eta))_-&=&(x+\xi+T(\xi),y+\eta+T(\eta))_-\\
&=&(x+\xi,y+\eta)_-+(\xi,T(\eta))_-+(T(\xi),\eta)_-\\
&=&(x+\xi,y+\eta)_-+\langle \xi,T(\eta)\rangle-\langle T(\xi),\eta\rangle\\
&=&(x+\xi,y+\eta)_-+\langle (T^*-T)\xi,\eta\rangle.
\end{eqnarray*}
Thus, $(e^{\hat{T}}(x+\xi),e^{\hat{T}}(y+\eta))_-=(x+\xi,y+\eta)_-$ if and only if $T=T^*.$
\end{proof}

\begin{thm}\label{thm:isomorphism-invariant-twilled}
Let $T:\g^*\lon\g$ be an $\huaO$-operator on the bimodule $(\g^*;\ad^*,-R^*)$ over $\g$ and $T=T^*$.
Then $(\frkd=\g\oplus\g^*,\pi^T)$  is a quadratic pre-Lie algebra with the invariant bilinear form $(-,-)_-$ in \eqref{symplectic bracket} and $e^{\hat{T}}$ is an isomorphism from the quadratic pre-Lie algebra $(\frkd=\g\oplus\g^*,\pi^T)$ to $(\frkd=\g\oplus\g^*,\pi)$.
 \end{thm}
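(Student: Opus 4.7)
The plan is to assemble the theorem from pieces already in place, without doing any fresh computation on $\pi^T$ directly. I would proceed in four short steps.

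First, by Corollary \ref{cor:O-twilled pre-Lie}, since $T$ is an $\huaO$-operator on $(\g^*;\ad^*,-R^*)$, the twisted structure $\pi^T=\hat{\mu}+[\hat{\mu},\hat{T}]^{\MN}$ is a pre-Lie algebra multiplication on $\frkd=\g\oplus\g^*$. Second, Corollary \ref{twisting-isomorphism} already gives that $e^{\hat{T}}\colon(\frkd,\pi^T)\lon(\frkd,\pi)$ is an isomorphism of pre-Lie algebras. Third, Lemma \ref{lem:O-symmetric} together with the hypothesis $T=T^*$ ensures that $e^{\hat{T}}$ preserves the skew-symmetric bilinear form $(-,-)_-$ of \eqref{symplectic bracket}.

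The remaining content is to promote $(\frkd,\pi^T)$ from a pre-Lie algebra to a \emph{quadratic} pre-Lie algebra, i.e. to check that $(-,-)_-$ is invariant with respect to $\pi^T$. My strategy is to pull back the invariance from $\pi$. For the semi-direct product $\g\ltimes_{\ad^*,-R^*}\g^*$, a one-line check using the definitions of $\ad^*$ and $-R^*$ shows that $(-,-)_-$ is invariant with respect to $\pi$; this is exactly the statement that $(\g^*;\ad^*,-R^*)$ is the dual bimodule of the regular bimodule of $\g$. Now, using $\pi^T=e^{-\hat{T}}\circ\pi\circ(e^{\hat{T}}\otimes e^{\hat{T}})$ and the induced commutator identity $[a,c]_{\pi^T}=e^{-\hat{T}}[e^{\hat{T}}a,e^{\hat{T}}c]_\pi$, the form preservation from Step 3 yields, for all $a,b,c\in\frkd$,
\[
(\pi^T(a,b),c)_- + (b,[a,c]_{\pi^T})_- = (\pi(e^{\hat{T}}a,e^{\hat{T}}b),e^{\hat{T}}c)_- + (e^{\hat{T}}b,[e^{\hat{T}}a,e^{\hat{T}}c]_\pi)_-,
\]
and the right-hand side vanishes by the invariance of $(-,-)_-$ for $\pi$. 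Hence $(\frkd,\pi^T,(-,-)_-)$ is a quadratic pre-Lie algebra, and $e^{\hat{T}}$, being simultaneously a pre-Lie isomorphism (Step 2) and an isometry of $(-,-)_-$ (Step 3), is an isomorphism of quadratic pre-Lie algebras.

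I do not expect any genuine obstacle: the main work was carried out earlier in establishing Corollaries \ref{cor:O-twilled pre-Lie} and \ref{twisting-isomorphism} and Lemma \ref{lem:O-symmetric}. The only verification made ``from scratch'' is the invariance of $(-,-)_-$ on the semi-direct product $\g\ltimes_{\ad^*,-R^*}\g^*$, which is an immediate unpacking of the definitions and could alternatively be quoted as a folklore fact about the standard symplectic double.
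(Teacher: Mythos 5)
Your proposal is correct and follows essentially the same route as the paper's own proof: the paper likewise combines Corollary \ref{twisting-isomorphism} (pre-Lie isomorphism), Lemma \ref{lem:O-symmetric} with $T=T^*$ (isometry of $(-,-)_-$), and the invariance of $(-,-)_-$ for the semi-direct product $\pi$ to transfer invariance to $\pi^T$ via $(\pi^T(a,b),c)_-=(\pi(e^{\hat{T}}a,e^{\hat{T}}b),e^{\hat{T}}c)_-=-(b,\pi^T(a,c)-\pi^T(c,a))_-$. Your identity in Step 4 is exactly this computation rewritten as a vanishing sum, so there is nothing to add.
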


\begin{proof}
Since $e^{\hat{T}}$ is a pre-Lie algebra isomorphism and preserves the bilinear form $(-,-)_-$, for all $a,b,c\in\frkd$, we have
\begin{eqnarray*}
 (\pi^T(a,b),c)_-&=&(e^{-\hat{T}}\pi(e^{\hat{T}}a,e^{\hat{T}}b),c)_-=(\pi(e^{\hat{T}}a,e^{\hat{T}}b),e^{\hat{T}}c)_-\\
  &=&-(e^{\hat{T}}b,\pi(e^{\hat{T}}a,e^{\hat{T}}c))_-+(e^{\hat{T}}b,\pi(e^{\hat{T}}c,e^{\hat{T}}a))_-\\
  &=&-(b,e^{-\hat{T}}\pi(e^{\hat{T}}a,e^{\hat{T}}c))_-+(b,e^{-\hat{T}}\pi(e^{\hat{T}}c,e^{\hat{T}}a))_-\\
  &=&-(b,\pi^T(a,c)-\pi^T(c,a))_-,
\end{eqnarray*}
which implies that $(\frkd=\g\oplus\g^*,\pi^T)$  is a quadratic  pre-Lie algebra. It is obvious that $e^{\hat{T}}$ is an isomorphism from the quadratic pre-Lie algebra $(\frkd=\g\oplus\g^*,\pi^T)$ to $(\frkd=\g\oplus\g^*,\pi)$.
\end{proof}

By Theorem \ref{thm:equivalence1}, Proposition \ref{pro:algrep} and Theorem \ref{thm:isomorphism-invariant-twilled}, we obtain
\begin{cor}\label{cor:bialg}
  Let $T:\g^*\lon\g$ be an $\huaO$-operator on the bimodule $(\g^*;\ad^*,-R^*)$ over $\g$ and $T=T^*$. Then $(\g,\g^*_T)$ is a pre-Lie bialgebra.
\end{cor}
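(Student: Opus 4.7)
The plan is to assemble Corollary \ref{cor:bialg} as a direct consequence of the three ingredients already in place: the twilled structure furnished by Corollary \ref{cor:O-twilled pre-Lie}, the quadratic structure furnished by Theorem \ref{thm:isomorphism-invariant-twilled}, and the Manin-triple/bialgebra equivalence of Theorem \ref{thm:equivalence1}. The strategy is simply to verify that the data $((\g\oplus\g^*,\pi^T,(-,-)_-),\g,\g^*_T)$ satisfies the definition of a Manin triple for pre-Lie algebras, and then quote Theorem \ref{thm:equivalence1}.

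First I would observe that, by Corollary \ref{cor:O-twilled pre-Lie}, $(\g\oplus\g^*,\pi^T)$ is a twilled pre-Lie algebra with respect to the decomposition $\g\oplus\g^*$. Inspecting the formulas for $\pi^T$ given in Proposition \ref{pro:algrep}, one reads off immediately that the multiplication restricts to the original pre-Lie algebra structure on $\g$ and to the transferred structure $\cdot^T$ on $\g^*$, so $\g$ and $\g^*_T$ are both subalgebras. Next, because $T=T^*$, Theorem \ref{thm:isomorphism-invariant-twilled} tells us that $(-,-)_-$ is nondegenerate and invariant for $\pi^T$. The remaining checks for a Manin triple are the two isotropy conditions: these are trivial since the defining formula \eqref{symplectic bracket} already vanishes on $\g\times\g$ and on $\g^*\times\g^*$, so both subalgebras are Lagrangian. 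Hence $((\g\oplus\g^*,\pi^T,(-,-)_-),\g,\g^*_T)$ is a Manin triple for pre-Lie algebras.

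Having assembled the Manin triple, I would conclude by invoking Theorem \ref{thm:equivalence1}: the one-to-one correspondence between Manin triples and pre-Lie bialgebras produces a pre-Lie bialgebra structure on the pair $(\g,\g^*_T)$. The cocycle maps $\alpha$ and $\beta$ of Definition \ref{defi:pre-Lie bialgebra} are extracted from the mixed components of $\pi^T$ via dualization, exactly as in the proof of Theorem \ref{thm:equivalence1}; no further calculation is needed here.

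There is essentially no obstacle: all of the hard computational work, namely showing that $\pi^T$ remains a pre-Lie product (Proposition \ref{twisting-twilled}) and that invariance of $(-,-)_-$ is preserved under twisting (Theorem \ref{thm:isomorphism-invariant-twilled}), has already been carried out in Section \ref{sec:T} and earlier in the present section. The only minor verification that needs to be acknowledged explicitly is the isotropy of $\g$ and $\g^*$ under the twisted form, but this is automatic since the form itself is unchanged (we use the same $(-,-)_-$ of \eqref{symplectic bracket}) and already vanishes on the two Lagrangian summands.
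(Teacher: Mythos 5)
Your proposal is correct and follows exactly the route the paper takes: the paper derives Corollary \ref{cor:bialg} by combining Theorem \ref{thm:equivalence1}, Proposition \ref{pro:algrep} and Theorem \ref{thm:isomorphism-invariant-twilled}, i.e.\ by exhibiting $((\g\oplus\g^*,\pi^T,(-,-)_-),\g,\g^*_T)$ as a Manin triple and invoking the correspondence with pre-Lie bialgebras. Your explicit remarks on the isotropy of the two summands and on $\g$, $\g^*_T$ being subalgebras of the twisted structure only make visible what the paper leaves implicit.
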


By Lemma \ref{twilled-DGLA-concrete}, we have
\begin{cor}\label{dual-GLA}
Let $(\g,\cdot_\g)$ be a pre-Lie algebra and denote the structure of the semi-direct product pre-Lie algebra $\g\ltimes_{\ad^*,-R^*} \g^*$ by $\mu$. Then  $(C^*(\g^*,\g),[-,-]_\mu)$ is a gLa, where $[-,-]_\mu$ is given by \eqref{eq:twilled-DGLA-concrete2} with $\huaL=\ad^*$ and $\huaR=-R^*$.
\end{cor}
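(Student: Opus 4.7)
The plan is to obtain this corollary as an immediate specialization of Corollary \ref{semi-direct-GLA} combined with the explicit formula in Lemma \ref{twilled-DGLA-concrete}. By construction, the semi-direct product pre-Lie algebra $\g\ltimes_{\ad^*,-R^*}\g^*$ is a twilled pre-Lie algebra in the sense of Section \ref{sec:L}, with the decomposition $\huaG=\g\oplus\g^*$ into $\g_1=\g$ and $\g_2=\g^*$. Crucially, the pre-Lie product $\ast_2$ on the subspace $\g^*$ is identically zero, since the semi-direct product puts no intrinsic multiplication on $\g^*$; hence the bidegree-$(0|1)$ component $\hat{\mu}_2$ of the total structure vanishes identically.

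With $\hat{\mu}_2=0$, the differential $d_{\hat{\mu}_2}=[\hat{\mu}_2,-]^{\MN}$ appearing in Corollary \ref{twilled-DGLA} is trivial, so the dgLa structure on $C^*(\g^*,\g)$ collapses to a bare gLa structure, namely $(C^*(\g^*,\g),[-,-]_{\hat{\mu}_1})$. This already establishes the existence statement and is precisely the content of Corollary \ref{semi-direct-GLA} applied to the bimodule $(\g^*;\ad^*,-R^*)$ over $(\g,\cdot_\g)$.

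To verify that the bracket agrees with the formula in the statement, I would substitute $\huaL_x=\ad^*_x$, $\huaR_x=-R^*_x$, and $\ast_2\equiv 0$ into the general expression \eqref{eq:twilled-DGLA-concrete2} from Lemma \ref{twilled-DGLA-concrete}. Because $\ast_2=0$, the induced sub-adjacent Lie bracket $[-,-]_\g$ on $\g^*$ is also zero, and consequently every term in \eqref{eq:twilled-DGLA-concrete2} that plugs $v_i\cdot_\g v_{m+1}$ or $[v_i,v_j]_\g$ into an argument of $f_1$ or $f_2$ drops out. What survives is exactly the bracket claimed in the corollary.

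The only obstacle here is clerical: one must be careful with Koszul signs and unshuffle indexing when specializing the lengthy formula \eqref{eq:twilled-DGLA-concrete2}. No new conceptual input is required, since the graded Jacobi identity is inherited for free from the Matsushima--Nijenhuis bracket on $C^*(\huaG,\huaG)$ via the derived bracket construction (Proposition \ref{old-derived}), which was already used in the proof of Theorem \ref{quasi-as-shLie}.
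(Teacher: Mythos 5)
Your proposal is correct and takes essentially the same route as the paper, which obtains this corollary directly by specializing Lemma \ref{twilled-DGLA-concrete} (equivalently Corollary \ref{semi-direct-GLA}) to the semi-direct product, where $\hat{\mu}_2=0$ kills the differential and leaves the bare gLa $(C^*(\g^*,\g),[-,-]_{\hat{\mu}_1})$ with $\huaL=\ad^*$, $\huaR=-R^*$. One clerical remark: the terms involving $v_i\cdot_\g v_{m+1}$ and $[v_i,v_j]_\g$ that you say "drop out" actually occur only in the differential \eqref{eq:twilled-DGLA-concrete1}, not in the bracket formula \eqref{eq:twilled-DGLA-concrete2}, so nothing needs to be discarded from the latter.
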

In the following, we transfer the above gLa structure to the following tensor space.

For $k\ge1$, we define $\Psi:\wedge^{k-1}\g\otimes\g \otimes\g\longrightarrow \Hom(\wedge^{k-1}\g^*\otimes\g^*,\g)$ by
\begin{equation*}\label{eq:defipsi}
 \langle\Psi(P)(\xi_1,\cdots,\xi_{k-1},\xi_k),\xi_{k+1}\rangle=\langle P,~\xi_1\wedge\cdots\wedge\xi_{k-1}\otimes\xi_{k+1}\otimes\xi_{k}\rangle,\quad\forall \xi_1,\cdots, \xi_{k+1}\in\g^*,
\end{equation*}
and $\Upsilon:\Hom(\wedge^{k-1}\g^*\otimes\g^*,\g)\longrightarrow \wedge^{k-1}\g\otimes\g \otimes\g$ by
\begin{equation*}\label{eq:defiUpsilon}
 \langle\Upsilon(f),\xi_1\wedge\cdots\wedge\xi_{k-1}\otimes\xi_{k}\otimes\xi_{k+1}\rangle=\langle f(\xi_1,\cdots,\xi_{k+1}),\xi_{k}\rangle,\quad \forall \xi_1,\cdots, \xi_{k+1}\in\g^*.
\end{equation*}
Obviously we have $\Psi\circ\Upsilon={\Id},~~\Upsilon\circ\Psi={\Id}.$

\begin{thm}
Let $(\g,\cdot_\g)$ be a pre-Lie algebra. Then, there is a graded Lie  bracket $\llbracket -,-\rrbracket$ on the tensor space $\oplus_{k\ge1}(\wedge^{k-1}\g\otimes\g \otimes\g)$ given by
$$
\llbracket P,Q\rrbracket:=\Upsilon[\Psi(P),\Psi(Q)]_{\mu},\,\,\,\,\forall P\in\wedge^{m-1}\g\otimes\g \otimes\g,Q\in\wedge^{n-1}\g\otimes\g \otimes\g.
$$
\end{thm}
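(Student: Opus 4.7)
The plan is a pure transport-of-structure argument: once the linear map $\Psi:\wedge^{k-1}\g\otimes\g\otimes\g\longrightarrow C^k(\g^*,\g)$ is seen to be a graded isomorphism with inverse $\Upsilon$, the graded Lie algebra $(C^*(\g^*,\g),[-,-]_\mu)$ of Corollary \ref{dual-GLA} pulls back along $\Psi$ to a graded Lie algebra on $\bigoplus_{k\ge1}(\wedge^{k-1}\g\otimes\g\otimes\g)$, and the transported bracket is precisely $\llbracket P,Q\rrbracket=\Upsilon[\Psi(P),\Psi(Q)]_\mu$. So there is nothing to construct beyond checking that $\Psi$ is a degree-preserving bijection.

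First I would verify the matching of degrees. For $P\in\wedge^{m-1}\g\otimes\g\otimes\g$, the defining identity shows $\Psi(P)\in\Hom(\wedge^{m-1}\g^*\otimes\g^*,\g)=C^m(\g^*,\g)$, and symmetrically $\Upsilon$ sends $C^{m+n}(\g^*,\g)$ into $\wedge^{m+n-1}\g\otimes\g\otimes\g$. Hence $\llbracket-,-\rrbracket$ is a well-defined binary operation of degree $0$ with respect to the grading by $k$.

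Second, the identities $\Psi\circ\Upsilon=\Id$ and $\Upsilon\circ\Psi=\Id$, already recorded above the statement, make $\Psi$ a linear isomorphism of graded vector spaces. They follow from nondegeneracy of the canonical pairing between $\g$ and $\g^*$ in the finite-dimensional setting: both defining formulas extract the same tensor component under this pairing, and the slot swap of the last two arguments in $\Psi$ is undone by the corresponding slot swap in $\Upsilon$.

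With the bijection in hand, bilinearity of $\llbracket-,-\rrbracket$ is obvious, and graded antisymmetry follows from graded antisymmetry of $[-,-]_\mu$:
$$\llbracket P,Q\rrbracket=\Upsilon[\Psi(P),\Psi(Q)]_\mu=-(-1)^{mn}\Upsilon[\Psi(Q),\Psi(P)]_\mu=-(-1)^{mn}\llbracket Q,P\rrbracket.$$
For the graded Jacobi identity, the key observation is $\Psi(\llbracket P,Q\rrbracket)=[\Psi(P),\Psi(Q)]_\mu$ (by $\Psi\circ\Upsilon=\Id$), so applying $\Psi$ to the Jacobiator for $\llbracket-,-\rrbracket$ reduces it to the Jacobiator for $[-,-]_\mu$, which vanishes by Corollary \ref{dual-GLA}; since $\Psi$ is injective, the Jacobiator for $\llbracket-,-\rrbracket$ vanishes too. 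I do not foresee a substantial obstacle: the only bookkeeping care is to respect the order of the last two tensor slots in the definitions of $\Psi$ and $\Upsilon$, so that $[-,-]_\mu$ on the ordinary cochain space $C^*(\g^*,\g)$ transports to a bracket on the prescribed graded vector space. Explicit formulas for $\llbracket-,-\rrbracket$ in terms of $\cdot_\g$ can then be read off from \eqref{eq:twilled-DGLA-concrete2} with $\huaL=\ad^*$ and $\huaR=-R^*$, to be used later for the study of $\frks$-matrices.
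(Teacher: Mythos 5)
Your proposal is correct and coincides with the paper's (implicit) argument: the paper records $\Psi\circ\Upsilon=\Id$ and $\Upsilon\circ\Psi=\Id$ just before the theorem and later explicitly refers to $\Psi$ as a gLa isomorphism onto $(C^*(\g^*,\g),[-,-]_\mu)$, so the bracket is obtained exactly by the transport of structure you describe. No discrepancy to report.
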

The general formula of $\llbracket P,Q\rrbracket$ is rather complicated. But for $P=x\otimes y$ and $Q=z\otimes w$, there is a concrete expression.
\begin{lem}\label{gla-s-matrix}
For $x\otimes y,~z\otimes w\in\g\otimes\g$, we have
\begin{eqnarray}\label{2-tensor}
\nonumber\llbracket x\otimes y,z\otimes w\rrbracket&=&z\otimes y\otimes [w,x]_\g+x\otimes w\otimes [y,z]_\g+z\otimes w\cdot_\g y\otimes x\\
&&+x\otimes y\cdot_\g w\otimes z -x\cdot_\g w\otimes y\otimes z-z\cdot_\g y\otimes w\otimes x.
\end{eqnarray}
\end{lem}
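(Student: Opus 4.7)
The plan is to unwind the definitions and then do a direct but careful computation. First, I would identify the relevant cochains: writing $T := \Psi(x\otimes y)$ and $S := \Psi(z\otimes w)$, the unwinding of $\Psi$ in the case $k=1$ shows that $T, S \in C^{1}(\g^{*},\g) = \Hom(\g^{*},\g)$ and (up to the pairing convention being used) are rank-one maps fully determined by $x,y$ and $z,w$ respectively. The target $\llbracket x\otimes y,z\otimes w\rrbracket = \Upsilon[T,S]_{\mu}$ therefore lives in $\g\otimes\g\otimes\g$, and by the definition of $\Upsilon$ in the $k=2$ case its pairing against $\xi_{1}\otimes\xi_{2}\otimes\xi_{3}$ equals $\langle [T,S]_{\mu}(\xi_{1},\xi_{3}),\xi_{2}\rangle$.

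Next I would compute the derived bracket. By Theorem \ref{quasi-as-shLie} applied with $m=n=1$ (and $\hat{\mu}_{1}=\hat{\mu}$ for the semi-direct product), one has $[T,S]_{\hat{\mu}} = [[\hat{\mu},\hat{T}]^{\MN},\hat{S}]^{\MN}$. Because $\hat{T},\hat{S}$ are $1$-cochains, the Matsushima–Nijenhuis formula collapses to
\begin{eqnarray*}
[\hat{\mu},\hat{T}]^{\MN}(X_{1},X_{2}) &=& \hat{\mu}(\hat{T}(X_{1}),X_{2})+\hat{\mu}(X_{1},\hat{T}(X_{2}))-\hat{T}(\hat{\mu}(X_{1},X_{2})),\\
{}[[\hat{\mu},\hat{T}]^{\MN},\hat{S}]^{\MN}(X_{1},X_{2}) &=& [\hat{\mu},\hat{T}]^{\MN}(\hat{S}(X_{1}),X_{2}) + [\hat{\mu},\hat{T}]^{\MN}(X_{1},\hat{S}(X_{2})) - \hat{S}\bigl([\hat{\mu},\hat{T}]^{\MN}(X_{1},X_{2})\bigr).
\end{eqnarray*}
Setting $X_{i}=(0,\xi_{i})$ and using that $\hat{\mu}$ on $\g\ltimes_{\ad^{*},-R^{*}}\g^{*}$ acts as $(x_{1}\cdot_{\g}x_{2},\ad^{*}_{x_{1}}\xi_{2}-R^{*}_{x_{2}}\xi_{1})$, almost every term drops out and one obtains a clean six-term expression
\begin{eqnarray*}
[T,S]_{\hat{\mu}}(\xi_{1},\xi_{2}) &=& S(\xi_{1})\cdot_{\g}T(\xi_{2})+T(\xi_{1})\cdot_{\g}S(\xi_{2})\\
&&-\,T(\ad^{*}_{S(\xi_{1})}\xi_{2})-S(\ad^{*}_{T(\xi_{1})}\xi_{2})+T(R^{*}_{S(\xi_{2})}\xi_{1})+S(R^{*}_{T(\xi_{2})}\xi_{1}).
\end{eqnarray*}

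Then I would substitute the rank-one forms of $T$ and $S$. Each of the six terms above collapses to a scalar pairing times an element of $\g$ built from $x,y,z,w$ and $\cdot_{\g}$: the first two produce $S(\xi_{1})\cdot_{\g}T(\xi_{2})$ and its mate, giving terms like $\langle\cdot,\xi_{1}\rangle\langle\cdot,\xi_{2}\rangle\,(w\cdot_{\g}y)$ or $(y\cdot_{\g}w)$; the third and fifth bring in $\ad^{*}$, and the dual identity $\langle\ad^{*}_{a}\xi,b\rangle=-\langle\xi,[a,b]_{\g}\rangle$ converts them into Lie brackets $[w,x]_{\g}$ and $[y,z]_{\g}$; the fourth and sixth bring in $R^{*}$, and the dual identity $\langle R^{*}_{a}\xi,b\rangle=-\langle\xi,b\cdot_{\g}a\rangle$ produces the two "minus" terms $(x\cdot_{\g}w)$ and $(z\cdot_{\g}y)$ appearing in the claim.

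Finally I would apply $\Upsilon$ term by term. Using $\langle\Upsilon(f),\xi_{1}\otimes\xi_{2}\otimes\xi_{3}\rangle=\langle f(\xi_{1},\xi_{3}),\xi_{2}\rangle$ one reads off the tensor slot that each scalar pairing occupies, yielding the six summands on the right-hand side of \eqref{2-tensor}. The main obstacle is the bookkeeping: one must keep the duality conventions for $\ad^{*}$ and $-R^{*}$ straight (to get the right sign for each of the six terms and to correctly convert $\ad^{*}$-terms into Lie brackets and $R^{*}$-terms into pre-Lie products), and one must track which of the three tensor slots is fed $\xi_{1}$, which is fed $\xi_{2}$, and which is fed $\xi_{3}$ under $\Upsilon$. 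Once those conventions are fixed, the identification of the six computed summands with the six terms of the claimed formula is routine.
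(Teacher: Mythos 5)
Your proposal is correct, and since the paper states this lemma without proof (it only remarks that the general formula is complicated), your direct unwinding of $\Psi$, $\Upsilon$ and the derived bracket is exactly the computation the author leaves implicit. I verified your intermediate six-term expression for $[T,S]_{\hat\mu}(\xi_1,\xi_2)$ term by term against the Matsushima--Nijenhuis formula on $\g\ltimes_{\ad^*,-R^*}\g^*$ and it is right, and the dualization via $\langle\ad^*_a\xi,b\rangle=-\langle\xi,[a,b]_\g\rangle$ and $\langle R^*_a\xi,b\rangle=-\langle\xi,b\cdot_\g a\rangle$ then yields the six summands of \eqref{2-tensor}. Two small remarks: the hazard you flag about the pairing convention is real --- reading the definition of $\Psi$ with the contraction $\langle x\otimes y,\xi_2\otimes\xi_1\rangle=\langle x,\xi_2\rangle\langle y,\xi_1\rangle$ produces the stated right-hand side with $x\leftrightarrow y$ and $z\leftrightarrow w$ interchanged, so one must take $\Psi(x\otimes y)(\xi)=\langle x,\xi\rangle y$ to land on \eqref{2-tensor} as printed (harmless for the intended application to symmetric $r$); and your ordinal references should read ``the third and fourth terms bring in $\ad^*$, the fifth and sixth bring in $R^*$'', though the operators you attach to each output are the correct ones.
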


Moreover, we can obtain the tensor form of an $\huaO$-operator on a bimodule $(\g^*;\ad^*,-R^*)$ over $\g$.

\begin{pro}\label{o-operator-tensor-form}
Let $T:\g^*\lon\g$ be a linear map.
 \begin{itemize}
   \item[\rm(i)] $T$ is an $\huaO$-operator on the bimodule  $(\g^*;\ad^*,-R^*)$  over a pre-Lie algebra $\g$ if and only if the tensor form $\bar{T}=\sum_{i=1}^{n}e_i\otimes T(e_i^*)\in\g\otimes\g$ satisfies
\begin{eqnarray*}
\nonumber\llbracket \bar{T},\bar{T}\rrbracket = 0.
\end{eqnarray*}
 \item[\rm(ii)] $T=T^*$ if and only if $\bar{T}\in\Sym^2(\g)$.
 \end{itemize}
\end{pro}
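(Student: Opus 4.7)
My plan is to deduce both parts from the already-established characterization of $\huaO$-operators on $(\g^*;\ad^*,-R^*)$ as the Maurer--Cartan elements of the gLa $(C^*(\g^*,\g),[-,-]_\mu)$ from Corollary \ref{dual-GLA}, together with the tautological observation that the new bracket $\llbracket -,-\rrbracket$ is defined precisely so as to make $\Psi$ and $\Upsilon$ mutually inverse isomorphisms of graded Lie algebras between $(C^*(\g^*,\g),[-,-]_\mu)$ and $(\oplus_{k\ge1}\wedge^{k-1}\g\otimes\g\otimes\g,\llbracket -,-\rrbracket)$.

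For (ii), I fix a basis $\{e_i\}$ of $\g$ with dual basis $\{e_i^*\}$ and write $T(e_j^*)=\sum_i T_{ij}\,e_i$, so that
$$\bar T=\sum_j e_j\otimes T(e_j^*)=\sum_{i,j}T_{ij}\,e_j\otimes e_i.$$
Thus $\bar T\in\Sym^2(\g)$ precisely when $T_{ij}=T_{ji}$ for all $i,j$, and since $T_{ij}=\langle e_i^*,T(e_j^*)\rangle$, this symmetry is equivalent to $\langle\xi,T(\eta)\rangle=\langle\eta,T(\xi)\rangle$ for all $\xi,\eta\in\g^*$, i.e., $T=T^*$.

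For (i), I first verify that $\Psi(\bar T)=T$ as a map $\g^*\to\g$, by the one-line computation
$$\langle\Psi(\bar T)(\xi),\eta\rangle=\langle\bar T,\xi\otimes\eta\rangle=\sum_i\xi(e_i)\,\eta(T(e_i^*))=\langle T(\xi),\eta\rangle.$$
Granting this identification, the transfer-of-structure identity
$$\llbracket\bar T,\bar T\rrbracket=\Upsilon[\Psi(\bar T),\Psi(\bar T)]_\mu=\Upsilon[T,T]_\mu$$
combined with the injectivity of $\Upsilon$ gives $\llbracket\bar T,\bar T\rrbracket=0$ if and only if $[T,T]_\mu=0$. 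By the proposition preceding Corollary~\ref{cor:O-twilled pre-Lie}, which states that $T$ is an $\huaO$-operator on a bimodule $(V;\huaL,\huaR)$ over $\g$ iff $[\hat T,\hat T]_{\hat\mu}=0$, and specializing to $V=\g^*$ with the bimodule $(\ad^*,-R^*)$ (where $[\hat T,\hat T]_{\hat\mu}$ restricts on the relevant homogeneous bidegree $-1|2$ subspace $C^2(\g^*,\g)$ to $[T,T]_\mu$), this is exactly the statement that $T$ is an $\huaO$-operator.

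The main obstacle is essentially notational: one must pin down the ordering convention in the defining pairing of $\Psi$ so that $\Psi(\bar T)=T$ rather than $T^*$; once that is checked against the formulas for $\Psi$ and $\Upsilon$, everything is formal. An alternative route, which avoids this identification, is to compute $\llbracket\bar T,\bar T\rrbracket$ directly from the explicit formula in Lemma~\ref{gla-s-matrix}, pair the resulting three-tensor with an arbitrary $\xi\otimes\eta\otimes\zeta\in(\g^*)^{\otimes 3}$, and recognize the expressions $\ad^*_{T(\xi)}\eta$ and $R^*_{T(\eta)}\xi$ among the terms to recover the defining $\huaO$-operator identity.
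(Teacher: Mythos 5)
Your proposal is correct and follows essentially the same route as the paper, whose entire proof is the one-line observation that $\Psi$ is a gLa isomorphism onto $(C^*(\g^*,\g),[-,-]_\mu)$, so $\llbracket\bar T,\bar T\rrbracket=0$ iff $[T,T]_\mu=0$, with the rest declared "obvious." You actually supply more detail than the paper does (the coordinate check for (ii), the identification $\Psi(\bar T)=T$, and the flagged ordering/transpose subtlety in the pairing defining $\Psi$, which the paper silently passes over).
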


\begin{proof}
Since $\Psi$ is a gLa isomorphism from $\oplus_{k\ge1}(\wedge^{k-1}\g\otimes\g \otimes\g)$ to $(C^*(\g^*,\g),[-,-]_\mu)$, we deduce that $[T,T]_\mu=0$ if and only if $\llbracket \bar{T},\bar{T}\rrbracket = 0.$ The rest conclusion is obvious.
\end{proof}

\begin{defi}
Let $(\g,\cdot_\g)$ be a pre-Lie algebra and $r\in\Sym^2(\g)$. The equation
 \begin{equation}\label{S-equation1}
\llbracket r,r\rrbracket=0
\end{equation}
 is call {\bf $S$-equation} in $\g$ and $r$ is called an {\bf $\frks$-matrix}.
\end{defi}
\begin{rmk}
The definition of $S$-equation and $\frks$-matrix had been introduced by Bai in \cite{Left-symmetric bialgebras} as an analogue of the classical Yang-Baxter equation and $r$-matrix respectively. But we obtain these notions through the gLa on the tensor space $\oplus_{k\ge1}(\wedge^{k-1}\g\otimes\g \otimes\g)$, which is different from the approach given by Bai.
\end{rmk}
 \emptycomment{For any $r\in\g\otimes \g$, the linear map $r^\sharp:\g^*\longrightarrow \g$ is given by
$$\langle r^\sharp(\xi),\eta\rangle=r(\xi,\eta),\quad \forall  ~\xi,\eta\in \g^*.$$
Let $r\in\Sym^2(\g)$, then
\begin{eqnarray}
\nonumber\langle \half[\hat{r}^\sharp,\hat{r}^\sharp]_{\hat{\mu}}(\xi,\eta),\zeta\rangle&=&\langle r^\sharp(\xi)\cdot_\g r^\sharp(\eta)-r^\sharp(\ad^*_{r^\sharp(\xi)}\eta-R^*_{r^\sharp(\eta)}\xi) ,\zeta\rangle\\
\label{eq:twist-s-matrix}&=&-\langle\xi,r^\sharp(\zeta)\cdot_\g r^\sharp(\eta)\rangle+\langle\zeta,r^\sharp(\xi)\cdot_\g r^\sharp(\eta)\rangle+\langle\eta,[r^\sharp(\xi),r^\sharp(\zeta)]_\g\rangle\\
\nonumber&=&\llbracket r,r\rrbracket(\xi,\zeta,\eta).
\end{eqnarray}}

\emptycomment{There is a close relationship between $\frks$-matrices and $\GRB$-operators.
\begin{pro}\label{pro:assrmatrix-GRB1}
 Let $(\g,\cdot_\g)$ be a pre-Lie algebra.  Then $r\in\Sym^2(\g)$ is an $\frks$-matrix if and only if $[\hat{r}^\sharp,\hat{r}^\sharp]_{\hat{\mu}}=0$, or equivalently, $r^\sharp$ is an $\GRB$-operator on the bimodule $(\g^*;\ad^*,-R^*)$ over $\g$, where $\mu$ is the semi-direct pre-Lie algebra structure of $\g\ltimes_{\ad^*,-R^*} \g^*$.
\end{pro}}

For any $r\in\g\otimes \g$, define a linear map $r^\sharp:\g^*\longrightarrow \g$ by
$$\langle r^\sharp(\xi),\eta\rangle=r(\xi,\eta),\quad \forall  ~\xi,\eta\in \g^*.$$
Assume that $r\in\Sym^2(\g)$ is an $\frks$-matrix. By Proposition \ref{o-operator-tensor-form}, $r^\sharp:\g^*\longrightarrow \g$ is an $\huaO$-operator on the bimodule $(\g^*;\ad^*,-R^*)$. By Proposition \ref{pro:algrep}, we have
\begin{cor}{\rm(\cite{Left-symmetric bialgebras})}
  Let $(\g,\cdot_\g)$ be a pre-Lie algebra and $r\in\Sym^2(\g)$ an $\frks$-matrix. Then $(\g^*,\cdot^{r^\sharp})$ is a pre-Lie algebra, where $\cdot^{r^\sharp}$ is given by
\begin{equation}\label{eq:s-pre-Lie operation}
  \xi\cdot^{r^\sharp} \eta= \ad^*_{r^\sharp(\xi)}\eta-R^*_{r^\sharp(\eta)}\xi,\quad\forall~\xi,\eta\in \g^*,
\end{equation}
and $r^\sharp$ is a homomorphism from the pre-Lie algebra $(\g^*,\cdot^{r^\sharp})$ to the pre-Lie algebra $(\g,\cdot_\g)$. We denote this pre-Lie algebra by $\g^*_{r^\sharp}$.
\end{cor}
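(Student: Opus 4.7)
The plan is to deduce the corollary as an immediate consequence of two results already established in the excerpt, with essentially no calculation required. The key observation is that the $\frks$-matrix condition on $r$ is equivalent to the $\huaO$-operator condition on $r^\sharp$, and once that translation is made, both the pre-Lie structure on $\g^*$ and the homomorphism property drop out of the general theory already developed.

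Concretely, I would proceed in two steps. First, by Proposition \ref{o-operator-tensor-form}(i), the hypothesis $\llbracket r,r\rrbracket=0$ for $r\in\g\otimes\g$ is equivalent to $r^\sharp:\g^*\to\g$ being an $\huaO$-operator on the bimodule $(\g^*;\ad^*,-R^*)$ over $\g$. The extra symmetry condition $r\in\Sym^2(\g)$ corresponds, via Proposition \ref{o-operator-tensor-form}(ii), to $r^\sharp=(r^\sharp)^*$; this refinement is not needed for the present corollary—it becomes essential only when one wants the pair $(\g,\g^*_{r^\sharp})$ to form a pre-Lie bialgebra, as in Corollary \ref{cor:bialg} via Theorem \ref{thm:isomorphism-invariant-twilled} and Lemma \ref{lem:O-symmetric}. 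Second, I would apply Corollary \ref{o-multiplication} (equivalently, the first half of Proposition \ref{pro:algrep}) to the bimodule $(\g^*;\ad^*,-R^*)$ and the operator $T=r^\sharp$. The general formula $u\cdot^T v=\huaL_{T(u)}v+\huaR_{T(v)}u$ specializes, with $\huaL=\ad^*$ and $\huaR=-R^*$, to
$$\xi\cdot^{r^\sharp}\eta=\ad^*_{r^\sharp(\xi)}\eta-R^*_{r^\sharp(\eta)}\xi,$$
which is exactly the multiplication in the statement; the same corollary further records that $r^\sharp$ is a pre-Lie algebra homomorphism from $(\g^*,\cdot^{r^\sharp})$ to $(\g,\cdot_\g)$, which is just a reformulation of the defining identity $T(u)\cdot_\g T(v)=T(\huaL_{T(u)}v+\huaR_{T(v)}u)$.

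There is no real obstacle left at this point, since the genuine conceptual work has been front-loaded: the transfer of the graded Lie bracket on $C^*(\g^*,\g)$ to the tensor space $\bigoplus_{k\ge 1}\wedge^{k-1}\g\otimes\g\otimes\g$ via the isomorphism $\Psi$ (yielding $\llbracket-,-\rrbracket$), together with the identification of $\huaO$-operators as Maurer–Cartan elements of $(C^*(\g^*,\g),[-,-]_{\hat\mu})$, is precisely what makes the translation in the first step automatic. Both assertions of the corollary thus reduce to a single two-line invocation of the earlier propositions.
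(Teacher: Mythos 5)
Your proposal is correct and follows essentially the same route as the paper, which likewise deduces the corollary by first invoking Proposition \ref{o-operator-tensor-form} to identify $r^\sharp$ as an $\huaO$-operator on $(\g^*;\ad^*,-R^*)$ and then specializing Proposition \ref{pro:algrep} (equivalently Corollary \ref{o-multiplication}) with $\huaL=\ad^*$, $\huaR=-R^*$. Your remark that the symmetry $r\in\Sym^2(\g)$ is not actually needed for this particular conclusion is a correct and worthwhile observation consistent with how the paper uses it.
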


By Theorem \ref{thm:isomorphism-invariant-twilled}, we have
\begin{cor}{\rm(\cite{Left-symmetric bialgebras})}
  Let $(\g,\cdot_\g)$ be a pre-Lie algebra and $r\in\Sym^2(\g)$ an $\frks$-matrix. Then $((\frkd=\g\oplus\g^*_{r^\sharp},\cdot_\frkd,(-,-)_-),\g,\g^*_{r^\sharp})$ is a Manin triple for the pre-Lie algebra $\g$ and $\g^*_{r^\sharp}$, where the  pre-Lie algebra multiplication $\cdot_\frkd$ is given by
  \begin{equation}
     (x,\xi)\cdot_\frkd(y,\eta)=(x\cdot_{\g}y+r^\sharp(\xi)\cdot_\g y+r^\sharp(R^*_y\xi)+x\cdot_\g r^\sharp(\eta)-r^\sharp(\ad^*_x \eta),\ad^*_x \eta-R^*_y\xi+\xi\cdot^{r^\sharp} \eta),
   \end{equation}
  and the bilinear form $(-,-)_-$ is given by \eqref{symplectic bracket}.
\end{cor}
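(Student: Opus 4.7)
The plan is to assemble Proposition \ref{o-operator-tensor-form}, Corollary \ref{cor:O-twilled pre-Lie}, Proposition \ref{pro:algrep}, and Theorem \ref{thm:isomorphism-invariant-twilled}; no genuinely new computation is required.

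First, I would translate the $\frks$-matrix hypothesis into the language of \kups. By Proposition \ref{o-operator-tensor-form}(i), the equation $\llbracket r,r\rrbracket=0$ is equivalent to $r^\sharp:\g^*\to\g$ being an $\huaO$-operator on the bimodule $(\g^*;\ad^*,-R^*)$ over $\g$, and by part (ii) the condition $r\in\Sym^2(\g)$ is equivalent to $(r^\sharp)^*=r^\sharp$. Hence $T:=r^\sharp$ satisfies precisely the hypotheses of Theorem \ref{thm:isomorphism-invariant-twilled}.

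Second, I would invoke Theorem \ref{thm:isomorphism-invariant-twilled} with $T=r^\sharp$ to conclude that $(\frkd=\g\oplus\g^*,\pi^{r^\sharp})$ is a quadratic pre-Lie algebra with the canonical skew-symmetric form $(-,-)_-$ of \eqref{symplectic bracket}. In parallel, Corollary \ref{cor:O-twilled pre-Lie} says that $(\frkd,\pi^{r^\sharp},\g,\g^*)$ is a twilled pre-Lie algebra, so $\g$ and $\g^*$ are both pre-Lie subalgebras; by Corollary \ref{cor:pre-Lie and O-operator} (or equivalently by the explicit formula for $\xi\cdot^{r^\sharp}\eta$), the induced pre-Lie structure on the second summand is exactly $\g^*_{r^\sharp}$. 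The two summands are manifestly isotropic with respect to $(-,-)_-$, since \eqref{symplectic bracket} pairs $\g$ only with $\g^*$ and vice versa; combined with the vector-space decomposition $\frkd=\g\oplus\g^*_{r^\sharp}$ this is precisely the data of a Manin triple for pre-Lie algebras.

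Finally, the explicit formula for $\cdot_\frkd=\pi^{r^\sharp}$ is a direct substitution: apply Proposition \ref{pro:algrep} with $T=r^\sharp$ and expand ${\ad^*}^{r^\sharp}$ and $-{R^*}^{r^\sharp}$ via \eqref{eq:dual bimodule in bialgebra}, which gives
\[
{\ad^*_\xi}^{r^\sharp} y = r^\sharp(\xi)\cdot_\g y + r^\sharp(R^*_y\xi),\qquad -{R^*_\eta}^{r^\sharp} x = x\cdot_\g r^\sharp(\eta)-r^\sharp(\ad^*_x\eta),
\]
and substituting these into $(x,\xi)\ast(y,\eta)=(x\cdot_\g y+{\ad^*_\xi}^{r^\sharp} y-{R^*_\eta}^{r^\sharp} x,\ \ad^*_x\eta-R^*_y\xi+\xi\cdot^{r^\sharp}\eta)$ reproduces the displayed formula verbatim. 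There is no genuine obstacle — all the hard work was done in establishing the twisting isomorphism (Corollary \ref{twisting-isomorphism} and Theorem \ref{thm:isomorphism-invariant-twilled}) and the tensor characterization of $\frks$-matrices as symmetric self-dual \kups (Proposition \ref{o-operator-tensor-form}); the only care required here is bookkeeping of signs in the explicit formula.
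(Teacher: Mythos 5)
Your proposal is correct and follows exactly the paper's route: the paper derives this corollary by combining Proposition \ref{o-operator-tensor-form} (an $\frks$-matrix is a symmetric $\huaO$-operator in tensor form), Theorem \ref{thm:isomorphism-invariant-twilled} (quadratic pre-Lie algebra structure on the twist), and Proposition \ref{pro:algrep} (the explicit multiplication), with the isotropy of $\g$ and $\g^*$ under \eqref{symplectic bracket} being immediate. No discrepancies to report.
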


By Corollary \ref{cor:bialg}, we have
\begin{cor}{\rm(\cite{Left-symmetric bialgebras})}
 Let $(\g,\cdot_\g)$ be a pre-Lie algebra and $r\in\Sym^2(\g)$ an $\frks$-matrix. Then $(\g,\g^*_{r^\sharp})$ is a pre-Lie bialgebra.
\end{cor}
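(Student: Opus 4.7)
The plan is to obtain this corollary as a direct consequence of Corollary \ref{cor:bialg}, once we verify that $r^\sharp$ meets its two hypotheses: being an $\huaO$-operator on the bimodule $(\g^*;\ad^*,-R^*)$ and being self-dual. Both verifications are already prepared by Proposition \ref{o-operator-tensor-form}, so the argument is essentially one of assembly.

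First I would invoke Proposition \ref{o-operator-tensor-form}(i): since $r$ is an $\frks$-matrix, by definition $\llbracket r,r\rrbracket = 0$, and the proposition then tells us that $r^\sharp:\g^*\lon\g$ is an $\huaO$-operator on the bimodule $(\g^*;\ad^*,-R^*)$ over $\g$. Next I would invoke Proposition \ref{o-operator-tensor-form}(ii): the hypothesis $r\in\Sym^2(\g)$ is exactly the condition that $r^\sharp=(r^\sharp)^*$ as linear maps $\g^*\lon\g$. These two observations together verify precisely the two hypotheses of Corollary \ref{cor:bialg}.

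Finally, applying Corollary \ref{cor:bialg} with $T=r^\sharp$ gives that $(\g,\g^*_{r^\sharp})$ is a pre-Lie bialgebra, where $\g^*_{r^\sharp}$ is the pre-Lie algebra on $\g^*$ with multiplication $\xi\cdot^{r^\sharp}\eta=\ad^*_{r^\sharp(\xi)}\eta-R^*_{r^\sharp(\eta)}\xi$, matching the formula from Proposition \ref{pro:algrep}. There is no real obstacle here; the only point worth stressing is the consistency of the identification used to pass between the tensor formulation of the $S$-equation and the $\huaO$-operator formulation, but this is exactly the content of Proposition \ref{o-operator-tensor-form}, which was itself derived from the isomorphism $\Psi$ between $\oplus_{k\ge 1}(\wedge^{k-1}\g\otimes\g\otimes\g)$ and $C^*(\g^*,\g)$ transporting the Matsushima--Nijenhuis-derived gLa. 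So the entire corollary reduces to citing Proposition \ref{o-operator-tensor-form} and Corollary \ref{cor:bialg} in sequence.
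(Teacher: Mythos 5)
Your proposal is correct and follows exactly the paper's route: the paper also deduces from Proposition \ref{o-operator-tensor-form} that an $\frks$-matrix $r\in\Sym^2(\g)$ gives a self-dual $\huaO$-operator $r^\sharp$ on $(\g^*;\ad^*,-R^*)$, and then cites Corollary \ref{cor:bialg} to conclude. No gaps; the assembly is the same.
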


\emptycomment{The following lemma is useful in our discussion below.
\begin{lem}
Let $r\in\Sym^2(\g)$, then
\begin{eqnarray}
\nonumber\half\llbracket r,r\rrbracket(\xi,\zeta,\eta)&=&\half\langle [\hat{r}^\sharp,\hat{r}^\sharp]_{\hat{\mu}}(\xi,\eta),\zeta\rangle\\
\label{eq:twist-s-matrix}&=&\langle r^\sharp(\xi)\cdot_\g r^\sharp(\eta)-r^\sharp(\ad^*_{r^\sharp(\xi)}\eta-R^*_{r^\sharp(\eta)}\xi) ,\zeta\rangle\\
\nonumber&=&-\langle\xi,r^\sharp(\zeta)\cdot_\g r^\sharp(\eta)\rangle+\langle\zeta,r^\sharp(\xi)\cdot_\g r^\sharp(\eta)\rangle+\langle\eta,[r^\sharp(\xi),r^\sharp(\zeta)]_\g\rangle,
\end{eqnarray}
where $\xi,\eta,\zeta\in\g^*$.
\end{lem}

\begin{pro}
  Let $(\g,\cdot_\g)$ be a pre-Lie algebra and $r\in\Sym^2(\g)$. Then $(\g\oplus \g^*,\pi=\hat{\mu}+[\hat{\mu},\hat{r}^\sharp]^{\MN})$ is a twilled pre-Lie algebra if and only if $d_{\hat{\mu}}([\hat{r}^\sharp,\hat{r}^\sharp]_{\hat{\mu}})=0$, or equivalently,
 \begin{equation}\label{eq:generalized s-matrix}
 (L_x\otimes1\otimes1+1\otimes L_x\otimes1+1\otimes1\otimes \ad_x)\llbracket r,r\rrbracket=0,\quad\forall~x\in\g.
 \end{equation}
 We call $r\in\Sym^2(\g)$ a {\bf generalized $\frks$-matrix} if it satisfies \eqref{eq:generalized s-matrix}.
\end{pro}
\begin{proof}
By Proposition \ref{pro:construction twilled pre-Lie2}, the first claim follows immediately. By the definition of Matsushima-Nijenhuis bracket and \eqref{eq:twist-s-matrix}, we have
\begin{eqnarray*}
  &&\langle d_{\hat{\mu}}([\hat{r}^\sharp,\hat{r}^\sharp]_{\hat{\mu}})(\xi,\eta,\zeta),x\rangle\\
  &=&\langle \ad^*_{[\hat{r}^\sharp,\hat{r}^\sharp]_{\hat{\mu}}(\xi,\eta)}\zeta-\ad^*_{[\hat{r}^\sharp,\hat{r}^\sharp]_{\hat{\mu}}(\eta,\xi)}\zeta+ R^*_{[\hat{r}^\sharp,\hat{r}^\sharp]_{\hat{\mu}}(\eta,\zeta)}\xi-R^*_{[\hat{r}^\sharp,\hat{r}^\sharp]_{\hat{\mu}}(\xi,\zeta)}\eta ,x\rangle\\
  &=&-\llbracket r,r\rrbracket(L^*_x\xi,\eta,\zeta)-\llbracket r,r\rrbracket(\xi,L^*_x\eta,\zeta)-\llbracket r,r\rrbracket(\xi,\ad^*_x\zeta,\eta)+\llbracket r,r\rrbracket(\eta,\ad^*_x\zeta,\xi).
\end{eqnarray*}
It is straightforward to check that
$$-\llbracket r,r\rrbracket(\xi,\ad^*_x\zeta,\eta)+\llbracket r,r\rrbracket(\eta,\ad^*_x\zeta,\xi)=-\llbracket r,r\rrbracket(\xi,\eta,\ad^*_x\zeta).$$
Thus
\begin{eqnarray*}
\langle d_{\hat{\mu}}([\hat{r}^\sharp,\hat{r}^\sharp]_{\hat{\mu}})(\xi,\eta,\zeta),x\rangle&=&-\llbracket r,r\rrbracket(L^*_x\xi,\eta,\zeta)-\llbracket r,r\rrbracket(\xi,L^*_x\eta,\zeta)-\llbracket r,r\rrbracket(\xi,\eta,\ad^*_x\zeta)\\
&=&(L_x\otimes1\otimes1+1\otimes L_x\otimes1+1\otimes1\otimes \ad_x)\llbracket r,r\rrbracket(\xi,\eta,\zeta).
\end{eqnarray*}
The equivalence follows immediately.
\end{proof}}
\section{Quasi-pre-Lie bialgebras and twisted $\frks$-matrices}\label{sec:Q}
In this section, we use the approach of twisting operations on pre-Lie algebras to study the quasi-pre-Lie bialgebras and quasi-Manin triples for pre-Lie algebras.
\begin{defi}\label{defi:quasi-pre-Lie bialgebra}
 A {\bf quasi-pre-Lie bialgebra} on $(\g,\g^*)$ consists of the following data:
 \begin{itemize}
\item[$\bullet$]A pre-Lie algebra $(\g^*,\cdot_{\g^*})$ and a linear map $\alpha:\g\rightarrow \g\otimes \g$ such that
$\langle\alpha(x),\xi\otimes \eta\rangle=\langle x, \xi\cdot_{\g^*} \eta\rangle$;
\item[$\bullet$]An operation $\cdot_{\g}:\g\otimes\g\rightarrow \g$ and a linear map $\beta:\g^*\rightarrow \g^*\otimes \g^*$ such that
$\langle\beta(\xi),x\otimes y\rangle=\langle\xi, x\cdot_\g y\rangle$;
\item[$\bullet$] An element $\Phi\in\wedge^2\g^*\otimes\g^*$
\end{itemize}
satisfying the following properties:
 \begin{itemize}
\item[${\rm(i)}$]for any $\xi\in\g^*$,
\begin{eqnarray*}
\nonumber&&(\beta\otimes 1)\beta(\xi)-(1\otimes\beta)\beta(\xi)-\tau\circ(\beta\otimes 1)\beta(\xi)+\tau\circ(1\otimes\beta)\beta(\xi)\\
&&= (L_\xi\otimes1\otimes1+1\otimes L_\xi\otimes1+1\otimes1\otimes \ad_\xi)\Phi,
\end{eqnarray*}
where $\tau:\otimes^3\g^*\rightarrow\otimes^3\g^* $ is defined by $\tau(\xi\otimes\eta\otimes\zeta)=\eta\otimes\xi\otimes\zeta$;
\item[$\rm(ii)$]$\dt(\Phi)=0$, where $\dt$ is the formal coboundary operator associated to the trivial bimodule on the structure $(\g,\cdot_\g)$;
\item[$\rm(iii)$]$\Phi(x,y,z)=\Phi(x,z,y)-\Phi(y,z,x)$;
\item[$\rm(iv)$]$\alpha([x,y]_\g)=(L_x\otimes1+1\otimes\ad_x)\alpha(y)-(L_y\otimes1+1\otimes\ad_y)\alpha(x)$, where $[x,y]_\g=x\cdot_\g y-y\cdot_\g x$ for $x,y\in\g$;
\item[$\rm(v)$]$\beta$ is a $1$-cocycle associated to the module $(\g^*\otimes\g^*;L\otimes1+1\otimes\ad)$ over the sub-Lie algebra $({\g^*},[-,-]_{\g^*})$.
\end{itemize}
We denote a quasi-pre-Lie bialgebra by $(\g,\g^*,\Phi)$.
\end{defi}

\begin{defi}
A   {\bf quasi-Manin triple for pre-Lie algebras}
 is a triple $((\frkd,\cdot_\frkd, (-,-)_-),\g_1,\g_2)$, where $(\frkd,\cdot_\frkd,(-,-)_-)$ is an even dimensional quadratic pre-Lie algebra, $\g_1$ is a subspace and $\g_2$ is a pre-Lie subalgebra, both isotropic with respect to $(-,-)_-$, and $\frkd=\g_1\oplus\g_2$ as vector spaces.
\end{defi}

\begin{rmk}
Let $((\frkd,\cdot_\frkd, (-,-)_-),\g_1,\g_2)$ be a quasi-Manin triple. Then $\frkd=\g_1\oplus\g_2$ is a quasi-twilled pre-Lie algebra. Thus a  quasi-Manin triple can be seen as a quasi-twilled pre-Lie algebra with a nondegenerate invariant skew-symmetric bilinear form.
\end{rmk}
\begin{ex}\label{ex:quasi-quadratic pre-Lie}{\rm
 Let $(\g,\cdot_\g)$ be a pre-Lie algebra. If $\phi:\g\times\g \rightarrow \g^*$ is a $2$-cocycle associated to the bimodule $(\g^*;\ad^*,-R^*)$ and satisfies
 \begin{equation}\label{eq:phi-invariant condition}
   \langle\phi(x,y),z\rangle-\langle \phi(x,z),y\rangle+\langle \phi(z,x),y\rangle=0.
 \end{equation}
 Then $(\frkd=\g\ltimes_{\ad^*,-R^*,\phi} \g^*,(-,-)_-)$ is a quadratic pre-Lie algebra, where the nondegenerate invariant skew-symmetric bilinear form $(-,-)_-$ is given by \eqref{symplectic bracket}.}
\end{ex}

\begin{thm}\label{thm:equivalence2}
  There is a one-to-one correspondence between  quasi-Manin triples for pre-Lie algebras and quasi-pre-Lie bialgebras.
\end{thm}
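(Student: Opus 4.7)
The plan is to mimic the strategy used for Theorem \ref{thm:equivalence1}, using the nondegenerate invariant skew-symmetric bilinear form $(-,-)_-$ to identify $\g_2 \cong \g_1^*$ and then reading off the data of a quasi-pre-Lie bialgebra from the quasi-twilled pre-Lie algebra structure on $\frkd = \g_1\oplus\g_2$. The extra ingredient compared to the bialgebra case is the tensor $\Phi$, which will come from the ``defect'' $\hat{\phi}_1$ appearing in the canonical decomposition of the multiplication.

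First, given a quasi-Manin triple $((\frkd,\cdot_\frkd,(-,-)_-),\g_1,\g_2)$, I set $\g := \g_1$ and identify $\g^*\cong\g_2$ via $\langle\xi,x\rangle := -((x,0),(0,\xi))_-$ for $x\in\g_1,\xi\in\g_2$. By Lemma \ref{lem:dec} the pre-Lie multiplication decomposes uniquely as $\pi=\hat{\phi}_1+\hat{\mu}_1+\hat{\mu}_2$ (the vanishing of $\hat{\phi}_2$ being exactly the subalgebra condition on $\g_2$). The restriction $\hat{\mu}_1|_{\g_1\otimes\g_1}$ defines the operation $\cdot_\g$ on $\g$, while $\hat{\mu}_2|_{\g_2\otimes\g_2}$ gives the pre-Lie multiplication $\cdot_{\g^*}$ on $\g^*$. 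The mixed components of $\hat{\mu}_1,\hat{\mu}_2$ produce, through the pairing, the cocycles $\alpha$ and $\beta$ of Definition \ref{defi:quasi-pre-Lie bialgebra}. Finally, I define
\[
\Phi(x,y,z) := \langle \phi_1(x,y),z\rangle,\qquad x,y,z\in\g.
\]

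The verification now proceeds by matching each of the five conditions of Definition \ref{defi:quasi-pre-Lie bialgebra} with one of the four Maurer-Cartan-like equations \eqref{quasi-1}--\eqref{quasi-4} of Lemma \ref{lem:quasi-t} after imposing the invariance of $(-,-)_-$. Explicitly, I expect:
\begin{itemize}
\item[(a)] $\tfrac{1}{2}[\hat{\mu}_2,\hat{\mu}_2]^{\MN}=0$ encodes that $(\g^*,\cdot_{\g^*})$ is pre-Lie, already built into the data;
\item[(b)] $[\hat{\mu}_1,\hat{\mu}_2]^{\MN}=0$ splits, after pairing with the form, into the two cocycle conditions (iv) for $\alpha$ and (v) for $\beta$;
\item[(c)] $\tfrac{1}{2}[\hat{\mu}_1,\hat{\mu}_1]^{\MN}+[\hat{\mu}_2,\hat{\phi}_1]^{\MN}=0$ yields condition (i), relating the coassociator of $\beta$ to the action of $\g^*$ on $\Phi$;
\item[(d)] $[\hat{\mu}_1,\hat{\phi}_1]^{\MN}=0$ translates into condition (ii), i.e. $\dt\Phi=0$.
\end{itemize}
Condition (iii), the skew-symmetry/cyclic relation for $\Phi$, will not come from one of the Maurer--Cartan equations but directly from the skew-symmetry of $(-,-)_-$ together with the invariance identity $(a\cdot_\frkd b,c)_-+(b,[a,c]_\frkd)_-=0$: testing this on triples of elements in $\g_1$ produces exactly the identity
$\Phi(x,y,z)=\Phi(x,z,y)-\Phi(y,z,x)$ (and, with another choice of arguments, the skewness in the first two slots needed for $\Phi\in\wedge^2\g^*\otimes\g^*$).

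For the reverse direction, given $(\g,\g^*,\Phi)$ I put on $\frkd=\g\oplus\g^*$ the multiplication dictated by the formulas above (as in Example \ref{ex:quasi-quadratic pre-Lie}, extended to account for the pre-Lie multiplication on $\g^*$), equip $\frkd$ with the canonical skew form \eqref{symplectic bracket}, and check that the associator is symmetric in the first two arguments and the form is invariant, by running the matching in reverse; the five bialgebra conditions are then exactly what is needed to conclude $[\pi,\pi]^{\MN}=0$ via Lemma \ref{lem:quasi-t}. The two constructions are manifestly inverse to one another since they are both controlled by the same bigraded decomposition. The main obstacle I anticipate is the bookkeeping in step (c) above: teasing out condition (i) requires expanding $[\hat{\mu}_1,\hat{\mu}_1]^{\MN}$ and $[\hat{\mu}_2,\hat{\phi}_1]^{\MN}$, pairing with elements of $\g$ and $\g^*$ appropriately, and carefully using both halves of the invariance identity to recognize the resulting expression as the combination of $\beta$-coassociator terms and the $\g^*$-action on $\Phi$ appearing in Definition \ref{defi:quasi-pre-Lie bialgebra}(i); the remaining identifications are straightforward generalizations of the argument for Theorem \ref{thm:equivalence1}.
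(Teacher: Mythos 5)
Your overall strategy is the paper's: build the double $\frkd=\g\oplus\g^*$ with the canonical skew form \eqref{symplectic bracket}, use the invariance of the form to pin the structure down to the shape \eqref{eq:bracket}, and match the quasi-twilled pre-Lie conditions against the axioms of Definition \ref{defi:quasi-pre-Lie bialgebra}; the paper merely phrases the matching as an element-wise check of the associator identity $((a,b,c))=((b,a,c))$ on the six types of triples rather than through the bidegree equations of Lemma \ref{lem:quasi-t}, which is an equivalent packaging. Two points in your write-up need repair before the verification will close.

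First, your definition $\Phi(x,y,z):=\langle\phi_1(x,y),z\rangle$ has the slots in the wrong order. The paper's convention is $\phi(x,y)=\Phi(x,-,y)$, i.e.\ $\Phi(x,y,z)=\langle\phi_1(x,z),y\rangle$. With that convention the invariance identity $(x\cdot_\frkd y,z)_-+(y,[x,z]_\frkd)_-=0$ on triples from $\g$ yields exactly \eqref{eq:phi-invariant condition}, which (symmetrizing in two of the arguments) gives both the skewness of $\Phi$ in its first two slots and condition (iii). With your convention the same computation produces skewness in the first and \emph{third} slots, so your $\Phi$ does not land in $\wedge^2\g^*\otimes\g^*$ and condition (iii) comes out permuted; the 2-cocycle condition on $\phi_1$ likewise fails to translate into $\dt\Phi=0$ as in Lemma \ref{lem:twist-prepare}. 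Second, the clean one-equation-per-axiom dictionary in (a)--(d) is an oversimplification. Each bidegree equation of Lemma \ref{lem:quasi-t}, evaluated on the different argument types it governs, contributes to several axioms, and conversely: the paper's own tabulation shows condition (i) entering three of the six associator checks, and conditions (iv), (v) being needed both for the mixed associators governed by $[\hat{\mu}_1,\hat{\mu}_2]^{\MN}=0$ and for those governed by \eqref{quasi-2}. Relatedly, you should make explicit that the invariance of $(-,-)_-$ is what forces the mixed components of $\hat{\mu}_1$ and $\hat{\mu}_2$ to be the dual bimodules $(\ad^*,-R^*)$ of $\cdot_\g$ and $\cdot_{\g^*}$ respectively; this is the step that turns $\alpha$ and $\beta$ into the mere transposes of the two multiplications (as in Definition \ref{defi:quasi-pre-Lie bialgebra}) rather than independent data read off ``through the pairing'' from the mixed components. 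None of this changes the architecture of your argument, but as written the construction of $\Phi$ would fail conditions (ii) and (iii).
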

\begin{proof}
Let $(\g,\g^*,\Phi)$ be a quasi-pre-Lie bialgebra. We show that $((\frkd=\g\oplus\g^*,\cdot_\frkd,(-,-)_-),\g,\g^*)$ is a quasi-Manin triple, where the quasi-twilled pre-Lie algebra structure  on $\g\oplus \g^*$ is given by
       \begin{eqnarray}\label{eq:bracket}
    (x,\xi)\cdot_\frkd (y,\eta)=(x\cdot_\g y+\ad^*_\xi y-R^*_\eta x,\ad_x^*\eta-R_y^*\xi+\xi\cdot_{\g^*}\eta+\phi(x,y)),
    \end{eqnarray}
    where $\phi(x,y):=\Phi(x,-,y)$ and $(-,-)_-$ is given by \eqref{symplectic bracket}.

By \eqref{eq:bracket} and condition (iii) in the definition of quasi-pre-Lie bialgebra, the bilinear form $(-,-)_-$ given by \eqref{symplectic bracket} is invariant.

Furthermore, $(\frkd=\g\oplus\g^*,\cdot_\frkd)$ is a quasi-twilled pre-Lie algebra structure, which is obtained by the following relations:
\begin{eqnarray*}
 \mbox{Conditions (i) and (ii) }&\Longrightarrow& ((x,0),(y,0),(z,0))= ((y,0),(x,0),(z,0));\\
  \mbox{Conditions (i) and (iv) }&\Longrightarrow&((x,0),(y,0),(0,\zeta))= ((y,0),(x,0),(0,\zeta));\\
  \mbox{Conditions (i) and (v)}&\Longrightarrow&((0,\xi),(y,0),(z,0))= ((y,0),(0,\xi),(z,0));\\
  \mbox{$(\g^*,\cdot_{\g^*})$ is a pre-Lie algebra}&\Longrightarrow& ((0,\xi),(0,\eta),(0,\zeta))= ((0,\eta),(0,\xi),(0,\zeta));\\
  \mbox{$(\g^*,\cdot_{\g^*})$ is a pre-Lie algebra and (v) }&\Longrightarrow&((0,\xi),(0,\eta),(0,z))= ((0,\eta),(0,\xi),(0,z));\\
  \mbox{$(\g^*,\cdot_{\g^*})$ is a pre-Lie algebra and (iv) }&\Longrightarrow&((x,0),(0,\eta),(0,\zeta))= ((0,\eta),(0,x),(0,\zeta)).
\end{eqnarray*}

Conversely, let $((\frkd,\cdot_\frkd, (-,-)_-),\g_1,\g_2)$ be a quasi-Manin triple. Since the pairing $(-,-)_-$ is nondegenerate, $\g_2$ is isomorphic to $\g_1^*$, via $ \langle \xi,y\rangle=(\xi,y)_-$ for all $y\in\g_1,~\xi\in\g_2$. Under this isomorphism, the skew-symmetric bilinear form $(\cdot,\cdot)_-$ on $\frkd$ is given by \eqref{symplectic bracket}.

By the invariance of the bilinear form $(-,-)_-$, we deduce that the multiplication between $\g_1$ and $\g_2$ is given by \eqref{eq:bracket}.

The rest is a similar proof of the above relations. We omit the details.
\end{proof}

\begin{cor}
 Let $(\g,\g^*,\Phi)$ be a quasi-pre-Lie bialgebra. Then $({\g}^c\oplus {\g^*}^c,[-,-])$ is a Lie algebra, where the bracket is defined by
    \begin{eqnarray*}
    [x+\xi,y+\eta]=[\xi,\eta]_{\g^*}+L^*_x\xi-L^*_y\eta+L^*_\xi y-L^*_\eta x+[x,y]_\g+\phi(x,y)-\phi(y,x),
    \end{eqnarray*}
    where $\phi(x,y):=\Phi(x,-,y)$ and $x,y\in\g,\xi,\eta\in \g^*$. Furthermore, $({\g}^c\oplus {\g^*}^c,\omega)$ is a symplectic Lie algebra, where $\omega$ is given by
    \begin{equation*}\label{eq:defiomega}
     \omega(x+\xi,y+\eta)=\langle\xi,y\rangle-\langle\eta,x\rangle,\quad \forall x,y\in\g,~\xi,\eta\in\g^*.
   \end{equation*}
\end{cor}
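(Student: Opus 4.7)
The plan is to deduce both statements by invoking Theorem \ref{thm:equivalence2} to pass from the quasi-pre-Lie bialgebra $(\g, \g^*, \Phi)$ to its associated quasi-Manin triple, and then exploiting the sub-adjacent Lie algebra construction together with the invariance of the bilinear form.

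First, Theorem \ref{thm:equivalence2} furnishes a quasi-Manin triple $((\frkd, \cdot_\frkd, (-,-)_-), \g, \g^*)$ where $\frkd = \g \oplus \g^*$ as a vector space, $\cdot_\frkd$ is the multiplication \eqref{eq:bracket}, and $(-,-)_-$ is the nondegenerate invariant skew-symmetric bilinear form \eqref{symplectic bracket}. Since $(\frkd, \cdot_\frkd)$ is a pre-Lie algebra, its sub-adjacent Lie algebra $\frkd^c$ with bracket $[a, b]_\frkd := a \cdot_\frkd b - b \cdot_\frkd a$ is automatically a Lie algebra (recalled in Section \ref{sec:Pre}). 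Under the vector space identification $\frkd^c \cong \g^c \oplus \g^{*c}$, a direct computation of $[x+\xi, y+\eta]_\frkd$ from \eqref{eq:bracket}, using the relation $L^*_x = \ad^*_x + R^*_x$ to collapse the $\ad^* + R^*$ sums arising in each component, recovers the displayed bracket on $\g^c \oplus \g^{*c}$; this step is pure symbolic unpacking.

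For the symplectic structure, $\omega := (-,-)_-$ is nondegenerate and skew-symmetric by construction, so one only needs the $2$-cocycle identity. The invariance of $(-,-)_-$ with respect to $\cdot_\frkd$ reads $\omega(a \cdot_\frkd b, c) + \omega(b, [a,c]_\frkd) = 0$, hence
\begin{eqnarray*}
\omega([a,b]_\frkd, c) &=& \omega(a \cdot_\frkd b, c) - \omega(b \cdot_\frkd a, c) \\
&=& -\omega(b, [a,c]_\frkd) + \omega(a, [b,c]_\frkd).
\end{eqnarray*}
Summing the three cyclic permutations of $a, b, c$ and using the skew-symmetry of both $\omega$ and $[-,-]_\frkd$, the six resulting terms cancel in pairs, yielding the cocycle identity
$$\omega([a,b]_\frkd, c) + \omega([b,c]_\frkd, a) + \omega([c,a]_\frkd, b) = 0.$$
There is no genuine obstacle here: the result is an instance of the standard fact that an invariant nondegenerate skew-symmetric form on a pre-Lie algebra descends to a symplectic structure on its sub-adjacent Lie algebra, and the only bookkeeping is the verification that the bracket displayed in the corollary matches the commutator derived from \eqref{eq:bracket}.
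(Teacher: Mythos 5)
Your proposal is correct and follows the paper's own (much terser) proof: invoke Theorem \ref{thm:equivalence2} to endow $\g\oplus\g^*$ with the quasi-twilled pre-Lie multiplication \eqref{eq:bracket}, identify the displayed bracket with the commutator of that multiplication via $L^*=\ad^*+R^*$, and derive the $2$-cocycle identity for $\omega=(-,-)_-$ from invariance. One small correction to your final step: after substituting the invariance identity into the cyclic sum $S=\omega([a,b]_\frkd,c)+\omega([b,c]_\frkd,a)+\omega([c,a]_\frkd,b)$, the six terms do not cancel in pairs but combine (using $[c,a]_\frkd=-[a,c]_\frkd$ and the skew-symmetry of $\omega$) to give $S=-2S$, whence $3S=0$ and $S=0$; the conclusion is unaffected.
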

\begin{proof}
  By Theorem \ref{thm:equivalence2}, the double structure given by \eqref{eq:bracket} of the quasi-pre-Lie bialgebra $(\g,\g^*,\Phi)$ is a quasi-twilled pre-Lie algebra. The first claim follows that $({\g}^c\oplus {\g^*}^c,[-,-])$ is the sub-adjacent Lie algebra of the above quasi-twilled pre-Lie algebra. The second claim follows the invariance of the non-degenerate bilinear form $(-,-)$.
\end{proof}

Let $T$ be a $\phi$-twisted $\huaO$-operator on a bimodule $(V;\huaL,\huaR)$ over a pre-Lie algebra $(\g,\cdot_\g)$. Let $\pi^\phi$ be the structure of the quasi-pre-Lie algebra $\g\ltimes_{\ad^*,-R^*,\phi}\g^*$. By Corollary \ref{cor:twist-o-quasi-pre-Lie},   $(( \g\oplus\g^*,\pi^{T,\phi}),\g,\g^*)$  is a quasi-twilled pre-Lie algebra. Moreover, by Corollary \ref{twisting-isomorphism},
$e^{\hat{T}}:( \g\oplus\g^*,\pi^T)\lon( \g\oplus\g^*,\pi)$ is an isomorphism between  pre-Lie algebras.

By Corollary \ref{cor:twist-pre-Lie operation} and Corollary \ref{cor:twist-o-quasi-pre-Lie}, we have
\begin{pro}\label{pro:twist-O-quasi-pre-Liebia}
Let $T:\g^*\lon\g$ be a $\phi$-twisted $\GRB$-operator on the bimodule $(\g^*;\ad^*,-R^*)$ over $\g$. Then $\g^*_{T,\phi}=(\g^*,\cdot^{T,\phi})$ is a pre-Lie algebra, where $\cdot^{{T,\phi}}$ is given by
\begin{equation}\label{eq:twist-s-pre-Lie operation}
  \xi\cdot^{T,\phi} \eta= \ad^*_{T(\xi)}\eta-R^*_{T(\eta)}\xi+\phi(T(\xi),T(\eta)),\quad\forall~\xi,\eta\in \g^*.
\end{equation}
Moreover, the twilled pre-Lie algebra $\pi^{T,\phi}$ on $\g\oplus\g^*$ is given by
  \begin{eqnarray*}
     (x,\xi)\ast(y,\eta)&=&(x\cdot_{\g}y+{\ad_\xi^*}^T y-{R_\eta^*}^T x-T(\phi(x,y))-T(\phi(T(\xi),y))\\
     &&-T(\phi(x,T(\eta))),\ad^*_x \eta-R^*_y\xi+\xi\cdot^{T,\phi} \eta+\phi(x,y)+\phi(T(\xi),y)+\phi(x,T(\eta)),
   \end{eqnarray*}
   where ${\ad^*}^T$ and $-{R^*}^T$ are given by \eqref{eq:dual bimodule in bialgebra} and $x,y\in\g,\xi,\eta\in \g^*$.
\end{pro}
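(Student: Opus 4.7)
The plan is to specialize Corollaries \ref{cor:twist-pre-Lie operation} and \ref{cor:twist-o-quasi-pre-Lie} to the bimodule $(\g^*;\ad^*,-R^*)$, substituting $V=\g^*$, $\huaL=\ad^*$, $\huaR=-R^*$, together with the given $2$-cocycle $\phi:\g\times\g\to\g^*$. The hypothesis that $T$ is a $\phi$-twisted $\huaO$-operator on $(\g^*;\ad^*,-R^*)$ is exactly the hypothesis required by those corollaries, so neither assertion requires new work beyond unwinding the abstract formulas in the present notation.

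For the first assertion, Corollary \ref{cor:twist-pre-Lie operation} directly supplies a pre-Lie multiplication on $V$ via $u\cdot^{T,\phi} v = \huaL_{T(u)}v + \huaR_{T(v)}u + \phi(T(u),T(v))$. Substituting $\huaL=\ad^*$ and $\huaR=-R^*$ gives
$$\xi\cdot^{T,\phi}\eta = \ad^*_{T(\xi)}\eta - R^*_{T(\eta)}\xi + \phi(T(\xi),T(\eta)),$$
which is precisely \eqref{eq:twist-s-pre-Lie operation}. So the pre-Lie axioms on $\g^*_{T,\phi}$ are free of charge from that corollary.

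For the explicit formula on $\g\oplus\g^*$, I would apply Corollary \ref{cor:twist-o-quasi-pre-Lie}, which produces the quasi-twilled pre-Lie structure $\pi^{T,\phi}=\widehat{\pi^\phi}+[\hat\mu,\hat T]^{\MN}+[\hat\phi,\hat T]^{\MN}$ and the associated multiplication on $\g\oplus V$ in terms of $\frkL^T$ and $\frkR^T$. It then suffices to identify $\frkL^T$ and $\frkR^T$ with the operations $({\ad^*})^T$ and $-(R^*)^T$ of \eqref{eq:dual bimodule in bialgebra}. From \eqref{eq:LT}, with $\huaR=-R^*$,
$$\frkL^T_\xi y = T(\xi)\cdot_\g y - T(-R^*_y\xi) = T(\xi)\cdot_\g y + T(R^*_y\xi) = {\ad_\xi^*}^T y,$$
and from \eqref{eq:RT}, with $\huaL=\ad^*$,
$$\frkR^T_\eta x = x\cdot_\g T(\eta) - T(\ad^*_x\eta) = -{R_\eta^*}^T x.$$
Inserting these two identifications together with $\xi\cdot^{T,\phi}\eta$ from the first part into the formula displayed at the end of Corollary \ref{cor:twist-o-quasi-pre-Lie} immediately yields the claimed expression for $(x,\xi)\ast(y,\eta)$.

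Since the proposition is essentially a direct translation of the two corollaries into dual-bimodule language, I do not expect a genuine obstacle. The only subtle point, and the one to pay attention to while transcribing, is the sign coming from $\huaR=-R^*$: it propagates into the definition \eqref{eq:dual bimodule in bialgebra} of $({\ad^*})^T$ and $-(R^*)^T$ and is responsible for the sign pattern in the displayed multiplication, in particular for the combinations $-T(\phi(x,y)) - T(\phi(T(\xi),y)) - T(\phi(x,T(\eta)))$ appearing in the $\g$-component and for the three $\phi$-terms appearing unsigned in the $\g^*$-component.
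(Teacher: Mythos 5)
Your proposal is correct and follows exactly the paper's route: the paper derives this proposition precisely by citing Corollary \ref{cor:twist-pre-Lie operation} for the pre-Lie structure \eqref{eq:twist-s-pre-Lie operation} on $\g^*$ and Corollary \ref{cor:twist-o-quasi-pre-Lie} for the explicit multiplication on $\g\oplus\g^*$, with the same specialization $\huaL=\ad^*$, $\huaR=-R^*$ and the same identification of $\frkL^T,\frkR^T$ with $({\ad^*})^T$ and $-({R^*})^T$ via \eqref{eq:LT}--\eqref{eq:RT}.
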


Similar to the Lemma \ref{lem:O-symmetric}, we have
\begin{lem}
 Let $T:\g^*\lon\g$ be a $\phi$-twisted $\GRB$-operator on the bimodule $(\g^*;\ad^*,-R^*)$ over $\g$. Then $e^{\hat{T}}$ preserves the bilinear form $(-,-)_-$ given by \eqref{symplectic bracket} if and only if $T =T^*$.
\end{lem}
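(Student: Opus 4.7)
The plan is to observe that this lemma is a direct analogue of Lemma \ref{lem:O-symmetric}, and in fact the proof of Lemma \ref{lem:O-symmetric} does not use the $\huaO$-operator condition at all: it only uses the formal fact that $\hat T\circ \hat T=0$ for the lift of any linear map $T:\g^*\to\g$. Consequently the $\phi$-twisted hypothesis on $T$ plays no role in the computation, and the same argument applies verbatim.

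Concretely, I would first note that since $\hat T$ sends $\g^*\to\g$ and vanishes on $\g$, we have $\hat T\circ \hat T=0$, so
\[
e^{\hat T}=\Id+\hat T.
\]
Then for arbitrary $x,y\in\g$ and $\xi,\eta\in\g^*$ I would compute
\begin{align*}
(e^{\hat T}(x+\xi),\,e^{\hat T}(y+\eta))_-
&=(x+\xi+T(\xi),\,y+\eta+T(\eta))_-\\
&=(x+\xi,y+\eta)_-+(\xi,T(\eta))_-+(T(\xi),\eta)_-\\
&=(x+\xi,y+\eta)_-+\langle \xi,T(\eta)\rangle-\langle T(\xi),\eta\rangle\\
&=(x+\xi,y+\eta)_-+\langle (T^*-T)\xi,\eta\rangle,
\end{align*}
using the explicit formula \eqref{symplectic bracket} for $(-,-)_-$ and the definition of the dual map $T^*$.

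From this identity the equivalence is immediate: $e^{\hat T}$ preserves $(-,-)_-$ if and only if $\langle(T^*-T)\xi,\eta\rangle=0$ for all $\xi,\eta\in\g^*$, which by nondegeneracy of the natural pairing is equivalent to $T=T^*$. There is essentially no obstacle here; the only thing worth emphasizing in the write-up is that although the ambient pre-Lie structure $\pi^{T,\phi}$ on $\g\oplus\g^*$ does depend on $\phi$, the canonical transformation $e^{\hat T}$ itself is defined purely in terms of the graded commutator with $\hat T$ and hence its compatibility with the pairing is insensitive to $\phi$. This is also conceptually consistent with the fact that, later, the twisted $\frks$-matrix theory for quasi-pre-Lie bialgebras will require exactly the symmetry condition $T=T^*$, just as in the untwisted case.
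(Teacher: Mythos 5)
Your proposal is correct and is exactly the argument the paper intends: the paper itself proves this lemma by the phrase ``Similar to the Lemma \ref{lem:O-symmetric}'', and your computation reproduces that proof verbatim, together with the (correct) observation that the $\phi$-twisted $\huaO$-operator hypothesis is never used beyond $\hat{T}\circ\hat{T}=0$. No gaps.
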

Similar to the Theorem \ref{thm:isomorphism-invariant-twilled}, we have
\begin{thm}\label{thm:isomorphism-invariant-quais-twilled}
 Let $T:\g^*\lon\g$ be a $\phi$-twisted $\GRB$-operator on the bimodule $(\g^*;\ad^*,-R^*)$ over $\g$, $\phi$ satisfies \eqref{eq:phi-invariant condition} and $T=T^*$.
Then $(\frkd=\g\oplus\g^*,\pi^{T,\phi})$  is a quadratic pre-Lie algebra with the invariant bilinear form $(-,-)_-$ given by \eqref{symplectic bracket} and $e^{\hat{T}}$ is an isomorphism from the quadratic pre-Lie algebra $(\frkd=\g\oplus\g^*,\pi^{T,\phi})$ to $(\frkd=\g\oplus\g^*,\pi^\phi)$.
 \end{thm}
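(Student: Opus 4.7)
The plan is to exactly parallel the proof of Theorem~\ref{thm:isomorphism-invariant-twilled}, adapted to the presence of the additional term $\phi$. Three ingredients are already in place: (a) the target $(\frkd,\pi^\phi)$ is a quadratic pre-Lie algebra by Example~\ref{ex:quasi-quadratic pre-Lie}, because the $2$-cocycle condition together with \eqref{eq:phi-invariant condition} forces $\g\ltimes_{\ad^*,-R^*,\phi}\g^*$ to be invariant under $(-,-)_-$; (b) by Corollary~\ref{cor:twist-o-quasi-pre-Lie} the twisted structure $\pi^{T,\phi}$ is a pre-Lie algebra multiplication on $\frkd$, and by Corollary~\ref{twisting-isomorphism} the map $e^{\hat{T}}$ is an isomorphism of pre-Lie algebras from $(\frkd,\pi^{T,\phi})$ to $(\frkd,\pi^\phi)$; (c) the hypothesis $T=T^*$ guarantees, by the lemma immediately preceding this theorem, that $e^{\hat{T}}$ preserves $(-,-)_-$.

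With these facts in hand, I would transport invariance back along $e^{\hat{T}}$ by the standard computation. For any $a,b,c\in\frkd$, the chain of identities
\begin{align*}
(\pi^{T,\phi}(a,b),c)_- &= (e^{-\hat{T}}\pi^\phi(e^{\hat{T}}a,e^{\hat{T}}b),c)_- = (\pi^\phi(e^{\hat{T}}a,e^{\hat{T}}b),e^{\hat{T}}c)_- \\
&= -(e^{\hat{T}}b,\pi^\phi(e^{\hat{T}}a,e^{\hat{T}}c))_- + (e^{\hat{T}}b,\pi^\phi(e^{\hat{T}}c,e^{\hat{T}}a))_- \\
&= -(b,\pi^{T,\phi}(a,c))_- + (b,\pi^{T,\phi}(c,a))_-
\end{align*}
reproduces exactly the invariance of $(-,-)_-$ for $(\frkd,\pi^{T,\phi})$, giving the first assertion. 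The second assertion, that $e^{\hat{T}}$ is an isomorphism of quadratic pre-Lie algebras, then follows directly from combining (b) and (c), since a pre-Lie isomorphism that preserves a nondegenerate invariant skew-symmetric form is automatically an isomorphism of quadratic pre-Lie algebras.

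The only substantive verification hidden in this plan is the claim in Example~\ref{ex:quasi-quadratic pre-Lie}: namely that $(\g\ltimes_{\ad^*,-R^*,\phi}\g^*,(-,-)_-)$ is invariant. I would check this directly from the explicit bracket formula \eqref{eq:bracket}: the terms not involving $\phi$ reproduce invariance by the standard computation for the untwisted semidirect product $\g\ltimes_{\ad^*,-R^*}\g^*$, while the $\phi$-contribution collapses to \eqref{eq:phi-invariant condition} after expanding the sub-adjacent commutator $[a,c]_\frkd=a\cdot_\frkd c-c\cdot_\frkd a$. This is the main potential pitfall---matching the cyclic symmetry in \eqref{eq:phi-invariant condition} exactly with the invariance identity---but it is routine once the signs are tracked. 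Beyond this preliminary bookkeeping, the argument is purely structural.
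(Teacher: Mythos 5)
Your proposal is correct and follows essentially the same route as the paper: the paper proves this theorem by declaring it ``similar to Theorem~\ref{thm:isomorphism-invariant-twilled}'', whose proof is exactly your chain of identities transporting the invariance of $(-,-)_-$ back along the form-preserving pre-Lie isomorphism $e^{\hat{T}}$, with the quadratic structure of the target supplied by Example~\ref{ex:quasi-quadratic pre-Lie}. Your explicit identification of that example (invariance of $\g\ltimes_{\ad^*,-R^*,\phi}\g^*$ via \eqref{eq:phi-invariant condition}) as the only substantive verification is accurate and, if anything, slightly more careful than the paper's own treatment.
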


By Theorem \ref{thm:equivalence2}, Proposition \ref{pro:twist-O-quasi-pre-Liebia} and Theorem \ref{thm:isomorphism-invariant-quais-twilled}, we obtain
\begin{cor}\label{cor:quasi-pre-Liebialg}
   Let $T:\g^*\lon\g$ be a $\phi$-twisted $\GRB$-operator on the bimodule $(\g^*;\ad^*,-R^*)$ over $\g$, $\phi$ satisfies \eqref{eq:phi-invariant condition} and $T=T^*$. Then $(\g,\g^*_{T,\phi},\Phi)$ is a quasi-pre-Lie bialgebra, where $\Phi(x,y,z):=\langle\phi(x,z),y\rangle$ for $x,y,z\in\g$.
\end{cor}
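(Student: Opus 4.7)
The plan is to assemble Corollary \ref{cor:quasi-pre-Liebialg} by combining three previously established ingredients: Proposition \ref{pro:twist-O-quasi-pre-Liebia}, Theorem \ref{thm:isomorphism-invariant-quais-twilled}, and the equivalence Theorem \ref{thm:equivalence2} between quasi-Manin triples and quasi-pre-Lie bialgebras. The strategy is to first exhibit a quasi-Manin triple of the form $((\frkd,\pi^{T,\phi},(-,-)_-),\g,\g^*_{T,\phi})$ and then invoke Theorem \ref{thm:equivalence2} to read off the desired quasi-pre-Lie bialgebra.

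First I would produce the candidate quasi-twilled structure on $\frkd=\g\oplus\g^*$. Since $T$ is a $\phi$-twisted $\huaO$-operator on the bimodule $(\g^*;\ad^*,-R^*)$, Proposition \ref{pro:twist-O-quasi-pre-Liebia} yields the pre-Lie algebra $\g^*_{T,\phi}=(\g^*,\cdot^{T,\phi})$ together with an explicit quasi-twilled multiplication $\pi^{T,\phi}$ on $\frkd$, for which $\g^*_{T,\phi}$ is the subalgebra piece. Next, under the additional hypotheses $T=T^*$ and \eqref{eq:phi-invariant condition}, Theorem \ref{thm:isomorphism-invariant-quais-twilled} asserts that $(\frkd,\pi^{T,\phi},(-,-)_-)$ is a quadratic pre-Lie algebra, where $(-,-)_-$ is the canonical pairing from \eqref{symplectic bracket}. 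Both $\g$ and $\g^*_{T,\phi}$ are manifestly isotropic for $(-,-)_-$, and $\frkd=\g\oplus\g^*_{T,\phi}$ as vector spaces; therefore all axioms of a quasi-Manin triple are satisfied.

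Applying Theorem \ref{thm:equivalence2} to this quasi-Manin triple produces a quasi-pre-Lie bialgebra on $(\g,\g^*)$. By construction, the pre-Lie structure on the second factor supplied by that equivalence is precisely $\cdot^{T,\phi}$ of \eqref{eq:twist-s-pre-Lie operation}. The 3-form $\Phi$ is extracted from the bidegree-$(2|-1)$ component $\hat{\phi}_1^T$ of $\pi^{T,\phi}$, which by \eqref{twisting-1} equals the original $\hat{\phi}_1$, i.e.\ $\phi$ itself; translating this $\g^*$-valued $2$-cocycle on $\g$ into an element of $\wedge^2\g^*\otimes\g^*$ via $(-,-)_-$ then yields $\Phi(x,y,z)=\langle\phi(x,z),y\rangle$, as claimed.

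The main (and essentially only) obstacle is bookkeeping: one must verify that $\g$ plays the role of the merely-subspace factor in Theorem \ref{thm:equivalence2} while $\g^*_{T,\phi}$ plays the role of the subalgebra factor, confirm that the twisting leaves the lowest-degree piece $\hat{\phi}_1$ invariant, and check that \eqref{eq:phi-invariant condition} translates into exactly the skew-symmetry condition (iii) required of $\Phi$ in Definition \ref{defi:quasi-pre-Lie bialgebra}. No computations beyond those already carried out in Section \ref{sec:T} and in the proofs of Theorems \ref{thm:equivalence2} and \ref{thm:isomorphism-invariant-quais-twilled} are needed.
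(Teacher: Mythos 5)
Your proposal is correct and follows essentially the same route as the paper, which derives the corollary precisely by combining Theorem \ref{thm:equivalence2}, Proposition \ref{pro:twist-O-quasi-pre-Liebia} and Theorem \ref{thm:isomorphism-invariant-quais-twilled}. The additional bookkeeping you describe (isotropy of the two factors, invariance of $\hat{\phi}_1$ under twisting, and the translation of \eqref{eq:phi-invariant condition} into condition (iii) of Definition \ref{defi:quasi-pre-Lie bialgebra}) is exactly what the paper leaves implicit.
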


\begin{lem}\label{lem:twist-prepare}
  Let $(\g,\cdot_\g)$ be a pre-Lie algebra. Then $\phi:\g\times\g \rightarrow \g^*$ is a $2$-cocycle associated to the bimodule $(\g^*;\ad^*,-R^*)$ and satisfies \eqref{eq:phi-invariant condition} if and only if $\Phi\in \wedge^2\g^*\otimes\g^*$ satisfies $\dt(\Phi)=0$ and
 \begin{equation}\label{eq:quasi-preLie bialgebra extra}
\Phi(x,y,z)=\Phi(x,z,y)-\Phi(y,z,x),
  \end{equation}
  where $\Phi(x,y,z):=\langle\phi(x,z),y\rangle$ for $x,y,z\in\g$.
\end{lem}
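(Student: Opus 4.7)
The plan is to recast both sides of the claim in terms of the tensor $\Phi$ and reduce the equivalence to a direct computational identity. Throughout I would use $\Phi(x,y,z) = \langle\phi(x,z),y\rangle$ together with the duality rules $\langle\ad^*_x\xi,w\rangle = -\langle\xi,[x,w]_\g\rangle$ and $\langle R^*_x\xi,w\rangle = -\langle\xi,w\cdot_\g x\rangle$ to transfer any statement about $\phi$ into one about $\Phi$.

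First I would handle the symmetry conditions. Translating \eqref{eq:phi-invariant condition} via the above identification gives $\Phi(x,z,y) - \Phi(x,y,z) + \Phi(z,y,x) = 0$. Swapping $y$ and $z$ in this identity immediately yields \eqref{eq:quasi-preLie bialgebra extra}, while adding the original and the swapped versions yields $\Phi(y,z,x) + \Phi(z,y,x) = 0$, i.e.\ skew-symmetry in the first two slots; hence $\Phi \in \wedge^2\g^*\otimes\g^*$. Conversely, combining this skew-symmetry with \eqref{eq:quasi-preLie bialgebra extra} recovers \eqref{eq:phi-invariant condition} by direct substitution. So the algebraic hypotheses on the two sides match, and we may assume both in what follows.

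Under these hypotheses the remaining task is to prove $\delta\phi = 0 \Leftrightarrow \dt(\Phi) = 0$. I would expand $\delta\phi(x,y,z)$ by \eqref{eq:pre-Lie cohomology} with $(\huaL,\huaR) = (\ad^*,-R^*)$, pair with an arbitrary $w\in\g$, and rewrite each of the seven resulting summands using $\Phi$. Separately I would expand $\dt(\Phi)(x,y,w,z)$ by \eqref{eq:pre-Lie cohomology} with trivial coefficients, obtaining six summands. The skew-symmetry in the first two slots of $\Phi$ converts the terms $\Phi([x,w]_\g,y,z)$ and $\Phi([y,w]_\g,x,z)$ arising in $\dt(\Phi)$ into $-\Phi(y,[x,w]_\g,z)$ and $-\Phi(x,[y,w]_\g,z)$, which exactly match the corresponding terms of $\langle\delta\phi(x,y,z),w\rangle$. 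After this cancellation the residual discrepancy is
\[
-\Phi(x,y,w\cdot_\g z) - \Phi(y,w\cdot_\g z,x) + \Phi(x,w\cdot_\g z,y),
\]
which is precisely the negative of \eqref{eq:quasi-preLie bialgebra extra} evaluated at the triple $(x,y,w\cdot_\g z)$ and hence vanishes by hypothesis. Therefore $\dt(\Phi)(x,y,w,z) = \langle\delta\phi(x,y,z),w\rangle$ for all $x,y,z,w\in\g$, yielding the cocycle equivalence and completing the proof. The main obstacle is purely the sign bookkeeping required to align the seven summands of $\langle\delta\phi,w\rangle$ with the six summands of $\dt(\Phi)$ and to invoke the skew-symmetry and \eqref{eq:quasi-preLie bialgebra extra} at exactly the right slots; no deeper structural input is needed.
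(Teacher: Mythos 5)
Your proposal is correct and follows the same direct-verification route as the paper, whose proof is essentially a one-sentence assertion that the two pairs of conditions translate into each other under $\Phi(x,y,z)=\langle\phi(x,z),y\rangle$; you simply carry out the computation that the paper omits, and your identifications (\eqref{eq:phi-invariant condition} becoming \eqref{eq:quasi-preLie bialgebra extra} after relabeling, skew-symmetry in the first two slots, and the term-by-term matching of $\langle\delta\phi(x,y,z),w\rangle$ with $\dt(\Phi)(x,y,w,z)$) all check out. One point in your favour: the paper words its proof as if ``$\phi$ is a $2$-cocycle iff $\dt(\Phi)=0$'' and ``\eqref{eq:phi-invariant condition} iff \eqref{eq:quasi-preLie bialgebra extra}'' were two independent equivalences, whereas, as your residual term $\Phi(y,w\cdot_\g z,x)-\Phi(x,w\cdot_\g z,y)+\Phi(x,y,w\cdot_\g z)$ shows, the cocycle equivalence genuinely uses \eqref{eq:quasi-preLie bialgebra extra}; your ordering of the argument (symmetry first, then cohomology under that hypothesis) is the precise way to state it, and the conjunctive form of the lemma is what keeps the overall statement true.
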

\begin{proof}
  It follows that $\phi$ is a $2$-cocycle associated to the bimodule $(\g^*;\ad^*,-R^*)$ if and only if $\Phi\in \wedge^2\g^*\otimes\g^*$ satisfies $\dt(\Phi)=0$ and $\phi$ satisfies \eqref{eq:phi-invariant condition} if and only if $\Phi$ satisfies \eqref{eq:quasi-preLie bialgebra extra}.
\end{proof}

\begin{pro}\label{pro:twisted s-matrix equivalent}
 Let $(\g,\cdot_\g)$ be a pre-Lie algebra and $r\in\Sym^2(\g)$. Then $r^\sharp:\g^*\rightarrow \g$ is a $\phi$-twisted $\GRB$-operator on the bimodule $(\g^*;\ad^*,-R^*)$ over $\g$ and $\phi$ satisfies \eqref{eq:phi-invariant condition} if and only if $r$ satisfies
 \begin{equation}\label{eq:twist s-matrix}
 \half\llbracket r,r\rrbracket=(\otimes^3r^\sharp)\Phi
 \end{equation}
 and $\Phi\in \wedge^2\g^*\otimes\g^*$ satisfies $\dt(\Phi)=0$ and \eqref{eq:quasi-preLie bialgebra extra}, where $\Phi(x,y,z):=\langle\phi(x,z),y\rangle$ for any $x,y,z\in\g$.
\end{pro}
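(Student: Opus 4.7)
The plan is to reduce everything to a single tensorial computation and then invoke Lemma~\ref{lem:twist-prepare} to take care of the ``housekeeping'' conditions on $\phi$ and $\Phi$.

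First, by Lemma~\ref{lem:twist-prepare}, under the identification $\Phi(x,y,z)=\langle\phi(x,z),y\rangle$, the requirements that $\phi$ be a $2$-cocycle associated to the bimodule $(\g^*;\ad^*,-R^*)$ and satisfy \eqref{eq:phi-invariant condition} are equivalent to $\Phi\in\wedge^2\g^*\otimes\g^*$ together with $\dt(\Phi)=0$ and \eqref{eq:quasi-preLie bialgebra extra}. So the two sides of the desired equivalence already match on these auxiliary hypotheses, and it remains only to show that, granting them, the tensorial identity $\tfrac12\llbracket r,r\rrbracket=(\otimes^3 r^\sharp)\Phi$ in $\g\otimes\g\otimes\g$ is equivalent to the $\phi$-twisted $\huaO$-operator equation \eqref{eq:twist-Operator} for $T=r^\sharp$ on the bimodule $(\g^*;\ad^*,-R^*)$.

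Next, I would test the purported identity against an arbitrary $\xi\otimes\zeta\otimes\eta\in\otimes^3\g^*$. For the left-hand side, the definition $\llbracket r,r\rrbracket=\Upsilon[\Psi(r),\Psi(r)]_\mu$ together with the convention $\langle\Upsilon(f),\xi_1\otimes\xi_2\otimes\xi_3\rangle=\langle f(\xi_1,\xi_3),\xi_2\rangle$ reduces the computation to evaluating the derived bracket $[r^\sharp,r^\sharp]_\mu$ in the gLa $(C^*(\g^*,\g),[-,-]_\mu)$ of Corollary~\ref{dual-GLA}. Specialising the formula of Lemma~\ref{twilled-DGLA-concrete} to the semi-direct product $\g\ltimes_{\ad^*,-R^*}\g^*$ and to $f_1=f_2=r^\sharp\in C^1(\g^*,\g)$ yields
\[
\tfrac12\langle\llbracket r,r\rrbracket,\xi\otimes\zeta\otimes\eta\rangle=\big\langle r^\sharp(\xi)\cdot_\g r^\sharp(\eta)-r^\sharp\big(\ad^*_{r^\sharp(\xi)}\eta-R^*_{r^\sharp(\eta)}\xi\big),\zeta\big\rangle.
\]
For the right-hand side, expanding $(\otimes^3 r^\sharp)\Phi$ componentwise and using the symmetry $\langle r^\sharp\alpha,\beta\rangle=\langle\alpha,r^\sharp\beta\rangle$, which holds by Proposition~\ref{o-operator-tensor-form}(ii) because $r\in\Sym^2(\g)$, one obtains
\[
\langle(\otimes^3 r^\sharp)\Phi,\xi\otimes\zeta\otimes\eta\rangle=\Phi(r^\sharp\xi,r^\sharp\zeta,r^\sharp\eta)=\big\langle\phi(r^\sharp\xi,r^\sharp\eta),r^\sharp\zeta\big\rangle=\big\langle r^\sharp\phi(r^\sharp\xi,r^\sharp\eta),\zeta\big\rangle.
\]

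Since $\zeta\in\g^*$ is arbitrary, equality of these two expressions for all $\xi,\zeta,\eta$ is equivalent to
\[
r^\sharp(\xi)\cdot_\g r^\sharp(\eta)=r^\sharp\big(\ad^*_{r^\sharp(\xi)}\eta-R^*_{r^\sharp(\eta)}\xi\big)+r^\sharp\big(\phi(r^\sharp(\xi),r^\sharp(\eta))\big),
\]
which is precisely \eqref{eq:twist-Operator} for $T=r^\sharp$ on the bimodule $(\g^*;\ad^*,-R^*)$. Combined with the first paragraph, this proves both directions.

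The main obstacle is the derivation of the explicit expression for $\tfrac12\langle\llbracket r,r\rrbracket,\xi\otimes\zeta\otimes\eta\rangle$: no deep step is involved, but one must keep the permutation conventions built into $\Psi$ and $\Upsilon$ straight, since the middle slot of $\llbracket r,r\rrbracket$ corresponds to the output of $[r^\sharp,r^\sharp]_\mu$ rather than to an input. The symmetry of $r$ enters exactly at the step where $r^\sharp$ is moved across the pairing with $\zeta$; everything else is routine bookkeeping and has already been carried out in the untwisted $\frks$-matrix case of Section~\ref{sec:P}, where $\phi=0$ plays no role in the definition of the gLa bracket $\llbracket-,-\rrbracket$.
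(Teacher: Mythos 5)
Your proposal is correct and follows essentially the same route as the paper's own proof: dispose of the conditions on $\phi$ and $\Phi$ via Lemma \ref{lem:twist-prepare}, then show by direct pairing against $\xi\otimes\zeta\otimes\eta$ that $\half\llbracket r,r\rrbracket(\xi,\zeta,\eta)=\langle r^\sharp(\xi)\cdot_\g r^\sharp(\eta)-r^\sharp(\ad^*_{r^\sharp(\xi)}\eta-R^*_{r^\sharp(\eta)}\xi),\zeta\rangle$ while $(\otimes^3r^\sharp)\Phi(\xi,\zeta,\eta)=\langle r^\sharp(\phi(r^\sharp(\xi),r^\sharp(\eta))),\zeta\rangle$, so that the tensorial identity is exactly \eqref{eq:twist-Operator} for $T=r^\sharp$. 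The only difference is that you spell out the bookkeeping behind the ``direct calculation'' (via $\Psi$, $\Upsilon$ and Lemma \ref{twilled-DGLA-concrete}) that the paper leaves implicit.
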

\begin{proof}
We only show that $r^\sharp:\g^*\rightarrow \g$ is a $\phi$-twisted $\GRB$-operator on the bimodule $(\g^*;\ad^*,-R^*)$ if and only if \eqref{eq:twist s-matrix} holds. The rest follows by Lemma \ref{lem:twist-prepare}.

By direct calculations, we have
  \begin{eqnarray*}
\half\llbracket r,r\rrbracket(\xi,\zeta,\eta)=\langle r^\sharp(\xi)\cdot_\g r^\sharp(\eta)-r^\sharp(\ad^*_{r^\sharp(\xi)}\eta-R^*_{r^\sharp(\eta)}\xi) ,\zeta\rangle.
  \end{eqnarray*}
  On the other hand,
  \begin{eqnarray*}
  (\otimes^3r^\sharp)\Phi(\xi,\zeta,\eta)&=&\Phi(r^\sharp(\xi),r^\sharp(\zeta),r^\sharp(\eta))=\langle \phi(r^\sharp(\xi),r^\sharp(\eta)),r^\sharp(\zeta)\rangle\\
  &=&\langle r^\sharp(\phi(r^\sharp(\xi),r^\sharp(\eta))),\zeta\rangle.
  \end{eqnarray*}
  Thus $\half\llbracket r,r\rrbracket(\xi,\zeta,\eta)= (\otimes^3r^\sharp)\Phi(\xi,\zeta,\eta)$ holds
   if and only if
 $$r^\sharp(\xi)\cdot_\g r^\sharp(\eta)-r^\sharp(\ad^*_{r^\sharp(\xi)}\eta-R^*_{r^\sharp(\eta)}\xi)=r^\sharp(\phi(r^\sharp(\xi),r^\sharp(\eta))).$$
 The claim follows.
\end{proof}

\begin{defi}
Let $(\g,\cdot_\g)$ be a pre-Lie algebra and $\Phi\in \wedge^2\g^*\otimes\g^*$ is $\dt$-closed and satisfies \eqref{eq:quasi-preLie bialgebra extra}. An element $r\in\Sym^2(\g)$ is called a {\bf $\Phi$-twisted $\frks$-matrix}, or simply twisted $\frks$-matrix if $r$ satisfies \eqref{eq:twist s-matrix}.
\end{defi}

By Proposition \ref{pro:twisted s-matrix equivalent} and Corollary \ref{cor:twist-pre-Lie operation}, we have
\begin{cor}
  Let $(\g,\cdot_\g)$ be a pre-Lie algebra and $r\in\Sym^2(\g)$ a $\Phi$-twisted $\frks$-matrix. Set $\phi(x,y)=\Phi(x,-,y)$. Then $(\g^*,\cdot^{r^\sharp,\phi})$ is a pre-Lie algebra, where $\cdot^{{r^\sharp,\phi}}$ is given by
\begin{equation}\label{eq:twist-s-pre-Lie operation}
  \xi\cdot^{r^\sharp,\phi} \eta= \ad^*_{r^\sharp(\xi)}\eta-R^*_{r^\sharp(\eta)}\xi+\phi(r^\sharp(\xi),r^\sharp(\eta)),\quad\forall~\xi,\eta\in \g^*
\end{equation}
and $r^\sharp$ is a homomorphism from the pre-Lie algebra $(\g^*,\cdot^{r^\sharp,\phi})$ to the pre-Lie algebra $(\g,\cdot_\g)$. We denote this pre-Lie algebra by $\g^*_{r^\sharp,\phi}$.
\end{cor}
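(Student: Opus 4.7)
The plan is to obtain this as an immediate consequence of the two results cited just before the statement, so there is nothing substantially new to prove. I would first invoke Proposition \ref{pro:twisted s-matrix equivalent}, which says that $r\in\Sym^2(\g)$ being a $\Phi$-twisted $\frks$-matrix is equivalent to $r^\sharp:\g^*\to\g$ being a $\phi$-twisted $\huaO$-operator on the bimodule $(\g^*;\ad^*,-R^*)$ over $(\g,\cdot_\g)$, where $\phi(x,y):=\Phi(x,-,y)$. Thus the hypothesis of the statement translates verbatim into the hypothesis of Corollary \ref{cor:twist-pre-Lie operation}.

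Next, I would apply Corollary \ref{cor:twist-pre-Lie operation} with the specific choices $V=\g^*$, $\huaL=\ad^*$, $\huaR=-R^*$, and $T=r^\sharp$. That corollary then gives at once that $(\g^*,\cdot^{r^\sharp,\phi})$ is a pre-Lie algebra, with the multiplication taking the explicit form
\[
\xi\cdot^{r^\sharp,\phi}\eta = \huaL_{r^\sharp(\xi)}\eta+\huaR_{r^\sharp(\eta)}\xi+\phi(r^\sharp(\xi),r^\sharp(\eta))=\ad^*_{r^\sharp(\xi)}\eta-R^*_{r^\sharp(\eta)}\xi+\phi(r^\sharp(\xi),r^\sharp(\eta)),
\]
which is exactly \eqref{eq:twist-s-pre-Lie operation}. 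The same corollary also asserts that $T$ is a pre-Lie algebra homomorphism from $(V,\cdot^{T,\phi})$ to $(\g,\cdot_\g)$, which upon substitution yields the asserted homomorphism property of $r^\sharp$.

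There is essentially no obstacle here: the only potentially delicate point is verifying that the dictionary between the tensorial language ($r$ as a symmetric $2$-tensor satisfying \eqref{eq:twist s-matrix}) and the operator language ($r^\sharp$ as a $\phi$-twisted $\huaO$-operator) lines up with the conventions for $\ad^*$ and $-R^*$, but this is precisely the content of Proposition \ref{pro:twisted s-matrix equivalent}, which was established by direct computation. Consequently the proof reduces to citing Proposition \ref{pro:twisted s-matrix equivalent} and Corollary \ref{cor:twist-pre-Lie operation} and reading off the two claims.
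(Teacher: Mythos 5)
Your proposal is correct and is exactly the paper's own argument: the paper introduces this corollary with the phrase ``By Proposition \ref{pro:twisted s-matrix equivalent} and Corollary \ref{cor:twist-pre-Lie operation}, we have,'' i.e.\ it likewise translates the $\Phi$-twisted $\frks$-matrix condition into the statement that $r^\sharp$ is a $\phi$-twisted $\huaO$-operator on $(\g^*;\ad^*,-R^*)$ and then specializes Corollary \ref{cor:twist-pre-Lie operation} to $V=\g^*$, $\huaL=\ad^*$, $\huaR=-R^*$, $T=r^\sharp$. Your attention to the dictionary $\phi(x,y)=\Phi(x,-,y)$ versus $\Phi(x,y,z)=\langle\phi(x,z),y\rangle$ is the only point of care, and it is handled by the cited proposition exactly as you say.
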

By Theorem \ref{thm:isomorphism-invariant-quais-twilled}, we have
\begin{cor}
  Let $(\g,\cdot_\g)$ be a pre-Lie algebra and $r\in\Sym^2(\g)$ a $\Phi$-twisted $\frks$-matrix. Then $((\frkd=\g\oplus\g^*_{r^\sharp,\phi},\cdot_\frkd,(-,-)_-),\g,\g^*_{r^\sharp})$ is a quasi-Manin triple, where
  the pre-Lie algebra multiplication $\cdot_\frkd$ is given by
  \begin{eqnarray*}
    {(x,\xi)\cdot_\frkd(y,\eta)}&=&(x\cdot_{\g}y+r^\sharp(\xi)\cdot_\g y+x\cdot_\g r^\sharp(\eta)
     +r^\sharp(R^*_y\xi)-r^\sharp(\ad^*_x \eta)\\
     &&-r^\sharp(\phi(r^\sharp(\xi),y))-r^\sharp(\phi(x, r^\sharp(\eta)))-r^\sharp(\phi(x,y)),\ad^*_x \eta-R^*_y\xi\\
     &&+\phi(r^\sharp(\xi),y)+\phi(x,r^\sharp(\eta))+\xi\cdot^{r^\sharp,\phi} \eta+\phi(x,y)),
   \end{eqnarray*}
   where $x,y\in\g,\xi,\eta\in \g^*$, and the bilinear form $(-,-)_-$ is given by \eqref{symplectic bracket}.
\end{cor}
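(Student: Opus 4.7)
The plan is to chain together the results already assembled in this section. First, by Proposition \ref{pro:twisted s-matrix equivalent}, the hypothesis that $r\in\Sym^2(\g)$ is a $\Phi$-twisted $\frks$-matrix is equivalent to saying that $r^\sharp:\g^*\to\g$ is a $\phi$-twisted $\huaO$-operator on the bimodule $(\g^*;\ad^*,-R^*)$, where $\phi(x,y):=\Phi(x,-,y)$ satisfies \eqref{eq:phi-invariant condition} (via Lemma \ref{lem:twist-prepare}). Moreover, $r\in\Sym^2(\g)$ is precisely the condition $(r^\sharp)^*=r^\sharp$, so the hypotheses of Theorem \ref{thm:isomorphism-invariant-quais-twilled} are satisfied with $T=r^\sharp$.

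Second, applying Theorem \ref{thm:isomorphism-invariant-quais-twilled}, one obtains that $(\frkd=\g\oplus\g^*,\pi^{r^\sharp,\phi})$ is a quadratic pre-Lie algebra with the nondegenerate invariant skew-symmetric bilinear form $(-,-)_-$ given by \eqref{symplectic bracket}. In parallel, Proposition \ref{pro:twist-O-quasi-pre-Liebia} (specialised to $T=r^\sharp$) describes the multiplication $\pi^{r^\sharp,\phi}$ explicitly, and identifies the restriction of this multiplication to $\g^*\otimes\g^*$ with $\cdot^{r^\sharp,\phi}$ defined in \eqref{eq:twist-s-pre-Lie operation}. This immediately shows that $\g^*_{r^\sharp,\phi}=(\g^*,\cdot^{r^\sharp,\phi})$ is a pre-Lie subalgebra of $(\frkd,\pi^{r^\sharp,\phi})$.

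Third, the remaining conditions for a quasi-Manin triple are verified directly from \eqref{symplectic bracket}: both $\g$ and $\g^*$ are isotropic subspaces of $\frkd$ since $(x,y)_-=0$ for $x,y\in\g$ and $(\xi,\eta)_-=0$ for $\xi,\eta\in\g^*$, and by construction $\frkd=\g\oplus\g^*$ as vector spaces. Thus $((\frkd,\cdot_\frkd,(-,-)_-),\g,\g^*_{r^\sharp,\phi})$ is a quasi-Manin triple.

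Finally, the explicit formula for $\cdot_\frkd$ in the statement is obtained by reading off Proposition \ref{pro:twist-O-quasi-pre-Liebia} with $T=r^\sharp$ and substituting \eqref{eq:LT}--\eqref{eq:RT}, which give $\frkL^{r^\sharp}_\xi y=r^\sharp(\xi)\cdot_\g y+r^\sharp(R^*_y\xi)$ and $\frkR^{r^\sharp}_\eta x=x\cdot_\g r^\sharp(\eta)-r^\sharp(\ad^*_x\eta)$, and then recognizing $\xi\cdot^{r^\sharp,\phi}\eta$ in the $\g^*$-component. Since the entire argument is a straightforward concatenation of results already proved, no step is really an obstacle; the only mild bookkeeping is matching the $\frkL^T$-$\frkR^T$ notation of Section \ref{sec:T} with the explicit $\ad^*, R^*$ expressions arising from the bimodule $(\g^*;\ad^*,-R^*)$, which amounts to a routine unfolding of definitions.
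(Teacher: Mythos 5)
Your proposal is correct and follows essentially the same route as the paper: the corollary is obtained by combining Proposition \ref{pro:twisted s-matrix equivalent} (identifying $\Phi$-twisted $\frks$-matrices with symmetric $\phi$-twisted $\huaO$-operators $r^\sharp$), Theorem \ref{thm:isomorphism-invariant-quais-twilled} (yielding the quadratic pre-Lie algebra structure), and Proposition \ref{pro:twist-O-quasi-pre-Liebia} (giving the explicit multiplication), exactly as you do. The isotropy check and the unfolding of ${\ad^*}^T$, ${R^*}^T$ via \eqref{eq:dual bimodule in bialgebra} are routine and correctly handled.
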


By Corollary \ref{cor:quasi-pre-Liebialg}, we have
\begin{cor}
 Let $(\g,\cdot_\g)$ be a pre-Lie algebra and $r\in\Sym^2(\g)$ a $\Phi$-twisted $\frks$-matrix. Then $(\g,\g^*_{r^\sharp,\phi},\Phi)$ is a quasi-pre-Lie bialgebra.
\end{cor}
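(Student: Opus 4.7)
The plan is to reduce the corollary to Corollary~\ref{cor:quasi-pre-Liebialg} by setting $T := r^\sharp$, so that the three hypotheses of that corollary are met: (a) $r^\sharp$ is a $\phi$-twisted $\huaO$-operator on the bimodule $(\g^*;\ad^*,-R^*)$; (b) $\phi$ satisfies \eqref{eq:phi-invariant condition}; and (c) $r^\sharp = (r^\sharp)^*$. Once all three are verified, Corollary~\ref{cor:quasi-pre-Liebialg} directly delivers that $(\g,\g^*_{r^\sharp,\phi},\Phi)$ is a quasi-pre-Lie bialgebra, with the pre-Lie multiplication on $\g^*$ given by \eqref{eq:twist-s-pre-Lie operation}.

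Condition (c) is immediate from $r\in\Sym^2(\g)$: for all $\xi,\eta\in\g^*$, $\langle r^\sharp(\xi),\eta\rangle = r(\xi,\eta) = r(\eta,\xi) = \langle r^\sharp(\eta),\xi\rangle$, hence $r^\sharp = (r^\sharp)^*$. Conditions (a) and (b) are precisely the content of Proposition~\ref{pro:twisted s-matrix equivalent} combined with Lemma~\ref{lem:twist-prepare}: by the very definition of a $\Phi$-twisted $\frks$-matrix, $\Phi \in \wedge^2\g^*\otimes\g^*$ is $\dt$-closed, satisfies \eqref{eq:quasi-preLie bialgebra extra}, and $r$ satisfies \eqref{eq:twist s-matrix}. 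Proposition~\ref{pro:twisted s-matrix equivalent} then produces $\phi(x,y) := \Phi(x,-,y)$ for which $r^\sharp$ is a $\phi$-twisted $\huaO$-operator, and Lemma~\ref{lem:twist-prepare} ensures that the $\dt$-closedness of $\Phi$ together with \eqref{eq:quasi-preLie bialgebra extra} is exactly equivalent to $\phi$ being a $2$-cocycle in the bimodule $(\g^*;\ad^*,-R^*)$ satisfying \eqref{eq:phi-invariant condition}.

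An equivalent route, which is more in the spirit of the section, is to combine the preceding corollary (which already assembles a quasi-Manin triple $((\frkd = \g\oplus\g^*_{r^\sharp,\phi},\cdot_\frkd,(-,-)_-),\g,\g^*_{r^\sharp})$ from $r$) with Theorem~\ref{thm:equivalence2}, the one-to-one correspondence between quasi-Manin triples for pre-Lie algebras and quasi-pre-Lie bialgebras. Here the only bookkeeping is to unpack the explicit bracket \eqref{eq:bracket} in the quasi-Manin triple obtained above and verify that the resulting comultiplication on $\g$ reads off $\Phi$ in the third slot, while the induced pre-Lie structure on $\g^*$ coincides with $\cdot^{r^\sharp,\phi}$.

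There is no real obstacle here: the corollary is a direct application of previously established machinery. The only mild care needed is to track the three translations $r \leftrightarrow r^\sharp$, $\Phi \leftrightarrow \phi$, and the symmetry $r^\sharp = (r^\sharp)^*$, each of which is packaged in one of the earlier results just cited.
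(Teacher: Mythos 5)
Your proposal is correct and follows essentially the same route as the paper, which derives this corollary directly from Corollary~\ref{cor:quasi-pre-Liebialg} after noting (via Proposition~\ref{pro:twisted s-matrix equivalent} and Lemma~\ref{lem:twist-prepare}) that a $\Phi$-twisted $\frks$-matrix makes $r^\sharp$ a $\phi$-twisted $\huaO$-operator with $\phi$ satisfying \eqref{eq:phi-invariant condition}, and that $r\in\Sym^2(\g)$ gives $r^\sharp=(r^\sharp)^*$. Your spelled-out verification of these three hypotheses is exactly the bookkeeping the paper leaves implicit.
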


In the following, we use symplectic Lie algebras to construct quasi-pre-Lie bialgebras, which is parallel to that a Cantan $3$-form on a semisimple Lie algebra gives a quasi-Lie bialgebra. First, we recall a useful lemma.

\emptycomment{\begin{defi}
  A Lie algebra $(\g,[-,-]_\g)$ is called a {\bf symplectic Lie algebra} if there is a nondegenerate skew-symmetric $2$-cocycle $\omega$ on $\g$, i.e.
  $$\omega([x,y]_\g,z)+\omega([y,z]_\g,x)+\omega([z,x]_\g,y)=0,\quad\forall~x,y,z\in\g.$$
\end{defi}}

\begin{lem}{\rm(\cite{symplectic Lie algebras})}
  Let $(\g,[-,-]_\g,\omega)$ be a symplectic Lie algebra. Then there exists a compatible pre-Lie algebra structure $\cdot_\g$ on $\g$ given by
  \begin{equation}\label{eq:pre-Lie-symplectic}
    \omega(x\cdot_\g y,z)=-\omega(y,[x,z]_\g),\quad\forall~x,y,z\in\g.
  \end{equation}
\end{lem}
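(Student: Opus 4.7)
The plan is to exploit the non-degeneracy of $\omega$ to reduce everything to identities paired with an arbitrary $z$ (or $w$), and then to use the $2$-cocycle condition together with the Jacobi identity of $[-,-]_\g$.

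First, since $\omega$ is non-degenerate, for each fixed pair $x,y\in\g$ the linear functional $z\mapsto -\omega(y,[x,z]_\g)$ on $\g$ is represented by a unique element of $\g$, which we call $x\cdot_\g y$. Thus \eqref{eq:pre-Lie-symplectic} determines a well-defined bilinear operation $\cdot_\g:\g\otimes\g\to\g$, and any identity in $\cdot_\g$ can be verified after pairing with an arbitrary $z\in\g$ through $\omega$.

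Next I would check the compatibility condition $x\cdot_\g y-y\cdot_\g x=[x,y]_\g$. Using \eqref{eq:pre-Lie-symplectic} twice and the skew-symmetry of $\omega$,
\begin{equation*}
\omega(x\cdot_\g y-y\cdot_\g x,z)=-\omega(y,[x,z]_\g)+\omega(x,[y,z]_\g)
=\omega([y,z]_\g,x)-\omega([x,z]_\g,y).
\end{equation*}
The $2$-cocycle identity $\omega([x,y]_\g,z)+\omega([y,z]_\g,x)+\omega([z,x]_\g,y)=0$ rewrites the right-hand side as $\omega([x,y]_\g,z)$. Non-degeneracy then yields $x\cdot_\g y-y\cdot_\g x=[x,y]_\g$, so the sub-adjacent Lie bracket coincides with $[-,-]_\g$.

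Finally I would verify the left-symmetric identity, equivalently $L_x L_y-L_y L_x=L_{[x,y]_\g}$ with $L_x(y):=x\cdot_\g y$. Applying \eqref{eq:pre-Lie-symplectic} twice gives
\begin{equation*}
\omega(x\cdot_\g(y\cdot_\g z),w)=-\omega(y\cdot_\g z,[x,w]_\g)=\omega(z,[y,[x,w]_\g]_\g),
\end{equation*}
and symmetrically $\omega(y\cdot_\g(x\cdot_\g z),w)=\omega(z,[x,[y,w]_\g]_\g)$. Subtracting and applying Jacobi for $[-,-]_\g$ in the form $[y,[x,w]_\g]_\g-[x,[y,w]_\g]_\g=-[[x,y]_\g,w]_\g$ yields
\begin{equation*}
\omega\bigl(x\cdot_\g(y\cdot_\g z)-y\cdot_\g(x\cdot_\g z),w\bigr)=-\omega(z,[[x,y]_\g,w]_\g)=\omega([x,y]_\g\cdot_\g z,w).
\end{equation*}
Non-degeneracy gives $L_x L_y z-L_y L_x z=[x,y]_\g\cdot_\g z$, and substituting the already-established $[x,y]_\g=x\cdot_\g y-y\cdot_\g x$ produces precisely the pre-Lie identity $(x\cdot_\g y)\cdot_\g z-x\cdot_\g(y\cdot_\g z)=(y\cdot_\g x)\cdot_\g z-y\cdot_\g(x\cdot_\g z)$. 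There is no serious obstacle: the entire argument is driven by non-degeneracy of $\omega$, the cocycle identity, and Jacobi, and the only mild care needed is keeping track of signs coming from $\omega(a,b)=-\omega(b,a)$.
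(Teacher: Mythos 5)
The paper does not prove this lemma at all --- it is quoted from \cite{symplectic Lie algebras} --- so there is no internal proof to compare against. Your argument is the standard one and is correct in structure and in all of its conclusions: non-degeneracy of $\omega$ makes $\cdot_\g$ well defined, the $2$-cocycle identity yields the compatibility $x\cdot_\g y-y\cdot_\g x=[x,y]_\g$, and the Jacobi identity yields $L_xL_y-L_yL_x=L_{[x,y]_\g}$; combined, these are exactly the left-symmetric identity.

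One intermediate line needs fixing. Skew-symmetry gives
\begin{equation*}
-\omega(y,[x,z]_\g)+\omega(x,[y,z]_\g)=\omega([x,z]_\g,y)-\omega([y,z]_\g,x),
\end{equation*}
which is the negative of the right-hand side you display; moreover, the expression you wrote, $\omega([y,z]_\g,x)-\omega([x,z]_\g,y)$, equals $-\omega([x,y]_\g,z)$ by the cocycle identity, not $+\omega([x,y]_\g,z)$. These two sign slips cancel, so your conclusion $x\cdot_\g y-y\cdot_\g x=[x,y]_\g$ is correct, but the displayed computation as written is not. The verification of the left-symmetric identity in the last paragraph is sign-correct throughout.
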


\begin{pro}\label{pro:symplectic2}
  Let $(\g,[-,-]_\g,\omega)$ be a symplectic Lie algebra and $(\g,\cdot_\g)$ its compatible pre-Lie algebra. Define $\Phi\in\wedge^2\g^*\otimes\g^*$ by
  \begin{equation}\label{eq:3-form-symplectic}
    \Phi(x,y,z)=\omega([x,y]_\g,z),\quad\forall~x,y,z\in\g.
  \end{equation}
  Then $\Phi$ is $\dt$-closed and satisfies \eqref{eq:quasi-preLie bialgebra extra}.
\end{pro}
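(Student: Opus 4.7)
The plan is to verify both properties by direct computation, relying only on the defining relation \eqref{eq:pre-Lie-symplectic}, the skew-symmetry of $\omega$, and the Jacobi identity for $[-,-]_\g$. No additional use of the $2$-cocycle property of $\omega$ is needed beyond what is already encoded in \eqref{eq:pre-Lie-symplectic}.

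For the identity \eqref{eq:quasi-preLie bialgebra extra}, I start from the right-hand side $\Phi(x,z,y) - \Phi(y,z,x) = \omega([x,z]_\g, y) - \omega([y,z]_\g, x)$, use skew-symmetry of $\omega$ to bring $y,x$ into the second slot, and then apply \eqref{eq:pre-Lie-symplectic} (which gives $\omega(u, [v,w]_\g) = -\omega(v \cdot_\g u, w)$) to rewrite the expression as $\omega(x \cdot_\g y - y \cdot_\g x,\, z) = \omega([x,y]_\g, z) = \Phi(x,y,z)$.

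For $\delta^T\Phi = 0$, I specialize the coboundary formula \eqref{eq:pre-Lie cohomology} to the trivial bimodule ($\huaL = \huaR = 0$) applied to the $3$-cochain $\Phi$ on four arguments $(x_1,x_2,x_3,x_4)$. The result splits into two groups: an ``associator'' group of three terms of the form $\pm\Phi(\ldots,x_i\cdot_\g x_4)$, and a ``bracket'' group of three terms of the form $\pm\Phi([x_i,x_j]_\g, x_k, x_4)$. Unfolding $\Phi$ on the bracket group factors out $\omega(-,x_4)$ and produces (with the right signs) precisely the expression $-[[x_1,x_2]_\g,x_3]_\g + [[x_1,x_3]_\g,x_2]_\g - [[x_2,x_3]_\g,x_1]_\g$, which is zero by Jacobi. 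For the associator group, after swapping via skew-symmetry of $\omega$ and then applying \eqref{eq:pre-Lie-symplectic} once to push $x_i$ into the bracket, everything lands in $\omega(x_4,-)$ and the inner argument becomes $-[x_1,[x_2,x_3]_\g]_\g + [x_2,[x_1,x_3]_\g]_\g - [x_3,[x_1,x_2]_\g]_\g$, again killed by Jacobi.

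The main obstacle, such as it is, is purely sign bookkeeping in these two groups; there is no structural difficulty, since both groups independently reduce to Jacobi identities at different nesting depths of $[-,-]_\g$.
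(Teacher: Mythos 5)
Your proof is correct and takes essentially the same route as the paper's: both parts are verified by direct computation, with $\dt(\Phi)=0$ reduced to the Jacobi identity (the paper merges your two groups into the single expression $2\omega([x,[y,z]_\g]_\g+[z,[x,y]_\g]_\g+[y,[z,x]_\g]_\g,w)$, each group contributing one copy of the Jacobiator). For \eqref{eq:quasi-preLie bialgebra extra} the paper invokes the $2$-cocycle condition on $\omega$ directly on the cyclic sum $\omega([x,y]_\g,z)+\omega([y,z]_\g,x)+\omega([z,x]_\g,y)$, whereas you reach the same identity through \eqref{eq:pre-Lie-symplectic} together with $x\cdot_\g y-y\cdot_\g x=[x,y]_\g$; these are trivially equivalent, since the latter compatibility encodes the cocycle condition.
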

\begin{proof}
  By \eqref{eq:pre-Lie-symplectic}, we can show that
  \begin{eqnarray*}
    \dt(\Phi)(x,y,z,w)&=&2\omega([x,[y,z]_\g]_\g+[z,[x,y]_\g]_\g+[y,[z,x]_\g]_\g,w);\\
    \Phi(x,y,z)-\Phi(x,z,y)+\Phi(y,z,x)&=&\omega([x,y]_\g,z)+\omega([y,z]_\g,x)+\omega([z,x]_\g,y).
  \end{eqnarray*}
  By the Jacobi identity of the Lie algebra $\g$, $\dt(\Phi)=0$. Since $\omega$ is a symplectic structure on the Lie algebra $\g$, $\Phi(x,y,z)=\Phi(x,z,y)-\Phi(y,z,x).$
\end{proof}
By Example \ref{ex:quasi-quadratic pre-Lie}, Lemma \ref{lem:twist-prepare} and Proposition \ref{pro:symplectic2}, we have
\begin{pro}
   Let $(\g,[-,-]_\g,\omega)$ be a symplectic Lie algebra and $(\g,\cdot_\g)$ its compatible pre-Lie algebra. Then $(\frkd=\g\oplus\g^*,\cdot_\frkd,(-,-)_-)$ is a quadratic pre-Lie algebra, where the multiplication $\cdot_\frkd$ is given by
   \begin{equation}\label{eq:standard quadratic-pre-Lie}
      (x,\xi)\cdot_\frkd(y,\eta)=(x\cdot_\g y,\ad^*_x\eta-R^*_y \xi+\phi(x,y)),\quad\forall~x,y\in\g,\xi,\eta\in\g^*,
   \end{equation}
   where $\phi(x,y)=\Phi(x,-,y)$ and $\Phi$ is given by \eqref{eq:3-form-symplectic}. Furthermore, $(\g,\g^*,\Phi)$ is a quasi-pre-Lie bialgebra.
\end{pro}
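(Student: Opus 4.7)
The plan is to chain together the three results that the paper explicitly mentions just before the statement: Proposition~\ref{pro:symplectic2}, Lemma~\ref{lem:twist-prepare}, and Example~\ref{ex:quasi-quadratic pre-Lie}. First I would invoke Proposition~\ref{pro:symplectic2} to conclude that the element $\Phi\in\wedge^2\g^*\otimes\g^*$ defined in \eqref{eq:3-form-symplectic} is $\dt$-closed and satisfies the cyclic-symmetry identity \eqref{eq:quasi-preLie bialgebra extra}. Lemma~\ref{lem:twist-prepare} then immediately translates these two facts about $\Phi$ into the statement that $\phi(x,y):=\Phi(x,-,y)$ is a $2$-cocycle on $(\g,\cdot_\g)$ with coefficients in the bimodule $(\g^*;\ad^*,-R^*)$ and moreover satisfies the invariance condition \eqref{eq:phi-invariant condition}.

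With $\phi$ in hand and satisfying precisely the hypotheses of Example~\ref{ex:quasi-quadratic pre-Lie}, I would apply that example to deduce that $\frkd=\g\ltimes_{\ad^*,-R^*,\phi}\g^*$ is a quadratic pre-Lie algebra with invariant nondegenerate skew-symmetric bilinear form $(-,-)_-$ as in \eqref{symplectic bracket}. A glance at Proposition~\ref{pro:condition of quasi-twilled pre-Lie} (which defines $\g\ltimes_{\huaL,\huaR,\phi}V$) shows that this multiplication is exactly \eqref{eq:standard quadratic-pre-Lie}, proving the first half of the statement.

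For the bialgebra half, I would observe that the triple $((\frkd,\cdot_\frkd,(-,-)_-),\g,\g^*)$ is automatically a quasi-Manin triple for pre-Lie algebras: the subspace $\g$ is a pre-Lie subalgebra (the restriction of $\cdot_\frkd$ to $\g\times\g$ recovers $\cdot_\g$ since $\phi$ takes values in $\g^*$), the complementary subspace $\g^*$ is carried with the zero product and so is trivially a subalgebra, and both summands are isotropic with respect to $(-,-)_-$ by the form of \eqref{symplectic bracket}. Theorem~\ref{thm:equivalence2} then produces the corresponding quasi-pre-Lie bialgebra structure on $(\g,\g^*)$, and tracing through the correspondence in the proof of that theorem one sees that the defect $3$-form extracted from the bracket \eqref{eq:bracket} via $\Phi(x,y,z)=\langle\phi(x,z),y\rangle$ recovers precisely the $\Phi$ of \eqref{eq:3-form-symplectic}.

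There is no genuine obstacle; the entire argument is bookkeeping that the hypotheses of the three cited results are satisfied and that the resulting formulas match. The only place that rewards a moment of care is verifying that the sign and index conventions in $\phi(x,y)=\Phi(x,-,y)$ versus $\Phi(x,y,z)=\langle\phi(x,z),y\rangle$ are consistent with those used in Definition~\ref{defi:quasi-pre-Lie bialgebra} and in Lemma~\ref{lem:twist-prepare}, so that the $\Phi$ fed into Theorem~\ref{thm:equivalence2} really is the same as the one recovered by the equivalence.
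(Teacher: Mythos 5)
Your proposal follows exactly the route the paper intends: the paper offers no written proof, merely the citation chain ``By Example~\ref{ex:quasi-quadratic pre-Lie}, Lemma~\ref{lem:twist-prepare} and Proposition~\ref{pro:symplectic2}, we have\dots'', and your assembly of those three results, the identification of \eqref{eq:extension mult} with \eqref{eq:standard quadratic-pre-Lie}, and the appeal to Theorem~\ref{thm:equivalence2} for the bialgebra half is precisely that argument made explicit. One correction, though it does not break the proof: in your quasi-Manin-triple verification you assert that $\g$ is a pre-Lie \emph{subalgebra} of $\frkd$. It is not, since $(x,0)\cdot_\frkd(y,0)=(x\cdot_\g y,\phi(x,y))$ has a nonzero $\g^*$-component whenever $\phi(x,y)\neq 0$; this failure of closure is exactly what makes the triple \emph{quasi}. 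The argument survives because the definition of a quasi-Manin triple only requires $\g_1=\g$ to be an isotropic subspace, with only $\g_2=\g^*$ (here carrying the zero product) required to be a subalgebra, so you should justify that clause by citing the definition rather than by a closure claim that is false.
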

\begin{ex}
Let $\g$ be the 4-dimensional Lie algebra with basis $\{e_1, e_2, e_3, e_4\}$ and the bracket
$$[e_1,e_2]=e_3.$$
Let $\{e_1^*,e_2^*,e_3^*,e_4^*\}$ be the dual basis and $\omega$ a nondegenerate skew-symmetric bilinear form given by
  $$\omega=e^*_1\wedge e^*_4+e^*_2\wedge e^*_3.$$
  Then $\omega$ is a symplectic form on the Lie algebra $(\g,[-,-])$. It is straightforward to verify that the compatible pre-Lie algebra structure on $\g$ is given by
  $$e_1\cdot e_2=e_3,\quad e_2\cdot e_2=e_4.$$

  By straightforward calculations, the $\Phi$ in \eqref{eq:3-form-symplectic} is given by
  $$\Phi=-e^*_1\wedge e^*_2\otimes e^*_2.$$
  Furthermore, the pre-Lie algebra multiplication in \eqref{eq:standard quadratic-pre-Lie} is given by
  \begin{eqnarray*}
    (e_1,0)\ast (e_2,0)&=&(e_3,-e^*_2),\quad  (e_2,0)\ast(e_2,0)=(-e_4,e^*_1), \quad(e_1,0)\ast (0,e^*_3)=(0,-e^*_2),\\
(0, e^*_3)\ast (e_2, 0)&=&(0,e^*_1),\quad (0, e^*_4)\ast (e_2, 0)=(0,-e^*_2).
  \end{eqnarray*}
  Then $r$ given as follows
\begin{itemize}
\item[\rm(1)] $r=r_{11}e_1\odot e_2+r_{13}e_1\odot e_3+r_{14}e_1\odot e_4+r_{33}e_3\odot e_3+r_{34}e_3\odot e_4+r_{44}e_4\odot e_4$;
\item[\rm(2)] $r=r_{13}e_1\odot e_3+e_1\odot e_4-e_2\odot e_3+r_{33}e_3\odot e_3+r_{34}e_3\odot e_4+r_{44}e_4\odot e_4$;
\item[\rm(3)]$r=r_{13}e_1\odot e_3-\frac{r_{23}}{2+r_{23}}e_1\odot e_4+r_{23}e_2\odot e_3+r_{33}e_3\odot e_3+r_{34}e_3\odot e_4+r_{44}e_4\odot e_4$;
\item[\rm(4)]
   $r=-\frac{r^2_{23}}{r_{24}}e_1\odot e_3-r_{23}e_1\odot e_4+r_{23}e_2\odot e_3+r_{24}e_2\odot e_4+r_{33}e_3\odot e_3+r_{34}e_3\odot e_4+r_{44}e_4\odot e_4$
\end{itemize}
are $\Phi$-twisted $\frks$-matrices, where $r_{i,j}$ are constants and $e_i\odot e_j:=e_i\otimes e_j+e_j\otimes e_i$ for $1\leq i,j\leq 4$.
\end{ex}

\noindent
{\bf Acknowledgements.} This research is supported by NSFC (11901501). I give my warmest thanks to Chengming Bai, Yunhe Sheng and Rong Tang for very useful comments and discussions.


\begin{thebibliography}{abcdsfgh}




\bibitem{Bai-1}
C. Bai, A unified algebraic approach to the classical Yang-Baxter equation. \emph{J. Phys. A} 40 (2007), no. 36, 11073-11082.

\bibitem{Left-symmetric bialgebras}
C. Bai, Left-symmetric bialgebras and an analogue of the classical Yang-Baxter equation. \emph{Commun. Contemp. Math.} 10 (2008),
221-260.

\bibitem{Bai-Liu-Ni}
C. Bai, L. Liu and X. Ni, Some results on L-dendriform algebras. \emph{J. Geom. Phys.} 60 (2010), 940-950.


\bibitem{Ban}
R. Bandiera, Formality of Kapranov¡¯s brackets in K\"{a}hler geometry via pre-Lie deformation theory. \emph{Int.Math. Res. Not.} 21 (2016), 6626-6655.


\bibitem{BaKos}
M. Bangoura and Y. Kosmann-Schwarzbach, The double of a Jacobian quasi-bialgebra. \emph{Lett. Math. Phys.} 28 (1993), 13-29.

\bibitem{Bakalov}
B. Bakalov and V. Kac, Field algebras. \emph{Int. Math. Res. Not.} 3 (2003), 123-159.


\bibitem{Bel}
A. A. Belavin, Dynamical symmetry of integrable quantum systems. \emph{Nucl. Phys. B} 180 (1981), 189-200.

\bibitem{BeM}S. Benayadi and M. Boucetta, On para-K\"{a}hler and hyper-para-K\"{a}hler Lie algebras. \emph{J. Algebra} 436 (2015), 61-101.

\bibitem{Bor}
M. Bordemann, Generalized Lax pairs, the modified classical Yang-Baxter equation, and affine geometry
of Lie groups. \emph{Comm. Math. Phys.}  135 (1990), 201-216.


 \bibitem{Pre-lie algebra in geometry} D. Burde, Left-symmetric algebras and pre-Lie algebras in
geometry and physics. \emph{ Cent. Eur. J. Math.} 4 (2006), 323-357.


\bibitem{ChaLiv}
 F. Chapoton and M. Livernet, Pre-Lie algebras and the rooted trees operad. \emph{ Int. Math. Res.
Not.} 8 (2001), 395-408.


\bibitem{symplectic Lie algebras}
B. Y. Chu, Symplectic homogeneous spaces. \emph{Trans. Amer. Math. Soc.} 197 (1974), 145-159.

\bibitem{DSV}
V. Dotsenko, S. Shadrin and B. Vallette, Pre-Lie deformation theory. \emph{ Mosc. Math. J.} 16 (2016), 505-543.

\bibitem{Dr83}
V. Drinfeld, Hamiltonian structure on the Lie groups, Lie bialgebras and the geometric sense of the classical Yang-Baxter equations. \emph{Soviet Math. Dokl.} 27 (1983) 68-71.

\bibitem{D90}
V.G. Drinfeld. Quasi-Hopf algebras, \emph{Leningrad Math. J.} 1 (1990), 1419-1457.




\bibitem{Getzler}
E. Getzler, Lie theory for nilpotent $L_{\infty}$-algebras. {\em Ann. of Math. (2)} 170 (2009), 271-301.



\bibitem{Kan}
S. Kaneyuki, Homogeneous symplectic manifolds and dipolarizations in Lie algebras.
\emph{Tokyo J. Math.} 15 (1992), 313-325.

\bibitem{KlS}
C. Kli\u{m¡¦}cik and T. Strobl, WZW-Poisson manifolds. \emph{J. Geom. Phys.} 43 (2002), 341-344.


\bibitem{Kosmann92}
Y. Kosmann-Schwarzbach, Lie quasi-bialgebras and quasi-Poisson Lie
groups. \emph{Contemp. Mathematics.} 132 (1992), 459-489.


 \bibitem{KosmannD}
Y. Kosmann-Schwarzbach, From Poisson algebras to Gerstenhaber algebras. \emph{Ann. Inst. Fourier (Grenoble)} 46 (1996), 1243¨C1274.

\bibitem{KS05}
Y. Kosmann-Schwarzbach, Quasi, twisted, and all that. . . in Poisson geometry and Lie algebroid theory. {\em The breadth
of symplectic and Poisson geometry}, 363-389, Progr. Math., 232, {\em Birkh$\ddot{a}$user Boston, Boston, MA,} 2005.

\bibitem{KS87}
 B. Kostant and S. Sternberg, Symplectic reduction, BRS cohomology, and infinite-dimensional
Clifford algebras.  \emph{Ann. Phys.} 176 (1987), 49-113.

\bibitem{Ku94}
B. A. Kupershmidt, Non-abelian phase spaces. \emph{J. Phys. A: Math. Gen.} 27 (1994), 2801-2810.

\bibitem{Ku99a}
B. A. Kupershmidt, On the nature of the Virasoro algebra. \emph{J. Nonlinear Math. Phy.} 6
(1999), 222-245.

\bibitem{Ku99b}
B. A. Kupershmidt, What a classical $r$-matrix really is. \emph{J.
Nonlinear Math. Phys.}  6 (1999), 448-488.


\bibitem{LR90}
P. B. A. Lecomte and C. Roger, Modules et cohomologie des big\`{e}bres de Lie. \emph{ CR Acad. Sci. Paris
S\'{e}rie I} 310 (1990), 405-410.

\bibitem{Lichnerowicz}
A. Lichnerowicz  and A. Medina , On Lie groups with left-invariant symplectic or K$\rm\ddot{a}$hlerian structures. \emph{Lett.
Math. Phys.} 16 (1988), 225-235.

\bibitem{LS}
T. Lada and J. Stasheff, Introduction to sh Lie algebras for
physicists. \emph{Internat. J. Theoret. Phys.} 32 (1993), 1087-1103.

\bibitem{LM}
T. Lada and M. Markl,  Strongly homotopy Lie algebras. \emph{ Comm. Algebra} 23 (1995), 2147-2161.



\bibitem{LBS2} J. Liu, C. Bai and Y. Sheng, Left-symmetric bialgebroids and their corresponding Manin triples. \emph{Differential Geom. Appl.} 59 (2018), 91-111.

\bibitem{MT}S. Majid and W. Tao, Noncommutative Differentials on Poisson-Lie groups and pre-Lie algebras.  \emph{Pacific J. Math.} 284 (2016), 213-256.

\bibitem{Ma}
M. Markl, Deformation Theory of Algebras and Their Diagrams. \emph{Regional Conference Series in Mathematics}, Number 116, American Mathematical Society (2011).

\bibitem{Nij}
A. Nijenhuis, Sur une classe de proprits communes quelques types differents d'algebres. (French) \emph{Enseignement Math. (2)} 14 (1968), 225-277.

\bibitem{Pa}
J.-S. Park, Topological open p-branes, in Symplectic Geometry and Mirror
Symmetry (Seoul, 2000), K. Fukaya, Y.-G. Oh, K. Ono and G. Tian, eds., World Sci. Publishing, River Edge, NJ, 2001, 311-384.


\bibitem{Roy99}
D. Roytenberg, Courant algebroids, derived brackets and even symplectic supermanifolds,
PhD thesis, UC Berkeley, 1999, arXiv:math.DG/9910078.

\bibitem{Roy02A}
D. Roytenberg, Quasi-Lie bialgebroids and twisted Poisson manifolds. \emph{Lett. Math. Phys.} 61 (2002), 123-137.


\bibitem{Shima}
H. Shima, Homogeneous Hessian manifolds. \emph{Ann. Inst. Fourier} 30 (1980), 91-128.

\bibitem{stasheff:shla}
J. Stasheff, Differential graded Lie algebras, quasi-Hopf algebras and higher
  homotopy algebras.   \emph{Quantum groups (Leningrad, 1990)}, 120-137,
  Lecture Notes in Math., 1510, \emph{ Springer, Berlin,} 1992.

\bibitem{Uchino}
K. Uchino, Twisting on associative algebras and Rota-Baxter type operators. \emph{ J. Noncommut. Geom.} 4 (2010), 349-379.

\bibitem{Uchino-1}
K. Uchino, Derived brackets and sh  Leibniz algebras. \emph{J. Pure Appl. Algebra} 215 (2011), 1102-1111.


\bibitem{WBLS}
Q. Wang, Y. Sheng, C. Bai and J. Liu, Nijenhuis operators on pre-Lie algebras.  \emph{Commun. Contemp. Math.}  21 (2019), 1850050, 37 pp.


\end{thebibliography}
 \end{document}